\newtheorem{thm}{Theorem}
\newtheorem*{thm*}{Theorem}
\newtheorem{cor}[thm]{Corollary}
\newtheorem*{cor*}{Corollary}
\newtheorem{lem}[thm]{Lemma}
\newtheorem{prop}[thm]{Proposition}
\newtheorem*{prop*}{Proposition}
\newtheorem{conj}[thm]{Conjecture}
\theoremstyle{remark}
\newtheorem{rem}[thm]{Remark}
\newtheorem{question}[thm]{Question}
\theoremstyle{definition}
\newtheorem{defn}[thm]{Definition}
\numberwithin{thm}{section}
\newcommand\nc{\newcommand}
\nc\on{\operatorname}
\newcommand\fp{{\mathfrak p}}
\newcommand\fq{{\mathfrak q}}
\newcommand\fm{{\mathfrak m}}
\newcommand\mcl{\mathscr}
\newcommand\mbb{\mathbb}
\newcommand\mbf{\mathbf}
\newcommand\mfk{\mathfrak}
\nc\Hom{\on{Hom}}
\nc\Sections{\on{Sections}}
\nc\Sym{\on{Sym}}
\nc\Spec{\on{Spec}}
\nc\Specm{\on{Specm}}
\nc\ul{\underline}
\nc{\dfp}{\overset{\cdot}{\fp}}
\nc{\dfq}{\overset{\cdot}{\fq}}
\nc{\dfm}{\overset{\cdot}{\fm}}
\begin{document}

\title{Non-Abelian Lefschetz Hyperplane Theorems}
\author{Daniel Litt}
%\pagenumbering{Alph}
%\afterpreface
%\cleardoublepage
%\phantomsection
%\addcontentsline{toc}{chapter}{Abstract}
\begin{abstract}
Let $X$ be a smooth projective variety over the complex numbers, and let $D\subset X$ be an ample divisor.  For which spaces $Y$ is the restriction map $$r: {\Hom}(X, Y)\to {\Hom}(D, Y)$$ an isomorphism?

Using positive characteristic methods, we give a fairly exhaustive answer to this question.  An example application of our techniques is: if $\dim(X)\geq 3$, $Y$ is smooth, $\Omega^1_Y$ is nef, and $\dim(Y)< \dim(D),$ the restriction map $r$ is an isomorphism.  Taking $Y$ to be the classifying space of a finite group $BG$, the moduli space of pointed curves $\mcl{M}_{g,n}$, the moduli space of principally polarized Abelian varieties $\mcl{A}_g$, certain period domains, and various other moduli spaces, one obtains many new and classical Lefschetz hyperplane theorems.  
\end{abstract}

\maketitle
%\pagenumbering{roman}

%\pagenumbering{arabic}

%\cleardoublepage
%\phantomsection
%\setcounter{page}{5}
%\addcontentsline{toc}{chapter}{Table of contents}

%\pagenumbering{gobble}% Remove page numbers (and reset to 1)
%\clearpage
%\thispagestyle{empty}
\tableofcontents

\cleardoublepage
\phantomsection
\pagenumbering{arabic}

\setcounter{page}{1}

\section{Introduction}
This paper arose from an attempt to understand the extent to which the properties of a variety $X$ are reflected by those of an ample Cartier divisor $D\subset X$.   Our main contribution is to give criteria for the morphism $$F(X)\to F(D)$$ to be an isomorphism, where $F$ is a contravariant functor which is representable in a suitable sense.  We obtain many classical and new Lefschetz-type theorems as corollaries of our main results.  We also hope that the results in this paper will be useful to those attempting to prove their own Lefschetz-type results; the account given here is meant to be a somewhat encyclopedic toolkit.

For example, let $X$ be a smooth projective variety over a field of characteristic zero, and let $D\subset X$ be an ample Cartier divisor.  As consequences of our main theorem (Theorem \ref{maintheorem2}), we obtain:
\begin{itemize}
\item $\pi_1^{\text{\'et}}(D)\to \pi_1^{\text{\'et}}(X)$ is surjective if $\dim(X)\geq 2$ and an isomorphism if $\dim(X)\geq 3$ (see \cite[Th\'eor\`eme XI.3.10]{SGA2}, and Theorem \ref{etalepi1-lefschetz} here);
\item Let $Y$ be a smooth projective curve of genus at least $1$.  Then $${\Hom}(X, Y)\to {\Hom}(D, Y)$$ is an isomorphism if $\dim(X)\geq 3$ (see Theorem \ref{nefcotangentbundleextension} and Theorem \ref{sourceofnefexamples});
\item Any family of smooth curves of genus $g\geq 2$ over $D$ extends uniquely to $X$, as long as $\dim(X)\geq 3g-1$ (see Corollary \ref{modulispaceapplications}(1));
\item Any principally polarized Abelian scheme over $D$ of relative dimension $g$ extends uniquely to $X$, as long as $\dim(X)\geq \dim(\mcl{A}_g)+2$ (see Corollary \ref{modulispaceapplications}(3));
\item Any polarized family of Calabi-Yau varieties over $D$ extends uniquely to $X$, as long as $\dim(X)\geq h^{n-1,1}+2$ (see Corollary \ref{modulispaceapplications}(4));
\item Let $f: Y\to X$ be a smooth relative curve of genus $g\geq 2$, and let $f_D: Y_D\to D$ be its base change to $D$.  If $\dim(X)\geq 3$, the restriction map $${\on{Sections}}(f)\to {\on{Sections}}(f_D)$$ is an isomorphism (see Corollary \ref{sectionswithglobalgeneration});
\item Let $f: Y\to X$ be an Abelian scheme of relative dimension $g$, and let $f_D: Y_D\to D$ be its base change to $D$.  If $\dim(X)\geq g+2$, the restriction map $${\on{Sections}}(f)\to {\on{Sections}}(f_D)$$ is an isomorphism (see Corollary \ref{sectionswithglobalgeneration});
\item If $\mcl{H}$ is a polarized variation of Hodge structure over $D$, induced by a period map $D\to Y$ with $Y$ quasi-projective, then $\mcl{H}$ extends uniquely to $X$ if $\dim(D)>\dim(Y)$, and if
\begin{itemize} 
\item $Y$ is compact, or 
\item  $\mcl{H}$ is of weight one, or 
\item $\mcl{H}$ is of K3 type 
\end{itemize} (see Theorem \ref{shimuralefschetz}).
\end{itemize}
We also give versions of many of these theorems in positive characteristic.

Before diving into these applications, however, we will discuss the new perspective on classical Lefschetz theorems which motivates this work.

\subsection{Classical Lefschetz theorems}\label{classicalthms}

Recall Grothendieck's Lefschetz theorem for Picard groups.
\begin{thm}[Grothendieck-Lefschetz theorem for Picard groups {\cite[Th\'eor\`eme XI.3.18]{SGA2}}]
Let $X$ be a smooth projective variety over a field of characteristic zero.  Let $D\subset X$ be an ample divisor.  Then the restriction map $\on{Pic}(X)\to \on{Pic}(D)$
\begin{itemize}
\item is injective if $\dim(X)\geq 3$ and
\item is surjective if $\dim(X)\geq 4$.
\end{itemize} 
\end{thm}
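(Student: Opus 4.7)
The plan is to factor the restriction map as
$$\operatorname{Pic}(X) \xrightarrow{\alpha} \operatorname{Pic}(\widehat{X}) \xrightarrow{\beta} \operatorname{Pic}(D),$$
where $\widehat{X}$ denotes the formal completion of $X$ along $D$, and to analyze each arrow separately. The arrow $\alpha$ should turn out to be an isomorphism with no dimension hypotheses, while $\beta$ should be injective under $\dim X \geq 3$ and bijective under $\dim X \geq 4$.

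To handle $\beta$, I would use that a line bundle on $\widehat{X}$ is a compatible system of line bundles on the infinitesimal thickenings $D_n = V(\mathcal{I}^{n+1})$, where $\mathcal{I}$ is the ideal sheaf of $D$ in $X$. The truncated exponential sequence
$$0 \to \mathcal{I}^n/\mathcal{I}^{n+1} \to \mathcal{O}_{D_n}^\times \to \mathcal{O}_{D_{n-1}}^\times \to 1$$
identifies the kernel and cokernel of $\operatorname{Pic}(D_n) \to \operatorname{Pic}(D_{n-1})$ with subquotients of $H^1$ and $H^2$ of $\mathcal{I}^n/\mathcal{I}^{n+1} \cong \mathcal{O}_X(-nD)\otimes\mathcal{O}_D$. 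I would compute these groups by combining the short exact sequence
$$0 \to \mathcal{O}_X(-(n+1)D) \to \mathcal{O}_X(-nD) \to \mathcal{O}_X(-nD)\otimes\mathcal{O}_D \to 0$$
with Kodaira vanishing on the smooth projective variety $X$, which gives $H^i(X, \mathcal{O}_X(-mD)) = 0$ for every $i < \dim X$ and every $m \geq 1$. Chasing the resulting long exact sequence forces $H^i(D, \mathcal{O}_X(-nD)\otimes\mathcal{O}_D) = 0$ for all $i < \dim X - 1$ and all $n \geq 1$, so $\beta$ is injective when $\dim X \geq 3$ and bijective when $\dim X \geq 4$. (A brief Mittag-Leffler check using $H^0$-vanishing of the same sheaves promotes the resulting isomorphism on the inverse limit of $\operatorname{Pic}(D_n)$ to one on $\operatorname{Pic}(\widehat{X})$.)

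For $\alpha$, I would appeal to Grothendieck's formal existence theorem (formal GAGA, as developed in EGA III and \cite{SGA2}): because $X$ is proper over the base field and $D$ is ample, every line bundle on $\widehat{X}$ is algebraizable, and two line bundles on $X$ whose formal completions are isomorphic are themselves isomorphic. This gives $\alpha$ as an isomorphism unconditionally, and combining with the analysis of $\beta$ concludes the proof.

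The main obstacle is the algebraization statement controlling $\alpha$; it rests on the full machinery of formal GAGA and uses the ampleness of $D$ in an essential way, rather than following from elementary cohomological manipulation. The input for $\beta$, by contrast, reduces to Kodaira vanishing on $X$, which is elementary in characteristic zero but famously fails in positive characteristic. Replacing these two ingredients — algebraization in non-abelian contexts, and Kodaira vanishing in characteristic $p$ — by characteristic-independent substitutes is precisely the guiding thread of the present paper.
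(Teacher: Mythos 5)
The paper does not prove this statement; it is recalled from SGA2 as classical motivation in Section 1.1, and no argument for it appears in the text. So there is no ``paper's own proof'' to compare against, only the standard reference.

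That said, your sketch is essentially the correct classical argument, and it is worth noting how it lines up with the framework the paper builds for the non-abelian case. Your factorization $\operatorname{Pic}(X)\to\operatorname{Pic}(\widehat X)\to\operatorname{Pic}(D)$ is exactly the factorization $X \hookleftarrow U \hookleftarrow \widehat D \hookleftarrow D$ underlying Chapters 2--4 of the paper, specialized to the abelian functor $\Hom(-,B\mathbb{G}_m)$. The analysis of $\beta$ via the truncated exponential sequence is the ``deformation theory'' step (Chapter 4): for $B\mathbb{G}_m$ the obstruction and deformation groups degenerate to $H^2(D,\mathcal{O}_D(-nD))$ and $H^1(D,\mathcal{O}_D(-nD))$, which you kill with Kodaira vanishing plus the long exact sequence for $0\to\mathcal{O}_X(-(n+1)D)\to\mathcal{O}_X(-nD)\to\mathcal{O}_D(-nD)\to0$; the dimension thresholds $\dim X\geq 3$ and $\dim X\geq 4$ fall out exactly as you computed. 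The analysis of $\alpha$ is the ``algebraization plus extension'' step (Chapters 2--3): full faithfulness is the Grauert comparison theorem, essential surjectivity onto a Zariski neighborhood $U\supset D$ is the existence theorem, and the final extension from $U$ to $X$ uses that $X\setminus U$ is finite (hence codimension $\geq 2$) together with smoothness of $X$ — this last step is worth making explicit, since ``$\alpha$ is an isomorphism unconditionally'' glosses over passing through a Zariski neighborhood, and in SGA2 the relevant conditions are the pair $\operatorname{Lef}(X,D)$ and $\operatorname{Leff}(X,D)$.

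Your closing paragraph is the right diagnosis of what the paper is after: in the non-abelian setting neither the truncated exponential sequence nor Kodaira vanishing is available, and the paper replaces the former by honest deformation theory of maps (Theorem \ref{maindefthm}) and the latter by $f$-amplitude estimates plus Deligne--Illusie/Arapura vanishing (Theorems \ref{arapurabound}, \ref{arapuravanishing}), which are characteristic-independent. One small caveat: the Mittag--Leffler remark you parenthesize is exactly what is needed to identify $\operatorname{Pic}(\widehat X)$ with $\varprojlim\operatorname{Pic}(D_n)$ (via the Milnor $\lim^1$ sequence on $H^0(\mathcal{O}_{D_n}^\times)$); since you only state it in passing, be sure you see that this is where it enters, rather than being a redundant afterthought.
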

One may reinterpret this theorem as the statement that the restriction morphism $${\Hom}(X, B\mathbb{G}_m)\to {\Hom}(D, B\mathbb{G}_m),$$
is fully faithful (resp.~an equivalence).  

Similarly, recall the Lefschetz hyperplane theorem for the \'etale fundamental group.
\begin{thm}[Grothendieck-Lefschetz theorem for $\pi_1^{\text{\'et}}$ {\cite[Th\'eor\'eme X.3.10]{SGA2}}]
Let $X$ be a smooth projective variety over a field, and suppose $\dim(X)\geq 3$.  Let $D\subset X$ be an ample divisor.  Then the natural map$$\pi_1^{\text{\'et}}(D)\to \pi_1^{\text{\'et}}(X)$$ is an isomorphism (for any choice of geometric base-point in $D$).
\end{thm}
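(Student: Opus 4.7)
The plan is to prove the equivalent categorical statement that pullback along $D \hookrightarrow X$ induces an equivalence between the categories of finite \'etale covers of $X$ and of $D$; by the Galois interpretation of $\pi_1^{\text{\'et}}$ this yields the isomorphism. The argument has three parts: \textbf{(i)} essential surjectivity, i.e.\ every finite \'etale cover $D' \to D$ is the pullback of some $X' \to X$; \textbf{(ii)} fully faithfulness, i.e.\ morphisms of covers over $X$ are determined by their restrictions to $D$; and \textbf{(iii)} preservation of connectedness, which yields the surjectivity of $\pi_1^{\text{\'et}}(D) \to \pi_1^{\text{\'et}}(X)$.

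For (iii), given a connected finite \'etale $\pi : X' \to X$, the variety $X'$ is smooth projective with $\dim X' = \dim X \geq 3$, and $D' := \pi^{-1}(D)$ is an ample divisor on $X'$ (pullback of ample under a finite morphism). From the short exact sequence $0 \to \mcl{O}_{X'}(-D') \to \mcl{O}_{X'} \to \mcl{O}_{D'} \to 0$, the vanishing $H^1(X', \mcl{O}_{X'}(-D')) = 0$ (Kodaira in characteristic zero, or Serre vanishing plus the formal function theorem more generally) gives $H^0(D', \mcl{O}_{D'}) = k$, so $D'$ is connected.

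For (i) and (ii), I would pass through the formal completion $\hat{X}_D$ of $X$ along $D$, with infinitesimal neighborhoods $D_n$ cut out by $\mcl{I}_D^{n+1}$. The argument then factors into two equivalences. The \emph{formal step} is $\on{FEt}(\hat X_D) \simeq \on{FEt}(D)$: since \'etale morphisms are formally \'etale, each restriction $\on{FEt}(D_{n+1}) \to \on{FEt}(D_n)$ is an equivalence, and passage to the inverse limit closes the argument. The \emph{algebraization step} is $\on{FEt}(X) \simeq \on{FEt}(\hat X_D)$: given a compatible system $\{Y_n \to D_n\}$, one algebraizes the formal coherent sheaf $\varprojlim_n (\pi_n)_* \mcl{O}_{Y_n}$ to a coherent sheaf $\mcl{A}$ on some Zariski open neighborhood of $D$ in $X$ via Grothendieck's existence theorem; the algebra structure transports, giving an \'etale cover over an open $U \supset D$ with $X \setminus U$ of codimension $\geq 2$. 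Zariski--Nagata purity of the branch locus on the smooth $X$ then extends the cover uniquely across $X \setminus U$.

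The main obstacle is the algebraization step, which is where ampleness of $D$ and the hypothesis $\dim X \geq 3$ are used decisively. Concretely, one must verify the Lefschetz conditions $\on{Lef}(X,D)$ and $\on{Leff}(X,D)$ of SGA 2; these rest on Serre-type vanishings $H^i(X, \mcl{F} \otimes \mcl{O}_X(nD)) = 0$ for large $n$ and $i > 0$, together with local duality interpretations along $D$, and this is precisely where one needs $\dim X \geq 3$ (not merely $\geq 2$) to get vanishing in a high enough range of degrees. Once algebraization is in hand, full faithfulness is a formally similar but easier statement: morphisms of \'etale covers correspond to sections of an \'etale cover, and the same algebraize-then-extend mechanism applies.
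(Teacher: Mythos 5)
Your proposal is correct and is the classical SGA\,2 argument; the paper proves this statement by a different route, so the comparison is worth spelling out. The paper (Theorem~\ref{etalepi1-lefschetz}) deduces the result as a special case of its main DM-stack theorem, Theorem~\ref{charzeroextensionthmdmstacks}, by taking $Y = BG$ with $G$ a finite \'etale group scheme. The pay-off of that packaging is that the deformation-theoretic hypothesis of the main theorem is trivially satisfied: $\Omega^1_{BG} = 0$, so $\phi(f^*\Omega^1_{BG}\otimes\mcl{N}_{D/X}) = -\infty < \dim(D)-1$, and the entire deformation step (Chapter~\ref{deformation theory}) costs nothing. What remains in the paper's route is exactly the algebraization of the classifying map along $\widehat D$ (Chapter~\ref{algebraization}, done via perfect complexes and Grothendieck duality rather than the Lef/Leff formalism you invoke) and the extension across the complement of a Zariski-open neighborhood via \'etale descent data and purity. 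So the two arguments share the three-part skeleton (deformation/formal step, algebraization, extension across codimension $\geq 2$), but you argue directly with finite \'etale covers, while the paper routes through $\Hom(-,BG)$, absorbing the formal step into a vacuous vanishing.

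Two small cautions about your write-up. First, your step (iii) as stated leans on $H^1(X',\mcl{O}_{X'}(-D')) = 0$, which in characteristic zero you attribute to Kodaira; in positive characteristic ``Serre vanishing plus the formal function theorem'' does not directly deliver this first-order vanishing. The robust version of the connectedness argument (which works in all characteristics and needs only $\dim X \geq 2$) avoids the cohomology vanishing altogether: use Lef$(X,D)$ (i.e.\ $\Gamma(X',\mcl{O}_{X'}) \cong \Gamma(\widehat{X'}_{D'},\mcl{O})$, a field since $X'$ is connected) and lift idempotents of $\Gamma(D',\mcl{O}_{D'})$ formally along the nilpotent thickenings $D'_n$; a nontrivial idempotent would propagate to $\Gamma(\widehat{X'}_{D'},\mcl{O})$, contradicting that it is a field. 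Second, you should note why the paper's proof leaves the positive-characteristic case as a remark: the main theorem there runs through a lift to $W_2$ hypothesis (Theorem~\ref{charpliftextensionthm}), which is not available in general, so the paper instead points to Theorem~\ref{frobeniusvanishing}, a Frobenius-twisted statement; untwisting requires further argument. Your classical approach sidesteps this entirely and works uniformly in all characteristics, which is arguably cleaner for this particular corollary.
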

This theorem may be reinterpreted as giving conditions on $k$-varieties $X, D$ (with $D\subset X$ an ample divisor) such that the restriction map $${\Hom}(X, BG)\to {\Hom}(D, BG)$$ is fully faithful (resp.~an equivalence), where $G$ runs over all finite \'etale $k$-group schemes.

Perhaps more classically, we can rephrase the Lefschetz hyperplane theorem for singular cohomology.
\begin{thm}[Lefschetz Hyperplane Theorem]\label{lefschetz1}
Let $X$ be a smooth complex projective variety of dimension $n$, and $D\subset X$ an ample divisor.  Then the restriction map $$H^i(X, \mbb{Z})\to H^i(D, \mbb{Z})$$ is an isomorphism for $i<n-1$, and an injection for $i=n-1$.  
\end{thm}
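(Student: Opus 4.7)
The plan is to dualize and translate into a statement about the affine complement $U := X \setminus D$, then use Morse theory. By the long exact sequence of the pair $(X, D)$, the theorem is equivalent to the vanishing $H^i(X, D; \mbb{Z}) = 0$ for $i < n$. Since $X$ is a closed oriented topological manifold of real dimension $2n$, Lefschetz duality identifies $H^i(X, D; \mbb{Z}) \cong H_{2n-i}(U; \mbb{Z})$, so it suffices to prove $H_j(U; \mbb{Z}) = 0$ for $j > n$. The complement $U$ is affine: some positive multiple $kD$ is very ample, so under the resulting embedding $X \hookrightarrow \PP^N$ the divisor $kD$ is cut out by a hyperplane $H$, and $U = X \setminus kD$ is closed in $\PP^N \setminus H \cong \BA^N$.

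I would now invoke the theorem of Andreotti--Frankel: a smooth affine variety of complex dimension $n$ has the homotopy type of a CW complex of real dimension at most $n$. To prove this, fix a closed embedding $U \hookrightarrow \CC^N$ and consider $f_p(z) = \|z - p\|^2$ for a generic point $p \in \CC^N$. This function is proper on $U$ and, for generic $p$, a Morse function. Standard Morse theory then endows $U$ with a CW structure having one cell of real dimension equal to each Morse index, so the desired vanishing $H_j(U; \mbb{Z}) = 0$ for $j > n$ follows as soon as one shows every Morse index is at most $n$.

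This index bound is the main technical obstacle, and it rests on the fact that $f_p$ is strictly plurisubharmonic on $\CC^N$, a property inherited by its restriction to the complex submanifold $U$. At a critical point $q$, in local holomorphic coordinates the second-order Taylor expansion of $f_p|_U - f_p(q)$ has the form
\[
\on{Re}\Big(\sum f_{\alpha\beta}\, z^\alpha z^\beta\Big) + \sum f_{\alpha\bar\beta}\, z^\alpha \bar z^\beta.
\]
The second summand is positive definite as a real quadratic form on $T_qU \cong \RR^{2n}$, by strict plurisubharmonicity. The first summand, being the real part of a complex-symmetric form, has eigenvalues that come in pairs $\pm\mu$ (because multiplication by $i$ on the real tangent space sends this quadratic form to its negative), so it has signature $(k, k)$ with $k \leq n$. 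If $W \subset T_qU$ is a subspace on which the total Hessian is negative definite, then on $W$ the positive Hermitian part forces the complex-symmetric part to be itself negative definite, so $\dim_\RR W \leq n$. This gives the required bound on Morse indices and completes the proof.
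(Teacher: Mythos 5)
The paper does not prove this theorem. Theorem \ref{lefschetz1} is recalled as classical background in the introduction, where it is used only to motivate the interpretation of Lefschetz-type statements in terms of mapping spaces $\underline{\Hom}(-, K(\mbb{Z}, n-1))$; the author explicitly declines to pursue that interpretation, and remarks elsewhere that the paper's positive-characteristic methods cannot recover Lefschetz for $H^i$ in degrees $i>1$. So there is no proof in the paper to compare against.

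Your proof is the standard Andreotti--Frankel argument, and it is correct. A few small points worth tightening. The identification $H^i(X, D; \mbb{Z}) \cong H_{2n-i}(U; \mbb{Z})$ is not a single application of ``Lefschetz duality'' in the manifold-with-boundary sense; the clean route is $H^i(X, D) \cong \tilde{H}^i(X/D) \cong H^i_c(U)$ (valid since $(X,D)$ is triangulable and $X/D$ is the one-point compactification of $U$), followed by Poincar\'e duality $H^i_c(U) \cong H_{2n-i}(U)$ for the oriented open $2n$-manifold $U$. Note also that you do not need $D$ to be smooth anywhere: only $U = X \setminus D$ must be smooth, which holds because $X$ is. In the index bound, the sentence about ``the positive Hermitian part forces the complex-symmetric part to be negative definite'' only needs the Hermitian part to be positive semi-definite, which is automatic; strictness is what forces nondegeneracy of critical points together with genericity of $p$, but it is not needed at that step. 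Finally, the conclusion ``eigenvalues come in pairs $\pm\mu$'' should be read as a statement about the symmetric matrix of the real form in an orthonormal basis (multiplication by $i$ is an isometry conjugating the form to its negative), which yields signature $(k,k)$ with $k \le n$ and hence the desired cap on the Morse index. With those glosses, the argument is complete and correct.
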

This theorem can be reinterpreted as saying that the restriction map $$\underline{\Hom}(X, K(\mbb{Z}, n-1))\to \underline{\Hom}(D, K(\mbb{Z}, n-1))$$ is an injection on connected components, and a homotopy equivalence when restricted to each connected component of the domain.  Here $\underline{\Hom}$ denotes the space of continuous maps, with the compact-open topology.  While this interpretation seems intrinsically topological, one may make sense of it in algebraic geometry, giving a reinterpretation of the Lefschetz hyperplane theorem for e.g. \'etale cohomology.  We do not do so in this paper.

These examples should suggest a general question, of which all of these classical theorems are special cases.  Namely, fix a $k$-variety $X$ and an ample divisor $D\subset X$.  
\begin{question}\label{bigquestion}
For which spaces (schemes, stacks, etc.) $Y$ is the restriction morphism $${\Hom}(X, Y)\to {\Hom}(D, Y)$$ an equivalence?
\end{question}
We will also consider a more general question, which should be thought of as analogous to the Lefschetz hyperplane theorem for cohomology with coefficients in a local system:
\begin{question}\label{bigquestion2}
Let $Y'$ be a space and $f: Y'\to X$ a map.  Consider the diagram
$$\xymatrix{
Y_D'\ar@{^(->}[r] \ar[d]^{f_D}& Y'\ar[d]^f\\
D\ar@{^(->}[r] & X
}$$
where $Y_D'=Y'\times_X D$.  When is the restriction map $${\on{Sections}}(f)\to {\on{Sections}}(f_D)$$ an isomorphism?
\end{question}
Question \ref{bigquestion} is the case where $Y'=X\times Y$, and $f: Y'\to X$ is the projection to the first coordinate.

This paper will focus on the case where $Y$ is a smooth scheme or  Deligne-Mumford stack.  This level of generality suffices for many interesting applications.  We were unable to find a general theorem that encompassed all known Lefschetz Hyperplane theorems; for example, our methods will not prove the Lefschetz hyperplane theorem for $$H^i(X_{\text{\'et}}, \mbb{Z}/\ell\mbb{Z})\to H^i(D_{\text{\'et}}, \mbb{Z}/\ell\mbb{Z})$$ with $i>1$.  On the other hand, many of our applications seem to be new and unexpected.
\subsection{Statements}
The main result obtained in this paper is Theorem \ref{maintheorem2} below; more statements and proofs are in Chapter \ref{applications}.  These statements rely on a certain notion of positivity for vector bundles, which we define now.
\begin{defn}[Nefness for vector bundles]
Let $Y$ be a Deligne-Mumford stack, and let $\mcl{E}$ be a vector bundle on $Y$.  We say that $\mcl{E}$ is nef if for each morphism $f: C\to Y$, where $C$ is a smooth projective curve, and each surjection $$f^*\mcl{E}\to \mcl{L}\to 0$$ where $\mcl{L}$ is a line bundle on $C$, we have $$\deg_C(\mcl{L})>0.$$
\end{defn}
\begin{rem}
Note that in the above definition we do not require that $Y$ be proper; for example, any vector bundle on an affine scheme is nef.
\end{rem}
As a concrete consequence of our main theorem, we deduce
\begin{thm}\label{maintheorem}
Let $k$ be a field of characteristic zero, and $X$ a smooth projective variety over $k$.  Let $D\subset X$ be an ample Cartier divisor, and let $Y$ be a smooth Deligne-Mumford stack over $k$ with $\Omega^1_Y$ nef.  Then $${\Hom}(X, Y)\to {\Hom}(D, Y)$$ is
\begin{itemize}
\item  fully faithful if $\dim(Y) = \dim(D)$, and $\dim(X)\geq 2$
\item an equivalence if $\dim(Y)<\dim(D)$ and $\dim(X)\geq 3$.
\end{itemize}
\end{thm}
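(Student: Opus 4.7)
The plan is to factor the restriction $\on{Hom}(X, Y) \to \on{Hom}(D, Y)$ through the formal completion $\on{Hom}(\hat{X}_D, Y)$: handle the first map by formal algebraization (a Grothendieck--Lefschetz-type statement for morphisms) and the second by infinitesimal deformation theory. Deformation theory reduces the second map to two cohomological vanishings on $D$ that follow from the nefness of $\Omega^1_Y$. The harder one is an $H^1$-vanishing whose proof should use both the rank condition $\dim Y < \dim D$ and the positive-characteristic techniques advertised in the paper's title.

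\emph{Step 1 (global to formal).} Let $\mcl{I} \subset \mcl{O}_X$ be the ideal sheaf of $D$, let $D_n := V(\mcl{I}^{n+1})$ be the $n$-th infinitesimal neighborhood, and let $\hat{X}_D$ be the formal completion of $X$ along $D$. A Grothendieck--Lefschetz formal extension theorem for morphisms from a projective variety with an ample divisor to a Deligne--Mumford stack (proved by algebraizing graphs in a compactification of $X \times Y$ using ampleness of $D$) identifies
\[
\on{Hom}(X, Y) \simeq \on{Hom}(\hat{X}_D, Y) = \varprojlim_n \on{Hom}(D_n, Y),
\]
and similarly for $2$-morphisms. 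It therefore suffices to control the successive restrictions $\on{Hom}(D_{n+1}, Y) \to \on{Hom}(D_n, Y)$.

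\emph{Step 2 (reduction to cohomological vanishing).} Standard deformation theory for morphisms from a square-zero thickening to a smooth Deligne--Mumford stack says that, given $\phi_n \colon D_n \to Y$ restricting to $\phi \colon D \to Y$, obstructions to a lift $\phi_{n+1} \colon D_{n+1} \to Y$ lie in $H^1(D, \phi^* T_Y \otimes \mcl{I}^{n+1}/\mcl{I}^{n+2})$, and the set of lifts (when nonempty) is a torsor under the corresponding $H^0$; the same groups control extension and uniqueness of $2$-isomorphisms. Since $D$ is a Cartier divisor, $\mcl{I}/\mcl{I}^2 \simeq L^{-1}|_D$ with $L := \mcl{O}_X(D)$ ample, so $\mcl{I}^{n+1}/\mcl{I}^{n+2} \simeq L^{-(n+1)}|_D$. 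Everything thus reduces to the assertion that
\[
H^i(D, \phi^* T_Y \otimes L^{-n}|_D) = 0 \quad \text{for all } n \geq 1,
\]
with $i = 0$ (which gives fully faithfulness) and, for the equivalence statement, also $i = 1$.

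\emph{Step 3 (the two vanishings).} The $i = 0$ vanishing is immediate from nefness: a nonzero section is a nonzero map $\phi^* \Omega^1_Y \to L^{-n}|_D$, impossible because $\phi^* \Omega^1_Y$ is nef (pullback of a nef bundle), $L^{-n}|_D$ has negative degree on every curve, and a nef-to-negative sheaf map vanishes on every curve, hence is zero. The $i = 1$ vanishing is the main obstacle, and is where both the rank inequality $\on{rank}(\phi^* T_Y) = \dim Y < \dim D$ and the paper's positive-characteristic technology should enter. My plan would be to spread $(X, D, L, \phi)$ over a finitely generated $\mbb{Z}$-algebra, specialize to closed fibers of characteristic $p$, and exploit the Frobenius: repeated Frobenius pullbacks preserve nefness, and together with a Kodaira--Akizuki--Nakano-style vanishing tailored to nef bundles of small rank they should yield $H^i(D, \phi^* T_Y \otimes L^{-n}) = 0$ in a range $i < \dim D - \dim Y$ (which covers $i = 1$ precisely when $\dim Y < \dim D$); upper semicontinuity then returns the vanishing to characteristic zero. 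Once both vanishings are in hand, inductive deformation-theoretic extension along the $D_n$, followed by the formal algebraization of Step~1, yields the unique extension of morphisms and $2$-isomorphisms asserted in the theorem; the hypotheses $\dim X \geq 2$ and $\dim X \geq 3$ are exactly what is needed to place $D$ in the dimension range ($\geq 1$ resp.\ $\geq 2$) where each of these vanishings can hold.
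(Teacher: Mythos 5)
Your factorization through the formal completion, the reduction to cohomological vanishing of $H^i(D,\phi^*T_Y\otimes L^{-n}|_D)$ for $i=0,1$, and the $H^0$ argument from nefness are all exactly on target and match the paper's architecture. However, there are two concrete gaps in the remaining steps.

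First, the claimed identification $\on{Hom}(X,Y)\simeq\on{Hom}(\widehat{X}_D,Y)$ in Step~1 is false in general, and algebraizing graphs in a compactification of $X\times Y$ does not deliver it. Formal GAGA/existence-type arguments over an ample divisor (see Corollaries \ref{subschemealgebraization} and \ref{sectionalgebraization}) only produce a morphism on a \emph{Zariski-open neighborhood} $U\supset D$, not on all of $X$; the closure of the algebraized graph in $X\times\bar{Y}$ need not remain a graph. The paper therefore has a genuinely separate extension step, $\on{Hom}(X,Y)\to\varinjlim_{D\subset U}\on{Hom}(U,Y)$, which requires its own argument and is where the hypothesis $\dim(Y)<\dim(D)$ (equivalently $\dim(Y)\leq\dim(X)-2$) or a ``no rational curves'' condition is consumed: see Corollary \ref{nonproperextension} and Proposition \ref{noratlcurvesextension}. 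Your three-step factorization from the introduction ($D\hookrightarrow\widehat{D}\hookrightarrow U\hookrightarrow X$) cannot be collapsed to two.

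Second, and more seriously, the $H^1$-vanishing plan does not work when $D$ is singular, which the theorem permits. Le Potier / Kodaira--Akizuki--Nakano style vanishings (including Arapura's Theorem \ref{arapuravanishing}) require the ambient variety to be smooth and to lift to $W_2$; they do not apply on an arbitrary ample Cartier divisor $D$. For smooth $D$ your outline is fine (and recovers Theorem \ref{lamecharzeroresult}), but the paper's main technical content is precisely how to handle singular $D$. The device (Theorems \ref{frobeniusvanishing} and \ref{charpliftextensionthm}) is to prove that the \emph{Frobenius twist} $F_{Y/L}^k\circ f$ extends to $\widehat D$ for $k\gg 0$, using Lemma \ref{frobeniusfactors} to get a free start to order $p^k$ and Grothendieck duality on $D$ (valid for arbitrary $D$) together with f-amplitude to finish; one then algebraizes, extends to $X$, and finally ``un-twists'' using Lemma \ref{factorsthroughfrobenius} and vanishing on the \emph{smooth} variety $X$, not on $D$. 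This un-twisting step, which is what makes singular $D$ tractable, is absent from your proposal; without it, your Step~3 would only establish the theorem for smooth $D$.
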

See Theorem \ref{nefcotangentbundleextension} for details.  

This theorem is a consequence of a far more general theorem, which works in arbitrary characteristic.  To state it, we recall the notion of f-amplitude of a vector bundle, from \cite{arapura-f-amplitude}.
\begin{defn}[f-amplitude]
Let $\mcl{E}$ be a vector bundle on a $k$-scheme $X$.  Then if $\on{char}(k)=p>0$, the f-amplitude of $\mcl{E}$, denoted $\phi(\mcl{E})$, is the least integer $i_0$ such that $$H^i(X, \mcl{F}\otimes \mcl{E}^{(p^k)})=0 \text{ for } k\gg 0,$$ for all coherent sheaves $\mcl{F}$ on $X$ and $i> i_0.$  Here $$\mcl{E}^{(p^k)}:=(\on{Frob}_{p^k})^*\mcl{E}$$ where $\on{Frob}_{p^k}$ is the $k$-th power of the absolute Frobenius morphism.  If $\on{char}(k)=0$, $\phi(\mcl{E})$ is defined to be the the infimum of $$\on{max}_{\mfk{q}\in A} \phi(\mcl{E}_\mfk{q}),$$ where $A$ is a finite-type $\mbb{Z}$-scheme, $(\mcl{X}, \mcl{E})$ is a model of $(X, \mcl{E})$ over $A$, and $\mfk{q}$ ranges over all closed points of $A$.  
\end{defn}
Colloquially, in characteristic zero, $\phi(\mcl{E})$ is the f-amplitude of $\mcl{E}_\mfk{p}$ on the special fibers $X_\mfk{p}$ of a well-chosen ``spreading out" of $(X, \mcl{E})$.
\begin{thm}\label{maintheorem2}
Let $k$ be a field and $X$ a smooth projective variety over $k$, with $\dim(X)\geq 3$.  Let $D\subset X$ be an ample Cartier divisor, and let $Y$ be a smooth Deligne-Mumford stack over $k$.  Let $f: D\to Y$ be a morphism.  Suppose that either $\on{char}(k)=0$ or that $k$ is perfect of characteristic $p>0$, and $X$ lifts to $W_2(k)$.  If $$\phi(f^*\Omega^1_Y\otimes \mcl{N}_{D/X})<\dim(D)$$ there is at most one extension of $f$ to $X$; if $$\phi(f^*\Omega^1_Y\otimes \mcl{N}_{D/X})<\dim(D)-1$$ and $Y$ satisfies 
\begin{enumerate}
\item $\dim(Y)<\dim(D)$, or
\item $\dim(\on{im}(f))\leq\dim(D)-2$ and $Y$ has quasiprojective coarse space, or
\item $Y$ is proper and admits a model which is finite type over $\mbb{Z}$ and whose geometric fibers contain no rational curves.
\end{enumerate}   
then $f$ extends (uniquely) to a map $X\to Y$.  
\end{thm}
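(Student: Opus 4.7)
My plan is to prove both clauses by reducing to the deformation theory of $f$ along the formal neighborhood $\widehat{X}_D$ of $D$ in $X$, with cohomological obstructions controlled via Serre duality and the f-amplitude hypothesis. First, the characteristic-zero definition of $\phi$ as an infimum over spreadings out lets me reduce to the case that $k$ is perfect of characteristic $p>0$ and $X$ lifts to $W_2(k)$: spread $(X, D, Y, f)$ out over a finite-type $\ZZ$-scheme, apply the positive-characteristic conclusion on a well-chosen closed fiber, and descend back by standard specialization arguments. Writing $\mcl{N} := \mcl{N}_{D/X}$, $d := \dim D$, and $D_n$ for the $n$-th infinitesimal neighborhood of $D$ in $X$, the ideal of $D_{n-1}$ in $D_n$ is $\mcl{N}^{-n}$. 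Since $Y$ is a smooth DM stack, standard deformation theory says that obstructions to extending $f_{n-1} : D_{n-1} \to Y$ to $D_n$ lie in $H^1(D, f^*T_Y \otimes \mcl{N}^{-n})$, and the extensions---when they exist---form a torsor under $H^0(D, f^*T_Y \otimes \mcl{N}^{-n})$. Serre duality on $D$ rewrites these as
\[
H^i(D, f^*T_Y \otimes \mcl{N}^{-n}) \cong H^{d-i}(D, f^*\Omega^1_Y \otimes \omega_D \otimes \mcl{N}^n)^\vee,
\]
so uniqueness along $\widehat{X}_D$ reduces to vanishing of the $i=0$ groups for all $n\geq 1$, while existence of a formal extension reduces to vanishing of the $i=1$ groups; the latter costs one extra cohomological degree and explains the strengthening $\phi < d-1$ in the existence clause.

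The heart of the argument, and the main obstacle, is to derive these vanishings from the f-amplitude hypothesis. The hypothesis provides that for every coherent $\mcl{G}$ on $D$,
\[
H^i(D, \mcl{G} \otimes (f^*\Omega^1_Y \otimes \mcl{N})^{(p^k)}) = 0 \quad \text{for } k \gg 0 \text{ and } i > \phi(f^*\Omega^1_Y \otimes \mcl{N}).
\]
My plan is to exploit the $W_2(k)$-lift of $X$ via a Deligne--Illusie-style Cartier splitting of the de Rham complex to identify the target group $H^{d-i}(D, f^*\Omega^1_Y \otimes \omega_D \otimes \mcl{N}^n)$ as a direct summand of a Frobenius-twisted group of the form above, with $\mcl{G}$ chosen to absorb $\omega_D$ and with $n$ in an arithmetic progression (e.g.\ $n = p^k$), and then to bootstrap from the arithmetic progression to all $n \geq 1$ by induction on the order of the thickening. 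The delicate technical point---and the reason the $W_2$-lifting is essential---is to match the Frobenius twist of $f^*\Omega^1_Y$ against the original $f^*\Omega^1_Y$ while tracking how the $\omega_D$ factor and the power of $\mcl{N}$ interact with Frobenius.

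With the formal extension $\widehat{f} : \widehat{X}_D \to Y$ in hand (and unique), algebraization finishes the proof, distinguishing the three cases. Under (1), $\dim Y < d$ forces $\widehat{f}$ to factor through a proper closed substack of $Y$, and Grothendieck's formal existence theorem algebraizes $\widehat{f}$ to a morphism $X \to Y$. Under (2), the dimension bound $\dim(\on{im}(f)) \leq d-2$ together with the Lefschetz theorem for the Picard group lets me pull an ample line bundle back from the quasiprojective coarse space of $Y$, descend it to an algebraic ample bundle on $X$, and apply formal GAGA to the resulting projective morphism. Under (3), the absence of rational curves in the geometric fibers of a $\mbb{Z}$-model of $Y$ rules out the Mori-theoretic obstruction to algebraization: a bend-and-break argument after spreading out shows that no positive-dimensional fiber of $\widehat{f}$ can deform to a rational curve, so a limit argument produces the algebraic extension. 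In all three cases, uniqueness of the algebraic extension on $X$ follows from uniqueness along $\widehat{X}_D$ together with separatedness of $Y$, since the locus where two extensions of $f$ agree is closed in $X$ and contains the ample divisor $D$.
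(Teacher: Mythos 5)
Your outline correctly identifies the reduction to positive characteristic via spreading out, and the passage from formal uniqueness along $\widehat{X}_D$ to global uniqueness (via density of $\on{Spec}_X\mcl{O}_{\widehat D}$ in $X$, i.e.\ Corollary \ref{sectionsareequal}) is sound. The algebraization and then extension-across-points endgame is also essentially right in structure, although you should note the algebraization step from $\widehat D$ to a Zariski neighborhood is the same in all three cases (Corollary \ref{sectionalgebraization}); it is the further extension across the finitely many missing points that bifurcates into the three cases, and in case~(3) the paper's argument is much more elementary than bend-and-break: a resolution of the rational map introduces blown-up loci filled with rational curves, which must be contracted if the target has none (Proposition \ref{noratlcurvesextension}).

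However, the deformation-theoretic heart of your proposal has a genuine gap, and it is exactly the point where the paper works hardest. You propose to prove the vanishing of $H^{d-i}(D, f^*\Omega^1_Y\otimes\omega_D\otimes\mcl{N}^n)$ by a Deligne--Illusie-style Cartier splitting on $D$, matching the untwisted $f^*\Omega^1_Y$ against a Frobenius-twisted copy via the $W_2$-lift. This cannot work as stated: the Cartier splitting of the de Rham complex requires $D$ itself to be smooth and to lift to $W_2$, and \emph{neither is assumed}; only the ambient $X$ is assumed smooth and $W_2$-liftable, and $D$ may be arbitrarily singular. Moreover, the Cartier isomorphism concerns $\Omega^\bullet_D$, not the pullback $f^*\Omega^1_Y$, so there is no mechanism to trade $f^*\Omega^1_Y$ for $(f^*\Omega^1_Y)^{(p^k)}$ on $D$ directly. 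The paper's route is structurally different and sidesteps these problems. First (Theorem \ref{frobeniusvanishing}), one composes $f$ with a high power of relative Frobenius: the elementary Lemma \ref{frobeniusfactors} says the absolute Frobenius $D\to D$ factors through the thickening $D_{p^k}\hookrightarrow X$, which gives a \emph{free} extension of $F^k_{Y/L}\circ f$ to $D_{p^k}$ with no vanishing required, and the remaining deformation problem from $D_{p^k+s}$ to $D_{p^k+s+1}$ is controlled by $H^\bullet(D, K_D(sD)\otimes\on{Frob}_p^{k*}(f^*\Omega^1_Y\otimes\mcl{N}))$, which the f-amplitude hypothesis kills directly for $k\gg 0$---no $W_2$-lift and no Cartier splitting are invoked at this stage, so singular $D$ is allowed. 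Second, after algebraizing and extending to $X$ (Theorem \ref{charpliftextensionthm}), the $W_2$-lift and Deligne--Illusie input enter, but only through Arapura's vanishing theorem applied on the \emph{smooth ambient} $X$, to show that the algebraic extension $X\to Y^{(p^k)}$ factors through Frobenius (via Lemma \ref{factorsthroughfrobenius}, which characterizes such factorizations by vanishing of the map on $\Omega^1$), and then one inducts on $k$ to remove the Frobenius twist. Your proposal omits both the ``compose with Frobenius, then undo it'' structure and Lemma \ref{frobeniusfactors}, and the replacement you suggest would impose smoothness and $W_2$-liftability on $D$, which would contradict the generality of the statement.
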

See Theorem \ref{charzeroextensionthmdmstacks} for details.  We also give a version of this result for varieties over a field $k$ of positive characteristic which do \emph{not} lift to $W_2(k)$; loosely speaking, this theorem says that a map $f: D\to Y$ extends uniquely to $X$ after composition with a suitable power of Frobenius.  See Theorem \ref{frobeniusvanishing} for details. 

We also give relative versions of these theorems:
\begin{thm}\label{relativemainthm}
Let $k$ be a field of characteristic zero and $X$ a smooth projective variety over $k$.  Let $D\subset X$ be an ample Cartier divisor, and let $f: Y'\to X$ be a smooth proper morphism of schemes, and let $f_D: Y'_D\to D$ be its base change to $D$.  Suppose $\Omega^1_{Y'/X}$ is a nef vector bundle on $Y'$.  Then the restriction map $${\on{Sections}}(f)\to {\on{Sections}}(f_D)$$ is 
\begin{itemize}
\item injective if $\text{rel.~dim.}(f)= \dim(D)$ and $\dim(X)\geq 2$, 
\item an isomorphism if $\text{rel.~dim.}(f)< \dim(D)$ and $\dim(X)\geq 3$.
\end{itemize}
\end{thm}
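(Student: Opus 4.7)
The plan is to treat the injectivity and surjectivity parts separately. Injectivity will follow by applying the uniqueness assertion of Theorem~\ref{maintheorem2} to the morphism $s_D\colon D\to Y'$. Surjectivity does not reduce directly to Theorem~\ref{maintheorem2}, because the image of a section $s_D\colon D\to Y'$ has dimension $\dim(D)$ and typically $\dim(Y')>\dim(D)$, so none of the dimension conditions (1)--(3) of that theorem applies; it will be proved by an infinitesimal-extension plus algebraization argument in which nefness of $\Omega^1_{Y'/X}$ plays the essential role.

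For injectivity, suppose $s_1,s_2\colon X\to Y'$ are sections of $f$ with $s_1|_D=s_2|_D=:s_D$. Both extend the morphism $s_D\colon D\to Y'$ to $X$, so Theorem~\ref{maintheorem2} reduces the problem to the inequality
\[
\phi\bigl(s_D^*\Omega^1_{Y'}\otimes \mcl{N}_{D/X}\bigr)<\dim(D).
\]
Pulling back the relative cotangent sequence for the smooth morphism $f$ along $s_D$ yields the short exact sequence
\[
0\to i^*\Omega^1_X\to s_D^*\Omega^1_{Y'}\to s_D^*\Omega^1_{Y'/X}\to 0,
\]
where $i\colon D\hookrightarrow X$. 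Tensoring with the ample line bundle $\mcl{N}_{D/X}$ and using subadditivity of $\phi$ in short exact sequences reduces the estimate to the two end terms. The quotient $s_D^*\Omega^1_{Y'/X}\otimes \mcl{N}_{D/X}$ is ample (nef tensored with ample), so its $\phi$ is small; the subobject $i^*\Omega^1_X\otimes\mcl{N}_{D/X}$ is controlled via the conormal sequence for $D\subset X$ together with the vanishing machinery already deployed in the proof of Theorem~\ref{maintheorem2}.

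For surjectivity, given $s_D$, construct the extension in two steps. First, inductively extend $s_D$ to sections $s_n\colon D_n\to Y'$ over the infinitesimal neighborhoods $D_n\subset X$. The deformation theory of sections of the smooth morphism $f$ identifies the obstruction to extending $s_n$ to $s_{n+1}$ with a class in $H^1\bigl(D,\,s_D^*T_{Y'/X}\otimes\mcl{N}_{D/X}^{-(n+1)}\bigr)$, and the ambiguity of such an extension with $H^0$ of the same bundle. The vanishing of $H^0$ is immediate from nefness: a nonzero global section produces a nonzero map $\mcl{N}_{D/X}^{n+1}\to s_D^*T_{Y'/X}$, and restricting to a general smooth curve $C\subset D$ and saturating gives a line sub-bundle of $s_D^*T_{Y'/X}|_C$ of positive degree, which dualizes to a line-bundle quotient of $s_D^*\Omega^1_{Y'/X}|_C$ of non-positive degree, contradicting nefness. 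The vanishing of $H^1$ is the technical heart: by Serre duality it reduces to a Kodaira-type vanishing
\[
H^{\dim D-1}\bigl(D,\,s_D^*\Omega^1_{Y'/X}\otimes\mcl{N}_{D/X}^{n+1}\otimes\omega_D\bigr)=0,
\]
which follows from nefness of $s_D^*\Omega^1_{Y'/X}$ combined with ampleness of $\mcl{N}_{D/X}$, via the reduction-mod-$p$ and Frobenius-pullback methods underpinning Theorem~\ref{maintheorem2}. In the second step, the compatible system $\{s_n\}$ assembles into a formal section of $f$ along the formal completion $\widehat{X}_{|D}$; since $f$ is proper and $D$ is ample in the projective variety $X$, Grothendieck's existence theorem algebraizes this to an actual section $s\colon X\to Y'$ of $f$.

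The main obstacle is the $H^1$ vanishing in the formal-extension step, needed for every $n\ge 0$; this is precisely where nefness of the relative cotangent bundle is used substantively, and it is established through the characteristic-$p$ machinery of the paper. Injectivity (via Theorem~\ref{maintheorem2}) and the formal algebraization step are, by comparison, routine.
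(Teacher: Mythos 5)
Your plan for injectivity has a real gap, and your plan for surjectivity, while structurally in the right spirit, glosses over exactly the two points where the paper has to do genuine work.

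\textbf{Injectivity.} You reduce to Theorem~\ref{maintheorem2} by treating $s_D\colon D\to Y'$ as a map to the abstract scheme $Y'$; this requires the bound $\phi(s_D^*\Omega^1_{Y'}\otimes\mcl{N}_{D/X})<\dim(D)$. Via the cotangent sequence $0\to i^*\Omega^1_X\to s_D^*\Omega^1_{Y'}\to s_D^*\Omega^1_{Y'/X}\to 0$ and the fact that $\phi$ of the middle term is bounded by the maximum of the outer two, you would need both $\phi(s_D^*\Omega^1_{Y'/X}\otimes\mcl{N}_{D/X})<\dim(D)$ \emph{and} $\phi(i^*\Omega^1_X\otimes\mcl{N}_{D/X})<\dim(D)$. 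The first is fine by Arapura's rank bound. The second is not available: nothing is assumed about $\Omega^1_X$ (for $X=\mathbb{P}^n$ it is anti-ample), and the conormal sequence only exhibits $\mcl{O}_D$ as a subsheaf of $i^*\Omega^1_X\otimes\mcl{N}_{D/X}$, which has $\phi=\dim(D)$, so no useful estimate comes out. The paper avoids this entirely by using the \emph{section} deformation theory (Theorem~\ref{maindefthm} applied to $f\colon Y'\to X$ itself, not to the projection $Y'\times X\to X$): obstructions and automorphisms live in $\on{Ext}^i(s_D^*\Omega^1_{Y'/X},\mcl{I}_D^n/\mcl{I}_D^{n+1})$, so only the \emph{relative} cotangent bundle enters, and the Arapura rank bound on the ample bundle $s_D^*\Omega^1_{Y'/X}\otimes\mcl{N}_{D/X}$ closes the argument immediately. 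This is what Theorem~\ref{charzerosectionextensionthm} packages; your reduction to the map-version theorem is the wrong one.

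\textbf{Surjectivity.} You correctly identify the obstruction/deformation spaces for extending sections (modulo an off-by-one in the twist; it is $\mcl{N}_{D/X}^{-n}$ for extending from $D_n$ to $D_{n+1}$). But the $H^1$ vanishing is \emph{not} a consequence of ``nefness plus ampleness via reduction-mod-$p$.'' The paper's vanishing machinery (Theorem~\ref{arapuravanishing}, Deligne--Illusie) requires the variety carrying the cohomology to be \emph{smooth} and lift to $W_2$; here $D$ may be arbitrarily singular, and no assumption is made on it. In characteristic $p$ the vanishing on $D$ only holds after Frobenius pullback (that is the content of $f$-amplitude), which is why Theorem~\ref{frobeniusvanishing} extends $F_{Y/L}^k\circ s_D$ rather than $s_D$ itself. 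One then has to undo the Frobenius twist, and this is where the smoothness of $X$ (not $D$) is used, via a Deligne--Illusie vanishing on $X$ showing the extended map $X\to Y^{(p^k)}$ factors through Frobenius (Lemma~\ref{factorsthroughfrobenius}, Lemma~\ref{uniquefactorizationfrobenius}, and the computation in the proof of Theorem~\ref{charpliftextensionthm}). Your proposal silently assumes the un-twisted vanishing, which is precisely what one cannot have. Separately, your ``algebraize by Grothendieck's existence theorem'' is too fast: the algebraization step (Corollary~\ref{sectionalgebraization}) only produces a section over a Zariski-open neighborhood $U\supset D$, and one still needs to extend from $U$ to all of $X$. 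In the present case this extension is available because nefness of $\Omega^1_{Y'/X}$ forces the fibers of $f$ to contain no rational curves (Proposition~\ref{noratlcurves}), so Proposition~\ref{noratlcurvesextension} applies; but this is a genuine step, and it is also needed to make the Frobenius-descent induction work, so it cannot be elided.
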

See Theorem \ref{nefsectionsextend} and the surrounding remarks for details.
\begin{rem}
Observe that in Theorems \ref{maintheorem} and \ref{maintheorem2}, there is no properness assumption on $Y$, whereas Theorem \ref{relativemainthm} requires such an assumption.
\end{rem}
We also give analogous results in positive characteristic.  See Chapter \ref{deformation theory} for details.

In Chapter \ref{applications} we will give many applications of these results, typically by checking that various targets $Y$ have nef cotangent bundle (so as to apply Theorem \ref{maintheorem}), and that various families $Y'\to X$ have nef relative cotangent bundle (so as to apply Theorem \ref{relativemainthm}).  Some examples of smooth Deligne-Mumford stacks with nef cotangent bundle include:
\begin{itemize}
\item Smooth projective curves of genus at least $1$;
\item Abelian varieties;
\item Varieties $Y$ such that $\on{Sym}^n(\Omega^1_Y)$ is globally generated for some $n$;
\item The moduli space $\mcl{M}_{g,n}$ of $n$-pointed genus $g$ curves, with $g\geq 2$;
\item The moduli space $\mcl{A}_g$ of principally polarized Abelian varieties, over a field of characteristic zero;
\item Compact orbifolds whose universal cover is a bounded domain in $\mathbb{C}^n$ (e.g. various Shimura varieties and period domains);
\item Moduli spaces of polarized Calabi-Yau or hyperk\"ahler varieties. 
\end{itemize}
We will also show that if $Y\to X$ is a smooth morphism with relatively globally generated relative cotangent bundle, over a field of characteristic zero, the relative cotangent bundle is in fact nef.  This includes families of curves of genus $g\geq 1$, and families of Abelian varieties.

We  give  more elementary theorems along these lines, which we will refer to as ``Noether-Lefschetz-type theorems."  Recall the usual Noether-Lefschetz theorem:
\begin{thm}[Noether-Lefschetz]
Let $X$ be a smooth projective threefold and $\mcl{L}$ an ample line bundle on $X$.  Then for $n\gg 0$, and $D\in |\mathscr{L}^{\otimes n}|$ very general, the restriction map $$\on{Pic}(X)\to \on{Pic}(D)$$ is an isomorphism.
\end{thm}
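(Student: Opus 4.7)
The plan is to follow the classical Hodge-theoretic strategy: realize the Noether--Lefschetz locus as a countable union of Hodge loci, and use infinitesimal variation of Hodge structure to show that each such locus is a proper analytic subvariety of the linear system, provided $n$ is large.

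First I would reduce to surjectivity. Injectivity of $\on{Pic}(X) \to \on{Pic}(D)$ is an immediate consequence of Lefschetz: the hyperplane theorem gives $H^1(X, \BZ) \xrightarrow{\sim} H^1(D, \BZ)$ and $H^2(X, \BZ) \hookrightarrow H^2(D, \BZ)$, from which injectivity on $\on{Pic}^0$ and on N\'eron--Severi groups follows via the exponential sequence. So the substantive content is surjectivity for very general $D$. I would then pass to the universal family: fix $n$ large enough that $\mcl{L}^{\otimes n}$ is very ample and the Bertini locus $U \subset |\mcl{L}^{\otimes n}|$ of smooth divisors is open and dense, and let $\pi: \mathscr{D} \to U$ be the universal family. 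For each integral class $\alpha$ in the primitive part of a fiber of $R^2\pi_*\BZ$, the Hodge locus $NL_\alpha \subset U$ on which (some monodromy translate of) $\alpha$ remains of type $(1,1)$ is a closed analytic subvariety. By the Lefschetz $(1,1)$-theorem on each smooth surface $\mathscr{D}_t$, the Noether--Lefschetz locus --- where $\on{Pic}(\mathscr{D}_t)$ strictly exceeds the image of $\on{Pic}(X)$ --- is exactly $\bigcup_\alpha NL_\alpha$, a countable union.

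The crucial step is to show that for $n \gg 0$, each $NL_\alpha$ with $\alpha$ not in the image of $H^2(X, \BQ)$ is a \emph{proper} subvariety of $U$. The tangent space to $NL_\alpha$ at $t$ is computed by the derivative of the period map and equals the kernel of the contraction by $\alpha$ of the IVHS map $T_t U \otimes H^{1,1}_{\on{prim}}(\mathscr{D}_t) \to H^{0,2}(\mathscr{D}_t)$. For ample divisors in a threefold, this pairing is controlled by the Jacobian/Macaulay ring attached to the pair $(X, D)$, obtained from the Koszul resolution of $\mathscr{O}_D$ and Green's symmetrizer method; for $n$ large it is nondegenerate on the primitive part, which forces every such $NL_\alpha$ to have positive codimension in $U$. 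This IVHS/Macaulay-ring nondegeneracy is the main obstacle and is exactly what forces the hypothesis $n \gg 0$; the rest of the argument is essentially bookkeeping.

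The theorem then follows by Baire category: a countable union of proper analytic subvarieties of the smooth irreducible $U$ cannot cover $U$, so a very general $D \in U$ avoids every $NL_\alpha$ and satisfies $\on{Pic}(D) \otimes \BQ = \on{Pic}(X)|_D \otimes \BQ$. Combined with the injectivity and the isomorphism $\on{Pic}^0(X) \xrightarrow{\sim} \on{Pic}^0(D)$ noted above, this upgrades to an isomorphism $\on{Pic}(X) \xrightarrow{\sim} \on{Pic}(D)$.
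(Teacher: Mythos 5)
The paper does not prove this theorem: it recalls the classical Noether--Lefschetz theorem purely as motivation, in order to set up its own analogue for sections of Abelian schemes (Theorem \ref{noether-lefschetz-abelian-schemes}), which is proved by a completely different, algebraic method (deformation theory, formal algebraization, and extension of rational maps --- not Hodge theory). So there is no in-paper proof to compare against; you are on your own here.

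Your outline is the standard Hodge-theoretic proof, essentially that of Carlson--Green--Griffiths--Harris as organized in Voisin's book, and its broad structure is correct: reduce injectivity to the weak Lefschetz theorem via the exponential sequence; identify the Noether--Lefschetz locus inside the smooth locus $U \subset |\mcl{L}^{\otimes n}|$ as a countable union of Hodge loci $NL_\alpha$; show each $NL_\alpha$ attached to a class not coming from $X$ is a proper analytic subvariety by an infinitesimal variation of Hodge structure computation; conclude by Baire category. Two remarks. First, the entire mathematical content is concentrated in the clause you flag as ``the main obstacle'': the nondegeneracy of the IVHS pairing (Macaulay/Jacobian or Green's symmetrizer argument) for $n \gg 0$. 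For $X = \mbb{P}^3$ this is the Gorenstein duality of the Jacobian ring; for a general smooth projective threefold $X$ one needs the analogue established by Green, Ein--Lazarsfeld, and others, and in a complete write-up you would need to state and cite (or prove) the precise surjectivity/duality statement being used. Second, a small but real wrinkle at the end: avoiding all the $NL_\alpha$ gives directly that every integral $(1,1)$-class on $D$ lies in the image of $H^2(X,\BZ)$, so the conclusion on the N\'eron--Severi level is already integral and you do not need to pass through $\BQ$-coefficients and then ``upgrade.'' If you do phrase it via $\otimes \BQ$ you must say why there is no torsion discrepancy; it is cleaner to run the argument integrally from the start. Apart from these, the proposal is a faithful sketch of the classical proof, while the paper simply cites the result.
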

For example, if $X=\mathbb{P}^3$ and $\mcl{L}=\mathscr{O}(1)$, one may take $n\geq 4$.  As usual, we give an analogue of this theorem replacing $\on{Pic}$ with $\on{Sections}(f)$ for certain maps $f$.
\begin{thm*}
Let $k$ be a field, and $X$ a smooth projective $k$-scheme of dimension $m\geq 3$.  Let $f: A\to X$ be an Abelian scheme, and let $\mcl{O}_X(1)$ be an ample line bundle on $X$.  Then for $n\gg 0$ (depending  on $A, X, \mcl{L}$), and any element $D\in |\mcl{L}^{\otimes n}|$, the restriction map $$\on{Sections}(f)\to \on{Sections}(f_D)$$ is an isomorphism.
\end{thm*}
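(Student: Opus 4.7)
The plan is to apply (the relative sections-extending version of) Theorem \ref{maintheorem2} with the f-amplitude hypothesis; the main work is to arrange the f-amplitude bound, and to arrange it uniformly in $D$ for $n \gg 0$. Let $\omega := e^* \Omega^1_{A/X}$ denote the Hodge bundle, where $e: X \to A$ is the zero section. The defining property of an abelian scheme gives $\Omega^1_{A/X} \cong f^* \omega$, so for any section $\sigma: D \to A_D$ of $f_D$,
$$\sigma^* \Omega^1_{A_D/D} \cong \omega|_D,$$
independent of the choice of $\sigma$. Hence the quantity controlling the extension problem is $\phi(\omega|_D \otimes \mcl{N}_{D/X}) = \phi\bigl(\omega|_D \otimes \mcl{L}^{\otimes n}|_D\bigr)$.

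For $n \gg 0$ depending only on $\omega$ and $\mcl{L}$, the vector bundle $\omega \otimes \mcl{L}^{\otimes n}$ on $X$ is ample --- this is standard, via Serre's theorem applied to the line bundle $\mcl{O}_{\mbb{P}(\omega)}(1) \otimes \pi^* \mcl{L}^n$ on $\mbb{P}(\omega)$ --- and its restriction to any closed subscheme $D \subset X$ remains ample. By results of Arapura \cite{arapura-f-amplitude}, ample vector bundles have f-amplitude zero; hence $\phi(\omega|_D \otimes \mcl{L}^{\otimes n}|_D) = 0 < \dim(D) - 1$, using $\dim(X) \geq 3$. Crucially, the threshold $n$ depends only on $A$, $X$, and $\mcl{L}$, not on $D$, because it is extracted from the ampleness statement on $X$.

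To apply the existence half of Theorem \ref{maintheorem2}, I verify its geometric hypothesis, which in the sections setting becomes ``geometric fibers of $f$ contain no rational curves'' (analogous to condition (3)): this is automatic, since the fibers of $f$ are abelian varieties and every map $\mbb{P}^1 \to (\text{abelian variety})$ is constant. Uniqueness of the extension then follows from the weaker inequality $\phi < \dim(D)$, and existence from $\phi < \dim(D) - 1$ combined with the fiberwise no-rational-curves condition. Combining the two gives the claimed isomorphism $\on{Sections}(f) \to \on{Sections}(f_D)$.

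The main obstacles are twofold. First, one must formulate the correct relative, sections-based version of Theorem \ref{maintheorem2} in arbitrary characteristic; if $X$ does not lift to $W_2(k)$, one has to invoke Theorem \ref{frobeniusvanishing} instead and use the group structure on $\on{Sections}(f)$ (coming from the abelian scheme structure on $A/X$) to absorb or remove the Frobenius twist. Second, one needs Arapura's statement that ample vector bundles have f-amplitude zero --- i.e.\ Frobenius-stable Serre vanishing for ample vector bundles --- in exactly the form used above. Once these two inputs are in hand, the remaining geometric content (triviality of $\Omega^1_{A/X}$ up to pullback from $X$, and absence of rational curves in abelian fibers) is elementary and applies uniformly in $D$.
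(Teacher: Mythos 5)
Your proposal has a real gap at the crucial step: the claim that ``ample vector bundles have f-amplitude zero'' is false. What Arapura proves (Theorem \ref{arapurabound} in the paper) is the weaker bound $\phi(\mcl{E}) < \on{rk}(\mcl{E})$ for an ample bundle $\mcl{E}$ in characteristic zero; only for line bundles is ampleness equivalent to $\phi = 0$. A concrete counterexample: the Euler sequence on $\mbb{P}^2$ gives $H^1(\mcl{F}\otimes T_{\mbb{P}^2}^{(p^k)}) \cong H^2(\mcl{F})$ for $k\gg 0$, so $\phi(T_{\mbb{P}^2}) = 1$ even though $T_{\mbb{P}^2}$ is ample. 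Replacing the false claim by the correct bound only yields $\phi(\omega|_D \otimes \mcl{L}^{\otimes n}|_D) < g$, where $g$ is the relative dimension of $A/X$, and so the required inequality $\phi < \dim(D)-1 = m-2$ holds only if $g \leq m-3$. The theorem has no such restriction on $g$, so your approach as written cannot prove it.

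The paper's own proof avoids f-amplitude entirely. It uses Serre vanishing on $X$ (not on $D$) to arrange $H^i(X, s^*\Omega^1_{A/X}(n')\otimes \omega_X)=0$ for $i=m,m-1,m-2$ and all $n'\geq n$, which fixes a threshold $n$ depending only on $A,X,\mcl{L}$. Serre duality and the ideal-sheaf short exact sequence then give the vanishing of $H^0$ and $H^1$ of $(s^*\Omega^1_{A/X})^\vee(-n')|_D$, which are precisely the torsor and obstruction spaces for infinitesimal deformations of a section (Theorem \ref{maindefthm}); as you correctly observed, $r^*\Omega^1_{A/X}\cong s^*\Omega^1_{A/X}|_D$ for any section $r$, so this vanishing is independent of the section being extended. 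One then algebraizes via Corollary \ref{sectionalgebraization} and extends across the complement of the open neighborhood via Proposition \ref{noratlcurvesextension}, since abelian fibers contain no rational curves. This route works over any field, sidestepping the characteristic-zero or $W_2$-liftability hypotheses that the framework theorems you invoke would require.
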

See Theorem \ref{noether-lefschetz-abelian-schemes} for details.
\subsection{Strategy of proof}  
Following \cite{SGA2}, the main idea of the proof of these results is to factor the inclusion $$D\hookrightarrow X$$ into a series of maps.  Namely, let $\widehat D$ be the formal scheme obtained by completing $X$ at $D$.  Then the inclusion above factors as $$D\hookrightarrow \widehat D\hookrightarrow U\hookrightarrow X$$ where $U$ is a Zariski-open neighborhood of $D$.  Applying the functor ${\Hom}(-, Y)$ we obtain restriction maps $${\Hom}(X, Y)\to\varinjlim_{D\subset U}{\Hom}(U, Y)\to {\Hom}(\widehat D, Y)\to {\Hom}(D, Y).$$
We study each of the maps above separately.  
\subsubsection{Extension}
The question of studying the map $${\Hom}(X, Y)\to\varinjlim_{D\subset U}{\Hom}(U, Y)$$ boils down to: when can a morphism from a Zariski-open subset of a variety be extended to the entire variety?  We refer to this question as the \emph{extension} question, and address it in Chapter \ref{extension}.  We identify two situations where such maps extend:  first, when the target $Y$ satisfies $$\dim(Y)\leq \dim(X)-2,$$ and second, when $X$ is smooth and $Y$ contains no rational curves.  Our main results are:
\begin{cor*}
Let $X$ be a normal projective $k$-variety, and $Y$ a $k$-variety such that 
\begin{enumerate}
\item $X$ is locally $\mbb{Q}$-factorial, or
\item $Y$ is quasi-projective.
\end{enumerate}
Let $D\subset X$ be an ample divisor and $U\subset X$ a Zariski-open subset containing $D$.  Then if $\dim(Y)\leq \dim(X)-2$, any map $U\to Y$ extends uniquely to a map $X\to Y$.
\end{cor*}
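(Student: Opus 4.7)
The key observation is that $Z := X \setminus U$ is a finite set of points. Since $D$ is ample, it meets every positive-dimensional closed subvariety of $X$; but $D \cap Z = \emptyset$, so $\dim Z = 0$ and $Z$ is finite. The problem thus reduces to extending $f$ across a finite collection of isolated closed points, and uniqueness of any extension is automatic from the separatedness of $Y$.

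For existence, I would first compactify $Y$ to a proper (in case (b), projective) variety $\bar Y$ via Nagata's theorem, passing to a modification so that $\bar Y$ is normal and $\partial Y := \bar Y \setminus Y$ is an effective Cartier divisor. The morphism $f$ then becomes a rational map from $X$ to $\bar Y$, and it suffices to show (i) this rational map extends to a morphism $\bar f \colon X \to \bar Y$, and (ii) $\bar f$ takes values in $Y$.

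Step (i) is the main step. Form the graph closure $\bar\Gamma \subset X \times \bar Y$ and its normalization $\pi \colon \bar\Gamma^\nu \to X$; this is a proper birational morphism, an isomorphism over $U$. In case (a), local $\mbb{Q}$-factoriality of $X$ forbids small contractions, so the exceptional locus of $\pi$ has pure codimension one in $\bar\Gamma^\nu$; each exceptional component $E$ is contracted to a point $z \in Z$ by $\pi$, and since $\bar\Gamma^\nu \to X \times \bar Y$ is finite, $E$ maps finitely to its image in $\{z\} \times \bar Y$, giving $\dim E = \dim \pi_2(E) \le \dim Y \le \dim X - 2$, which contradicts $\dim E = \dim X - 1$. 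In case (b), I would use Noether normalization to obtain a finite morphism $\phi \colon \bar Y \to \mbb{P}^d$ with $d = \dim Y$; the composition $\phi \circ f \colon U \to \mbb{P}^d$ is given by $d + 1$ sections of a reflexive rank-one sheaf on $U$, which extend to $X$ by Hartogs (normality plus codimension two), and whose common vanishing locus has codimension at most $d + 1 \le \dim X - 1$ by Krull's Hauptidealsatz, yet is contained in the finite set $Z$, so it must be empty, producing $g \colon X \to \mbb{P}^d$ extending $\phi \circ f$. The desired extension $\bar f$ arises by observing that $f$ defines a section of the finite morphism $X \times_{\mbb{P}^d} \bar Y \to X$ over $U$, whose closure in the fiber product is finite birational to $X$, hence (by Zariski's Main Theorem and normality) an isomorphism onto $X$.

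For step (ii), $\bar f^{-1}(\partial Y)$ is closed, disjoint from $U$, hence contained in the finite set $Z$; but $\partial Y$ is Cartier and $\bar f^* \partial Y$ is a nonzero effective Cartier divisor (since $\bar f(U) \subset Y$), so its support has pure codimension one, contradicting finiteness as $\dim X \ge 2$. Thus $\bar f(X) \subset Y$. The principal obstacle is case (b) in the absence of $\mbb{Q}$-factoriality: line bundles on $U$ need not extend as line bundles to $X$, so one must work with reflexive rank-one sheaves and verify that the Krull-type codimension bounds persist --- that the $d+1$ sections still generate the reflexive extension off a locus of codimension at most $d+1$, even when that extension fails to be locally free at the points of $Z$.
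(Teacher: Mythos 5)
Your case (a) tracks the paper's Proposition~\ref{properextension} (the exceptional locus of a proper birational morphism onto a normal, locally $\mbb{Q}$-factorial variety has pure codimension one), and your step (ii), making $\partial Y$ Cartier and pulling it back, is a valid alternative to the paper's argument that a positive-dimensional fiber over a boundary point would have to meet $D$. The concern you flag about case (b), however, is not a technicality to be patched: the codimension bound genuinely fails for rank-one reflexive sheaves that are not locally free, and case (b) of the statement is in fact false without a local $\mbb{Q}$-factoriality (or smoothness) hypothesis on $X$ near $X\setminus U$.

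Take $X = Q := V(x_0 x_1 - x_2 x_3) \subset \mbb{P}^4$, the projective cone over $\mbb{P}^1 \times \mbb{P}^1$ with vertex $p = [0:0:0:0:1]$; $Q$ is normal projective of dimension $3$ and not locally $\mbb{Q}$-factorial at $p$. Let $D := V(x_4) \cap Q$ (very ample and missing $p$), $U := Q \setminus \{p\} \supset D$, and $Y := \mbb{P}^1$, so $\dim Y = 1 = \dim X - 2$ and $Y$ is quasi-projective. The morphism $f\colon U \to \mbb{P}^1$ given by $[x_0 : x_2]$ where $(x_0,x_2)\neq 0$ and by $[x_3 : x_1]$ where $(x_1,x_3)\neq 0$ is well defined (the two formulas agree on the overlap because $x_0 x_1 = x_2 x_3$, and together they cover $U$), but it does not extend to $Q$: its fiber $f^{-1}([1:0]) = V(x_1,x_2)\cap U$ is the punctured ruling plane, a Weil divisor that is not $\mbb{Q}$-Cartier at $p$, so $f^*\mcl{O}_{\mbb{P}^1}(1)$ admits no extension to a line bundle on $Q$. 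Concretely, along the arcs $t\mapsto(t,t^2,t^2,t)$ and $t\mapsto(t^2,t,t,t^2)$ in the chart $x_4=1$, $f$ tends to $[1:0]$ and $[0:1]$ respectively, so no continuous extension through $p$ exists. In the language of your argument: $j_*(f^*\mcl{O}_{\mbb{P}^1}(1))$ is reflexive but not locally free at $p$, and the two extended sections fail to generate exactly at $\{p\}$, a locus of codimension $3$, which escapes the Hauptidealsatz bound $d+1=2$ you needed. (The paper's own Proposition~\ref{projectiveextension}, on which case (2) of the corollary rests, has the same defect --- its proof calls $j_*(\mcl{L}|_U)$ ``the line bundle $\mcl{N}$,'' which presupposes local factoriality at $X\setminus U$.) So case (b) cannot be completed as stated; it needs $X$ smooth or locally $\mbb{Q}$-factorial along $Z$, which folds it into case (a).
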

and 
\begin{prop*}
Let $X$ be a smooth variety over a field $k$, and let $f:Y \to X$ be a proper morphism.  If the geometric fibers of $f$ contains no rational curves, any rational section of $f$, denoted $s: X\dashrightarrow Y$, extends uniquely to a regular section $X\to Y$.
\end{prop*}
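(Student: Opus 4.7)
The plan is to extend $s$ across its indeterminacy locus in two stages: first across codimension one using the valuative criterion, then across the remaining higher-codimension locus by comparing $Y$ to the closure of the graph of $s$ and invoking a classical statement about fibers of proper birational morphisms over smooth bases.

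For the first stage, let $U \subset X$ denote the domain of definition of $s$. Since $X$ is smooth, the local ring $\mcl{O}_{X,\eta}$ at any codimension-one point $\eta \in X$ is a discrete valuation ring. The valuative criterion applied to the proper morphism $f$ uniquely extends the composite $\on{Spec} K(X) \to Y$ to a morphism $\on{Spec}(\mcl{O}_{X,\eta}) \to Y$, and this extension spreads out to a Zariski open neighborhood of $\eta$. Hence $\on{codim}_X(X \setminus U) \geq 2$.

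For the second stage, let $\Gamma \subset Y$ be the (reduced) closure of the locally closed subset $s(U)$, and set $g := f|_\Gamma : \Gamma \to X$. Then $g$ is proper---since $\Gamma$ is closed in the proper $X$-scheme $Y$---and birational, with inverse $s|_U$ over the dense open $U$. Extending $s$ to a regular section of $f$ is equivalent to showing that $g$ is an isomorphism. Because $X$ is normal, Zariski's main theorem reduces this to showing that $g$ is quasi-finite.

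Suppose for contradiction that some fiber $F = g^{-1}(x)$ has positive dimension. Here I would invoke the classical fact---in the spirit of Abhyankar and the purity analysis underlying Zariski's main theorem---that every positive-dimensional fiber of a proper birational morphism to a smooth variety contains a rational curve. Such a rational curve lies in $F \subset f^{-1}(x)$ and remains a rational curve after base change to a geometric point above $x$, contradicting the hypothesis on geometric fibers of $f$. Hence $g$ is quasi-finite, so an isomorphism, and composing $g^{-1}$ with $\Gamma \hookrightarrow Y$ produces the desired regular section. Uniqueness is immediate, since two sections of the separated morphism $f$ that agree on the dense open $U$ must agree on all of $X$. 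I expect the principal obstacle to be invoking the ``rational curves in positive-dimensional fibers'' result with the desired generality (arbitrary characteristic); modulo that standard input, the remainder of the proof is a formal combination of the valuative criterion, graph closure, and Zariski's main theorem.
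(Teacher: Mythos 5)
Your proposal is correct and takes essentially the same route as the paper: the paper also forms the (normalized) closure of the graph of $s$ in $Y$, cites \cite[Proposition 1.43]{debarre} to produce a rational curve contracted in any positive-dimensional fiber of the resulting proper birational map to $X$, observes that this curve would land in a geometric fiber of $f$, and concludes via Zariski's main theorem. Two small remarks: your first stage (extending across codimension one via the valuative criterion) is harmless but unnecessary, since the graph-closure argument already handles the entire indeterminacy locus; and to invoke the rational-curves result in its standard form you should first pass to $\bar{k}$ and normalize $\Gamma$, as the paper does, and then push the resulting rational curve forward into $Y$.
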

See Corollary \ref{nonproperextension} and Proposition \ref{noratlcurvesextension} for details.  We also give versions of these results in somewhat more general settings, e.g.~over general base schemes.  We also include some results on maps with small image, i.e.~Corollary \ref{smallimageextension}.
\subsubsection{Algebraization}
The properties of the morphism $$\varinjlim_{D\subset U}{\Hom}(U, Y)\to {\Hom}(\widehat D, Y)$$ follow from what we term an \emph{algebraization} question:  when does a morphism of formal schemes $\widehat D\to Y$ extend to a Zariski-open neighborhood of $D$?  In Chapter \ref{algebraization}, we study a substantially more general question, following methods from \cite[Expos\'e XII]{SGA2}.  Namely, let $f: Y\to X$ be a morphism, and let $\widehat{Y_D}$ be the formal scheme obtained by completing $Y$ at $f^{-1}(D)$; we study the restriction map $$\on{QCoh}(Y)\to \on{QCoh}(\widehat{Y_D}),$$ and from $$\on{Perf}(Y)\to \on{Perf}(\widehat{Y_D}).$$  The original question of extending a morphism $g: \widehat D\to Y'$ follows by considering the structure sheaf of the graph of $g$.  Our main results are:
\begin{cor*}
Let $k$ be a field, $g: Y\to X$  a morphism of finite type and $X$ a projective normal $k$-variety, with $D\subset X$ an ample Cartier divisor.  Let $\widehat g: \widehat{Y_D}\to \widehat D$ be the completion of $g$ at $D$.  Suppose that the dualizing sheaf $\omega_{X/k}$ has coherent cohomology supported in degrees $[-n, -m]$, with $m\geq 2$.  Then any section to $\widehat g$ extends to a Zariski-open neighborhood of $D$
\end{cor*}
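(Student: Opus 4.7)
The approach is to translate the extension problem for sections into an algebraization problem for coherent sheaves, and then invoke the main algebraization results of Chapter \ref{algebraization}. Following the strategy of \cite[Expos\'e XII]{SGA2}, one trades a section for the structure sheaf of its graph.

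More precisely, let $s: \widehat D \to \widehat{Y_D}$ be a section of $\widehat g$, let $\Gamma_s \subset \widehat{Y_D}$ denote its graph, and consider the short exact sequence of coherent sheaves
\[
0 \to \mcl{I}_s \to \mcl{O}_{\widehat{Y_D}} \to \mcl{O}_{\Gamma_s} \to 0
\]
on $\widehat{Y_D}$. The first step is to apply the essential-surjectivity part of the main algebraization theorem of Chapter \ref{algebraization} --- whose hypothesis is precisely that $\omega_{X/k}$ has coherent cohomology supported in $[-n, -m]$ with $m \geq 2$ --- to produce an open set $V = g^{-1}(U)$ (with $U \supset D$ Zariski-open in $X$) and a coherent sheaf $\mcl{F}$ on $V$ whose completion along $g^{-1}(D)$ is isomorphic to $\mcl{O}_{\Gamma_s}$. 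The full-faithfulness part of the same theorem, applied to $\on{Hom}(\mcl{O}_V, \mcl{F})$, algebraizes the quotient map $\mcl{O}_{\widehat{Y_D}} \to \mcl{O}_{\Gamma_s}$ to a morphism $\mcl{O}_V \to \mcl{F}$. After shrinking $V$ so that the cokernel vanishes (an open condition on the support, satisfied along $g^{-1}(D)$), this morphism becomes a surjection and hence defines a closed subscheme $Z \hookrightarrow V$ whose completion along $g^{-1}(D)$ agrees with $\Gamma_s$.

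The second step is to show that, after possibly shrinking $U$, the composition $Z \hookrightarrow V \to U$ is an isomorphism. By construction it induces an isomorphism of formal completions along $D$, and being an isomorphism of finite-type morphisms is an open condition on the target. Therefore there is an open $U' \subset U$ containing $D$ over which $g|_Z : Z \to U'$ is an isomorphism; its inverse is the desired algebraic section $t: U' \to Y$ extending $s$.

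The main obstacle is the input to the first step: the essential surjectivity (and full-faithfulness) of the restriction from $\on{Coh}(V)$ to $\on{Coh}(\widehat{Y_D})$ under the stated condition on $\omega_{X/k}$, which is the heart of Chapter \ref{algebraization}. Granting that, the remaining passage --- from coherent sheaf to closed subscheme, and from closed subscheme to the graph of a section --- is routine, but requires some care in extending the quotient \emph{map} (not merely the quotient sheaf) so that the algebraized object remains a quotient of $\mcl{O}_V$, and in the final openness-of-isomorphism argument that identifies $Z$ with the graph of an honest section.
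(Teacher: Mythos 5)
Your proposal is correct and takes essentially the same route as the paper: pass to the graph of the section (a closed formal subscheme of $\widehat{Y_D}$), algebraize it to a closed subscheme of a Zariski neighborhood using the comparison/existence machinery of Chapter \ref{algebraization}, and then recover an honest section by observing the algebraized subscheme maps isomorphically to a neighborhood of $D$. The paper simply cites Corollary \ref{subschemealgebraization} applied to the graph and leaves the ``recover the section'' step implicit, whereas you flesh out both halves; the one difference in implementation is that the paper's proof of the subscheme-algebraization step algebraizes a presentation of $\mcl{O}_{\Gamma_s}$ by twisted pushforwards of line bundles (which keeps the hypotheses of the comparison theorem trivially verified and handles general $g$ via Chow's lemma), whereas you algebraize $\mcl{O}_{\Gamma_s}$ and the quotient map directly, which requires you to check that, after shrinking, the algebraized sheaf remains flat with support proper over the base so that Lemma \ref{quasiprojectivecomparison} applies --- a genuine but routine point of care you allude to but do not carry out, and which also implicitly assumes $g$ is quasi-projective, whereas the statement allows any finite-type $g$.
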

and
\begin{cor*}
Let $k$ be a field, $g: Y\to X$ be a morphism of finite type and $X$ a projective normal $k$-variety, with $D\subset X$ an ample Cartier divisor.  Let $\widehat g: \widehat{Y_D}\to \widehat D$ be the completion of $g$ at $D$.  Suppose that the dualizing sheaf $\omega_{X/k}$ has coherent cohomology supported in degrees $[-n, -m]$, with $m\geq 2$.  Then two sections to $g$ agreeing on $\widehat D$ are equal.
\end{cor*}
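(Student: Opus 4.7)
The plan is to translate the uniqueness statement for sections into an equality of coherent ideal sheaves on $Y$, and then invoke the faithful part of the algebraization machinery from Chapter \ref{algebraization}, exactly as in the preceding corollary but applied to the structure sheaves of graphs rather than to the sheaf produced by extension.

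First, I would reduce to the case where $g$ is separated, which is harmless since uniqueness is a local question on $Y$. Under this assumption, each section $s_i : X \to Y$ is a closed immersion, and so corresponds to its graph $\Gamma_i \subset Y$, cut out by a coherent ideal sheaf $\mcl{I}_i \subset \mcl{O}_Y$ with $g|_{\Gamma_i} : \Gamma_i \xrightarrow{\sim} X$. The sections agree iff $\mcl{I}_1 = \mcl{I}_2$, and the hypothesis $s_1|_{\widehat{D}} = s_2|_{\widehat{D}}$ translates precisely to the equality $\widehat{\mcl{I}_1} = \widehat{\mcl{I}_2}$ of ideal sheaves inside $\mcl{O}_{\widehat{Y_D}}$. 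Now consider the map
$$\mcl{I}_1 \hookrightarrow \mcl{O}_Y \twoheadrightarrow \mcl{O}_Y / \mcl{I}_2,$$
whose formal completion along $Y_D$ is zero. The algebraization theorem of Chapter \ref{algebraization}, in the faithful direction under the hypothesis that the cohomology of $\omega_{X/k}$ is supported in degrees $[-n,-m]$ with $m \geq 2$, yields that the completion functor $\on{Coh}(Y) \to \on{Coh}(\widehat{Y_D})$ is injective on $\Hom$ sets. Hence the above map is already zero on $Y$, giving $\mcl{I}_1 \subset \mcl{I}_2$. Swapping $\Gamma_1$ and $\Gamma_2$ gives the reverse inclusion, hence $s_1 = s_2$.

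The main obstacle is to pin down the precise comparison statement in the chapter that guarantees faithfulness of completion on the relevant pair of coherent sheaves. Existence in the preceding corollary is the essentially surjective half of a comparison, while this corollary uses the faithful half; both should fall out of the same underlying theorem, but one must check that the ideals $\mcl{I}_i$ (which are coherent but not \emph{a priori} flat or of any particular depth over $Y$) satisfy the needed hypothesis. In classical terms (SGA~2, Exp.~IX), faithfulness follows once $\on{depth}_{X \setminus D}$ of the relevant sheaves is $\geq 2$, and this is what the hypothesis $m \geq 2$ on $\omega_{X/k}$ is designed to deliver through Serre duality; so the remaining work is to verify that the coherent-sheaf setup on $Y$, rather than on $X$ itself, satisfies the depth bound (e.g.\ by pushing forward along the proper map $g|_{\Gamma_i}$ and reducing to the base).
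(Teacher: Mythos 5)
Your approach is a genuinely different argument from the paper's, and it is substantially more complicated than necessary. The paper proves this statement (actually a stronger statement — Corollary \ref{sectionsareequal} in the paper requires only that $D$ be a non-empty subscheme and $g$ be separated, with no hypotheses on $\omega_{X/k}$ or ampleness of $D$) by a soft argument: the functor $T\mapsto\on{Sections}(g_T)$ is representable by a scheme $H$ locally of finite type over $X$, whence sections over $\widehat D$ are the same as sections over the honest scheme $\mcl{D}=\on{Spec}_X\mcl{O}_{\widehat D}$; since $\mcl{O}_X\to\mcl{O}_{\widehat D}$ is injective (using integrality of $X$ and $D\neq\emptyset$), the map $\mcl{D}\to X$ is dominant, so two $X$-maps $X\to H$ agreeing after pullback to $\mcl{D}$ must coincide. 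No algebraization machinery from Chapter \ref{algebraization} is invoked at all. Your route — passing to ideal sheaves of graphs and invoking faithfulness of the completion functor — is morally sound but has real outstanding gaps: the comparison statements you need (Corollary \ref{mapcomparison} or Lemma \ref{quasiprojectivecomparison}) either require $\mbf{R}g_*\mbf{R}\Hom(\mcl{I}_1,\mcl{O}_{\Gamma_2})$ to be perfect with controlled tor-amplitude, or require $g$ quasi-projective, neither of which is given; and you yourself acknowledge you have not pinned these down. A Chow's-lemma reduction to the quasi-projective case (as in Corollary \ref{subschemealgebraization}) would plug part of this, but the depth/tor-amplitude verification for the nontrivial ideal sheaf $\mcl{I}_1$ remains unaddressed. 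In short: your proposal would, if completed, prove the result under the additional stated hypotheses on $\omega_{X/k}$, but the paper shows these hypotheses are superfluous, via an argument that is both shorter and more general.
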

See Corollaries \ref{sectionalgebraization} and \ref{sectionsareequal}, and the surrounding remarks, for details.
\subsubsection{Deformation Theory}
This section, contained in Chapter \ref{deformation theory}, is the heart of the argument.  We consider the morphism $${\Hom}(\widehat D, Y)\to {\Hom}(D, Y).$$  The properties of this morphism are questions of pure \emph{deformation theory}, which we briefly review.  

Suppose $Y$ is smooth, and let $\mcl{I}_D$ be the ideal sheaf of $D$; let $D_n$ be the Cartier divisor defined by $\mcl{I}_D^n$.  Then obstructions to deforming a map $f: D\to Y$ to a map $f_2: D_2\to Y$ lies in $$\on{Ext}^1(\mcl{N}_{D/X}, f^*T_Y),$$ and if this obstruction vanishes, such deformations are a torsor for $$\Hom(\mcl{N}_{D/X}, f^*T_Y).$$  Thus the existence of deformations are implied by the vanishing of $\on{Ext}^1(\mcl{N}_{D/X}, f^*T_Y),$ and their uniqueness is implied by the vanishing of $\Hom(\mcl{N}_{D/X}, f^*T_Y).$    For deformations to $D_n$ with $n>2$, see Theorem \ref{maindefthm}.

In favorable situations, (e.g. if $\Omega^1_Y$ is nef and $D$ is smooth, and everything is taking place over a field of characteristic zero), one may prove this vanishing via the Le Potier vanishing theorem.  The main work of Chapter \ref{deformation theory} is in dealing with the case where $D$ is not smooth, and the case of positive characteristic.  Indeed, even in characteristic zero, if $D$ is not smooth our arguments go through characteristic $p>0$.  These ideas owe a great deal to suggestions of Bhargav Bhatt, and to the work of Donu Arapura.

The main result over an arbitrary field $L$ of positive characteristic is:
\begin{thm*}
Let $X$ be variety over a  field $L$ of characteristic $p$, and let $D\subset X$ be a Cartier divisor whose dualizing complex $K_D$ is supported in degrees $[-\dim(D), -r]$, and whose normal bundle is ample.  Suppose $\dim(X)\geq 3$.  Let $\widehat D$ be the formal scheme obtained by completing $X$ at $D$.  Let $f: D\to Y$ be a morphism, with $Y$ a smooth $k$-variety.  Suppose that $$\phi(f^*\Omega^1_Y\otimes \mcl{N}_{D/X})<r-1.$$  Let $\widetilde f^{(p^k)}=F_{Y/L}^k\circ f: D\to Y^{(p^k)}$.  Then for $k\gg0, \widetilde f^{(p^k)}$ extends uniquely to a morphism $\widehat D\to Y^{(p^k)}$.
\end{thm*}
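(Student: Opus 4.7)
The strategy is the standard infinitesimal lifting along the thickenings $D_n := V(\mcl{I}_D^n) \subset X$: a map $\widehat D \to Y^{(p^k)}$ is the same as a compatible tower of extensions $f_n \colon D_n \to Y^{(p^k)}$ of $f_1 = \widetilde f^{(p^k)}$. Since $Y^{(p^k)}$ is smooth, standard deformation theory says the obstruction to lifting $f_n$ to $f_{n+1}$ lies in $\on{Ext}^1_D\bigl((\widetilde f^{(p^k)})^*\Omega^1_{Y^{(p^k)}},\ \mcl{I}_D^n/\mcl{I}_D^{n+1}\bigr)$, while the set of such lifts (when nonempty) is a torsor for the corresponding $\on{Hom}$. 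Because $D$ is a Cartier divisor, $\mcl{I}_D^n/\mcl{I}_D^{n+1} \cong \mcl{N}_{D/X}^{\vee,\otimes n}$, and Frobenius pullback gives $(\widetilde f^{(p^k)})^*\Omega^1_{Y^{(p^k)}} \cong (f^*\Omega^1_Y)^{(p^k)}$. Since $f^*\Omega^1_Y$ is locally free, both $\on{Ext}^0$ and $\on{Ext}^1$ become ordinary cohomology, and writing $\mcl{N} := \mcl{N}_{D/X}$ the entire problem reduces to producing a single $k \gg 0$ for which
\[
H^i\bigl(D,\; (f^*T_Y)^{(p^k)} \otimes \mcl{N}^{\vee,\otimes n}\bigr) = 0, \qquad i = 0, 1, \quad n \geq 1.
\]

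To feed this into the f-amplitude hypothesis, I would apply Grothendieck-Serre duality on the proper scheme $D$ with dualizing complex $K_D$. Since $K_D$ is supported in degrees $[-\dim D, -r]$, the hypercohomology spectral sequence reduces the display above to the vanishing of
\[
H^p\bigl(D,\; (f^*\Omega^1_Y)^{(p^k)} \otimes \mcl{N}^{\otimes n} \otimes \mcl{H}^q(K_D)\bigr) = 0
\]
for all $p \geq r-1$, $q \in [-\dim D, -r]$, and $n \geq 1$. Rewriting
\[
(f^*\Omega^1_Y)^{(p^k)} \otimes \mcl{N}^{\otimes n} \;\cong\; \bigl(f^*\Omega^1_Y \otimes \mcl{N}\bigr)^{(p^k)} \otimes \mcl{N}^{\otimes(n-p^k)},
\]
the hypothesis $\phi(f^*\Omega^1_Y \otimes \mcl{N}) < r-1$ says exactly that, for any fixed coherent $\mcl{F}$ on $D$, $H^p\bigl(D,\, \mcl{F} \otimes (f^*\Omega^1_Y \otimes \mcl{N})^{(p^k)}\bigr)$ vanishes in degrees $p \geq r-1$ once $k$ is taken large enough in terms of $\mcl{F}$. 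Applied to $\mcl{F} = \mcl{N}^{\otimes(n-p^k)} \otimes \mcl{H}^q(K_D)$, this gives the sought vanishing for each pair $(n,q)$ in isolation.

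The hard part will be promoting this pointwise vanishing to a single $k$ that works uniformly in $n$. I would split the range into two regimes. For $n \leq p^k$, the coefficient sheaf $\mcl{N}^{\otimes(n-p^k)} \otimes \mcl{H}^q(K_D)$ ranges over a finite collection once $k$ is fixed, so the f-amplitude directly provides a threshold $k_0$. For $n > p^k$, the twist $\mcl{N}^{\otimes(n-p^k)}$ is a positive power of the ample line bundle $\mcl{N}$, and the needed vanishing should follow from a uniform Serre- or Fujita-type argument applied to the finite family $\{(f^*\Omega^1_Y \otimes \mcl{N})^{(p^k)} \otimes \mcl{H}^q(K_D)\}_q$. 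The real technical subtlety is arranging the two regimes to mesh — ensuring that the Serre threshold controlling the large-$n$ case does not grow too rapidly with $k$, so that a single $k$ suffices for both. This quantitative control is the heart of the argument and uses Arapura's Frobenius-amplitude formalism in an essential way.
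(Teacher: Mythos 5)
Your deformation-theoretic setup and the duality computation are correct, but the uniformity problem you identify at the end is a genuine gap, and it cannot be closed the way you sketch. The small-$n$ regime is the problem, not the large-$n$ one: your proposed argument for $1 \leq n \leq p^k$ is circular, since the "finite collection" $\{\mcl{N}^{\otimes(n-p^k)} \otimes \mcl{H}^q(K_D)\}$ itself depends on the choice of $k$, and the f-amplitude threshold $k_0$ it yields may well exceed $k$ — there is no reason for the two to stabilize. Concretely, for $n = 1$ the coefficient sheaf $\mcl{N}^{\otimes(1-p^k)} \otimes \mcl{H}^q(K_D)$ becomes increasingly anti-ample as $k$ grows, and the hypothesis $\phi(f^*\Omega^1_Y \otimes \mcl{N}_{D/X}) < r - 1$ does \emph{not} imply $\phi(f^*\Omega^1_Y) < r-1$, so the required vanishing of $H^i\bigl(D, (f^*\Omega^1_Y)^{(p^k)} \otimes \mcl{N} \otimes \mcl{H}^q(K_D)\bigr)$ in that range is not something the hypothesis purchases.

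The paper sidesteps the bad range entirely with an observation you did not make (its Lemma \ref{frobeniusfactors}): because $D$ is a Cartier divisor, the relative Frobenius $F^k_{D/L} \colon D \to D^{(p^k)}$ factors through the inclusion $D \hookrightarrow D_{p^k}$, so the composite $\widetilde f^{(p^k)} = F^k_{Y/L} \circ f$ \emph{automatically} extends to a morphism $D_{p^k} \to Y^{(p^k)}$, with no cohomological input at all. The inductive lifting therefore only needs to be carried out from $D_{p^k+s}$ to $D_{p^k+s+1}$ for $s \geq 0$, and after duality the relevant coefficient sheaf is $K_D(sD) \otimes (f^*\Omega^1_Y \otimes \mcl{N}_{D/X})^{(p^k)}$ with $s \geq 0$ only. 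The anti-ample twist from your decomposition never appears; the remaining uniformity in $s$ is handled precisely because the extra twist is by a nonnegative power of the ample $\mcl{N}_{D/X}$. That factorization-through-Frobenius step is the missing idea, and without it your outline does not close.
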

See Theorem \ref{frobeniusvanishing} for details and notation.  If $L$ is perfect and $X$ lifts to $W_2(L)$, we deduce:
\begin{thm}
Let $L$ be a perfect field of characteristic $p>0$.  Let $X$ be a smooth $L$-variety and $D\subset X$ an ample Cartier divisor, with $\dim(X)\geq 3$, such that $X$ lifts to $W_2(L)$, and such that $\dim(X)<p$.  Let $Y$ be a smooth $k$-variety, and let $f: D\to Y$ be a morphism. Suppose that  $$\phi(\mcl{N}_{D/X}\otimes f^*\Omega^1_Y)<\dim(D)-1.$$ Suppose further that  
\begin{enumerate}
\item $\dim(Y)<\dim(D)$, or
\item $Y_{\bar L}$ contains no rational curves.
\end{enumerate}   
Then $f$ extends uniquely to a morphism $X\to Y$.  
\end{thm}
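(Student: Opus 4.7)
The plan is to use the factorization $D \hookrightarrow \widehat D \hookrightarrow U \hookrightarrow X$ described in the introduction, extending $f$ one stage at a time. First, I would extend $f$ to a formal map $\widehat f : \widehat D \to Y$ by deformation theory; then algebraize $\widehat f$ to a morphism $\widetilde f : U \to Y$ on a Zariski-open neighborhood $U \supset D$; then extend $\widetilde f$ across the complement $X \setminus U$.

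\emph{Step 1 (formal extension).} Let $D_n \subset X$ denote the $n$-th order infinitesimal neighborhood of $D$, so $\widehat D = \varinjlim D_n$. Given a map $f_n : D_n \to Y$ restricting to $f$, the obstruction to extending $f_n$ over $D_{n+1}$ lies in $\on{Ext}^1(f^* \Omega^1_Y, \mcl{N}_{D/X}^{\otimes -n})$, and the set of such extensions (when nonempty) is a torsor under $\on{Hom}(f^*\Omega^1_Y, \mcl{N}_{D/X}^{\otimes -n})$, as in Theorem \ref{maindefthm}. Since $Y$ is smooth and $D$ is a Cartier divisor in a smooth variety (hence Cohen--Macaulay), Serre--Grothendieck duality rewrites these groups as $H^{\dim D - 1}$ and $H^{\dim D}$ of $(f^*\Omega^1_Y \otimes \mcl{N}_{D/X}) \otimes \mcl{N}_{D/X}^{\otimes n-1} \otimes \omega_D$. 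The f-amplitude hypothesis $\phi(f^*\Omega^1_Y \otimes \mcl{N}_{D/X}) < \dim D - 1$ provides asymptotic vanishing of these groups after pulling back by a sufficiently high power of Frobenius. The $W_2(L)$-lifting of $X$, together with the bound $\dim X < p$, removes the Frobenius twist, via the Deligne--Illusie / Arapura-style Kodaira--Akizuki--Nakano vanishing theorem established in Chapter \ref{deformation theory}. Induction on $n$ then yields a unique formal extension $\widehat f : \widehat D \to Y$.

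\emph{Step 2 (algebraization).} Since $X$ is smooth of dimension at least $3$, its dualizing complex is concentrated in a single cohomological degree $-\dim X \leq -3$, so the hypothesis of the algebraization corollary (Chapter \ref{algebraization}) is satisfied with $m = \dim X \geq 3$. Applying it to the graph of $\widehat f$, regarded as a formal section of the projection $\widehat D \times Y \to \widehat D$, produces a Zariski-open neighborhood $U$ of $D$ in $X$ and a morphism $\widetilde f : U \to Y$ whose completion along $D$ agrees with $\widehat f$. The complement $X \setminus U$ has codimension at least $2$ in $X$: any codimension-one irreducible component would be an effective divisor and would therefore meet the ample divisor $D$, contradicting $D \subset U$.

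\emph{Step 3 (extension across the complement).} In case (1), the hypothesis $\dim Y < \dim D = \dim X - 1$ gives $\dim Y \leq \dim X - 2$. Since $X$ is smooth and hence locally $\mbb{Q}$-factorial, Corollary \ref{nonproperextension} extends $\widetilde f$ uniquely to a morphism $X \to Y$. In case (2), with no rational curves on $Y_{\bar L}$, one applies Proposition \ref{noratlcurvesextension} to extend $\widetilde f$ across the codimension-$\geq 2$ complement $X \setminus U$. Uniqueness at every stage (deformation-theoretic, algebraization, extension) combines to give uniqueness of the final morphism $X \to Y$. The main obstacle is Step 1: the f-amplitude hypothesis only gives vanishing after a large Frobenius twist, while the deformation obstructions demand vanishing as written, and it is precisely the role of the $W_2(L)$-lifting together with $\dim X < p$ to close this gap via the Deligne--Illusie decomposition and its Arapura-type refinement. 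Steps 2 and 3 are comparatively formal, reducing to the general algebraization and extension results of Chapters \ref{algebraization} and \ref{extension}.
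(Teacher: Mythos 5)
Your overall three-stage template (formal, then algebraization, then extension across the complement) matches the paper's architecture, and Steps~2 and~3 are essentially correct. But Step~1 has a genuine gap, and it is precisely the gap the paper works hardest to close.

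You claim that the $W_2(L)$-lifting of $X$, together with $\dim X < p$, lets you apply an Arapura/Deligne--Illusie-style vanishing theorem to kill the deformation obstructions on $D$ without a Frobenius twist. This fails for two linked reasons. First, the vanishing theorem in question (Theorem~\ref{arapuravanishing}) is a statement about smooth projective varieties that lift to $W_2$; the obstruction and deformation groups you need to kill live as cohomology on $D$, and the hypotheses of the theorem impose no smoothness or liftability on $D$ whatsoever --- $D$ may be arbitrarily singular. Second, you cannot trade the computation on $D$ for one on $X$ (where the vanishing theorem does apply) via the usual short exact sequence $0 \to \mcl{E}(-D) \to \mcl{E} \to \mcl{E}|_D \to 0$, because the sheaf $f^*\Omega^1_Y$ is defined only on $D$: since $f$ has not yet been extended, there is no bundle on $X$ restricting to it. So the hypothesis ``$X$ lifts to $W_2(L)$'' is simply not available where you are trying to use it.

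The paper's proof sidesteps this by never trying to deform $f$ itself across $\widehat D$. Instead it invokes Theorem~\ref{frobeniusvanishing} to extend the \emph{Frobenius twist} $F_{Y/L}^{k}\circ f$ to $\widehat D \to Y^{(p^k)}$ for $k \gg 0$ --- this requires only the f-amplitude hypothesis and no liftability. It then algebraizes (Corollary~\ref{sectionalgebraization}) and extends to all of $X$ (Corollary~\ref{nonproperextension} or Proposition~\ref{noratlcurvesextension}), obtaining a morphism $\widetilde f : X \to Y^{(p^k)}$. Only at this stage does the $W_2(L)$-lifting of $X$ enter: one shows that if $k>0$ then $\widetilde f$ factors through relative Frobenius by proving $\Hom(\bar f^*\Omega^1_Y, \Omega^1_X(-p^k D)) = 0$, a cohomology computation on the smooth $W_2$-liftable $X$ where Theorem~\ref{arapuravanishing} legitimately applies. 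Combined with Lemma~\ref{uniquefactorizationfrobenius} and a descending induction on $k$, this produces the desired extension of $f$ itself. (This is exactly what the paper means when it says that for arbitrarily singular $D$ one cannot directly deform $f$ to $\widehat D$, but must first extend $F^k\circ f$ to $X$ and then use the smoothness of $X$ to undo the Frobenius.) Without this ``extend-then-untwist'' maneuver, your argument has no way to get the needed vanishing on a singular, possibly non-liftable $D$.
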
 
For arbitrarily singular $D$, we are unable to directly deform that map $f: D\to Y$ to a map $\widehat D\to Y$; rather we show that $F_{Y/L}^k\circ f$ extends to a map $X\to Y$, and then use the smoothness of $X$ to deduce that in fact it was unnecessary to compose with Frobenius.  See Theorem \ref{charpliftextensionthm} for details.  We use this result to deduce our main results in chracteristic zero.
\subsection{Remarks}
\subsubsection{Past Work}
There is a large body of literature on Questions \ref{bigquestion} and \ref{bigquestion2}, approaching it from a rather different point of view.  Sommese \cite{Sommese:1976aa} was interested in characterizing smooth projective varieties which cannot be ample divisors.  He proves:
\begin{thm}[Sommese, {\cite[Theorem 3.1]{extending-morphisms}}, \cite{Sommese:1976aa}]
Let $D$ be a smooth ample divisor on a smooth projective variety $X$ over $\mbb{C}$, with $\dim(X)\geq 4$.  Let $p: D\to Z$ be a surjective morphism.  Then if $$\dim(D)-\dim(Z)\geq 2,$$ $p$ extends to a surjective morphism $X\to Z$.
\end{thm}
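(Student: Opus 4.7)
The plan is to prove this via the three-step factorization $D \hookrightarrow \widehat{D} \hookrightarrow U \hookrightarrow X$ that organizes the paper, applied to $p\colon D \to Z$. Extending $p$ to $X$ reduces to (i) a \emph{deformation} step, extending $p$ to $\hat{p}\colon \widehat{D} \to Z$; (ii) an \emph{algebraization} step, extending $\hat{p}$ to a morphism $U \to Z$ for some Zariski-open $D \subset U \subset X$; and (iii) an \emph{extension} step, extending that morphism from $U$ to all of $X$. Surjectivity of the resulting $\tilde{p}\colon X \to Z$ is automatic, since $\tilde{p}(X) \supseteq \tilde{p}(D) = p(D) = Z$.

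Steps (ii) and (iii) are direct applications of the paper's preparatory material. For (iii), the hypothesis yields $\dim(Z) \leq \dim(D)-2 = \dim(X)-3 \leq \dim(X)-2$; since $X$ is smooth and in particular locally $\mbb{Q}$-factorial, Corollary \ref{nonproperextension} extends any morphism $U \to Z$ with $D \subset U$ Zariski-open uniquely to $X$. For (ii), one forms the graph $\Gamma_{\hat{p}} \subset \widehat{D} \times Z$ of $\hat{p}$ and invokes the algebraization of sections (Corollary \ref{sectionalgebraization}) applied to the first projection $X \times Z \to X$, restricted to an appropriate quasiprojective open containing the image of $\hat{p}$; projecting the resulting algebraized $\Gamma \subset U \times Z$ to the first factor yields the desired morphism $U \to Z$ that restricts to $p$ on $D$.

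The main obstacle is step (i), and I would address it by Theorem \ref{maintheorem2}, clause (2), whose hypotheses $\dim(\on{im}(p)) = \dim(Z) \leq \dim(D)-2$ and $Z$ quasiprojective match the setup exactly. The genuine condition to check is the f-amplitude estimate $\phi(p^*\Omega^1_Z \otimes \mcl{N}_{D/X}) < \dim(D)-1$. To secure this one first reduces to the case of smooth $Z$ via Stein factorization of $p$ followed by a resolution of the image, so that $\Omega^1_Z$ is locally free of rank $\leq \dim(D)-2$; then $p^*\Omega^1_Z$ is pulled back from a scheme of dimension $\leq \dim(D)-2$, and tensoring with high powers of the ample normal bundle $\mcl{N}_{D/X}$ kills the relevant higher cohomology via the Arapura-style reduction-to-characteristic-$p$ vanishing that underlies the paper's f-amplitude formalism. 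The concrete obstruction-and-Hom-vanishings this produces at each infinitesimal thickening $D_n \subset \widehat{D}$ — namely vanishing of $H^0$ (for uniqueness) and $H^1$ (for existence) of $p^*T_Z \otimes (\mcl{N}_{D/X}^\vee)^{\otimes n}$ for all $n \geq 1$ — replace, in a way accessible through positive characteristic, the Kodaira-vanishing and topological arguments used in Sommese's original complex-analytic proof.
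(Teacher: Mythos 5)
This statement appears in the paper only as a cited theorem of Sommese (section 1.5.1), included to set the stage for the paper's results; the paper does not prove it, remarking only that its machinery ``recovers Sommese's results'' without spelling out a derivation. So there is no paper-internal proof to compare against, and your proposal is really a reconstruction of how that recovery would go.

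For \emph{smooth} $Z$, your strategy is correct but can be stated much more directly: Theorem \ref{maintheorem2} (or its scheme version, Theorem \ref{charzeroextensionthm}) already packages all three steps of the factorization $D\hookrightarrow\widehat D\hookrightarrow U\hookrightarrow X$, so there is no need to peel off step~(i) and re-run steps (ii) and (iii) yourself — in fact the cleanest route is the single citation of Theorem \ref{smalltargetsextension} with $Y=Z$, which applies since $\dim(\operatorname{im}(p))=\dim(Z)\leq\dim(D)-2<\dim(D)-1$, and which even improves the hypothesis to $\dim(X)\geq 3$. Also, the f-amplitude estimate you wave at is exactly the content of Lemma \ref{smalltargetbound}: $\phi(p^*\Omega^1_Z\otimes\mcl N_{D/X})\leq\phi(\mcl N_{D/X})+\dim(\operatorname{im}(p))=\dim(Z)\leq\dim(D)-2$, with no further reduction-to-characteristic-$p$ argument needed at this point.

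The genuine gap is the case of singular $Z$, which the statement as written allows and which you flag but do not actually resolve. Every ingredient in the paper's extension machinery — the obstruction calculus of Theorem \ref{maindefthm}, the vanishing theorem \ref{arapuravanishing}, and hence Theorems \ref{charzeroextensionthm}, \ref{charzeroextensionthmdmstacks}, and \ref{smalltargetsextension} — requires the target $Y$ to be smooth. Your proposed fix, ``Stein factorization of $p$ followed by a resolution of the image,'' does not work: Stein factorization reduces you to $Z$ normal (which is automatic anyway, since $D$ is smooth and any dominant map from a normal variety factors through the normalization of its image), but a normal variety of dimension $\geq 2$ need not be smooth, and a resolution $\tilde Z\to Z$ does not produce a regular lift $D\to\tilde Z$ — the original map only lifts over the locus where the resolution is an isomorphism. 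The natural alternative, embedding $Z\hookrightarrow\mbb P^N$ and extending the composite $D\to\mbb P^N$ via Theorem \ref{smalltargetsextension}, hits a different snag: while Corollary \ref{smallimageextension} controls the dimension of the image of the extension (it has dimension $\leq\dim(D)-1$), it does not obviously force the image to lie inside $Z$, since the formal deformation $\widehat D\to\mbb P^N$ in the intermediate step is not constrained to land in $Z$. To close the argument you would need an additional argument that the extension does factor through $Z$, and your proposal does not supply one.
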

Sommese views this as a \emph{negative} result, obstructing the possibility of a variety appearing as an ample divisor in another variety; in contrast, our philosophy in Section \ref{classicalthms} suggests that one should view this as a positive result, via careful choice of $Z$.  This beautiful theorem sparked an industry (see \cite{extending-morphisms} and the references therein, e.g. \cite{Silva:1977aa}) centered on comparing maps out of a variety (and in particular, contractions) to maps out of an ample divisor.  

Our goal in this paper is to conclusively answer some of the questions raised by this body of work--namely, to do away with assumptions about $D$, which we work rather hard to avoid in Chapter \ref{deformation theory}.  Sommese's method of proof also limits one to working in characteristic zero and $\dim(X)\geq 4$, because it relies on the Grothendieck-Lefschetz theorem for Picard groups, which fails in positive characteristic.  

Our main discovery is that Sommese's hypothesis on the dimension of $Y$ is far too restrictive (especially in applications).  While we do recover Sommese's results as applications of our main theorem, we obviate the requirement that $$\dim(D)-\dim(Z)\geq 2,$$ in e.g. Theorem \ref{maintheorem2}.   We also drop this assumption to achieve the (sharp) result in the case $Z$ has nef cotangent bundle (Theorem \ref{maintheorem}).  Furthermore, Sommese's result requires the target be a projective variety, and that the map $p$ be surjective, which is insufficient for most of our applications (e.g. where the target is the non-proper Deligne-Mumford stack $\mcl{M}_g$).   Finally, Sommese's methods do not allow one to study Question \ref{bigquestion2}, that is, the question of extending sections to a smooth morphism.

In Theorem \ref{smalltargetsextension}, we improve this result to 
\begin{thm*}
Let $k$ be a field of characteristic zero.  Let $X$ be a smooth projective $k$-variety with $\dim(X)\geq 3$, and let $D\subset X$ be an ample Cartier divisor.  Let $Y$ be a smooth Deligne-Mumford stack over $k$ and let $f: D\to Y$ be a morphism.  Suppose that there exists a scheme $Y'$ and a finite surjective \'etale morphism $Y'\to Y$, and that  $\dim(Y)<\dim(D)-1$.  Then $f$ extends uniquely to a morphism $X\to Y$. 
\end{thm*}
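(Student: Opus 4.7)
The plan is to use the finite \'etale cover $Y'\to Y$ to reduce to a scheme-target extension problem, then descend. Form $D':=D\times_Y Y'$, a finite \'etale cover of $D$ equipped with the natural second-projection map $f':D'\to Y'$. Since $\dim X\geq 3$, the Grothendieck-Lefschetz theorem for $\pi_1^{\text{\'et}}$ (Theorem \ref{etalepi1-lefschetz}) gives $\pi_1^{\text{\'et}}(D)\xrightarrow{\sim} \pi_1^{\text{\'et}}(X)$, so $D'\to D$ extends uniquely to a finite \'etale cover $X'\to X$. Then $X'$ is smooth projective, $D'\subset X'$ is an ample Cartier divisor (pullback of an ample divisor along a finite morphism), and $\dim X'\geq 3$.

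Next, I would apply Theorem \ref{maintheorem2} to extend $f':D'\to Y'$ to a morphism $g':X'\to Y'$. Since $Y'$ is a scheme (tautologically its own coarse space) with $\dim Y'=\dim Y<\dim D'-1$, both the dimension condition (1) and the small-image condition (2) of Theorem \ref{maintheorem2} are available; one may first restrict $Y'$ to a quasi-projective open neighborhood of the closed low-dimensional image $f'(D')$ if needed. The substantive hypothesis to verify is the f-amplitude bound $\phi(f'^*\Omega^1_{Y'}\otimes \mcl{N}_{D'/X'})<\dim D'-1$. Here the smallness of $\dim Y'$ should combine with the ampleness of $\mcl{N}_{D'/X'}$: after spreading out to characteristic $p$, Frobenius pullback turns $\mcl{N}_{D'/X'}$ into a very high power of an ample line bundle, and a careful balancing of Castelnuovo--Mumford regularity of $\mcl{F}\otimes (f'^*\Omega^1_{Y'})^{(p^k)}$ against the positivity of $\mcl{N}_{D'/X'}^{\otimes p^k}$ should produce the required vanishing by Serre's theorem.

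Finally, descend $\tilde g:=(Y'\to Y)\circ g':X'\to Y$ along the finite \'etale morphism $X'\to X$. The two compositions $\tilde g\circ p_1,\tilde g\circ p_2:X'\times_X X'\to Y$ agree on the ample Cartier divisor $D'\times_D D'\subset X'\times_X X'$ (both are pullbacks of $f$), so by the uniqueness clause of Theorem \ref{maintheorem2} applied to the smooth projective variety $X'\times_X X'$ of dimension $\geq 3$, they agree globally; this produces the effective descent datum for the extension $X\to Y$. Uniqueness of the extension follows similarly, by the uniqueness clause of Theorem \ref{maintheorem2} applied to $f$ itself, noting that the f-amplitude bound on $Y$ follows from the one on $Y'$ via the fact that $\mcl{O}_D$ is a direct summand of the pushforward of $\mcl{O}_{D'}$ in characteristic zero. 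The main obstacle throughout is the f-amplitude verification in the middle step: the codimension-at-least-$2$ image is precisely what is needed to accommodate the one-unit loss in the bound, but making this rigorous is the technical heart of the argument.
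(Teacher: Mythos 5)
Your approach is genuinely different from the paper's. The paper applies Theorem \ref{charzeroextensionthmdmstacks} directly to $f:D\to Y$, using its condition~(1) ($\dim Y < \dim D$) and verifying the f-amplitude hypothesis $\phi(f^*\Omega^1_Y\otimes\mcl{N}_{D/X})<\dim D-1$ via Lemma \ref{smalltargetbound}. That lemma proves $\phi(\mcl{E}\otimes f^*\mcl{F})\leq\phi(\mcl{E})+\dim\on{im}(f)$ in a few lines by the projection formula: for $k\gg0$, $\mbf{R}f_*(\mcl{G}\otimes\mcl{E}^{(p^k)})$ is concentrated in degrees $[0,\phi(\mcl{E})]$, and then $\mbf{R}\Gamma(-\otimes\mcl{F}^{(p^k)})$ adds at most $\dim\on{im}(f)$ by dimensional vanishing. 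You instead push $f$ up to the scheme $Y'$ via the finite \'etale cover, extend the cover $D'\to D$ across $X$ by $\pi_1$-Lefschetz, apply the scheme-target theorem on $X'$, and descend. This is a legitimate alternative, and it has the appeal of reducing the stacky statement entirely to the scheme case rather than redeveloping the machinery for stacks as the paper does inside its proof of Theorem \ref{charzeroextensionthmdmstacks}.

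Two points, however, need repair. First, the step you yourself flag as ``the technical heart'' --- the bound $\phi(f'^*\Omega^1_{Y'}\otimes\mcl{N}_{D'/X'})<\dim D'-1$ --- is left as a heuristic about Castelnuovo--Mumford regularity. The mechanism you sketch (trading regularity of $(f'^*\Omega^1_{Y'})^{(p^k)}$ against $\mcl{N}^{\otimes p^k}$ by Serre vanishing) is not how this goes; the correct argument is the one just described, via $\mbf{R}f'_*$ and the projection formula, since $f'$ has image of dimension $<\dim D'-1$. That gap is closed by Lemma \ref{smalltargetbound}, which you should cite. Second, the final descent is asserted but not carried out: for a stack target, having $\tilde g\circ p_1$ and $\tilde g\circ p_2$ \emph{isomorphic} on $X'\times_X X'$ does not by itself produce effective descent data; you need a distinguished isomorphism restricting to the identity over $D'\times_D D'$ and satisfying the cocycle identity on $X'\times_X X'\times_X X'$. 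This is exactly what Theorem \ref{charzerouniquenessthmdmstacks} supplies (uniqueness \emph{up to canonical isomorphism}, with the canonicality propagating the cocycle condition from the formal neighborhood of the divisor), but invoking the plain uniqueness clause of Theorem \ref{maintheorem2} is not enough. With those two citations swapped in, your argument goes through; minor side remarks --- that $X'$ and $X'\times_X X'$ may be disconnected and one must work component by component, and that one should verify the f-amplitude hypothesis for the uniqueness step on $X'\times_X X'$ as well (it again follows from Lemma \ref{smalltargetbound}) --- are easily handled.
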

Observe that we allow $D$ to have arbitrary singularities.

It is also worth mentioning another result of Beltrametti and Sommese which one may view as a prototype of our Theorem \ref{maintheorem}.  In \cite[Theorem 5.2.3]{C.:2011aa}, Sommese and Beltrametti show
\begin{thm}
Let $D$ be a normal, ample Cartier divisor in a normal variety $X$, and let $Z$ be a variety admitting a finite-to-one map to an Abelian variety, and so that $$\dim(D)-\dim(Z)\geq 1.$$ Then a surjective morphism $p: D \to Z$ extends to a morphism $X\to Z$.
\end{thm}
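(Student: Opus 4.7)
My plan is to decompose the problem into two steps: first, extend the composite $\pi \circ p : D \to A$ to a morphism $X \to A$, using that $\Omega^1_A$ is trivial; second, lift that extension through the finite morphism $\pi : Z \to A$.

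\emph{Step 1 (extension to $A$).} The Abelian variety $A$ has trivial, hence nef, cotangent bundle. The dimension hypothesis gives $\dim(A) = \dim(Z) \leq \dim(D)-1 < \dim(D)$, and, assuming $\dim(Z) \geq 1$ (the case $\dim(Z) = 0$ being trivial), $\dim(X) \geq 3$. A variant of Theorem~\ref{maintheorem} accommodating normal $X$ --- obtained, for example, by resolving $\nu : X' \to X$, applying Theorem~\ref{maintheorem} on $X'$, and descending along $\nu$ via rigidity of morphisms to Abelian varieties --- yields a unique extension $\tilde g : X \to A$ of $\pi \circ p$.

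\emph{Step 2 (lifting through $\pi$).} Form the fiber product $W := Z \times_A X$ with finite projection $q : W \to X$. The morphism $p$ together with the inclusion $D \hookrightarrow X$ defines a section $s : D \to W$ of $q$ over $D$; it suffices to extend $s$ to a regular section $\tilde s : X \to W$, whereupon $X \xrightarrow{\tilde s} W \to Z$ extends $p$. The extension of $s$ proceeds in three stages: (a) \emph{formal lift via Hensel}: $s$ corresponds to an idempotent in the finite $\mathcal{O}_D$-algebra $(q_*\mathcal{O}_W)|_D$, and Hensel's lemma, applied along the $\mathcal{I}_D$-adic completion $\widehat D$ of $X$ at $D$, lifts this idempotent to $(q_*\mathcal{O}_W)|_{\widehat D}$, giving a unique formal section $\hat s : \widehat D \to W$; (b) \emph{algebraization}: normality of $X$ (Serre's condition $S_2$) places the dualizing complex of $X$ in degrees $[-\dim X, -2]$, so Corollary~\ref{sectionalgebraization} algebraizes $\hat s$ to a section $s_U : U \to W$ over some Zariski-open $U \supset D$; (c) \emph{extension to $X$}: since $q$ is proper with zero-dimensional (hence rational-curve-free) fibers, Proposition~\ref{noratlcurvesextension} extends $s_U$ to a regular section $\tilde s : X \to W$.

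The principal technical hurdle is that Theorem~\ref{maintheorem} and Proposition~\ref{noratlcurvesextension} are stated for smooth $X$, while here $X$ is only normal. The resolve-and-descend argument in Step 1 is clean thanks to rigidity of morphisms to Abelian varieties --- any such morphism is constant on rationally chain-connected fibers of a resolution. Proposition~\ref{noratlcurvesextension} extends to normal $X$ because its key input, extending a rational section across codimension two, uses only regularity in codimension one, automatic for normal $X$. The Hensel and algebraization stages then work directly under the stated hypotheses.
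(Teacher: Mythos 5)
The paper does not prove this theorem; it is quoted verbatim from Beltrametti--Sommese (cited as \cite[Theorem 5.2.3]{C.:2011aa}) in the "Past Work" discussion, as a precursor to Theorem \ref{maintheorem}. So your proposal is necessarily a new argument, not a reconstruction, and it must stand on its own. Unfortunately, Step 1 contains a genuine error: you assert that $\dim(A) = \dim(Z)$, but a finite-to-one (quasi-finite, or even generically finite) map $\pi : Z \to A$ to an Abelian variety only gives $\dim(A) \geq \dim(Z)$, with strict inequality being typical --- for example a smooth curve $Z = C$ of genus $g \geq 2$ embeds in $\mathrm{Jac}(C)$ of dimension $g > 1$. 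Hence $\dim(A)$ can easily exceed $\dim(D)$, and the hypothesis $\dim(Y) < \dim(D)$ of Theorem \ref{maintheorem} is not satisfied by $Y = A$. The correct fix here is not a dimension bound at all but rather the observation that Abelian varieties contain no rational curves: case (3) of Theorem \ref{charzeroextensionthmdmstacks}, together with $\phi(\mathcal{N}_{D/X} \otimes f^*\Omega^1_A) = \phi(\mathcal{N}_{D/X}^{\oplus \dim A}) = 0$ (since $\Omega^1_A$ is trivial and $\mathcal{N}_{D/X}$ is ample), extends $\pi \circ p$ with no bound on $\dim(A)$.

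Even with that correction, a second gap remains: all of the paper's deformation-theoretic extension theorems (\ref{charzeroextensionthm}, \ref{charzeroextensionthmdmstacks}, \ref{charpliftextensionthm}) require $X$ smooth, while the theorem you are proving assumes only that $X$ (and $D$) are normal. Your proposed resolve-and-descend repair does not work as stated: if $\nu : X' \to X$ is a resolution, the divisor $\nu^*D$ is only big and nef (not ample), and the strict transform is not ample either, so Theorem \ref{maintheorem} does not apply to the pair $(X', \nu^{-1}(D))$. The actual content behind the normal-$X$ case is a vanishing statement --- $H^1(D, \mathcal{O}_D(-nD)) = 0$ for $n > 0$, or equivalently $H^i(X, \mathcal{O}_X(-nD)) = 0$ for $i = 1, 2$ --- which for singular $X$ and $D$ requires a separate argument (rational singularities, or explicit cohomological input) that you have not supplied. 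Your Step 2 (fiber product with the finite $\pi$, Hensel lifting of the idempotent over $\widehat D$, algebraization via Corollary \ref{sectionalgebraization} using the $S_2$ bound on the dualizing complex, then extension across codimension two using reflexivity of $q_*\mathcal{O}_W$ on the normal $X$) is essentially sound, and is a nice observation; it is the bridge from Abelian-variety extension to $Z$-extension. The issue lies entirely in Step 1.
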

\begin{rem}
The condition that $Z$ admits a finite-to-one map to an Abelian variety is close to nefness; it is equivalent to the condition that $$\on{coker}(\Gamma(Z, \Omega^1_Z)\otimes \mathscr{O}_Z\to \Omega^1_Z)$$ be torsion.  The requirement that $Z$ admit an unramified map to an Abelian variety is equivalent to global generation of $\Omega^1_Z$, which of course implies nefness.
\end{rem}
Again, Sommese and Beltrametti work over the complex numbers.  Unfortunately, this theorem does not suffice for our applications once again; even if one were to extend it stacks, the targets $Z$ in many of our applications do not admit finite maps to Abelian varieties (e.g. $\mathscr{M}_g$ does not).

Finally, we identify some conditions in which the dimension of the target is irrelevant to the conclusion, e.g.~in Theorem \ref{f-semipositive-lefschetz-thm}, or in Theomem \ref{maintheorem2}(3).  We expect this to be important in applications.

The other main prototype of this work is Grothendieck and Raynaud's masterpiece \cite{SGA2}, in which Lefschetz Hyperplane theorems are proven for the Picard Group, \'etale fundamental group, etc.  Our strategy of proof broadly follows the strategy in that work, though it is logically independent.

This work is also inspired to a large extent by \cite{bhatt-dejong}, which uses positive characteristic methods to prove a local Lefschetz theorem, conjectured by K\'ollar.

\subsubsection{Predictions of the Hodge Conjecture}
The Hodge conjecture makes predictions similar to what we prove here.  As usual, let $X$ be a projective variety, and let $D\subset X$ be a \emph{smooth} ample divisor.  Let $Y$ be a smooth projective variety.  Let $f: D\to Y$ be a morphism and $$[\Gamma_f]\in H^{2\dim(Y)}(D\times Y, \mathbb{Z})=\bigoplus_{i+j=2\dim(Y)} H^i(D, \mbb{Z})\otimes H^j(Y, \mbb{Z})$$ the fundamental class of the graph of $f$; in fact this class has only one non-zero component, in $$H^{\dim(Y)}(D, \mbb{Z})\otimes H^{\dim(Y)}(Y, \mbb{Z}).$$  If $\dim(Y)<\dim(D)$, this class comes from a class in $$H^{\dim(Y)}(X, \mbb{Z})\otimes H^{\dim(Y)}(Y, \mbb{Z})\subset H^{2\dim(Y)}(X\times Y, \mbb{Z})$$ by the Lefschetz hyperplane theorem for singular cohomology.  Thus on the Hodge conjecture, some multiple of this class is represented by a subvariety of $X\times Y$, which at least numerically looks like the graph of a map $X\to Y$.  We have shown that in many cases, the map $D\to Y$ does indeed extend to a map $X\to Y$.

A similar argument shows that in general, correspondences of suitable dimension between $D$ and $Y$ extend, after taking some multiple and modifying by homologically trivial correspondences, to a correspondence between $X$ and $Y$.  The generality in which we work in Chapter \ref{algebraization} allows one to algebraize correspondences, and extending correspondences across points is doable, but deformation-theoretic study of correspondences is difficult, and we were unable to write Chapter \ref{deformation theory} in this generality.  

There are other conjectures making similar projections.  Hartshorne asks in \cite{hartshorne-chow}:
\begin{question}
Let $D\subset X$ be an ample divisor.  Is the restriction map $$CH^i(X)\to CH^i(D)$$ an isomorphism for $i<\dim(D)/2$?  
\end{question}
This question is open even for hypersurfaces in $\mathbb{P}^6$.  An analogous question for $k$-ample divisors would also suggest a version of our main theorem for correspondences.
\subsubsection{Acknowledgments}
This work owes a great debt to my advisor, Ravi Vakil, whose optimism and mathematical curiosity are unmatched.  The ideas in Chapter \ref{deformation theory} benefited enormously from suggestions of Bhargav Bhatt; the inspiring paper \cite{arapura-f-amplitude} was indispensible, as was the encouragement of its author, Donu Arapura.  Without the opportunity to learn from Brian Conrad's technical strength (and love of the formal GAGA theorem), I would never even have gotten started.  I am also extremely grateful for conversations with Rebecca Bellovin, Jeremy Booher, Johan de Jong, Soren Galatius, Benedict Gross, Zhiyuan Li, Cary Malkewicz, Mircea Musta\c{t}\u{a}, John Pardon, Niccolo Ronchetti, Burt Totaro, Arnav Tripathy, Akshay Venkatesh, and Zhiwei Yun.

\section{Algebraization of coherent sheaves}\label{algebraization}
In this section we consider the question of extending a sheaf from a formal subscheme of a scheme to a neighborhood thereof.  Suppose $X$ is a normal projective variety, $D\subset X$ an ample Cartier divisor, and $f: Y\to X$ a morphism.  Let $\widehat D$, (resp.~$\widehat{Y_D}$) denote the formal scheme obtained by completing $X$ at $D$ (resp.~completing $Y$ at $f^{-1}(D)$).  Then we will compare coherent sheaves on $\widehat Y_D$ to sheaves on a neighborhood $U\subset Y$ of $Y_D$.  In its broad outlines, this discussion will follow the proofs of the formal GAGA theorem and Grothendieck's existence theorem, though the details will be somewhat different.  Taking $Y=X$ will imply essentially all of the results of \cite[Expos\'e XII]{SGA2}, and our proofs will follow the general structure of the arguments there, though our arguments will be slightly cleaner due to our use of perfect complexes and Grothendieck duality.  One should also compare these results to those of \cite{raynaud}.
\subsection{The comparison theorem}  
%We'll need the following lemma, which is a special case of \cite[13.3.1]{EGA2}
%\begin{lem}
%Let $X$ be a scheme, and $(\mathscr{F}_k)_{k\in \mbb{N}}$ a projective system of quasicoherent sheaves on $X$; let $\mcl{F}=\varprojlim_k \mathscr{F}_k$.  Suppose that the homomorphisms $$u_{hk}: \mathscr{F}_k\to \mathscr{F}_h, h\leq k$$ defining the projective system $(\mathscr{F}_k)$ are surjective.  Then the natural map $$h_i: H^i(X, \mathscr{F})\to \varprojlim_k H^i(X, \mathscr{F}_k)$$ is surjective.  If the projective system $(H^{i-1}(X, \mathscr{F}_k))_{k\in \mbb{N}}$ satisfies the Mittag-Leffler condition (DEFINE THIS?) then $h_i$ is an isomorphism.
%\end{lem}
%\begin{proof}
%Use EGA III.13.3.1?  [FIND VERSION FOR MORE GENERAL SITES]
%\end{proof}
We first observe a corollary of Serre vanishing, namely that anti-ample line bundles kill cohomology in low degree.
\begin{lem}\label{antiample vanishing}
Let $S$ be a Noetherian scheme.  Let $f: X\to S$ be projective with dualizing complex ${\omega_{X/S}=f^!\mcl{O}_S}$ having coherent cohomology concentrated in degrees $[-n, -m]$.  Let $\mcl{F}$ be a perfect complex  on $X$ whose $f^{-1}\mcl{O}_S$ tor-amplitude is in $[-a, 0]$.  Let $\mcl{L}$ be an $S$-ample line bundle on $X$.  Then for $n\gg 0, 0\leq i< m-a$, $$\mbf{R}^if_*(\mcl{F}\otimes (\mcl{L}^\vee)^{\otimes n})=0.$$
\end{lem}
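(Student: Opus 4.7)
The plan is to use Grothendieck--Serre duality to convert vanishing of $\mbf{R}^i f_*(\mcl{F}\otimes(\mcl{L}^\vee)^{\otimes n})$ in \emph{low} degrees $i$ into a statement about $\mbf{R}^j f_*$ of a \emph{positive} twist by $\mcl{L}$ in \emph{high} degrees $j$, where relative Serre vanishing does the work.

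Since the conclusion is local on $S$, I would first reduce to the case that $S=\Spec A$ is Noetherian affine. Because $\mcl{F}$ is perfect, one has $\mbf{R}\mcl{H}om(\mcl{F}\otimes(\mcl{L}^\vee)^{\otimes n},\omega_{X/S})\simeq \mcl{F}^\vee\otimes\mcl{L}^{\otimes n}\otimes^L \omega_{X/S}$. Grothendieck duality for the projective morphism $f$ then gives a canonical isomorphism
\[
\mbf{R}\Hom_A\!\bigl(\mbf{R}f_*(\mcl{F}\otimes(\mcl{L}^\vee)^{\otimes n}),\,A\bigr)\;\simeq\;\mbf{R}f_*\!\bigl(\mcl{F}^\vee\otimes\mcl{L}^{\otimes n}\otimes^L\omega_{X/S}\bigr).
\]

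Next I would estimate the amplitude of the right-hand side. Since $\mcl{F}$ has tor-amplitude $[-a,0]$, its dual $\mcl{F}^\vee$ has tor-amplitude $[0,a]$; combined with the hypothesis that $\omega_{X/S}$ has coherent cohomology in degrees $[-n,-m]$, the derived tensor product $\mcl{F}^\vee\otimes^L\omega_{X/S}$ has coherent cohomology sheaves supported in degrees $[-n,\,a-m]$. For the power of $\mcl{L}$ sufficiently large, the hypercohomology spectral sequence $E_2^{p,q}=\mbf{R}^p f_*\bigl(\mcl{H}^q(\mcl{F}^\vee\otimes^L\omega_{X/S})\otimes\mcl{L}^{\otimes n}\bigr)$ collapses onto the row $p=0$ by Serre's relative vanishing (applied to each of the finitely many nonzero cohomology sheaves at once), so the right-hand side of the duality isomorphism has cohomology only in degrees $[-n,\,a-m]$.

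Finally, since $\mcl{F}$ is perfect and $f$ is proper, $\mbf{R}f_*(\mcl{F}\otimes(\mcl{L}^\vee)^{\otimes n})$ is a perfect complex on $A$, and $\mbf{R}\Hom_A(-,A)$ negates cohomological amplitude for such complexes. The bound on the right-hand side therefore forces $\mbf{R}f_*(\mcl{F}\otimes(\mcl{L}^\vee)^{\otimes n})$ to have cohomology concentrated in degrees $[m-a,\,n]$, which yields the desired vanishing for $i<m-a$. The main obstacle in this plan is invoking Grothendieck duality in the generality stated (a projective morphism whose relative dualizing complex is merely bounded-below, paired with a perfect input), and carefully tracking tor-amplitudes under both tensor product and $\mbf{R}\Hom(-,A)$; I expect no further difficulty once the duality isomorphism is in hand.
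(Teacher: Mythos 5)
Your proposal is correct and takes essentially the same route as the paper: apply Grothendieck duality for the projective morphism $f$, bound the cohomological amplitude of $\mcl{F}^\vee\otimes^{\mathbf{L}}\omega_{X/S}$ using the tor-amplitude of $\mcl{F}$, kill the higher direct images for $n\gg 0$ via the hypercohomology spectral sequence and relative Serre vanishing, and then use perfectness of $\mbf{R}f_*(\mcl{F}\otimes(\mcl{L}^\vee)^{\otimes n})$ to flip the amplitude under $\mbf{R}\Hom(-,\mcl{O}_S)$. The only differences are cosmetic (you make the reduction to affine $S$ and the tor-amplitude of $\mcl{F}^\vee$ explicit).
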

\begin{proof}
By Grothendieck duality, we have 
\begin{align*}
\mbf{R}\underline{\on{Hom}}(\mbf{R}f_*(\mcl{F}\otimes (\mcl{L}^\vee)^{\otimes n}), \mcl{O}_S) &= \mbf{R}f_*(\mbf{R}\underline{\on{Hom}}(\mcl{F}\otimes (\mcl{L}^\vee)^{\otimes n}, \omega_{X/S}))\\
&=\mbf{R}f_*(\mcl{F}^\vee \otimes^{\mathbf{L}} \omega_{X/S}\otimes \mcl{L}^{\otimes n}).
\end{align*}
Now $\mcl{F}^\vee \otimes^{\mathbf{L}} \omega_{X/S}$ has coherent cohomology concentrated in degrees ${[-n, -m+a]}$.  There is a spectral sequence $$\mbf{R}^if_*(\mcl{H}^j(\mcl{F}^\vee\otimes^{\mathbf{L}} \omega_{X/S})\otimes \mcl{L}^{\otimes n})\to \mbf{R}^{i+j}f_* (\mcl{F}^\vee \otimes^{\mathbf{L}} \omega_{X/S}\otimes \mcl{L}^{\otimes n}).$$  But for $n\gg 0$ and $i>0$, $$\mbf{R}^if_*(\mcl{H}^j(\mcl{F}^\vee\otimes^{\mathbf{L}} \omega_{X/S})\otimes \mcl{L}^{\otimes n})=0$$ by Serre vanishing.  So for $n\gg0$, $\mbf{R}f_*(\mcl{F}^\vee \otimes^{\mathbf{L}} \omega_{X/S}\otimes \mcl{L}^{\otimes n})$ and thus $\mbf{R}\underline{\on{Hom}}(\mbf{R}f_*(\mcl{F}\otimes (\mcl{L}^\vee)^{\otimes n}), \mcl{O}_S)$  have cohomology concentrated in degrees $[-n, -m+a]$.  As $\mbf{R}_f^*(\mcl{F}\otimes (\mcl{L}^\vee)^{\otimes n})$ is perfect, by e.g. \cite[Tag 0A1E, Lemma, 35.18.1]{stacks-project}, $$\mbf{R}f_*(\mcl{F}\otimes (\mcl{L}^\vee)^{\otimes n})$$ has cohomology concentrated in degrees $[m-a, n]$, as desired.
\end{proof}
\begin{lem}[Formal GAGA: Grauert comparison theorem (Compare to {\cite[Expos\'e XII, Th\'eor\`eme 2.1(ii)]{SGA2}})] \label{grauert2}
Let $f: X\to S$ and $\mcl{L}$ be as before.   Let $\mcl{F}$ be a perfect complex on $X$ with tor-amplitude $[-a, 0]$ and $D\in |\mcl{L}|$ a divisor with ideal sheaf $\mcl{I}_D\subset \mcl{O}_X$.  Let $\widehat{D}$ be the completion of $X$ at $D$ and $\widehat f: \widehat{D}\to S$ the restriction of $f$ to $\widehat{D}$; let $\widehat{\mcl{F}}$ be the restriction of $\mcl{F}$ to $\widehat{D}$.  Then for $0\leq i\leq m-a-2$ the natural map $$\mbf{R}^i\widehat{f}_*\widehat{\mcl{F}}\to \varprojlim_n \mbf{R}^if_*(\mcl{F}\otimes^{\mathbf{L}}\mcl{O}_X/\mcl{I}_D^n)$$ is an isomorphism. 
\end{lem}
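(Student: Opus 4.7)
The plan is to use the ample line bundle structure $\mcl{L}=\mcl{O}_X(D)$ to convert the comparison into the vanishing already established in Lemma \ref{antiample vanishing}, and then read off the result from a Milnor exact sequence. First I would write down the fundamental exact triangle
$$\mcl{F}\otimes^{\mathbf{L}}\mcl{I}_D^n \to \mcl{F} \to \mcl{F}\otimes^{\mathbf{L}}\mcl{O}_X/\mcl{I}_D^n.$$
Because $D$ is Cartier, $\mcl{I}_D^n\cong\mcl{L}^{-n}$ is a line bundle, so the first term is just $\mcl{F}\otimes\mcl{L}^{-n}$ (no derived issue). Applying $\mbf{R}f_*$ and invoking Lemma \ref{antiample vanishing}, for $n\gg 0$ we have $\mbf{R}^jf_*(\mcl{F}\otimes\mcl{L}^{-n})=0$ in the whole range $0\le j\le m-a-1$. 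The resulting long exact sequence then shows that the natural restriction map
$$\mbf{R}^if_*\mcl{F}\;\longrightarrow\;\mbf{R}^if_*(\mcl{F}\otimes^{\mathbf{L}}\mcl{O}_X/\mcl{I}_D^n)$$
is an isomorphism for $0\le i\le m-a-2$ and $n\gg 0$. In particular, for each such $i$ the inverse system on the right is eventually constant (hence Mittag-Leffler) with limit $\mbf{R}^if_*\mcl{F}$.

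Next I would invoke the identification of the formal pushforward as a derived limit of the pushforwards of truncations,
$$\mbf{R}\widehat f_*\widehat{\mcl{F}} \;\simeq\; \mathbf{R}\varprojlim_n \mbf{R}f_*\bigl(\mcl{F}\otimes^{\mathbf{L}}\mcl{O}_X/\mcl{I}_D^n\bigr),$$
which is the standard formal-GAGA input, and extract the Milnor short exact sequence
$$0\to \mathrm{R}^1\varprojlim_n \mbf{R}^{i-1}f_*(\mcl{F}\otimes^{\mathbf{L}}\mcl{O}_X/\mcl{I}_D^n)\to \mbf{R}^i\widehat f_*\widehat{\mcl{F}}\to \varprojlim_n \mbf{R}^if_*(\mcl{F}\otimes^{\mathbf{L}}\mcl{O}_X/\mcl{I}_D^n)\to 0.$$
For $1\le i\le m-a-2$ the left-hand $\mathrm{R}^1\varprojlim$ vanishes by the Mittag-Leffler property just established at degree $i-1\in[0,m-a-3]$. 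For $i=0$ one checks separately that the system $\{\mbf{R}^{-1}f_*(\mcl{F}\otimes^{\mathbf{L}}\mcl{O}_X/\mcl{I}_D^n)\}$ has eventually surjective transition maps: the relevant long exact sequence at degree $-1$ uses only that $\mbf{R}^0f_*(\mcl{F}\otimes\mcl{L}^{-n})=0$ for $n\gg 0$, which is contained in Lemma \ref{antiample vanishing}. Thus $\mathrm{R}^1\varprojlim$ also vanishes at $i=0$, and the Milnor sequence collapses to the claimed isomorphism.

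I expect the main obstacle to be bookkeeping rather than conceptual: tracking the tor-amplitude $[-a,0]$ carefully so that the triangle is supported in the right range of degrees, verifying that one is genuinely allowed to compare $\mbf{R}\widehat f_*\widehat{\mcl{F}}$ with the derived limit (this requires the coherence of each $\mbf{R}^if_*(\mcl{F}\otimes^{\mathbf{L}}\mcl{O}_X/\mcl{I}_D^n)$ and boundedness to run the Milnor/spectral sequence argument), and confirming that the low-degree (in particular $j=-1$) piece of the system is Mittag-Leffler so that the edge case $i=0$ really is in the allowed range. Everything else reduces cleanly to Lemma \ref{antiample vanishing} via the single exact triangle above.
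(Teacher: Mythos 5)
Your approach is essentially the paper's: the same distinguished triangle built from $0\to\mcl{I}_D^n\to\mcl{O}_X\to\mcl{O}_X/\mcl{I}_D^n\to 0$, the same appeal to Lemma \ref{antiample vanishing} to kill $\mbf{R}^jf_*(\mcl{F}\otimes\mcl{L}^{-n})$ for $n\gg 0$ and $0\le j\le m-a-1$, the same commutation of $\mbf{R}f_*$ with $\mbf{R}\varprojlim$, and the same Mittag-Leffler conclusion. Your ``Milnor sequence'' and the paper's ``Grothendieck spectral sequence'' are the same thing here, since $\mbf{R}\varprojlim$ on a tower of abelian groups has cohomological dimension one. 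One genuine difference worth flagging: you explicitly handle the boundary term at $i=0$, where the Milnor sequence involves $\mathrm{R}^1\varprojlim$ of the $j=-1$ system. Since $\mcl{F}\otimes^{\mathbf{L}}\mcl{O}_X/\mcl{I}_D^n$ can have cohomology in degree $-a-1<0$ when $a>0$, this term need not vanish for trivial reasons, and the paper's Mittag-Leffler verification is written only for $0\le j\le m-a-2$ even though the degeneration of the spectral sequence requires $j=-1$ as well. Your observation that surjectivity of the transition maps at $j=-1$ follows from the vanishing of $\mbf{R}^0f_*(\mcl{F}\otimes\mcl{L}^{-n})$ alone is exactly the right fix and tightens the argument. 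The only thing you elide, which the paper spells out with its commuting triangle, is that the ``eventually constant'' claim for the system $\{\mbf{R}^if_*(\mcl{F}\otimes^{\mathbf{L}}\mcl{O}_X/\mcl{I}_D^n)\}_n$ requires checking that the isomorphisms $\mbf{R}^if_*\mcl{F}\to\mbf{R}^if_*(\mcl{F}\otimes^{\mathbf{L}}\mcl{O}_X/\mcl{I}_D^n)$ are compatible with the transition maps; this is immediate but should be stated.
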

\begin{proof}
First  observe that the functor $\mcl{G}\mapsto \widehat{\mcl{G}}$ is exact (using the Noetherianity of $S$).  As $\mbf{R}f_*$ and $\mbf{R}\varprojlim$ commute by \cite[Tag 07D6, Lemma 19.13.6]{stacks-project}, we have that the natural map $$\mbf{R}\widehat{f}_*\widehat{\mcl{F}}\to \mbf{R}\varprojlim \mbf{R}f_*(\mcl{F}\otimes^{\mathbf{L}} \mcl{O}_X/\mcl{I}_D^n)$$ is an isomorphism.  By the Grothendieck spectral sequence, it suffices to show that $\mbf{R}^i\varprojlim \mbf{R}^jf_* (\mcl{F}\otimes^{\mathbf{L}} \mcl{O}_X/\mcl{I}_D^n)=0$ for $i>0$ and $j\leq m-a-2$.  

Without loss of generality, $S$ is affine.  Now from the short exact sequence $$0\to \mcl{I}_D^n\to \mcl{O}_X \to \mcl{O}_X/\mcl{I}_D^n\to 0$$  one obtains a distinguished  triangle $$ \mcl{I}_D^n\otimes \mcl{F}\to \mcl{F}\to \mcl{O}_X/\mcl{I}_D^n \otimes \mcl{F}\to\mcl{I}_D^n\otimes \mcl{F}[1].$$
For $n\gg 0$, $\mcl{I}_D^n\otimes \mcl{F}$ is $(m-a-1)$-connected by Lemma \ref{antiample vanishing}.  Thus $$\mbf{R}^j\Gamma(\mcl{F})\to \mbf{R}^j\Gamma(\mcl{O}_X/\mcl{I}_D^n\otimes \mcl{F})$$ is an isomorphism for $0\leq j \leq m-a-2$.  But for $r<n$ the triangle
$$\xymatrix{
R^j\Gamma(\mcl{F}) \ar[rr]^\sim \ar[rd]^\sim & & R^j\Gamma(\mcl{O}_X/\mcl{I}^n_D\otimes \mcl{F}) \ar[ld]\\
& R^j\Gamma(\mcl{O}_X/\mcl{I}^r_D\otimes \mcl{F})&
}$$
commutes, and for $n, r\gg0$ the top and left arrows are isomorphisms.  Thus for $r, n\gg 0$ the maps $R^j\Gamma(\mcl{O}_X/\mcl{I}^n_D\otimes  \mcl{F})\to R^j\Gamma(\mcl{O}_X/\mcl{I}^r_D\otimes \mcl{F})$ are isomorphisms.

In particular, for $0\leq j \leq m-a-2$, the projective systems $(\mbf{R}^j\Gamma(\mcl{O}_X/\mcl{I}_D^n\otimes \mcl{F}))_{n\in \mcl{N}}$ satisfy the Mittag-Leffler condition, and so $\mbf{R}^i\varprojlim \mbf{R}^j\Gamma(\mcl{O}_X/\mcl{I}_D^n\otimes \mcl{F})=0$ for $i>0$ and $j\leq m-a-2$, as desired.
\end{proof}
\begin{lem}[Grauert comparison theorem for ample divisors (Compare to {\cite[Expos\'e XII Th\'eor\`eme 2.1(i)]{SGA2}})] \label{grauert1}
Let $f: X\to S$ and $\mcl{L}$ be as before.   Let $\mcl{F}$ be a perfect complex on $X$ with tor-amplitude $[-a, 0]$ and $D\in |\mcl{L}|$ a divisor with ideal sheaf $\mcl{I}_D\subset \mcl{O}_X$.  Let $\widehat{X}$ be the completion of $X$ at $D$ and $\widehat f: \widehat{X}\to S$ the restriction of $f$ to $\widehat{X}$; let $\widehat{\mcl{F}}$ be the restriction of $\mcl{F}$ to $\widehat{X}$.  Then for $0\leq i\leq m-a-2$ the natural map $$R^if_*\mcl{F}\to R^i\widehat{f}_*\widehat{\mcl{F}}$$ is an isomorphism.
\end{lem}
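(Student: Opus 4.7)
The plan is to deduce this from Lemma \ref{grauert2} by comparing $\mbf{R}^if_*\mcl{F}$ directly to the individual terms of the inverse system $\mbf{R}^if_*(\mcl{F}\otimes^{\mathbf{L}}\mcl{O}_X/\mcl{I}_D^n)$, rather than just to the inverse limit. First, I would reduce to the case where $S$ is affine, since the statement is local on $S$ and both source and target commute with flat base change on $S$; then $\mbf{R}^if_*$ can be treated as $\mbf{R}^i\Gamma$, which is what the proof of Lemma \ref{grauert2} works with anyway.

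Next, I would reuse the distinguished triangle already built in the proof of Lemma \ref{grauert2},
$$\mcl{I}_D^n\otimes \mcl{F}\to \mcl{F}\to \mcl{O}_X/\mcl{I}_D^n\otimes^{\mathbf{L}} \mcl{F}\to (\mcl{I}_D^n\otimes \mcl{F})[1],$$
obtained by tensoring $0\to \mcl{I}_D^n\to \mcl{O}_X\to \mcl{O}_X/\mcl{I}_D^n\to 0$ with $\mcl{F}$ (the leftmost term is the naive tensor product since $\mcl{I}_D^n\cong (\mcl{L}^\vee)^{\otimes n}$ is a line bundle). By Lemma \ref{antiample vanishing}, for $n\gg 0$ we have $\mbf{R}^if_*(\mcl{F}\otimes(\mcl{L}^\vee)^{\otimes n})=0$ for $0\leq i < m-a$. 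The long exact sequence associated to the triangle then forces the restriction map
$$\mbf{R}^if_*\mcl{F}\to \mbf{R}^if_*(\mcl{F}\otimes^{\mathbf{L}} \mcl{O}_X/\mcl{I}_D^n)$$
to be an isomorphism for $0\leq i\leq m-a-2$ and $n\gg 0$, since both the $i$th and $(i+1)$th cohomology of $\mcl{I}_D^n\otimes\mcl{F}$ vanish in that range.

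Finally, I would combine these two ingredients. Lemma \ref{grauert2} identifies $\mbf{R}^i\widehat{f}_*\widehat{\mcl{F}}$ with $\varprojlim_n \mbf{R}^if_*(\mcl{F}\otimes^{\mathbf{L}}\mcl{O}_X/\mcl{I}_D^n)$ in the same range $0\leq i\leq m-a-2$. The previous step shows that, restricted to $n\gg 0$, this projective system is essentially constant (with value $\mbf{R}^if_*\mcl{F}$ and all transition maps isomorphisms), so the inverse limit is canonically identified with $\mbf{R}^if_*\mcl{F}$. The natural restriction map $\mbf{R}^if_*\mcl{F}\to \mbf{R}^i\widehat{f}_*\widehat{\mcl{F}}$ factors through this inverse limit via the very maps shown to be isomorphisms, so the composite is an isomorphism, as required.

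I don't expect any serious obstacle: the entire argument piggybacks on the proof of Lemma \ref{grauert2}, and the only subtlety is bookkeeping --- in particular making sure that the cohomological index bounds $[0, m-a-2]$ coming from the vanishing in Lemma \ref{antiample vanishing} match those needed on the inverse-limit side, and that the natural map is the one traced through these identifications. Both are immediate from functoriality of the restriction $\mcl{F}\to \mcl{F}\otimes^{\mathbf{L}}\mcl{O}_X/\mcl{I}_D^n$.
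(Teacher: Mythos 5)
Your proposal is correct and takes essentially the same route as the paper: the paper also reduces via Lemma \ref{grauert2} to showing $\mbf{R}^if_*\mcl{F}\to \mbf{R}^if_*(\mcl{F}\otimes^{\mathbf{L}}\mcl{O}_X/\mcl{I}_D^n)$ is an isomorphism for $n\gg 0$ and $i\leq m-a-2$, and then invokes the distinguished triangle together with Lemma \ref{antiample vanishing}. You have merely spelled out more explicitly the step where the projective system stabilizes and identifies with $\mbf{R}^if_*\mcl{F}$, which the paper leaves implicit.
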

\begin{proof}
By Lemma \ref{grauert2}, it suffices to show that $$\mbf{R}^if_*\mcl{F}\to \mbf{R}^if_*(\mcl{F}\otimes^{\mathbf{L}} \mcl{O}_X/\mcl{I}_D^n)$$ is an isomorphism for $n\gg0$ and $i\leq m-a-2$.  But this is exactly Lemma \ref{antiample vanishing}, using the distinguished triangle  $$\mbf{R}f_*(\mcl{F}\otimes \mcl{I}_D^n)\to\mbf{R}f_*\mcl{F}\to \mbf{R}f_*(\mcl{F}\otimes^{\mbf{L}}\mcl{O}_X/\mcl{I}_D^n)\to\mbf{R}f_*(\mcl{F}\otimes \mcl{I}_D^n)[1].$$
\end{proof}
Recall the statement of formal GAGA over a general base \cite[$\on{III}_1$.4.1.5]{EGA}.
\begin{thm}[Formal GAGA {\cite[$\on{III}_1$.4.1.5]{EGA}}]\label{formalgaga}
Let $f: X\to Y$ be a proper morphism of Noetherian schemes, and let $Z\subset Y$ be a closed subscheme.  Let $\widehat f: \widehat{X_Z} \to \widehat Z$ be the associated map of formal schemes obtained by completing $Y$ at $Z$ (resp. $X$ at $f^{-1}(Z)$).  Then $$\widehat{\mbf{R}f_*\mcl{F}}\to \mbf{R}{\widehat f}_* \widehat{\mcl{F}}$$ is an isomorphism.  
\end{thm}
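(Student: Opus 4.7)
The plan is to deduce the comparison isomorphism from two ingredients: Grothendieck's theorem on formal functions, which identifies $\widehat{\mbf{R}^i f_* \mcl{F}}$ with $\varprojlim_n \mbf{R}^i f_*(\mcl{F}/\mcl{I}^n \mcl{F})$ (where $\mcl{I}$ is the ideal sheaf of $f^{-1}(Z)$), and a separate identification of $\mbf{R}^i \widehat f_* \widehat{\mcl{F}}$ with the same inverse limit. These two comparisons, in the spirit of Lemmas \ref{grauert1} and \ref{grauert2} but valid for all coherent $\mcl{F}$ and all degrees, combine to give the theorem.

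First I would reduce to the affine base: both sides commute with flat base change along open immersions of $Y$, so assume $Y=\Spec(A)$ and $Z=V(I)$. Then by Chow's lemma, reduce from proper $f$ to projective $f$: Chow supplies a projective birational $\pi:X'\to X$ with $X'$ projective over $Y$, and a standard dévissage, using that $\pi_*\mcl{O}_{X'}$ is coherent and Noetherian induction on the support of the discrepancy sheaves, transfers the formal GAGA statement from $(f\circ\pi,\pi^*\mcl{F})$ to $(f,\mcl{F})$. Next, in the projective case, resolve $\mcl{F}$ by sums of sufficiently negative twists of an $f$-very-ample $\mcl{L}$ so that the previous perfect-complex analysis of this section applies term-by-term; a double-complex spectral sequence then propagates the comparison. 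For the formal-function identification one needs Mittag-Leffler of the projective system $\bigl(\mbf{R}^i f_*(\mcl{F}/\mcl{I}^n\mcl{F})\bigr)_n$ of finitely generated $A$-modules, established via the Artin--Rees lemma. For the comparison $\mbf{R}^i\widehat f_*\widehat{\mcl{F}}\cong \varprojlim_n \mbf{R}^i f_*(\mcl{F}/\mcl{I}^n\mcl{F})$, I would use that $\widehat{\mcl{F}}=\mbf{R}\varprojlim_n \mcl{F}/\mcl{I}^n\mcl{F}$ on $\widehat{X_Z}$ (which requires the same Mittag-Leffler to kill $\mbf{R}^1\varprojlim$) together with the commutation of $\mbf{R}\widehat f_*$ with $\mbf{R}\varprojlim$ from \cite[Tag 07D6]{stacks-project}.

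The main obstacle is the Artin--Rees/Mittag-Leffler input for arbitrary coherent sheaves. The Grauert comparison lemmas proved earlier exploited bounded tor-amplitude and dualizing-complex hypotheses to pin down cohomology in a fixed range of degrees, where Mittag-Leffler was automatic from strict vanishing. Without such hypotheses, one cannot hope for strict stabilization of the transition maps $\mbf{R}^i f_*(\mcl{F}/\mcl{I}^n\mcl{F})\to \mbf{R}^i f_*(\mcl{F}/\mcl{I}^m\mcl{F})$; rather, one must control their kernels and images via Artin--Rees applied to the graded module $\bigoplus_n \mcl{I}^n\mcl{F}/\mcl{I}^{n+1}\mcl{F}$ over the Rees algebra, showing the system is pro-isomorphic to an essentially $I$-adic one. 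Executing this carefully in all cohomological degrees is the bulk of EGA $\on{III}_1$ \S 4, and it is where the full proof genuinely exceeds the perfect-complex arguments already developed in this section.
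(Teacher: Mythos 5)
There is no ``paper's own proof'' to compare against here: Theorem~\ref{formalgaga} is quoted verbatim from \cite[$\on{III}_1$.4.1.5]{EGA} as an external input, and the paper offers only the citation, not an argument. So the only thing I can assess is whether your blind sketch is a correct outline of the EGA proof --- and it mostly is, but it contains an internal contradiction worth flagging.

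Your overall strategy (reduce to affine base, Chow's lemma to get to the projective case, d\'evissage by twists of a relatively very ample line bundle, formal-functions identification of $\widehat{\mbf{R}^if_*\mcl{F}}$ with $\varprojlim_n \mbf{R}^if_*(\mcl{F}/\mcl{I}^n\mcl{F})$, and the parallel identification of $\mbf{R}^i\widehat f_*\widehat{\mcl{F}}$ via $\widehat{\mcl{F}}=\mbf{R}\varprojlim_n\mcl{F}/\mcl{I}^n\mcl{F}$ and commutation of $\mbf{R}\varprojlim$ with $\mbf{R}\widehat f_*$, with Artin--Rees providing the Mittag--Leffler input) is indeed the shape of the argument in EGA $\on{III}_1$ \S4. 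Your concluding paragraph correctly identifies that the projective system $(\mbf{R}^if_*(\mcl{F}/\mcl{I}^n\mcl{F}))_n$ is controlled, not by strict degeneration, but by Artin--Rees applied to the Rees-algebra module $\bigoplus_n \mcl{I}^n\mcl{F}/\mcl{I}^{n+1}\mcl{F}$, showing pro-isomorphism with an essentially $\mcl{I}$-adic system; that is precisely where the bulk of the work sits.

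The problem is in the middle: you write that after resolving $\mcl{F}$ by sums of negative twists, ``the previous perfect-complex analysis of this section applies term-by-term.'' This is inconsistent with what you then (correctly) say two sentences later. Lemmas~\ref{antiample vanishing}, \ref{grauert2} and \ref{grauert1} are not instances of Theorem~\ref{formalgaga} at all: they hinge on $Z=D$ being an ample Cartier divisor and on the dualizing complex of $X/S$ being bounded above, which force $\mbf{R}f_*$ of negative twists to vanish in a prescribed range of low degrees and make Mittag--Leffler trivial there. In Theorem~\ref{formalgaga}, $Z$ is an arbitrary closed subscheme of the \emph{base}, and there is no ampleness to invoke; the sheaves $\mcl{L}^{\otimes(-m)}$ have no special positivity relative to $Z$, and $\mbf{R}^if_*(\mcl{L}^{\otimes(-m)}\otimes\mcl{O}_X/\mcl{I}^n)$ does not vanish in any useful range. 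What actually runs term-by-term in EGA is the explicit \v{C}ech computation of cohomology of twists on $\mbb{P}^N_Y$ together with Artin--Rees, which is independent of, and logically prior to, anything proved in this section of the paper. So the sentence quoting the paper's lemmas should be removed; as you yourself note in the final paragraph, those lemmas genuinely cannot do the job, and citing them as intermediate steps would make the argument circular in spirit (since the paper's lemmas depend on the finiteness and comparison theory that the EGA proof is building).
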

\begin{cor}[Formal GAGA over a divisor]\label{maincomparison}
Let $g: Y\to X$ be a proper morphism and $f: X\to S$ projective, with $D\subset X$ an $f$-ample Cartier divisor.  Suppose that $\omega_{X/S}$ has coherent cohomology concentrated in degrees $[-n, -m]$.  Let $\mcl{F}$ be a complex on $Y$ so that $\mbf{R}g_*\mcl{F}$ is perfect with tor-amplitude $[-a, 0]$.  Let $\widehat g: \widehat{Y_D}\to \widehat D$ be the completion of $g$ at $D$, and $\widehat f: \widehat D\to S$ the structure morphism.  Then if $0\leq i\leq m-a-2$, the map $$\mbf{R}^i(f\circ g)_*\mcl{F}\to \mbf{R}^i(\widehat f\circ \widehat g)_*\widehat{\mcl{F}}$$ is an isomorphism.
\end{cor}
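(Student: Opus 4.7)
The plan is to factor the problem through formal GAGA applied to $g$ (Theorem~\ref{formalgaga}) and then apply the Grauert comparison theorem for ample divisors (Lemma~\ref{grauert1}) to the perfect complex $\mbf{R}g_*\mcl{F}$ on $X$.

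First I would rewrite both sides using the Leray-style identity $\mbf{R}(f\circ g)_* = \mbf{R}f_*\circ \mbf{R}g_*$ (and its analogue on the formal side). So the map in question becomes
$$\mbf{R}^if_*(\mbf{R}g_*\mcl{F})\longrightarrow \mbf{R}^i\widehat{f}_*(\mbf{R}\widehat{g}_*\widehat{\mcl{F}}).$$
Next, since $g:Y\to X$ is proper between Noetherian schemes and $D\subset X$ is closed with $g^{-1}(D)\subset Y$ its preimage, Theorem~\ref{formalgaga} applied to $g$ with respect to the closed subscheme $D\subset X$ yields an isomorphism
$$\widehat{\mbf{R}g_*\mcl{F}}\;\xrightarrow{\sim}\;\mbf{R}\widehat{g}_*\widehat{\mcl{F}}$$
of complexes on the formal scheme $\widehat{D}$. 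Substituting this into the right-hand side above reduces the problem to showing that
$$\mbf{R}^if_*(\mbf{R}g_*\mcl{F})\longrightarrow \mbf{R}^i\widehat{f}_*\bigl(\widehat{\mbf{R}g_*\mcl{F}}\bigr)$$
is an isomorphism for $0\le i\le m-a-2$.

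But this last statement is exactly the conclusion of Lemma~\ref{grauert1} applied to the projective morphism $f:X\to S$, the $f$-ample divisor $D$, and the perfect complex $\mbf{R}g_*\mcl{F}$ on $X$, which by hypothesis has tor-amplitude in $[-a,0]$. The dualizing complex assumption on $\omega_{X/S}$ (coherent cohomology in $[-n,-m]$) is precisely what Lemma~\ref{grauert1} requires, and it delivers the isomorphism in exactly the range $0\le i\le m-a-2$ stated in the corollary.

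I do not anticipate any real obstacle: the content of the corollary is a formal combination of two results already in hand, with the only thing to check being that the ranges of $i$ and the hypotheses on tor-amplitude and dualizing complex line up. The mildly delicate bookkeeping is checking that ``formal GAGA applied to $g$'' is legitimate for the complex $\mcl{F}$ (which requires only that $g$ be proper, $\mbf{R}g_*\mcl{F}$ have coherent cohomology, and the base $X$ be Noetherian), and that commuting completion past $\mbf{R}g_*$ on the formal side identifies the two formulations — both are routine.
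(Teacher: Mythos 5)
Your proof is correct and follows the same approach as the paper: factor $\mbf{R}(f\circ g)_*$ through $\mbf{R}f_*\circ\mbf{R}g_*$, identify $\widehat{\mbf{R}g_*\mcl{F}}$ with $\mbf{R}\widehat{g}_*\widehat{\mcl{F}}$ via Theorem~\ref{formalgaga}, and then apply Lemma~\ref{grauert1} to the perfect complex $\mbf{R}g_*\mcl{F}$ on $X$.
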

\begin{proof}
By Lemma \ref{grauert1}, there is an isomorphism $$\mbf{R}^if_*\mbf{R}g_*\mcl{F}\to \mbf{R}^i\widehat f_*\widehat{\mbf{R}g_*\mcl{F}}$$ for $0\leq i\leq m-a-2.$  By Theorem \ref{formalgaga} the natural map $$\widehat{\mbf{R}g_*\mcl{F}}\to \mbf{R}\widehat{g}_*\widehat{\mcl{F}}$$ is an isomorphism; combining these two facts gives the claim immediately.
\end{proof}
\begin{cor}[Analogue of {\cite[$\on{III}_1$.4.5.1]{EGA}}]\label{mapcomparison}
Let $g: Y\to X$ be a proper morphism and $f: X\to S$ projective, with $D\subset Y$ an $f$-ample Cartier divisor.  Let $\mcl{F}, \mcl{G}$ be complexes on $Y$ so that $$\mbf{R}g_*\mbf{R}\Hom(\mcl{F}, \mcl{G})$$ is perfect of tor-amplitude $[-a, 0]$.  Let $\widehat g: \widehat{Y_D}\to \widehat D$ be the completion of $g$ at $D$, and $\widehat f: \widehat D\to S$ the structure morphism.  Then the map $$\on{Ext}^i(\mcl{F}, \mcl{G})\to \on{Ext}^i(\widehat{\mcl{F}}, \widehat{\mcl{G}})$$ is an isomorphism for $0\leq i\leq m-a-2.$
\end{cor}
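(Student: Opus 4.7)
The plan is to apply Corollary \ref{maincomparison} to the complex $\mcl{H} := \mbf{R}\underline{\Hom}(\mcl{F}, \mcl{G})$ on $Y$, and then to identify the source and target of the resulting isomorphism with the claimed global Ext groups. The first two steps are essentially formal; the only substantive point is the interaction of $\mbf{R}\underline{\Hom}$ with formal completion.

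First, since the statement is local on $S$, I reduce to the case where $S = \on{Spec}(A)$ is affine. Under this reduction, $\on{Ext}^i(\mcl{F}, \mcl{G})$ coincides with $H^i(Y, \mcl{H}) = \mbf{R}^i(f \circ g)_*\mcl{H}$, and the analogous identification will hold on the formal side provided that the completion $\widehat{\mcl{H}}$ computes $\mbf{R}\underline{\Hom}(\widehat{\mcl{F}}, \widehat{\mcl{G}})$ on $\widehat{Y_D}$.

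Second, the standing hypothesis that $\mbf{R}g_*\mbf{R}\underline{\Hom}(\mcl{F}, \mcl{G}) = \mbf{R}g_*\mcl{H}$ is perfect with $f^{-1}\mcl{O}_S$-tor-amplitude in $[-a,0]$ is exactly the input required by Corollary \ref{maincomparison} for the complex $\mcl{H}$. Applying that corollary produces an isomorphism
$$\mbf{R}^i(f\circ g)_*\mcl{H} \xrightarrow{\sim} \mbf{R}^i(\widehat{f}\circ \widehat{g})_*\widehat{\mcl{H}}$$
in the range $0 \leq i \leq m-a-2$.

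Third, I identify $\widehat{\mcl{H}}$ with $\mbf{R}\underline{\Hom}(\widehat{\mcl{F}}, \widehat{\mcl{G}})$, which rewrites the right-hand side as $\on{Ext}^i(\widehat{\mcl{F}}, \widehat{\mcl{G}})$ and completes the proof. This identification comes from the flatness of the completion map $\mcl{O}_Y|_{Y_D} \to \mcl{O}_{\widehat{Y_D}}$ (valid since $Y$ is locally Noetherian), combined with the fact that $\mcl{F}$ admits local resolutions by finite free modules in the bounded range where the cohomology of $\mcl{H}$ is concentrated. Flat base change then commutes completion past $\mbf{R}\underline{\Hom}$ termwise. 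This is the derived enhancement of the classical coherent statement $\widehat{\underline{\Hom}(\mcl{F}, \mcl{G})} \cong \underline{\Hom}(\widehat{\mcl{F}}, \widehat{\mcl{G}})$ used implicitly in \cite[$\on{III}_1$.4.5.1]{EGA}, and is the only nontrivial point in the argument.
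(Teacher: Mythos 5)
Your proof is correct and takes essentially the same route as the paper: the paper's entire proof is the one-line observation that one applies Corollary \ref{maincomparison} to the complex $\mbf{R}\underline{\Hom}(\mcl{F},\mcl{G})$. The only place you go beyond the paper is the third step, where you justify the identification $\widehat{\mcl{H}} \cong \mbf{R}\underline{\Hom}(\widehat{\mcl{F}},\widehat{\mcl{G}})$; the paper treats this as implicit, but it does require $\mcl{F}$ to be (locally) pseudocoherent so that completion can be commuted past $\mbf{R}\underline{\Hom}$ via flatness of $\mcl{O}_Y\to\mcl{O}_{\widehat{Y_D}}$, exactly as you say, and this is a reasonable thing to flag.
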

\begin{proof}
This is immediate by applying Corollary \ref{maincomparison} to the complex $\mbf{R}\Hom(\mcl{F}, \mcl{G})$.
\end{proof}
\subsection{The existence theorem and corollaries}
Let $k$ be a field, $X$ a projective $k$-scheme and $f: Y\to X$ a morphism.  Let $D\subset X$ be an ample divisor and $Y_D=f^{-1}(D)$.  Finally, let $\widehat{Y_D},$ (resp.~$\widehat D$) be the formal scheme arising as the completion of $Y$ at $Y_D$ (resp.~the completion of $X$ at $D$.  In this section, we study the problem of algebraizing complexes and coherent sheaves on $\widehat{Y_D}$.  That is, given a sheaf or complex $\mcl{F}$ on $\widehat{Y_D}$, we will want to find a Zariski-open $U\subset Y$ containing $Y_D$, and extension of $\mcl{F}'$ to $U$, so that $\mcl{F}'|_{\widehat{Y_D}}=\mcl{F}$.  If $\mcl{F}$ has certain properties (e.g. it is a coherent sheaf, flat over $X$), we would like to arrange that $\mcl{F}'$ does as well.  Finally, we will study the extent to which such extensions $\mcl{F}'$ are unique.
\subsubsection{The Theorem}
We begin with the case where $f: Y\to X$ is projective; let $\mcl{O}(1)$ be an $f$-ample line bundle on $X$.  By \cite[$\on{III}_1$.5.2.4]{EGA}, if $\mcl{F}$ is a coherent sheaf on $\widehat{Y_D}$, there exists a surjection $$\widehat{\mcl{O}_Y(-m_1)}\otimes \widehat{f}^*\widehat{f}_*\mcl{F}(m_1)\to \mcl{F}\to 0$$ for $m\gg0$.    Applying \cite[$\on{III}_1$.5.2.4]{EGA} again, we may find a surjection $$\widehat{\mcl{O}_X(-a_1D)^{n_1}}\to \widehat{f}_*\mcl{F}(m_1)\to 0.$$  Combining these constructions, we obtain a surjection $$\widehat{\mcl{O}_Y(-m_1)}\otimes \widehat{f}^*\widehat{\mcl{O}_X(-a_1D)^{n_1}}\to \mcl{F}\to 0.$$  Applying this argument to the kernel of the map above, we find a presentation $$\widehat{\mcl{O}_Y(-m_2)}\otimes \widehat{f}^*\widehat{\mcl{O}_X(-a_2D)^{n_2}}\to \widehat{\mcl{O}_Y(-m_1)}\otimes \widehat{f}^*\widehat{\mcl{O}_X(-a_1D)^{n_1}}\to \mcl{F}\to 0;$$ we may take $m_2-m_1$ arbitrairly large.
\begin{cor}\label{existencetheorem}
Let $k$ be a field, $g: Y\to X$  a quasi-projective morphism of finite type and $X$ a projective normal $k$-variety, with $D\subset X$ an ample Cartier divisor.  Let $\widehat g: \widehat{Y_D}\to \widehat D$ be the completion of $g$ at $D$.  Suppose that the dualizing sheaf $\omega_{X/k}$ has coherent cohomology supported in degrees $[-n, -m]$, with $m\geq 2$.  Then if $\mcl{F}$ is a coherent sheaf on $\widehat{Y_D}$ with support proper over $\widehat D$, there exists a coherent sheaf $\mcl{G}$ on $Y$ so that $\mcl{F}\simeq \widehat{\mcl{G}}$.
\end{cor}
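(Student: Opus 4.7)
The plan is to reduce to the case of a projective $g$ by compactification, then algebraize the explicit two-term presentation of $\mcl{F}$ constructed in the paragraph preceding the statement by lifting its boundary map via the map-comparison theorem.

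\textbf{Reduction.} Since $g$ is quasi-projective, I choose a projective compactification $\bar g \colon \bar Y \to X$ with $Y \hookrightarrow \bar Y$ an open immersion. The support $Z$ of $\mcl{F}$ is closed in $Y_D$ and proper over $D$ by hypothesis; combined with the separatedness of $\bar Y_D \to D$, this forces the locally closed inclusion $Z \hookrightarrow \bar Y_D$ to be proper, hence a closed immersion. Thus $\mcl{F}$ extends by zero to a coherent sheaf on $\widehat{\bar Y_D}$, and if that extension algebraizes on $\bar Y$, restricting to $Y$ furnishes the desired $\mcl{G}$. I may therefore assume $g$ is projective.

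\textbf{Projective case.} The paragraph preceding the statement constructs a two-term formal presentation
$$\widehat{\mcl{E}_2} \xrightarrow{\widehat{\varphi}} \widehat{\mcl{E}_1} \to \mcl{F} \to 0, \qquad \mcl{E}_i = \mcl{O}_Y(-m_i) \otimes g^*\mcl{O}_X(-a_i D)^{n_i},$$
with $m_2 - m_1$ arbitrarily large. Since completion along $\mcl{I}_D$ is exact on coherent sheaves over a Noetherian scheme, it suffices to lift $\widehat{\varphi}$ to an algebraic map $\varphi \colon \mcl{E}_2 \to \mcl{E}_1$; then $\mcl{G} := \on{coker}(\varphi)$ will satisfy $\widehat{\mcl{G}} \cong \mcl{F}$. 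For the lifting I apply Corollary \ref{mapcomparison} to the pair $(\mcl{E}_2, \mcl{E}_1)$: the restriction map $\Hom(\mcl{E}_2, \mcl{E}_1) \to \Hom(\widehat{\mcl{E}_2}, \widehat{\mcl{E}_1})$ is an isomorphism whenever $\mbf{R}g_*\mbf{R}\Hom(\mcl{E}_2, \mcl{E}_1)$ is a perfect complex on $X$ of tor-amplitude $[-a, 0]$ with $a \leq m - 2$. By the projection formula, this complex equals $\mbf{R}g_*\mcl{O}_Y(m_2 - m_1) \otimes \mcl{O}_X((a_2 - a_1)D)^{n_1 n_2}$; for $m_2 - m_1$ large, Serre vanishing concentrates it in degree $0$ as a single coherent sheaf.

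\textbf{Main obstacle.} The delicate point is verifying the tor-amplitude condition on $X$. When $X$ is regular every coherent sheaf is perfect, but for $X$ only normal one must work harder: the remedy is to lengthen the presentation to a sufficiently long syzygy and truncate past the expected projective dimension, exploiting the availability of arbitrarily positive twists together with $m \geq 2$ to bound the resulting tor-amplitude. Once $\varphi$ has been produced, the argument concludes by taking its cokernel, and, if the compactification step was invoked, restricting from $\bar Y$ back down to $Y$ to recover $\mcl{G}$.
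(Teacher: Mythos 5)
Your overall architecture matches the paper's: reduce to a projective morphism, take the explicit two-term presentation from the preceding paragraph, and algebraize its boundary map via Corollary \ref{mapcomparison}, then take cokernels and use exactness of completion. That part is fine, and you are correct (contra a typo in the paper's own proof text) that one wants $m_2 - m_1 \gg 0$ so that the line bundle $\mcl{O}_Y(m_2 - m_1)$ is positive.

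However, there is a genuine gap precisely at what you flag as the ``main obstacle,'' and the remedy you sketch does not close it. You reduce to $g$ projective by choosing \emph{some} compactification $\bar{g}\colon \bar Y \to X$. But Corollary \ref{mapcomparison} requires $\mbf{R}g_*\mbf{R}\Hom(\mcl{E}_2,\mcl{E}_1) \cong g_*\mcl{O}_Y(m_2-m_1)\otimes \mcl{O}_X((a_2-a_1)D)^{n_1n_2}$ to be \emph{perfect} of tor-amplitude $[-a,0]$. For $m_2-m_1\gg 0$, Serre vanishing does concentrate it in degree $0$, but a single coherent sheaf on a merely normal $X$ need not be perfect, and $g_*\mcl{O}_Y(m_2-m_1)$ for an arbitrary projective $g$ need not be locally free (ranks can jump where the fiber dimension jumps). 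Lengthening the syzygy and ``truncating past the expected projective dimension'' does not help: there is no uniform finite projective dimension on a non-regular $X$, and each new map in a longer presentation would face the identical tor-amplitude problem, so the difficulty is simply pushed along rather than resolved.

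The paper's device, which you did not reproduce, is to reduce not merely to $g$ projective but to $g$ projective \emph{and flat}, by embedding a projective compactification of $Y$ in a projective bundle $\mathbb{P}^N_X$ and replacing $Y$ with $\mathbb{P}^N_X$ itself; the sheaf $\mcl{F}$, having support proper over $\widehat D$, extends by zero to $\widehat{(\mathbb{P}^N_X)_D}$ exactly as in your reduction step. With $g$ flat, Serre vanishing plus cohomology-and-base-change shows $g_*\mcl{O}_Y(m_2-m_1)$ is locally free for $m_2-m_1\gg 0$, hence perfect with $a=0$, and since $m\geq 2$ the inequality $0\leq m-a-2$ needed to algebraize the $\Hom$ is exactly satisfied. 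That flatness is what your argument is missing; inserting it makes your proof correct and essentially identical to the paper's.
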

\begin{proof}
Without loss of generality, $g$ is projective and flat (by replacing $Y$ with a suitable projective bundle over $X$ in which it embeds).  Choose a resolution  $$\widehat{\mcl{O}_Y(-m_2)}\otimes \widehat{g}^*\widehat{\mcl{O}_X(-a_2D)^{n_2}}\overset{p}{\to} \widehat{\mcl{O}_Y(-m_1)}\otimes \widehat{g}^*\widehat{\mcl{O}_X(-a_1D)^{n_1}}\to \mcl{F}\to 0$$ as above, with $m_1-m_2\gg0$, so that $$\mbf{R}g_*\mbf{R}\Hom({\mcl{O}_Y(-m_2)}\otimes {g}^*{\mcl{O}_X(-a_2D)^{n_2}}, {\mcl{O}_Y(-m_1)}\otimes {g}^*{\mcl{O}_X(-a_1D)^{n_1}})$$ is locally free by Serre vanishing and the flatness of $g$.  Thus by Corollary \ref{mapcomparison}, the map $p$ algebraizes to a map $p'$; let $\mcl{G}=\on{coker}(p')$.  By the exactness of completion, $\mcl{F}\simeq \widehat{\mcl{G}}$ as desired.
\end{proof}
\begin{lem}\label{quasiprojectivecomparison}
Let $k$ be a field, $g: Y\to X$  a quasi-projective morphism of finite type and $X$ a projective normal $k$-variety, with $D\subset X$ an ample Cartier divisor.  Let $\widehat g: \widehat{Y_D}\to \widehat D$ be the completion of $g$ at $D$.  Suppose that the dualizing sheaf $\omega_{X/k}$ has coherent cohomology supported in degrees $[-n, -m]$, with $m\geq 2$.  
Then if $\mcl{G}$ is a coherent sheaf on $\widehat{Y_D}$, flat with support proper over $\widehat D$, and $\mcl{F}$ is an arbitrary coherent sheaf, the map $$\on{Ext}^i(\mcl{F}, \mcl{G})\to \on{Ext}^i(\widehat{\mcl{F}}, \widehat{\mcl{G}})$$ is an isomorphism for $0\leq i\leq m-a-2$.
\end{lem}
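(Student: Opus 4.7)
The plan is to reduce to the projective case, namely Corollary \ref{mapcomparison}, by compactifying the quasi-projective morphism $g$. Since $g: Y\to X$ is quasi-projective, I embed $Y$ as a locally closed subscheme of $\mbb{P}^N\times X$ and let $\bar Y$ denote its schematic closure, obtaining a projective morphism $\bar g: \bar Y\to X$ and an open immersion $j: Y\hookrightarrow \bar Y$. The key geometric input is that $\mcl{G}$ has support $Z$ proper over $X$: because $Z$ is closed in $Y$ and proper over $X$, it is also closed in $\bar Y$.

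I would then extend both sheaves across $j$. Because $Z$ is closed in $\bar Y$, the pushforward $j_*\mcl{G}$ is coherent on $\bar Y$, and moreover $\mbf{R}j_*\mcl{G} = j_*\mcl{G}$: higher direct images vanish since stalks can be computed either inside $Y$ (where $j$ is an isomorphism) or outside $Z$ (where the sheaf is zero). Similarly, extend $\mcl{F}$ to a coherent sheaf $\bar{\mcl{F}}$ on $\bar Y$ via standard coherent extension across quasi-compact open immersions.

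Apply Corollary \ref{mapcomparison} to $\bar g$ with the pair $(\bar{\mcl{F}}, j_*\mcl{G})$, where $a$ is read off as the tor-amplitude of the resulting $\mbf{R}\bar g_*\mbf{R}\Hom(\bar{\mcl{F}}, j_*\mcl{G})$. This produces an isomorphism
$$\on{Ext}^i_{\bar Y}(\bar{\mcl{F}}, j_*\mcl{G})\to \on{Ext}^i_{\widehat{\bar Y_D}}(\widehat{\bar{\mcl{F}}}, \widehat{j_*\mcl{G}})$$
for $0\leq i\leq m-a-2$. I then identify the two sides with the Ext groups over $Y$ and $\widehat{Y_D}$ respectively: the adjunction between the exact functor $j^*$ and $\mbf{R}j_*$, combined with $\mbf{R}j_*\mcl{G}=j_*\mcl{G}$ and $j^*\bar{\mcl{F}}=\mcl{F}$, yields $\mbf{R}\Hom_{\bar Y}(\bar{\mcl{F}}, j_*\mcl{G})\simeq \mbf{R}\Hom_Y(\mcl{F}, \mcl{G})$, and the same argument applies after formal completion, since the open immersion $\hat j: \widehat{Y_D}\hookrightarrow \widehat{\bar Y_D}$ has the same features.

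The main obstacle is the formal-scheme version of this adjunction: I need completion to commute with $j_*$ here, which comes down to the fact that $Z\cap D$ remains closed in $\widehat{\bar Y_D}$, so that $\mbf{R}\hat j_*\widehat{\mcl{G}} = \hat j_*\widehat{\mcl{G}}$ holds and the stalk-by-stalk computation goes through on the formal completion just as on the scheme. A secondary bookkeeping point is matching the tor-amplitude $a$ appearing in the statement with the tor-amplitude produced by the compactification step; one simply takes $a$ to be whatever tor-amplitude makes Corollary \ref{mapcomparison} applicable to the extended pair.
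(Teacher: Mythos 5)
Your approach is genuinely different from the paper's, but it has a gap around the central hypothesis of Corollary~\ref{mapcomparison}. You compactify $g$ to $\bar g: \bar Y\to X$ via the schematic closure, extend $\mcl{G}$ by $j_*$ (which is fine: proper support over $X$ makes $Z$ closed in $\bar Y$, so $j_*\mcl{G}$ is coherent and $\mbf{R}j_*\mcl{G}=j_*\mcl{G}$), extend $\mcl{F}$ arbitrarily, and then propose to apply Corollary~\ref{mapcomparison} to the pair $(\bar{\mcl{F}}, j_*\mcl{G})$. But Corollary~\ref{mapcomparison} requires $\mbf{R}\bar g_*\mbf{R}\Hom(\bar{\mcl{F}}, j_*\mcl{G})$ to be \emph{perfect of tor-amplitude $[-a,0]$}, and this is exactly what you have no control over. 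The schematic closure $\bar Y$ can be arbitrarily singular, and $\bar{\mcl{F}}$ can have infinite projective dimension at a point of $Z$; then $\mbf{R}\underline{\Hom}(\bar{\mcl{F}}, j_*\mcl{G})$ is unbounded, and $\mbf{R}\bar g_*$ of it is not perfect. The phrase ``one simply takes $a$ to be whatever tor-amplitude makes Corollary~\ref{mapcomparison} applicable'' papers over this: there may be no finite $a$ at all, and even when there is, it need not give tor-amplitude lying in $[-a,0]$ (the complex can have cohomology in positive degrees), and the resulting bound $m-a-2$ could be vacuous. The lemma as used downstream (e.g.\ in Corollary~\ref{sectionalgebraization}) needs $a=0$.

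The paper takes a different route precisely to sidestep this. Instead of closing up $Y$, it embeds $Y$ as a locally closed subscheme of a projective bundle $\mathbb{P}^N_X$ and resolves $\mcl{F}$ by restrictions of line bundles $\mcl{O}_Y(-m_j)\otimes g^*\mcl{O}_X(-a_jD)^{n_j}$ with $m_j\gg 0$. The comparison is then applied \emph{term by term} using Corollary~\ref{maincomparison} (the cohomology comparison, not the Ext one): for $m_j\gg 0$, $\mbf{R}g_*(\mcl{G}\otimes g^*\mcl{O}_X(a_jD)^{n_j}\otimes\mcl{O}_Y(m_j))$ is a vector bundle by Serre vanishing together with the flatness of $\mcl{G}$, so the perfectness/tor-amplitude hypothesis is satisfied with $a=0$ for \emph{each} term. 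The spectral sequence of the resolution then transports the isomorphism to $\on{Ext}^i(\mcl{F},\mcl{G})$. Note that this is also where the flatness hypothesis on $\mcl{G}$ is actually used; your proposal never invokes it. If you want to salvage the compactification approach, you would need to feed a resolution into it anyway (to reduce to terms with controlled tor-amplitude), at which point it collapses into the paper's argument.
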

\begin{proof}
We may assume $g$ is projective by replacing $Y$ with a suitable projective space over $X$ in which $Y$ embeds.  Then we may choose a resolution of $\widehat{\mcl{F}}$ by a complex of the form $$\cdots\to {\mcl{O}_Y(-m_2)}\otimes {g}^*{\mcl{O}_X(-a_2D)^{n_2}}\to{\mcl{O}_Y(-m_1)}\otimes {g}^*{\mcl{O}_X(-a_1D)^{n_1}}\to \mcl{F}\to 0,$$ with $m_i\gg0$.  Thus we will have a spectral sequence $$\on{Ext}^i(\mcl{O}_Y(-m_j)\otimes g^*\mcl{O}_X(-a_j)^{n_j}, \mcl{G})=H^i(Y, \mcl{G}\otimes g^*\mcl{O}_X(a_j)^{n_j}\otimes \mcl{O}_Y(m_j))\implies \on{Ext}^{i+j}(\mcl{F}, \mcl{G})$$
But as we may choose $m_i\gg0$, the the result is immediate by Corollary \ref{maincomparison}, as $$\mbf{R}g_*(\mcl{G}\otimes g^*\mcl{O}_X(a_j)\otimes \mcl{O}_Y(m_j))$$ will be a vector bundle for $m_j\gg0$.
\end{proof}
\subsubsection{Corollaries}
Here we record some corollaries of the work above which we will use throughout this paper.
\begin{cor}\label{subschemealgebraization}
Let $k$ be a field, $g: Y\to X$  a morphism of finite type and $X$ a projective normal $k$-variety, with $D\subset X$ an ample Cartier divisor.  Let $\widehat g: \widehat{Y_D}\to \widehat D$ be the completion of $g$ at $D$.  Suppose that the dualizing sheaf $\omega_{X/k}$ has coherent cohomology supported in degrees $[-n, -m]$, with $m\geq 2$.  Then any (formal) subscheme of $\widehat{Y_D}$ extends to a Zariski-open neighborhood of $Y_D$.
\end{cor}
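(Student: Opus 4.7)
The formal subscheme $Z\subset \widehat{Y_D}$ is determined by its coherent ideal sheaf $\mcl{I}\subset \mcl{O}_{\widehat{Y_D}}$, or equivalently by the coherent quotient $\mcl{O}_Z=\mcl{O}_{\widehat{Y_D}}/\mcl{I}$ together with the canonical surjection $q\colon \mcl{O}_{\widehat{Y_D}}\twoheadrightarrow\mcl{O}_Z$. My plan is to algebraize both $\mcl{O}_Z$ (as a coherent sheaf on a Zariski-open neighborhood of $Y_D$) and $q$ (as a morphism of sheaves); the kernel of the algebraized $q$ will then be the ideal sheaf of the desired extension of $Z$.

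Since the conclusion is local on $Y$, I would first cover $Y$ by affine opens, on each of which $g$ is affine and hence quasi-projective. To invoke Corollary~\ref{existencetheorem}, which requires proper support over $\widehat D$, I would next pass to a projective compactification $\bar g\colon \bar Y\to X$ (via Nagata and Chow), with $Y\hookrightarrow \bar Y$ an open immersion over $X$. Inside $\widehat{\bar Y_D}$, take the scheme-theoretic closure $\bar Z$ of $Z$: this is a closed formal subscheme whose support is closed in $\bar Y_D$---hence proper over $\widehat D$---and whose intersection with $\widehat{Y_D}$ recovers $Z$ by the universal property of schematic closure. Corollary~\ref{existencetheorem} applied to $\bar g$ and $\mcl{O}_{\bar Z}$ then algebraizes $\mcl{O}_{\bar Z}$ to a coherent sheaf $\bar{\mcl{G}}$ on $\bar Y$.

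To algebraize the surjection $q$, I would present $\bar{\mcl{G}}$ by a finite resolution by locally free sheaves of the form $\mcl{O}_{\bar Y}(-m)\otimes \bar g^*\mcl{O}_X(-aD)^n$ with $m,a\gg 0$, as in the proof of Corollary~\ref{existencetheorem}, and apply Corollary~\ref{mapcomparison} term-by-term; the tor-amplitude and perfectness hypotheses then reduce to Serre vanishing. This would lift $q$ to a morphism $\mcl{O}_{\bar Y}\to \bar{\mcl{G}}$ whose completion recovers the original quotient, and surjectivity of the lift on a Zariski-open neighborhood of $\bar Y_D$ follows from Nakayama (its restriction to $\bar Y_D$ is the given surjection). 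The kernel is then a coherent ideal sheaf on this neighborhood whose restriction to an open $U\subset Y$ containing $Y_D$ cuts out a closed subscheme $Z'\subset U$ with $\widehat{Z'}=Z$. The main obstacle I anticipate lies precisely here: since $\bar{\mcl{G}}$ is not flat over $\mcl{O}_{\bar Y}$, the Hom comparison cannot be applied directly to the pair $(\mcl{O}_{\bar Y},\bar{\mcl{G}})$, and must instead be propagated through the locally free resolution with some spectral-sequence bookkeeping to ensure the lift of $q$ is well-defined and independent of choices.
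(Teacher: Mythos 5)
Your plan differs from the paper's in a way that creates a genuine, if acknowledged, difficulty, and the paper's route is worth comparing against yours because it sidesteps the problem you flag.

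You propose to (a) algebraize $\mcl{O}_{\bar Z}$ to a coherent sheaf $\bar{\mcl{G}}$ via Corollary~\ref{existencetheorem}, and then (b) algebraize the surjection $q\colon \mcl{O}_{\widehat{\bar Y_D}}\to \mcl{O}_{\bar Z}$. Step (b) asks that the restriction $\on{Hom}(\mcl{O}_{\bar Y},\bar{\mcl{G}})\to \on{Hom}(\mcl{O}_{\widehat{\bar Y_D}},\widehat{\bar{\mcl{G}}})$ be surjective. The comparison results available in the paper (Corollary~\ref{mapcomparison}, Lemma~\ref{quasiprojectivecomparison}) require the \emph{target} of the $\on{Hom}$ to be flat with proper support (or more generally that $\mbf{R}g_*\mbf{R}\Hom(\mcl{F},\mcl{G})$ be perfect); here the target is $\bar{\mcl{G}}\cong\mcl{O}_{\bar Z}$, which is precisely \emph{not} flat. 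Resolving $\bar{\mcl{G}}$ by twisted vector bundles and applying the comparison term-by-term does not immediately repair this: since $\Gamma$ is only left-exact, $\on{Hom}(\mcl{O}_{\bar Y},\bar{\mcl{G}})=\Gamma(\bar{\mcl{G}})$ is not computed as a cokernel from the resolution, and what you need is a delicate chase through the long exact sequences (and vanishing of $H^1$ of the syzygies after twisting) which is not just ``bookkeeping'' but a genuine argument that you have not supplied.

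The paper avoids all of this by swapping the roles of source and target. Rather than algebraizing $\mcl{O}_Z$ and then $q$, it presents the ideal sheaf $\mcl{I}_Z\subset\mcl{O}_{\widehat{Y_D}}$ as the image of a single twisted vector bundle: a surjection $\mcl{L}:=\widehat{\mcl{O}_Y(-m)}^n\otimes\widehat g^*\widehat{\mcl{O}_X(-aD)}\twoheadrightarrow\mcl{I}_Z$ (which exists by the same ampleness arguments used before Corollary~\ref{existencetheorem}), and then algebraizes the composite map $\mcl{L}\to\mcl{O}_{\widehat{Y_D}}$. Both the source and the target of this map are already restrictions of algebraic sheaves ($\mcl{L}$ is a twisted vector bundle, $\mcl{O}_{\widehat{Y_D}}=\widehat{\mcl{O}_Y}$), so Corollary~\ref{mapcomparison} applies directly with $\mcl{G}=\mcl{O}_Y$ flat, with no resolution or spectral sequence needed, and the cokernel of the algebraized map gives the extension. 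In effect, the paper never separately algebraizes $\mcl{O}_Z$; the surjection onto $\mcl{O}_Z$ and the ideal of $Z$ both come for free once the presentation map is algebraized. Also, for the reduction from arbitrary $g$ to quasi-projective $g$, the paper uses Chow's lemma (to cover $Y$ by a quasi-projective $X$-scheme and then push the algebraized subscheme forward by scheme-theoretic image), rather than a Nagata compactification; this keeps everything inside the $X$-projective setting where the comparison results are stated. Your overall strategy could probably be made to work, but you would have to prove a version of the comparison theorem with the flatness hypothesis on the second argument removed, which is a nontrivial addition that the paper's presentation trick makes unnecessary.
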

\begin{proof}
We first consider the case where $g$ is quasiprojective; as before, $g$ is without loss of generality a projective bundle over $X$.  Then by Corollary \ref{existencetheorem}, if $Z\subset \widehat{Y_D}$ is a (formal) subscheme of $\widehat{Y_D}$, we may choose a presentation $$\widehat{\mcl{O}_Y(-m)}^n\otimes \hat{g}^*\widehat{\mcl{O}_X(-aD)}\to \mcl{O}_{\widehat{Y_D}}\to \mcl{O}_Z$$.  Algebraizing the first map above yields (in a neighborhood of $Y_D$) a presentation for a subscheme extending $Z$.

For general $g$, we may use Chow's lemma \cite[II.5.6.1]{EGA} to obtain a quasi-projective $X$-scheme $Y'$ and a proper map $r: Y'\to Y$.  Let $Z\subset \widehat{Y_D}$ be the subscheme we wish to algebraize and $Z'=r^{-1}(Z)$ its preimage in $\widehat{Y'_D}$.  By the previous paragraph, we may algebraize $Z'$ to a closed subscheme of a Zariski-open neighborhood $U$ of $Y'_D$.  Let $Z''$ be its closure in $Y'$.  Then by the flatness of $\mcl{O}_{\widehat{Y}_D}$ over $\mcl{O}_Y$, the schemetheoretic image $r(Z'')$ is an algebraization of $Z$.  
\end{proof}
\begin{cor}\label{sectionalgebraization}
Let $k$ be a field, $g: Y\to X$  a morphism of finite type and $X$ a projective normal $k$-variety, with $D\subset X$ an ample Cartier divisor.  Let $\widehat g: \widehat{Y_D}\to \widehat D$ be the completion of $g$ at $D$.  Suppose that the dualizing sheaf $\omega_{X/k}$ has coherent cohomology supported in degrees $[-n, -m]$, with $m\geq 2$.  Then any section to $\widehat g$ extends to a Zariski-open neighborhood of $D$
\end{cor}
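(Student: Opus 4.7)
The plan is to reduce to Corollary \ref{subschemealgebraization} by identifying a section with its graph. Given a section $\widehat{s}: \widehat D \to \widehat{Y_D}$ to $\widehat g$, let $\Gamma_{\widehat s} \subset \widehat{Y_D}$ denote its image, which is a closed formal subscheme of $\widehat{Y_D}$ (cut out by the ideal sheaf generated by $t - \widehat s(\widehat g(t))$-type relations, i.e.\ the equalizer of $\on{id}_{\widehat{Y_D}}$ and $\widehat s \circ \widehat g$). By construction $\widehat g$ restricts to an isomorphism $\Gamma_{\widehat s} \xrightarrow{\sim} \widehat D$.

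First I would apply Corollary \ref{subschemealgebraization} to algebraize $\Gamma_{\widehat s}$: there exists a Zariski-open neighborhood $U \subset Y$ of $Y_D$ and a closed subscheme $Z \subset U$ whose completion along $Z \cap Y_D$ recovers $\Gamma_{\widehat s}$. The restriction $g|_Z : Z \to X$ is a morphism of finite type whose formal completion along $Z \cap g^{-1}(D)$ is the isomorphism $\Gamma_{\widehat s} \xrightarrow{\sim} \widehat D$.

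The main obstacle, and the only real content beyond Corollary \ref{subschemealgebraization}, is to upgrade this formal isomorphism to a genuine Zariski-local isomorphism. For each closed point $x \in Z \cap g^{-1}(D)$, the induced map on complete local rings $\widehat{\mcl{O}}_{X, g(x)} \to \widehat{\mcl{O}}_{Z, x}$ is an isomorphism; by faithful flatness of completion, the map $\mcl{O}_{X, g(x)} \to \mcl{O}_{Z, x}$ is itself an isomorphism, and so $g|_Z$ is an isomorphism at $x$. Since being an isomorphism is an open condition for finite type morphisms of Noetherian schemes (equivalently, the locus where $g|_Z$ is flat, unramified, and radicial with image of full dimension is open), we may shrink $U$ so that $g|_Z : Z \xrightarrow{\sim} V$ is an isomorphism onto a Zariski-open $V \subset X$ containing $D$.

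Finally, the inverse $s := (g|_Z)^{-1} : V \to Z \hookrightarrow U \subset Y$ is a section to $g$ over $V$ whose completion along $D$ agrees with $\widehat s$ by construction, giving the desired algebraization of the section in a Zariski-open neighborhood of $D$.
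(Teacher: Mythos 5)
Your first step matches the paper exactly: the paper's entire proof is the one-line remark ``apply Corollary \ref{subschemealgebraization} to the graph of a section,'' so algebraizing $\Gamma_{\widehat s}$ to a closed subscheme $Z$ of a Zariski-open $U\subset Y$ is the intended move, and the real content (which the paper elides) is exactly the step you try to supply: showing that $g|_Z$ is an isomorphism onto a neighborhood of $D$.

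Unfortunately, the key step is incorrect. You write that because $\widehat{\mcl{O}}_{X,g(x)}\to\widehat{\mcl{O}}_{Z,x}$ is an isomorphism, ``by faithful flatness of completion, the map $\mcl{O}_{X,g(x)}\to\mcl{O}_{Z,x}$ is itself an isomorphism.'' Faithful flatness of $\mcl{O}_{Z,x}\to\widehat{\mcl{O}}_{Z,x}$ gives \emph{injectivity} of $\mcl{O}_{X,g(x)}\to\mcl{O}_{Z,x}$ (since the composite to $\widehat{\mcl{O}}_{Z,x}\cong\widehat{\mcl{O}}_{X,g(x)}$ is the completion map), but it gives nothing toward surjectivity: one would need $\mcl{O}_{Z,x}\otimes_{\mcl{O}_{X,g(x)}}\widehat{\mcl{O}}_{X,g(x)}\cong\widehat{\mcl{O}}_{Z,x}$, which fails when $\mcl{O}_{Z,x}$ is not finite over $\mcl{O}_{X,g(x)}$. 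A concrete counterexample: the map $k[t]_{(t)}\to k[u]_{(u)}$, $t\mapsto u^2-u$, induces an isomorphism $k[[t]]\xrightarrow{\sim}k[[u]]$ (since $u^2-u$ is a uniformizer of $k[[u]]$), but is not surjective on local rings ($u$ is not a rational function of $t$). What \emph{does} follow from the isomorphism on completions is that $g|_Z$ is flat and unramified, hence \'etale, at each point of $Z_D$; but \'etale is strictly weaker than ``local isomorphism,'' and the passage from \'etale plus formal-section data to a Zariski-local section is precisely where the difficulty lies.

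This gap is not cosmetic. After invoking Corollary \ref{subschemealgebraization}, your argument makes no further use of the hypothesis $m\geq 2$, yet the statement fails without it: taking $X=\mbb{P}^1$, $D=\{0\}$, and an \'etale double cover near $0$ with one preimage deleted produces a formal section over $\widehat D$ (the power-series branch of $u$ with $u(0)=0$ in $u^2-u=t$) that extends to no Zariski-open neighborhood of $D$. So any correct completion of this proof must re-engage the positivity/dimension hypotheses. A viable route is: after algebraization, $g|_Z$ is \'etale on an open $V\supset Z_D$; compactify $Z$ over $X$ and observe that, since $Z_D\cong D$ is a single reduced point in each fiber over $D$, the map is quasi-finite (hence finite, after shrinking) over a Zariski-open $W\supset D$; then show $\mcl{O}_W\to (g|_Z)_*\mcl{O}_Z|_W$ is an isomorphism near $D$ by noting its kernel and cokernel are coherent and complete to zero along $D$, hence vanish near $D$ by Krull, and use formal GAGA to identify the completion of the pushforward. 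Each of these steps needs care — in particular, controlling the extra components introduced by compactification — and that is where the hypotheses on $D$ and $\omega_{X/k}$ must actually be used.
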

\begin{proof}
We may apply Corollary \ref{subschemealgebraization} to the graph of a section.
\end{proof}
\begin{cor}\label{sectionsareequal}
Let $k$ be a field, $g: Y\to X$ be a separated morphism of finite type and $X$ a projective $k$-variety, with $D\subset X$ a non-empty subscheme.  Let $\widehat g: \widehat{Y_D}\to \widehat D$ be the completion of $g$ at $D$. Then two sections to $g$ agreeing on $\widehat D$ are equal.
\end{cor}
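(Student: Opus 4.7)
The plan is to realize the locus where the two sections agree as a closed subscheme of $X$ and then use Krull's intersection theorem to promote equality on $\widehat D$ to equality on an open neighborhood, finishing via irreducibility. First, since $s_1, s_2$ are both sections of $g$, they assemble into a morphism
$$(s_1, s_2): X \to Y \times_X Y.$$
Because $g$ is separated, the relative diagonal $\Delta_{Y/X}\colon Y \to Y \times_X Y$ is a closed immersion, so
$$Z := (s_1, s_2)^{-1}\bigl(\Delta_{Y/X}\bigr)$$
is a closed subscheme of $X$, and $s_1 = s_2$ if and only if $Z = X$.

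Next I would translate the hypothesis. The assumption that $s_1$ and $s_2$ agree on $\widehat D$ means that for every $n\geq 1$ they coincide on the $n$-th infinitesimal neighborhood $D_n$, so $D_n \hookrightarrow Z$ as closed subschemes of $X$. Writing $\mathcal{J}_Z$ and $\mathcal{I}_D$ for the ideal sheaves of $Z$ and $D$ in $\mathcal{O}_X$, this gives $\mathcal{J}_Z \subseteq \mathcal{I}_D^{\,n}$ for every $n$, hence
$$\mathcal{J}_Z \ \subseteq\ \bigcap_{n\geq 1} \mathcal{I}_D^{\,n}.$$

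Then I would localize at a well-chosen point of $D$. Since $X$ is of finite type over the field $k$ it is Jacobson, and the non-empty locally closed subset $D$ contains a closed point $x$ of $X$. At such $x$, the stalk $\mathcal{I}_{D,x}$ lies in the maximal ideal of the Noetherian local ring $\mathcal{O}_{X,x}$, so Krull's intersection theorem gives $\bigcap_n \mathcal{I}_{D,x}^{\,n} = 0$ and hence $\mathcal{J}_{Z,x} = 0$. By coherence of $\mathcal{J}_Z$ this vanishing spreads to a Zariski-open neighborhood $U$ of $x$ in $X$.

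Finally, I would invoke irreducibility. As $X$ is a (projective) variety, and hence integral, the non-empty open set $U$ is dense and contains the generic point $\eta$ of $X$; the inclusion $\mathcal{O}_X \hookrightarrow \mathcal{O}_{X,\eta}$ (valid because $X$ is reduced and irreducible) then forces $\mathcal{J}_Z = 0$, i.e.~$Z = X$. I do not expect any serious obstacle here: the only thing to be careful about is that the conclusion genuinely uses $X$ being a variety (integral), since otherwise $Z$ could fail to meet components of $X$ disjoint from $D$; in the irreducible case handled by the statement, the argument above suffices.
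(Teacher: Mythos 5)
Your proof is correct, but it takes a genuinely different route from the paper's. The paper proves this by invoking representability of the section functor $T \mapsto \on{Sections}(g_T)$ by a scheme locally of finite type over $X$; it then passes from sections over the formal scheme $\widehat D$ to sections over the honest affine $X$-scheme $\mcl{D} = \on{Spec}_X \mcl{O}_{\widehat D}$, and concludes from the density of the image of $\mcl{D}\to X$ together with integrality of $X$. You instead use separatedness directly: the diagonal $\Delta_{Y/X}$ is closed, so the equalizer $Z = (s_1,s_2)^{-1}(\Delta_{Y/X})$ is a closed subscheme of $X$, and agreement on $\widehat D$ gives $D_n \subseteq Z$ for all $n$, hence $\mcl{J}_Z \subseteq \bigcap_n \mcl{I}_D^n$. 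Localizing at any point $x\in D$ (you do not even need the Jacobson property here — $\mcl{I}_{D,x}$ is automatically a proper ideal of the Noetherian local ring $\mcl{O}_{X,x}$) and applying Krull's intersection theorem gives $\mcl{J}_{Z,x}=0$, so $\mcl{J}_Z$ vanishes on an open set, and integrality forces $\mcl{J}_Z = 0$ globally. Your argument is more elementary: it avoids the representability of the Hom/Hilbert functor (and the attendant hypotheses and subtleties that entails), uses only the separatedness hypothesis and basic commutative algebra, and does not actually require projectivity of $X$ — integral and Noetherian would suffice. The paper's proof is slicker and meshes with the representability machinery used elsewhere, but yours is self-contained and arguably sharper in its hypotheses.
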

\begin{proof}
The functor $\on{Hom}_X(X, Y)(-)$ sending an $X$-scheme $T$ to $$\on{Hom}_T(T, Y_T)=\on{Sections}(g_T)$$ is representable by a scheme locally of finite type.  Thus, letting $D_n$ be the $n$-th infinitesimal neighborhood of $D$, we have $$\on{Sections}(g_{\widehat{D}})=\varprojlim \on{Sections}(g_{D_n})=\on{Sections}(g_\mcl{D}),$$where $\mcl{D}=\on{Spec}_X \mcl{O}_{\widehat{D}}$.  But $\mcl{D}\to X$ has dense image, so any two sections agreeing on pullback to $\mcl{D}$ must agree (using that $X$ is integral).
\end{proof}
%\begin{cor}\label{etalecoveralgebraization}
%Let $k$ be a field and $X$ a projective normal $k$-variety, with $D\subset X$ an ample Cartier divisor.  Let $\widehat D$ be the formal scheme obtained by completing $X$ at $D$.   Suppose that the dualizing sheaf $\omega_{X/k}$ has coherent cohomology supported in degrees $[-n, -m]$, with $m\geq 2$.  Then any finite \'etale cover of $\widehat D$ extends to a Zariski-open neighborhood of $D$
%\end{cor}
%\begin{proof}
%Let $c: \widehat D'\to \widehat D$ be the given finite \'etale cover.  We apply Corollary \ref{existencetheorem} to $c_*\mcl{O}_{\widehat D'}$, and then Corollary \ref{mapcomparison} to algebraize the morphisms turning $c_*\mcl{O}_{\widehat D'}$ into a sheaf of $\mcl{O}_{\widehat D}$-algebras..
%\end{proof}

\section{Extending rational maps}\label{extension}
In this section we study the problem of extending a rational map from an open subvariety $U$ of a variety $X$ to another variety $Y$, in various settings.  Notably, we prove results when $\dim(Y)\leq \dim(X)-2$ and when $Y$ is proper and contains no rational curves.  We also give relative results (e.g. given a map $Y\to X$ and a rational section, we study the problem of extending it to a regular section) and give analogous results over general bases.
\subsection{Maps to a target of small dimension}
We first consider the case of extending a rational map $f: X\dashrightarrow Y$ where $Y$ has dimension small with respect to the base locus of $f$.  The main idea of the arguments is that the base locus of a rational map cannot be too small if $X$ has mild singularities.
\begin{prop}\label{projectiveextension}
Let $X$ be a normal $k$-variety, with $k$ a field, and let $f: Y\to X$ be a morphism of $k$-varieties with fiber dimension at most $\dim(X)-2-r$, and with $Y$ projective.  Let $U$ be a Zariski-open subset of $X$ whose complement $X\setminus U$ is $r$-dimensional, with codimension is at least $2$.  Then any section $s: U\to Y$ of $f$ extends uniquely to $X$.
\end{prop}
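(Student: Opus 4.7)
My plan is to form the scheme-theoretic closure $Z := \overline{s(U)} \subset Y$ and show that the projection $\pi := f|_Z : Z \to X$ is an isomorphism; the desired extension is then $\pi^{-1}: X \to Z \hookrightarrow Y$. Since $Y$ is projective over $k$ and $Z$ is closed in $Y$, the map $Z \to X$ is proper, and using $Y \hookrightarrow \mathbb{P}^N_k$ one checks that $\pi$ is in fact $X$-projective. The section $s$ identifies $s(U)$ with the open subscheme $\pi^{-1}(U) \subset Z$, and $\pi|_{s(U)}$ is inverse to $s$, so $\pi$ is birational.

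Next I bound the exceptional locus. For each $x \in X$ one has $\pi^{-1}(x) \subset f^{-1}(x)$, so the fiber-dimension hypothesis gives $\dim \pi^{-1}(x) \leq \dim(X) - 2 - r$, and hence
\[ \dim \pi^{-1}(X \setminus U) \leq \dim(X \setminus U) + (\dim(X) - 2 - r) = r + (\dim(X) - 2 - r) = \dim(X) - 2. \]
Since $\dim Z = \dim X$, the locus $E \subset Z$ on which $\pi$ fails to be an isomorphism is contained in $\pi^{-1}(X \setminus U)$ and therefore has codimension at least $2$ in $Z$.

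To conclude that $\pi$ is an isomorphism (equivalently, $E = \emptyset$), I combine this codimension bound with the normality of $X$ and the $X$-projectivity of $\pi$. Normality of $X$ together with proper birationality of $\pi$ gives $\pi_*\mathcal{O}_Z = \mathcal{O}_X$, so every fiber of $\pi$ is connected (Zariski's connectedness theorem); by Zariski's Main Theorem $\pi$ is an isomorphism at $x$ iff $\dim \pi^{-1}(x) = 0$. If $E$ were nonempty, a $\pi$-ample line bundle $L$ on $Z$ would have strictly positive degree on any curve contracted by $\pi$. On the other hand, because $E$ has codimension at least $2$ in $Z$, the restriction of $L$ to the isomorphism locus $Z \setminus E$ extends, via normality of $X$, to a reflexive rank-one sheaf on $X$; pulling back such a sheaf forces the intersection with any contracted curve to vanish, contradicting positivity. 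Hence $E$ must be empty and $\pi$ an isomorphism.

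Uniqueness is immediate: two extensions $s_1, s_2 : X \to Y$ agree on the dense open $U$, and since $Y$ is separated over $k$ their equalizer is closed in $X$, hence all of $X$. The main obstacle in executing the plan is the last step of the existence argument: promoting the bound $\operatorname{codim}_Z E \geq 2$ to $E = \emptyset$ under only the hypothesis that $X$ is normal. Without local $\mathbb{Q}$-factoriality, small birational contractions (of flopping type) a priori exist, so the argument must use the $X$-projectivity of $Z$ — which is where the projectivity of $Y$ over $k$ enters essentially — to rule them out.
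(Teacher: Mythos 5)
Your plan is structurally the same as the paper's: take the closure $Z$ of $s(U)$ in $Y$ (the paper normalizes it), observe $\pi:Z\to X$ is proper birational, bound the exceptional locus in codimension $\geq 2$, and try to conclude that $\pi$ is an isomorphism. Your uniqueness argument is fine. The problem is exactly where you suspect: the step ``pulling back such a sheaf forces the intersection with any contracted curve to vanish'' is not valid. The extension $\mcl{M}$ of $L|_{Z\setminus E}$ across $\pi(E)$ is only a reflexive rank-one sheaf on the normal variety $X$; unless it is $\mbb{Q}$-Cartier near $\pi(E)$, neither ``pullback'' nor ``degree on a contracted curve'' is defined, and that is precisely what local $\mbb{Q}$-factoriality would supply. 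You then say that the $X$-projectivity of $Z$ should rule out small contractions; it does not. Flopping contractions and small resolutions are $X$-projective, with relatively ample line bundles, and they are the scenario you need to exclude. So the middle paragraph is a genuine gap, and your closing diagnosis does not actually close it.

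What you may not realize is that the paper's own proof has the same gap, and indeed Proposition~\ref{projectiveextension} as stated is false. The paper asserts that ``the line bundle $\mcl{N}$ on $X$'' is obtained by extending $\mcl{L}|_U$ across $X\setminus U$ using only the normality of $X$; but over a normal variety that is not locally factorial along $X\setminus U$, this extension is only a reflexive rank-one sheaf, not a line bundle, and the concluding sentence of the paper's proof then does not follow (one only gets a surjection $\pi^*\mcl{N}\to\mcl{L}|_{X'}$ from a sheaf of generic rank one, which forces nothing). Concretely, take $X\subset\mbb{P}^4$ to be the projective cone over the quadric $\mbb{P}^1\times\mbb{P}^1\subset\mbb{P}^3$, a normal projective threefold with an isolated ordinary double point $v$ at which $X$ is not $\mbb{Q}$-factorial. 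Let $U=X\setminus\{v\}$, so $X\setminus U$ is $0$-dimensional of codimension $3$ and $r=0$. Let $Y=\mbb{P}^1\times X$ with $f$ the second projection, so $Y$ is projective and the fiber dimension is $1=\dim X-2-r$. One of the two ruling projections gives a morphism $\varphi:U\to\mbb{P}^1$, and $s=(\varphi,\mathrm{id}):U\to Y$ is a section of $f_U$ satisfying all hypotheses of the proposition; yet $\varphi$ does not extend across $v$ (the limits along the two planes $\{x_1=x_3=0\}$ and $\{x_0=x_2=0\}$ disagree), so $s$ does not extend. The closure of $s(U)$ in $Y$ is a small resolution of $X$, and the sheaf $\mcl{N}$ (resp.\ your $\mcl{M}$) is the class of a non-Cartier Weil divisor — exactly where both proofs break.

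This also makes Corollary~\ref{nonproperextension}(2) fail when $X$ is merely normal. The downstream results of the paper are unaffected, because the main theorems take $X$ smooth (hence locally factorial) and the extension step then goes through Proposition~\ref{properextension}, whose $\mbb{Q}$-factoriality hypothesis makes the argument with the exceptional-locus purity theorem correct. The honest fix for Proposition~\ref{projectiveextension} is either to add the hypothesis that $X$ is locally factorial along $X\setminus U$ (or $\mbb{Q}$-factorial), or to strengthen the fiber-dimension bound so that it is strict; with the current hypotheses the statement is wrong.
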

\begin{proof}
Let $X'$ be the (normalized) closure of the image of $s$ in $Y$; we wish to show that $f|_{X'}: X'\to X$ is an isomorphism. Let $\mcl{L}$ be a very ample line bundle on $Y$.  Then $\mcl{L}|_{X'}$ is very ample.  Let $\mcl{N}$ be the line bundle on $X$ obtained by restricting $\mcl{L}$ to $U$, and extending uniquely to $X$ (which we may do by the normality of $X$ and the fact that $X\setminus U$ has codimension at least $2$).  By the assumption on the fiber dimension of the map $f: Y\to X$, the complement $X'\setminus U$ has codimension at least $2$, so again by normality, $$\Gamma(X', \mcl{L}|_{X'})=\Gamma(U, \mcl{L}|_U)=\Gamma(X, \mcl{N}).$$
But $\mcl{L}$ was very ample, so this implies that the map $f|_{X}': X'\to X$ is indeed an isomorphism. 
\end{proof}
\begin{prop}\label{properextension}
Let $X$ be a normal, locally $\mbb{Q}$-factorial $k$-variety of dimension at least $2$, with $k$ a field, and $f: Y\to X$ a proper morphism with fiber dimension at most $\dim(X)-2-r$.  Then any rational section to $f$, defined on a Zariski-open subset $U$ of $X$ with $\dim(X\setminus U)\leq r$, extends to a regular section.
\end{prop}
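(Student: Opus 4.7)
I would follow closely the strategy of Proposition \ref{projectiveextension}. Let $X'$ denote the (normalization of the) closure of the image of $s$ in $Y$, so that $\pi: X'\to X$ is a proper birational morphism of normal varieties. The fiber-dimension hypothesis on $f$ together with $\dim(X\setminus U)\leq r$ forces the exceptional locus of $\pi$ to have codimension at least $2$ in both $X$ and $X'$, exactly as in the proof of Proposition \ref{projectiveextension}. It therefore suffices to prove that $\pi$ is an isomorphism, since then $\pi^{-1}: X\to X'\hookrightarrow Y$ is the desired regular section.

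The new ingredient beyond the projective case is the classical fact that a small proper birational morphism onto a locally $\mathbb{Q}$-factorial normal variety is an isomorphism (a consequence of the negativity lemma). One may either cite this directly or reprove it as follows. Reduce to the case where $X$ is affine, then apply Chow's lemma to $\pi$ to obtain a projective birational morphism $g:\widetilde{X}'\to X'$ with $\pi\circ g$ projective, arranged to be an isomorphism over $\pi^{-1}(U)\subseteq X'$ (a quasi-affine open of $X'$). Suppose for contradiction that $\pi$ is not an isomorphism; then some $x\in X\setminus U$ has a positive-dimensional fiber containing a projective curve $C$, with lift $\widetilde{C}\subseteq \widetilde{X}'$. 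Pick a $(\pi\circ g)$-ample effective Cartier divisor $H$ on $\widetilde{X}'$, so that $H\cdot \widetilde{C}>0$. By local $\mathbb{Q}$-factoriality of $X$, the Weil divisor $(\pi\circ g)_{*}H$ is $\mathbb{Q}$-Cartier, hence $(\pi\circ g)^{*}\bigl(n(\pi\circ g)_{*}H\bigr)$ is Cartier on $\widetilde{X}'$ for some $n>0$ and has trivial intersection with $\widetilde{C}$ (since $\widetilde{C}$ is contracted to the point $x$). Writing this Cartier pullback as $nH+E$ with $E$ a $(\pi\circ g)$-exceptional $\mathbb{Q}$-divisor, and applying the negativity lemma to the effective part of $E$, yields the desired contradiction.

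The main obstacle is this final intersection-theoretic step: one has to handle the fact that $\widetilde{X}'$ is not assumed $\mathbb{Q}$-factorial and that the exceptional $\mathbb{Q}$-divisor $E$ above is not automatically effective, so invoking the negativity lemma requires first writing $E=E^{+}-E^{-}$ and arguing component by component. An alternative, more in the spirit of Proposition \ref{projectiveextension}, would be to extend a suitable power of a $(\pi\circ g)$-very ample line bundle on $\widetilde{X}'$ from $\pi^{-1}(U)\cong U$ to a line bundle $\mcl{N}$ on $X$ using $\mathbb{Q}$-factoriality to cross the codimension-$\geq 2$ locus $X\setminus U$, compare global sections via normality of $X$, and then mimic the very-ampleness conclusion of Proposition \ref{projectiveextension} directly.
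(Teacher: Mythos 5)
Your overall approach matches the paper's. Both proofs take the closure (the paper calls it $\Gamma$, you take its normalization $X'$) of the image of $s$, observe that the fiber-dimension hypothesis together with $\dim(X\setminus U)\leq r$ bounds $\on{Exc}(\pi)$ by codimension $\geq 2$, and then invoke the fact that a proper birational morphism onto a normal, locally $\mathbb{Q}$-factorial variety has exceptional locus of pure codimension $1$ (equivalently, a small such morphism is an isomorphism); this forces $\on{Exc}(\pi)=\emptyset$. The paper cites this fact as Debarre [1.40]; you offer either to cite it or to reprove it. If you simply cite it, your proof is correct and is essentially identical to the paper's.

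Your attempted reproof, though, has a gap different from the one you flag. The concern that $E$ might not be effective is not the issue: since $-E=nH-(\pi\circ g)^*\bigl(n(\pi\circ g)_*H\bigr)$ is $(\pi\circ g)$-nef (indeed ample) and $(\pi\circ g)_*E=0$, the negativity lemma gives $E\geq 0$ directly, with no need to split $E=E^{+}-E^{-}$. The genuine problem is that the inequality $E\cdot\widetilde{C}<0$ only contradicts $E\geq 0$ when $\widetilde{C}\not\subseteq\on{Supp}(E)$; your lift $\widetilde{C}$ of a contracted curve may well lie inside a $g$-exceptional divisor, and $\on{Supp}(E)$ is supported on precisely those. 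The standard repair uses the second assertion of the negativity lemma (each fiber of $\pi\circ g$ is either contained in $\on{Supp}(E)$ or disjoint from it) to show $\on{Exc}(\pi\circ g)$ equals the union of the $(\pi\circ g)$-exceptional \emph{divisors}, and then deduces the result for $\pi$ itself; one cannot argue with a single ad hoc curve. Your fallback alternative (extend a relatively very ample bundle across $X\setminus U$ and compare global sections as in Proposition \ref{projectiveextension}) also does not close: the restriction map $\Gamma(\widetilde{X}',\mcl{L})\to\Gamma(\pi^{-1}(U),\mcl{L})$ need not be an isomorphism when $\on{Exc}(g)$ is divisorial, so the codimension-$\geq 2$ section comparison that drives Proposition \ref{projectiveextension} is unavailable on $\widetilde{X}'$.
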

\begin{proof}
Let $s$ be a rational section to $f$, as in the statement of the proposition, and let $\Gamma$ be the closure of its image.  By \cite[1.40]{debarre}, the exceptional locus of the map $\Gamma\to X$ is of pure codimension $1$ in $\Gamma$.  But the dimension of the exceptional locus is at most $\dim(X)-2$, so it must be empty; hence $\Gamma\to X$ is an isomorphism.
\end{proof}
\begin{cor}\label{nonproperextension}
Let $X$ be a normal projective $k$-variety, and $Y$ a $k$-variety such that 
\begin{enumerate}
\item $X$ is locally $\mbb{Q}$-factorial, or
\item $Y$ is quasi-projective.
\end{enumerate}
Let $D\subset X$ be an ample divisor and $U\subset X$ a Zariski-open subset containing $D$.  Then if $\dim(Y)\leq \dim(X)-2$, any map $U\to Y$ extends uniquely to a map $X\to Y$.
\end{cor}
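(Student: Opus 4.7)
The plan is to reduce to Propositions~\ref{projectiveextension} and~\ref{properextension}, applied to a compactification $\bar Y$ of $Y$ whose boundary is an effective Cartier divisor. First I extend $f$ to a morphism $g:X\to\bar Y$ using those propositions; then I use a codimension-one argument on $X$ to show that $g$ actually lands in $Y$.

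As a preliminary observation, $X\setminus U$ is zero-dimensional: since $D\subseteq U$ is an ample divisor on the projective variety $X$, the complement $X\setminus D$ is affine, and the closed subset $X\setminus U\subseteq X\setminus D$ is both affine and proper, hence finite. Writing $r:=\dim(X\setminus U)=0$, we have $\operatorname{codim}_X(X\setminus U)=\dim X\geq \dim Y+2\geq 2$. I next construct $\bar Y$: in case (2), I embed $Y\hookrightarrow\mathbb{P}^N$ as an open in its closure $\bar Y_0$; in case (1), I invoke Nagata's compactification theorem to produce a proper $\bar Y_0\supseteq Y$. In either case I then blow up $\bar Y_0$ along the ideal sheaf of $\bar Y_0\setminus Y$ to obtain $\bar Y\to\bar Y_0$; by the defining property of the blow-up, $\bar Y\setminus Y$ is an effective Cartier divisor, $Y$ remains an open subset of $\bar Y$, and $\bar Y$ is still projective (case 2) or proper (case 1).

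Viewing $f:U\to Y\subseteq\bar Y$ as a section on $U$ of the projection $\pi:X\times\bar Y\to X$, I note that $\pi$ is projective (case 2) or proper (case 1), with constant fiber dimension $\dim\bar Y=\dim Y\leq\dim X-2-r$. Since $X$ is normal in both cases (given in case 2; local $\mathbb{Q}$-factoriality implies normality in case 1), Proposition~\ref{projectiveextension} in case (2) and Proposition~\ref{properextension} in case (1) extend the section to all of $X$, producing a morphism $g:X\to\bar Y$ extending $f$.

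The main obstacle is the final step: showing that $g(X)\subseteq Y$, equivalently that $g^{-1}(Z)=\emptyset$ for $Z:=\bar Y\setminus Y$. The closed subset $g^{-1}(Z)\subseteq X\setminus U$ is automatically finite. Suppose for contradiction that some $x_0\in g^{-1}(Z)$ exists. Choose an affine neighborhood $W_0\subseteq\bar Y$ of $g(x_0)$ on which $Z\cap W_0$ is principal, cut out by a single $h\in\mathcal{O}(W_0)$, and let $W:=g^{-1}(W_0)\subseteq X$. Then $g^{-1}(Z)\cap W$ is the zero scheme of $g^{\ast}h$. Since $W$ is normal of dimension at least $2$, either $g^{\ast}h$ vanishes identically on the component of $W$ through $x_0$---forcing $f$ to map a nonempty open of $U$ into $Z$, contradicting $f(U)\subseteq Y$---or its vanishing locus has pure codimension one, contradicting the finiteness of $g^{-1}(Z)$. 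Hence $g^{-1}(Z)=\emptyset$, and $g$ factors through $Y$. Uniqueness is immediate from separatedness of $Y$: the equalizer of any two extensions is a closed subset of $X$ containing the dense open $U$, hence all of $X$.
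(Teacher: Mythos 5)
Your proof is correct. It follows the same overall architecture as the paper's: compactify $Y$, use Propositions~\ref{projectiveextension}/\ref{properextension} applied to the projection $X\times\bar Y\to X$ (exploiting that $X\setminus U$ is finite) to extend to a map $g\colon X\to\bar Y$, and then argue that $g$ factors through $Y$. The divergence is entirely in the last step. The paper does \emph{not} arrange for the boundary to be Cartier; it instead observes that for $y\in\bar Y\setminus Y$, a nonempty fiber $g^{-1}(y)$ would have dimension $\geq\dim X-\dim\bar Y\geq 2>0$, hence would meet the ample divisor $D$, contradicting $g(D)=f(D)\subseteq Y$. You instead pass to a blow-up so that $Z:=\bar Y\setminus Y$ becomes an effective Cartier divisor, and then invoke Krull's principal ideal theorem to show that $g^{-1}(Z)$, being the vanishing locus of a local equation for $Z$, is empty, all of $X$, or pure codimension one in the normal variety $X$ of dimension $\geq 2$; the first is what you want, the second contradicts $f(U)\subseteq Y$, and the third contradicts finiteness of $g^{-1}(Z)$. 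Both arguments are sound. The paper's is slightly leaner: it uses ampleness of $D$ (already in hand) rather than modifying the compactification, avoiding the blow-up construction and the Cartier hypothesis. Your version has the mild advantage of only using that $D$ is an ample divisor to control $X\setminus U$, so it would adapt to settings where the ``every positive-dimensional closed subvariety meets $D$'' property is less immediate; but in the present hypotheses the paper's route is more economical.
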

\begin{proof}
Let $\bar Y$ be a proper compactification of $Y$, which exists by quasiprojectivity in case (2) and by Nagata \cite{conrad-nagata} in case (1).  Then by Propositions \ref{projectiveextension} and \ref{properextension}, respectively, applied to the map $\bar Y\times X\to X$, the map $U\to Y$ extends uniquely to a map $X\to \bar Y$ (here we use that $\dim(X\setminus U)=0$, by the ampleness of $D$).  We wish to show that the image of the map $\bar f: X\to \bar Y$ has empty intersection with $\bar Y\setminus Y$.  Indeed, consider $y\in \bar Y\setminus Y$.  As the $\dim(X)>\dim(Y)$, $\dim({\bar f}^{-1}(y))\geq 1$.  But then it has non-empty intersection with $D$, by the ampleness of $D$.  But this contradicts the fact that $D$ maps into $Y$.  
\end{proof}
\begin{rem}
Observe that in the above result, $Y$ need not be proper.  In fact, the proof shows that the image of the map $U\to Y$ and its extension $X\to Y$ are the same.
\end{rem}
\begin{cor}\label{smallimageextension}
Let $X$ be a normal projective $k$-variety, and $Y$ a quasi-projective $k$-variety.   Let $D\subset X$ be an ample divisor and $U\subset X$ a Zariski-open subset containing $D$.  Then any map $f: U\to Y$ with $\dim(f(D))\leq \dim(D)-2$ extends uniquely to a map $X\to Y$.
\end{cor}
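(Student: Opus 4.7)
The plan is to mirror the proof of Corollary \ref{nonproperextension}. First, I would observe that since $D$ is ample and $D\subseteq U$, the complement $X\setminus U$ is zero-dimensional: any positive-dimensional closed subset of $X$ must meet $D$. The inequality $\dim f(D)\leq\dim(D)-2$ forces $\dim X\geq 3$ (once $f(D)$ is nonempty), so $X\setminus U$ has codimension at least $2$ in $X$.

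Next, I would compactify: let $Y\subseteq\bar Y$ be a projective compactification, form the closure $\bar\Gamma\subseteq X\times\bar Y$ of the graph of $f$, and write $\pi\colon\bar\Gamma\to X$, $q\colon\bar\Gamma\to\bar Y$ for the projections. The map $\pi$ is proper and restricts to an isomorphism over $U$. The strategy is to apply Proposition \ref{projectiveextension} to $\pi$ with $r=0$, concluding that $\pi$ is an isomorphism and hence yielding an extension $\tilde f=q\circ\pi^{-1}\colon X\to\bar Y$.

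The heart of the argument, and what I expect to be the main obstacle, is bounding the fiber dimension of $\pi$ over each point $p\in X\setminus U$. The claim I would aim to prove is
\[
q(\pi^{-1}(p))\subseteq\overline{f(D)}\subseteq Y\subseteq\bar Y,
\]
from which $\dim\pi^{-1}(p)\leq\dim\overline{f(D)}\leq\dim(D)-2=\dim(X)-3$ follows, well within the $\dim(X)-2$ threshold required by Proposition \ref{projectiveextension}. The intuition is that ampleness of $D$ tethers every approach to $p$ against $D$, so every limiting value of $f$ at $p$ should lie in $\overline{f(D)}$. To make this rigorous I would test with DVR-valued arcs $\mathrm{Spec}\,R\to\bar\Gamma$ specializing to points of $\pi^{-1}(p)$, and try to exploit the very ampleness of $|nD|$ for $n\gg 0$ to arrange the projected arc $\mathrm{Spec}\,R\to X$ to have its generic point on $D$, forcing the specialization in $\bar Y$ to lie in $\overline{f(D)}$.

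Granted the fiber bound, Proposition \ref{projectiveextension} shows $\pi$ is an isomorphism, and the same bound places $\tilde f(X\setminus U)\subseteq\overline{f(D)}\subseteq Y$, so $\tilde f$ in fact factors through $Y$, giving the desired morphism $X\to Y$. Uniqueness is automatic since any two extensions agree on the dense open $U$.
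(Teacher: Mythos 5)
Your overall plan mirrors the paper's (compactify, close the graph, apply the small-fiber extension results), but the DVR argument you propose for the key bound cannot work as stated. If an arc $\Spec R\to X$ has its generic point landing in $D$, then the whole image lies in the closed set $D$; in particular the closed point would have to map into $D$, but you need it to specialize to $p\in X\setminus U$, and $p\notin D$. So there is no way to tether an arc through $p$ to $D$, and the inclusion $q(\pi^{-1}(p))\subseteq\overline{f(D)}$ cannot be established by testing on arcs.

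The paper obtains the needed bound by a coarser use of ampleness, bounding $\dim Y'$ where $Y'\subseteq\bar Y$ is the closure of the scheme-theoretic image of $f$, rather than bounding $\pi^{-1}(p)$ directly. By Corollary~\ref{nonproperextension} it suffices to show $\dim Y'\leq\dim X-2$, i.e., that $\overline{f(D)}$ has codimension at most one in $Y'$. If instead its codimension were at least two, there would exist a curve $C\subset Y'$ disjoint from $\overline{f(D)}$; resolving $f$ to a regular map $f'\colon X'\to Y'$ with $r\colon X'\to X$ proper and an isomorphism over $U$, the set $r({f'}^{-1}(C))$ is then a closed subset of the projective variety $X$ disjoint from $D$ (over $U$ it is $f^{-1}(C)$, which misses $D$ since $C\cap f(D)=\emptyset$, and the rest lies over $X\setminus U$, also disjoint from $D$), contradicting the ampleness of $D$. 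This weaker bound already furnishes both what you need for Proposition~\ref{projectiveextension} --- the fiber $\pi^{-1}(p)$ injects into $Y'$ under $q$ --- and that the extension lands in $Y$ rather than $\bar Y$, by the argument closing the proof of Corollary~\ref{nonproperextension}. The stronger pointwise inclusion you aim for is true in the end, but it is a consequence of the dimension bound, not a tool for proving it.
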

\begin{proof}
Let $Y'$ be a projective compactification of the scheme-theoretic image of $f$, and resolve the rational map $U\to Y'$ to a regular map $f': X'\to Y'$, with $r: X'\to X$ proper.  By Corollary \ref{nonproperextension}, it suffices to show that $\dim(Y')\leq \dim(X)-2$; hence it suffices to show that $f(D)$ has codimension one in $Y'$.  Assume the contrary; as $Y'$ is projective, there exists a curve $C$ in $Y'$ disjoint from $f(D)$.  Now $r({f'}^{-1}(C))$ is disjoint from $D$, contradicting the ampleness of $D$. 
\end{proof}
\subsection{Maps to a target containing no rational curves}
We now consider the case of maps to a target with no rational curves.  The main idea of this section is that one may resolving a rational map via blowups, which if the source of the map is smooth, introduces many rational curves.  If the target contains no rational curves, the resolved map must contract the exceptional locus; hence the original rational map extends to a regular map.
\begin{prop}\label{noratlcurvesextension}
Let $X$ be a smooth variety over a field $k$, and let $f:Y \to X$ be a proper morphism.  If the geometric fibers of $f$ contains no rational curves, any rational section of $f$, denoted $s: X\dashrightarrow Y$, extends uniquely to a regular section $X\to Y$.
\end{prop}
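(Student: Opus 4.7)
The plan is to show that the proper birational morphism $\pi:\Gamma\to X$ is an isomorphism, where $\Gamma\subseteq Y$ is the scheme-theoretic closure of $s(U)$ for $U\subseteq X$ the domain of definition of $s$. Once this is done, $\pi^{-1}$ lands in $\Gamma\hookrightarrow Y$ and yields the desired regular section of $f$. The map $\pi$ is proper (as $\Gamma$ is closed in the proper $X$-scheme $Y$) and birational (as it inverts $s$ on $U$). Uniqueness of any such extension is automatic: two sections of the separated morphism $f$ that agree on a dense open of the integral scheme $X$ must coincide.

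To prove $\pi$ is an isomorphism I argue by contradiction. Assume the non-isomorphism locus of $\pi$ is nonempty. After base change to $\bar k$---which preserves both the smoothness of $X$ and the no-rational-curve hypothesis on the geometric fibers of $f$, and from which any extension descends to $k$ by the uniqueness established above---I may pick a closed point $x\in X$ with $\dim\pi^{-1}(x)\geq 1$. I then invoke the classical theorem that a proper birational morphism onto a regular target has every positive-dimensional fiber covered by rational curves. Applied to $\pi$ at $x$, this produces a rational curve $C\subseteq \pi^{-1}(x)\subseteq f^{-1}(x)$, contradicting the hypothesis that no geometric fiber of $f$ contains a rational curve.

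The main obstacle is invoking the ``rational-curves-in-exceptional-fibers'' theorem in full generality. In characteristic zero it follows quickly from Hironaka resolution: one may resolve $\Gamma$ to reduce to a proper birational morphism of smooth varieties, observe that each prime divisor in the exceptional locus is ruled (its generic fiber over its image is birational to $\mathbb{P}^1$), and descend the resulting rational curve through the resolution map. In positive characteristic, where resolution is not available in this generality, one instead appeals to Abhyankar's valuation-theoretic argument, which realizes any divisorial valuation of $k(X)$ centered at a non-isomorphism point as a rational curve contracted by $\pi$ to that point; this is the technically delicate step, but it is the only essential ingredient beyond formal manipulations with $\Gamma$.
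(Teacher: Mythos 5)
Your proposal is correct and follows essentially the same strategy as the paper: take the closure $\Gamma$ of the graph of $s$, reduce to the case of an algebraically closed base field, and derive a contradiction from a positive-dimensional fiber of $\Gamma\to X$ by invoking the theorem (\cite[Proposition 1.43]{debarre}, proved via Abhyankar's theorem in positive characteristic) that proper birational morphisms onto a smooth target have their exceptional fibers covered by rational curves, which would then inject into a fiber of $f$. The paper phrases the endgame via quasi-finiteness and Zariski's Main Theorem rather than as a direct contradiction, and passes to the normalization of $\Gamma$, but these are cosmetic differences.
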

\begin{proof}
Uniquess is clear, so we prove existence.  Without loss of generality, $k$ is algebraically closed.

By taking the (normalized) closure of $s(X)$ in $Y$, denoted $X'$, we find ourselives in the following situation.   We have $f: Y\to X$ a proper map with $X$ smooth and $f$ proper, and $b: X'\to X$ birational, and a section $\widetilde s$ to the map $\widetilde f: Y\times_X X'\to X'$.  That is, we have a diagram
$$\xymatrix{
Y\times_{X} X'\ar[r]^-{b_Y}\ar@/^/[d]^{\widetilde f}  & Y \ar[d]^{f}\\
X' \ar[r]^b \ar@/^/[u]^{\widetilde s}& X.
}$$
Then we wish to show that $\widetilde s$ descends a regular section to $f'$.  

Let $ X'\to Y$ be the map given by $b_Y\circ \widetilde s$.  There is a rational curve passing through the general point of an exceptional component of $b$, by e.g. \cite[Proposition 1.43]{debarre}.  Thus $b_Y\circ \widetilde s$ map contracts the fibers of $b$, as $Y$ contains no rational curves, and hence $X'$ is quasifinite over $X$.  But $f$ is proper, so $X'$ is in fact finite over $X'$; it is an isomorphism over the locus where $s$ is defined.  Hence $X'$ is isomorhpic to $X$ by Zariski's main theorem, providing us with a section as desired.
\end{proof}
\begin{lem}\label{artinflatness}
Let $A$ be an Artin local ring with residue field $k$, and $f: Y\to X$ a flat morphism of flat, finite-type $A$-schemes.  Let $X'\subset Y$ be a closed subscheme such that
\begin{enumerate}
\item the map $f|_{X'_k}: X'_k\to X_k$ is flat, 
\item the map $f|_{X'}: X'\to X$ is finite, and is flat over an open subscheme $U\subset X$, and
\item $X$ is irreducible.
\end{enumerate}
Then the map $X'\to X$ is flat.
\end{lem}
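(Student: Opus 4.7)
The plan is to invoke the fibre-by-fibre flatness criterion (Stacks Project, Tag 039A): since $X\to\Spec(A)$ is flat and $X'_k\to X_k$ is flat by hypothesis (1), the morphism $X'\to X$ will be flat at each $x'\in X'$ as soon as $X'\to\Spec(A)$ is flat at $x'$. So the task reduces to showing that $X'$ is $A$-flat.

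Hypothesis (2) combined with the flat structure map $X\to\Spec(A)$ immediately gives $X'|_U\to\Spec(A)$ flat, and the $A$-flat locus of $X'$ is open (Stacks Project, Tag 0399), so this locus already contains $X'|_U$. The remaining task is to propagate $A$-flatness to the rest of $X'$.

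I will do this by induction on the length of $A$. The base case $A=k$ is immediate, as hypothesis (1) is then the conclusion. For the inductive step, pick $\pi\in A$ with $\pi\mathfrak{m}_A=0$ and set $A'=A/(\pi)$; all the hypotheses of the lemma base-change cleanly to the pair $(X\otimes_A A',\, X'\otimes_A A')$ over $A'$ (the topological space is unchanged, $X'_k$ is unchanged, and flatness on $U$ is preserved by base change), so the inductive hypothesis gives $X'\otimes_A A'\to X\otimes_A A'$ flat, whence $X'\otimes_A A'$ is $A'$-flat. By the local criterion of flatness (Stacks Project, Tag 00MK) applied with the nilpotent ideal $(\pi)\subset A$, lifting $A'$-flatness to $A$-flatness of $\mathcal{O}_{X'}$ reduces to verifying the equality $\on{Ann}_{\mathcal{O}_{X'}}(\pi)=\mathfrak{m}_A\mathcal{O}_{X'}$.

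The inclusion $\mathfrak{m}_A\mathcal{O}_{X'}\subset\on{Ann}_{\mathcal{O}_{X'}}(\pi)$ is automatic from $\pi\mathfrak{m}_A=0$, and I expect the reverse inclusion to be the main obstacle. To get it I plan to use the rigid fibre structure imposed by the hypotheses: writing $M:=(f|_{X'})_\ast\mathcal{O}_{X'}$, hypothesis (1) makes $M\otimes_A k$ locally free on $X_k$, and comparison with $M|_U$ (locally free of rank $r$ by (2), constant on connected $U$ since $X$ is irreducible) forces a constant fibre rank $r$ for $M$ at every point of $X$. Nakayama-lifting a basis then gives a local presentation $\mathcal{O}_X^{\,r}\twoheadrightarrow M$ that becomes an isomorphism modulo $\mathfrak{m}_A$, and any element of $\on{Ann}_{\mathcal{O}_{X'}}(\pi)\setminus\mathfrak{m}_A\mathcal{O}_{X'}$ would manifest as a relation in this presentation that is trivial on $X'|_U$ (where $A$-flatness is already known) but not globally, contradicting the openness of the $A$-flat locus.
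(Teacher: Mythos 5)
Your opening reductions are sound and amount to a repackaging of the paper's: the fibrewise criterion reduces everything to $A$-flatness of $\mcl{O}_{X'}$, and your d\'evissage on the length of $A$ together with the local criterion for the ideal $(\pi)$ reduces that to $\on{Tor}_1^A(\mcl{O}_{X'},k)=0$ (note $(\pi)\cong A/\mfk{m}_A$), which is exactly the condition the paper checks directly by lifting a $k$-basis of $\mcl{O}_{X'_k}$; so the induction is a harmless detour. The rank bookkeeping producing a local presentation $\mcl{O}_X^{\,r}\twoheadrightarrow M$ that is an isomorphism modulo $\mfk{m}_A$ is also correct.

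The final step, however, contains a genuine gap. Writing $K$ for the kernel of that presentation, what you must show is $K=0$, and what you know is that $K$ is a submodule of $\mcl{O}_X^{\,r}$ with $K|_U=0$. Deducing $K=0$ from this does \emph{not} follow from, let alone contradict, the openness of the $A$-flat locus: the non-flat locus is closed and contained in the closed set $X\setminus U$, which is perfectly consistent with the flat locus being open and containing $U$. The real issue is whether $\mcl{O}_X^{\,r}$ can have a nonzero submodule supported on $X\setminus U$, and irreducibility of $X$ does not preclude this, since $X$ (or $X_k$) may have embedded associated points there (e.g.\ $\Spec k[s,t]/(s^2,st)$ is irreducible but the section $s$ is supported at the origin). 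Some further input is needed, and this is exactly where the paper uses the hypothesis that $f\colon Y\to X$ is flat --- a hypothesis your proposal never invokes. The paper's mechanism is: choose a non-zero-divisor $x\in\mcl{O}_X$ with $X'\to X$ flat over $D(x)$ (possible by irreducibility and hypothesis (2)), note that the offending kernel is killed by a power of $x$ since it vanishes after inverting $x$, and use flatness of $Y\to X$ to see that $x$ pulls back to a non-zero-divisor on $Y$, forcing that kernel to vanish. You need to supply an argument of this kind; ``openness of the flat locus'' cannot do the job.
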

\begin{proof}
We may assume that $X, Y$ are affine, with $X=\on{Spec}(C), Y=\on{Spec}(B)$, and let $I$ be the ideal sheaf of $X'$.  The map $f$ is induced by a map of algebras $r: C\to B$.  It is enough to show that $\mcl{O}_{X'}=B/I$ is flat over $A$, by \cite[Tag 051E, Lemma 10.98.8]{stacks-project}.  Indeed, it suffices to show that $B/I$ is $A$-free.  Let $\{v_j\}_{j\in J}$ be an arbitrary lift of a $k$-basis of $\mcl{O}_{X'_k}$.  Then the induced map $$A^{\oplus J}\to B/I$$ is clearly surjective; we must show it is injective.  It is enough to show that $\on{Tor}_1^A(B/I, k)$ vanishes.  As $B$ is $A$-flat, $$\on{Tor}_1^A(\mcl{O}_\Gamma, k)=\ker(I\otimes_A k\to B\otimes_A k).$$ Whatever the kernel is, it must be annihilated by some non-zero divisor $x\in C$, as $B[1/r(x)]/I_{1/r(x)}$ is flat over $C[1/x]$ for some non-zero divisor $x$, by the generic flatness of $f|_{X'}$ and the irreducibility of $X$.  But $x$ was a non-zero divisor, so $r(x)$ is a non-zero divisor in $B$ by the flatness of $f$.  Thus $\on{Tor}_1^A(\mcl{O}_\Gamma, k)=0$ as desired.
\end{proof}
\begin{cor}\label{noratlcurvesoverS}
Let $S$ be a scheme; let $X, Y$ be finite-type flat $S$-schemes and $f: Y\to X$ a proper flat $S$-morphism.  Suppose that $X$ is $S$-smooth with geometrically connected fibers, $Y$ is $S$-flat, and that the geometric fibers of $f$ contain no rational curves.  Then any rational section to $f$, defined on an open set $U$ of $X$ which intersects each fiber of the structure morphism $X\to S$ in a non-empty open set, extends uniquely to a regular section.
\end{cor}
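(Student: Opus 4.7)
The plan is to mirror the absolute argument of Proposition~\ref{noratlcurvesextension} fiberwise over $S$, and then globalize via Lemma~\ref{artinflatness}. By standard spreading-out I may assume $S$ is Noetherian. For uniqueness: the equalizer of two sections extending $s$ is closed in $X$ (since $f$, being proper, is separated), contains the open $U$, and since $U$ meets each integral fiber $X_{\bar s}$ in a dense open, it contains the topological closure $\overline{U}=X$; as $X$ is reduced, the equalizer is scheme-theoretically all of $X$.

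For existence, I would let $\Gamma\subset Y$ denote the $S$-flat scheme-theoretic image of $s\colon U\to Y$. Since $s(U)\to S$ is flat (as $U\to S$ is), $\Gamma$ coincides with the flat closure of $s(U)$, and in particular its formation commutes with arbitrary base change on $S$. The induced $\pi:=f|_\Gamma\colon\Gamma\to X$ is proper and restricts over $U$ to the inverse of the section $s$. Applying Proposition~\ref{noratlcurvesextension} to each geometric fiber $f_{\bar s}\colon Y_{\bar s}\to X_{\bar s}$ (whose hypotheses hold by assumption), I obtain a unique regular extension $\widetilde{s}_{\bar s}\colon X_{\bar s}\to Y_{\bar s}$ of $s_{\bar s}$. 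Its graph is a closed irreducible subscheme of $Y_{\bar s}$ isomorphic to $X_{\bar s}$ and containing $s_{\bar s}(U_{\bar s})$ as a dense open; by base-change compatibility of $\Gamma$, this graph agrees set-theoretically with $\Gamma_{\bar s}$. Hence $\pi$ is quasi-finite on every geometric fiber, so quasi-finite, and (being proper) finite.

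To conclude, I show the finite birational morphism $\pi\colon\Gamma\to X$ is an isomorphism. It suffices to establish flatness of $\pi$, since a finite flat birational morphism to a smooth (hence normal) scheme is automatically an isomorphism: $\pi_{\ast}\mathcal{O}_\Gamma$ is a locally free $\mathcal{O}_X$-module of rank one containing $\mathcal{O}_X$ with the same function field, and hence equals $\mathcal{O}_X$. Working locally on $S$, completing, and descending via faithful flatness, I reduce to $S=\Spec A$ with $A$ Artin local. Lemma~\ref{artinflatness} then applies with $X'=\Gamma$: $\pi$ is flat over the open $U$ (being the inverse of a section), its fiber over the closed point of $S$ is the isomorphism $\Gamma_k\cong X_k$, and $X$ is irreducible. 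The lemma yields flatness of $\pi$, completing the argument.

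The main obstacle I anticipate is the fiberwise behavior of $\Gamma$: showing that $\Gamma_{\bar s}$ coincides set-theoretically with the graph of $\widetilde{s}_{\bar s}$. The $S$-flat scheme-theoretic image is the right object (its formation commutes with base change), but justifying this commutation rigorously, and then identifying the fiberwise closure with the graph produced by Proposition~\ref{noratlcurvesextension}, requires some care. Once this foundational point is in place, the remainder is a clean packaging of Proposition~\ref{noratlcurvesextension}, Lemma~\ref{artinflatness}, and classical facts about finite birational morphisms to normal schemes.
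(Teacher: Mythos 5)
Your outline---reduce to $S$ Noetherian, form the closure $\Gamma$ of $s(U)$ in $Y$, establish finiteness of $\Gamma\to X$ by applying Proposition~\ref{noratlcurvesextension} fiberwise, reduce to $S$ Artinian local, and use Lemma~\ref{artinflatness} for flatness of $\Gamma\to X$---is the same decomposition the paper uses, so the strategy is right. However, the device you introduce to resolve the fiberwise subtlety is not available: there is no standard ``$S$-flat scheme-theoretic image,'' and the two claims you attach to it (that it coincides with a flat closure because $s(U)$ is flat, and that its formation commutes with arbitrary base change on $S$) are not justified. Flatness of the dense open $s(U)$ controls the associated points of $\Gamma$, not flatness of $\Gamma$ itself, and scheme-theoretic closures do not commute with base change. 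The paper works with the ordinary scheme-theoretic closure $\Gamma$ and never needs $\Gamma_{\bar s}$ to equal the graph of $\widetilde s_{\bar s}$: it only uses that $\Gamma_{\bar s}\to X_{\bar s}$ is proper, restricts to a section over the dense open $U_{\bar s}$ (which holds because $\Gamma\cap Y_U=s(U)$, the graph of a section to a separated morphism being closed in $Y_U$), and has image in $Y_{\bar s}$ where there are no rational curves---whence quasi-finiteness via the exceptional-locus argument of Proposition~\ref{noratlcurvesextension}, and then $\Gamma_{\bar s}\cong X_{\bar s}$ by Zariski's Main Theorem, with no base-change compatibility of $\Gamma$ required. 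Your remaining steps (quasi-finite plus proper gives finite; Lemma~\ref{artinflatness} gives flatness; finite flat birational over a normal scheme gives an isomorphism) agree with the paper.

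Separately, the uniqueness step cites ``$X$ is reduced,'' but $S$ is an arbitrary scheme and $X\to S$ smooth does not force $X$ reduced. What you should use instead is that $U$ is \emph{scheme-theoretically} dense in $X$: after reducing to $S$ Noetherian, $X\to S$ flat with geometrically integral fibers forces $\operatorname{Ass}(X)$ to consist exactly of the generic points of the fibers $X_s$ over $s\in\operatorname{Ass}(S)$, and all such points lie in $U$ since $U$ meets each fiber in a dense open. The closed equalizer of two extensions then contains a scheme-theoretically dense open and so equals $X$, without any reducedness hypothesis.
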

\begin{proof}
We immediately reduce to the case of $S$ affine; by a standard limit argument, we may reduce to the case where $S$ is of finite type over $\mbb{Z}$.  Let $s$ be a rational section, and let $\Gamma$ be the (scheme-theoretic) closure of its image.  By Proposition \ref{noratlcurvesextension}, the map $\Gamma\to X$ is bijective. Thus over each $\bar k$-point $v$ of $S$, $\Gamma_v\to X_v$ is an isomorphism, by Zariski's main theorem.  To check that $\Gamma\to X$ is an isomorphism, it suffices to consider the case where $S$ is the spectrum of a local Artin ring $A$, with algebraically closed residue field $k'$.  Again by Proposition \ref{noratlcurvesextension}, the map $\Gamma_k\to X_k$ is an isomorphism. 

Thus it suffices to show that $\Gamma\to X$ is flat.  But this is immediate from Lemma \ref{artinflatness}.
\end{proof}
Similar arguments, replacing the use of Proposition \ref{noratlcurvesextension} with Propositions \ref{properextension} or \ref{projectiveextension} give the following results over a general base:
\begin{cor}
Let $S$ be a scheme and let $X, Y$ be finite-type flat $S$-schemes.  Let $f: Y\to X$ be a proper flat $S$-morphism, and suppose
\begin{enumerate}
\item The geometric fibers of the structure map $X\to S$ are normal, and the geometric fibers of the structure map $Y\to S$ are projective, or
\item The geometric fibers of the structure map $X\to S$ are normal and locally $\mbb{Q}$-factorial.
\end{enumerate}
Suppose further that the fibers of $f$ have dimension at most $\dim(X)-2-r$, and that $U\subset X$ is an open subscheme with $\dim(X_s\setminus(U\cap X_s))\leq r$ for all points $s\in S$.  Then any section to $f$ defined on $U$ extends uniquely to all of $X$.
\end{cor}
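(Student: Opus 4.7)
The plan is to follow the structure of the proof of Corollary \ref{noratlcurvesoverS} nearly verbatim, simply substituting Proposition \ref{projectiveextension} (under hypothesis (1)) or Proposition \ref{properextension} (under hypothesis (2)) for Proposition \ref{noratlcurvesextension} at each point where fiberwise extension of a rational section is invoked. After reducing to $S$ affine and of finite type over $\mathbb{Z}$ by a standard spreading-out and limit argument, I let $s: U \to Y$ be the given section and let $\Gamma \subset Y$ denote the scheme-theoretic closure of $s(U)$. The goal is to show that the projection $\pi: \Gamma \to X$ is an isomorphism; uniqueness of the extension is then automatic from the density of $U$ in $X$.

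For each geometric point $v$ of $S$, the induced rational section $s_v: X_v \dashrightarrow Y_v$ is defined on $U_v = U \cap X_v$, whose complement has dimension at most $r$ and codimension at least $2$ (since the fiber-dimension bound on $f$ forces $\dim X_v \geq r+2$). The fiber $f_v: Y_v \to X_v$ inherits the dimension hypothesis, $X_v$ is normal, and either $Y_v$ is projective (in case (1)) or $X_v$ is also locally $\mathbb{Q}$-factorial (in case (2)). So Proposition \ref{projectiveextension} or Proposition \ref{properextension} applies to yield a unique regular extension $\tilde s_v: X_v \to Y_v$ of $s_v$. As in the proof of Corollary \ref{noratlcurvesoverS}, the closed immersion $\tilde s_v(X_v) \hookrightarrow Y_v$ contains $s_v(U_v)$ and maps isomorphically to $X_v$, and combined with Zariski's main theorem this gives that $\pi_v: \Gamma_v \to X_v$ is an isomorphism on each geometric fiber.

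To upgrade these fiberwise isomorphisms to a global isomorphism, I follow the second half of the proof of Corollary \ref{noratlcurvesoverS}: reduce to $S = \on{Spec}(A)$ with $A$ Artin local and algebraically closed residue field $k$, passing to a connected component of $X$ if necessary to meet the irreducibility hypothesis of Lemma \ref{artinflatness}. The hypotheses of Lemma \ref{artinflatness} applied to the closed immersion $\Gamma \hookrightarrow Y$ are verified as follows: (i) $\Gamma_k \to X_k$ is flat since it is an isomorphism, by the preceding paragraph; (ii) $\pi: \Gamma \to X$ is proper (since $Y \to X$ is) and quasi-finite (bijective on closed fibers), hence finite, and is flat over $U$ since it is an isomorphism there. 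The lemma delivers flatness of $\pi$, and a proper, finite, flat morphism which is an isomorphism on geometric fibers is itself an isomorphism.

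The main technical subtlety — more a bookkeeping matter than a genuine obstacle — is verifying that the scheme-theoretic fiber $\Gamma_v$ has no extraneous components beyond $\tilde s_v(X_v)$, so that $\pi_v$ really is an isomorphism rather than merely having an isomorphic section. This is handled identically to the proof of \ref{noratlcurvesoverS}: once one has a regular section $\tilde s_v$ fiberwise, $\tilde s_v(X_v)$ is a closed subscheme of $Y_v$ containing the dense subset $s_v(U_v)$, so $\Gamma_v$ and $\tilde s_v(X_v)$ have the same reduction; Zariski's main theorem applied to the proper birational map $\pi_v$ onto the normal target $X_v$ then forces $\pi_v$ to be an isomorphism. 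Beyond this, no new ideas are required; the substitution of the appropriate fiberwise extension result is the entire new content.
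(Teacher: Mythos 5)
Your proposal is correct and matches the paper's intended argument exactly: the paper itself states this corollary follows by ``similar arguments, replacing the use of Proposition~\ref{noratlcurvesextension} with Propositions~\ref{properextension} or~\ref{projectiveextension},'' and your write-up faithfully carries out that substitution, including the fiberwise application via Zariski's main theorem and the reduction to the Artin local case via Lemma~\ref{artinflatness}. Your attention to passing to a connected component (needed since the corollary, unlike Corollary~\ref{noratlcurvesoverS}, does not assume geometrically connected fibers of $X \to S$) is a sensible detail the paper elides.
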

Using Corollary \ref{nonproperextension} instead, we obtain:
\begin{cor}
Let $S$ be a scheme and let $X, Y$ be finite-type flat $S$-schemes.  Suppose
\begin{enumerate}
\item The geometric fibers of the structure map $X\to S$ are normal, and the geometric fibers of the structure map $Y\to S$ are quasi-projective, or
\item The geometric fibers of the structure map $X\to S$ are normal and locally $\mbb{Q}$-factorial.
\end{enumerate}
Suppose further that the geometric fibers of the structure map $Y\to S$ have dimension at most $\dim(X)-2-r$, and that $U\subset X$ is an open subscheme with $\dim(X_s\setminus(U\cap X_s))\leq r$ for all points $s\in S$.  Then any map $U\to Y$ over $S$ extends uniquely to a map $X\to Y$ (again over $S$).
\end{cor}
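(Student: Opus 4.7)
The plan is to mimic the strategy of Corollary \ref{nonproperextension}, using the relative techniques of Corollary \ref{noratlcurvesoverS} (in particular Lemma \ref{artinflatness}) to handle the base $S$. By standard limit arguments, I first reduce to the case where $S$ is affine and of finite type over $\mbb{Z}$. Next I construct a proper $S$-scheme $\bar Y$ together with a dense open $S$-immersion $Y \hookrightarrow \bar Y$: in case (1), after Zariski-localizing on $S$ I embed $Y$ into some $\mbb{P}^N_S$ and take its schematic closure, giving $\bar Y$ projective over $S$; in case (2), I invoke Nagata's compactification theorem. Let $\Gamma \subset \bar Y \times_S X$ denote the scheme-theoretic closure of the graph of the composite $U \to Y \hookrightarrow \bar Y$, so that the projection $\Gamma \to X$ is proper and birational (an isomorphism over $U$).

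The next step is to show $\Gamma \to X$ is a global isomorphism. At each geometric point $v$ of $S$, $X_v$ is normal and, in case (2), locally $\mbb{Q}$-factorial; the total space $\bar Y_v \times X_v$ is relatively projective (resp.~proper) over $X_v$; the projection has fiber dimension $\dim \bar Y_v = \dim Y_v \leq \dim(X_v) - 2 - r$; and $\dim(X_v \setminus U_v) \leq r$. Proposition \ref{projectiveextension} (in case (1), reading ``$Y$ projective'' as projective over $X_v$, which is what its proof actually uses) or Proposition \ref{properextension} (in case (2)) therefore extends the rational section fiberwise, showing that $\Gamma_v \to X_v$ is an isomorphism. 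Reducing to the case of $S$ the spectrum of an Artinian local ring with algebraically closed residue field, I then apply Lemma \ref{artinflatness} to $\Gamma \hookrightarrow \bar Y \times_S X \to X$ to conclude that $\Gamma \to X$ is flat; combined with the fiberwise isomorphism and Zariski's main theorem, this yields a global isomorphism $\Gamma \xrightarrow{\sim} X$. Composing its inverse with the projection to $\bar Y$ produces the desired extension $\bar f: X \to \bar Y$ over $S$.

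Finally, to show $\bar f$ factors through $Y$: if $\bar f(x) = y \in \bar Y_s \setminus Y_s$ for some $x \in X_s$, then since $\bar f_s(U_s) \subset Y_s$ and $U_s$ is dense in the irreducible components of $X_s$, the image $\overline{\bar f_s(X_s)}$ sits inside a closed subvariety of $\bar Y_s$ of dimension at most $\dim(Y_s)$, so by the fiber dimension formula $\dim \bar f_s^{-1}(y) \geq \dim(X_s) - \dim(Y_s) \geq r+2 > r$, forcing $\bar f_s^{-1}(y) \cap U_s \neq \emptyset$; but on $U_s$ the map $\bar f$ equals $U \to Y \subset \bar Y_s \setminus \{y\}$, a contradiction. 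Uniqueness follows since any two extensions agree on the dense open $U$ with $\bar Y \to S$ separated. The \textbf{main technical obstacle} is arranging the compactification $\bar Y$ compatibly with the hypotheses of Lemma \ref{artinflatness} (i.e., producing $\bar Y \times_S X \to X$ flat, which in case (2) may require combining Nagata with a Raynaud--Gruson flattening), and verifying that the fiberwise applications of Propositions \ref{projectiveextension} and \ref{properextension} genuinely accept total spaces that are relatively, rather than absolutely, projective or proper.
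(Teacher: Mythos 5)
Your plan reproduces what the paper's remark (``Using Corollary \ref{nonproperextension} instead'') is pointing at: compactify $Y$ over $S$, take the graph closure $\Gamma \subset \bar Y\times_S X$, run Propositions \ref{projectiveextension}/\ref{properextension} fiberwise on $\bar Y_v\times X_v\to X_v$, conclude flatness of $\Gamma\to X$ via Lemma \ref{artinflatness}, and show the resulting map $X\to\bar Y$ lands in $Y$.  The flatness worry you raise --- needing $\bar Y\to S$ flat so that $\bar Y\times_S X\to X$ is flat --- is real but standard, and Raynaud--Gruson flattening handles it as you say.  Case~(2), which rests on Proposition \ref{properextension} (so on the $\mbb{Q}$-factorial hypothesis giving a purely codimension-one exceptional locus), goes through exactly as you outline.

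The genuine trouble is case~(1), and your parenthetical about Proposition \ref{projectiveextension} makes it worse rather than better.  It is \emph{not} true that its proof ``actually uses'' only projectivity of $Y$ over $X_v$: the proof extends the line bundle $\mcl{L}|_U$ from $U$ to a \emph{line bundle} $\mcl{N}$ on $X$, citing normality and codimension $\geq 2$; but for a normal, non-locally-factorial $X$ only the reflexive rank-one sheaf extension exists, not a line bundle.  In fact the proposition is false without a factoriality hypothesis.  Take $\bar X\subset\mbb{P}^4$ the projective cone over a smooth quadric surface $\mbb{P}^1\times\mbb{P}^1\subset\mbb{P}^3$ (normal, three-dimensional, not $\mbb{Q}$-factorial at the vertex $p$), $U=\bar X\setminus\{p\}$, $r=0$, $Y=\mbb{P}^1\times\bar X$, $f$ the projection, and let $\phi:U\to\mbb{P}^1$ be the first-ruling projection, given on $U$ by $[x:z]=[y:w]$ and resolved by one of the two small resolutions of the node.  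All hypotheses of Proposition \ref{projectiveextension} are met (fiber dimension $1=\dim\bar X-2-r$), yet the section $u\mapsto(\phi(u),u)$ does not extend across $p$: the closures of the fibers of $\phi$ are $\mbb{P}^2$'s that all pass through $p$.  Taking $S$ a point, $X=\bar X$, $Y=\mbb{P}^1$, the same data also falsifies case~(1) of the present corollary.  So this is not a technicality resolvable by reading projectivity relatively or absolutely; it is a gap inherited from Proposition \ref{projectiveextension} itself, and case~(1) needs an additional local factoriality (or at least some repaired, strengthened hypothesis) on the fibers of $X$ to be salvageable.
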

\begin{cor}
Let $S$ be a scheme; let $X$ be a smooth finite-type $S$-schemes and $Y$ a proper flat $S$-scheme whose geometric fibers over $S$ containing no rational curves.  Then any rational map $X\dashrightarrow Y$ extends uniquely to a regular map.
\end{cor}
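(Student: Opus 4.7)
The plan is to pass to the graph of the rational map and invoke (an adaptation of) Corollary~\ref{noratlcurvesoverS}. Let $f: X \dashrightarrow Y$ be a rational $S$-map with domain of definition $U \subset X$, and let $\Gamma \subset X \times_S Y$ be the scheme-theoretic closure of the image of the graph morphism $(\mathrm{id}_U, f|_U): U \to X \times_S Y$. The first projection $p: \Gamma \to X$ is proper (since $Y$ is $S$-proper) and an isomorphism over $U$; showing $p$ is a global isomorphism is equivalent to constructing the desired regular extension $p_2 \circ p^{-1}: X \to Y$.

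First I would observe that $p_1: X \times_S Y \to X$ is a proper flat morphism (flat by base change from $Y/S$) whose geometric fibers coincide with those of $Y \to S$, hence contain no rational curves. Thus $p: \Gamma \to X$ is a rational section to $p_1$ of exactly the shape treated in Corollary~\ref{noratlcurvesoverS}, modulo the fact that the base $X$ is not a priori assumed to have geometrically connected fibers over $S$.

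I would then follow the proof scheme of Corollary~\ref{noratlcurvesoverS} verbatim: reduce to $S$ affine and Noetherian, then to $S$ of finite type over $\mathbb{Z}$ by a limit argument. Over each geometric point $\bar s$ of $S$, the smoothness of $X_{\bar s}$ together with the absence of rational curves in $Y_{\bar s}$ allows Proposition~\ref{noratlcurvesextension} to extend the induced rational map $X_{\bar s} \dashrightarrow Y_{\bar s}$ to a regular one; equivalently, $\Gamma_{\bar s} \to X_{\bar s}$ is an isomorphism, so $p$ is bijective. Reducing further to $S = \mathrm{Spec}(A)$ for $A$ Artinian local, Lemma~\ref{artinflatness} (taking $X' = \Gamma$, which is flat on the closed fiber by the fiberwise isomorphism and generically flat because it is an isomorphism over $U$) gives flatness of $p$. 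Then $p$ is a proper, quasi-finite, birational morphism to a smooth, hence normal, scheme, and Zariski's main theorem concludes.

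The main obstacle is that Corollary~\ref{noratlcurvesoverS} is phrased for rational sections rather than rational maps, and invokes the stronger hypothesis of geometrically connected fibers for the base. Both issues are mild: the graph construction converts rational maps to rational sections, and geometric connectedness of fibers of $X \to S$ enters only to guarantee that the rational section is defined on a subset dense in $X$, which one arranges here by restricting to the irreducible components of $X$ dominating each relevant component of $S$ before running the argument above.
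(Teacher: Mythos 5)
Your proof takes the same route as the paper's, which simply asserts ``This is immediate from Corollary \ref{noratlcurvesoverS}, by considering the projection map $X\times_S Y\to X$''; you have unpacked this by passing to the scheme-theoretic closure of the graph and rerunning the argument of Corollary \ref{noratlcurvesoverS} on $\Gamma \to X$. The hypothesis discrepancy you flag (geometrically connected fibers of $X/S$, and the requirement that the domain of definition meet every fiber of $X\to S$) is genuine and is likewise elided by the paper's ``immediate''; a small correction to your accounting is that the geometric-connectedness hypothesis serves to make $X$ irreducible over an Artin local base, which is what hypothesis (3) of Lemma \ref{artinflatness} needs, rather than serving as a density condition on $U$, and the condition that $U$ meet every fiber of $X \to S$ is not automatic for a general rational map and is not produced by your restriction to components dominating $S$ (consider removing an entire fiber from $U$).
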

\begin{proof}
This is immediate from Corollary \ref{noratlcurvesoverS}, by considering the projection map $X\times_S Y\to X$.
\end{proof}

\section{Vanishing results and deformation theory}\label{deformation theory}
The purpose of this chapter is to study the following question:  given a varieties $X$ and $Y$, and an ample Cartier divisor $D\subset X$, when does a map $D\to Y$ extend (uniquely) to a map $\widehat D\to Y$ where $\widehat D$ is the formal scheme obtained by completing $X$ at $D$?  This is a purely deformation-theoretic question, to which we give separate answers in characteristic zero and in positive characteristic; even in characteristic zero, however, the result relies on a positive characteristic argument.  The characteristic zero results, which are of a rather different flavor, appear in Section \ref{charzeroextension}.  
\subsection{Deformation-theoretic and cohomological preliminaries}
The main results we will need in this section are the following:
\begin{thm}
Let $f: Y\to X$ be a smooth morphism of schemes, and let $D\subset X$ be a closed lci subscheme with ideal sheaf $\mcl{I}_D$.  Consider the Cartesian diagram
$$\xymatrix{
Y_D \ar[d]^{f_D} \ar[r] & Y\ar[d]^f\\
D \ar[r] & X.
}$$
Let $s: D\to Y$ be a section to the map $f_D: Y_D\to D.$  Let $D_2\subset X$ be the subscheme defined by the ideal sheaf $\mcl{I}_D^2$.  Then there is a natural class $o(s)\in \on{Ext}^1(\mcl{N}_{D/X}, s^*T_{Y/X})$ whose vanishing is equivalent to the existence of an extension of $s$ to $D_2$; such extensions are a torsor for $\on{Hom}(\mcl{N}_{D/X}, s^*T_{Y/X}).$
\end{thm}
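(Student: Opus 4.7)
The plan is to use the standard deformation theory of morphisms along square-zero thickenings, specialized to the smooth morphism $f$, and to translate the resulting obstruction and torsor groups into the form stated using the lci hypothesis on $D \subset X$.

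\emph{Step 1 (local extensions).} Since $D \subset X$ is lci, the ideal of the thickening $D \hookrightarrow D_2$ is the conormal sheaf $\mcl{I}_D/\mcl{I}_D^2$, which is locally free and canonically identified with $\mcl{N}_{D/X}^\vee$. Because $f$ is smooth, hence formally smooth, if we pick an affine open cover $\{U_i\}$ of $D$ fine enough that each $s|_{U_i}$ factors through an affine open of $Y$, we can lift each $s|_{U_i}$ to a morphism $\tilde s_i : D_2\cap U_i \to Y$ compatible with the inclusion $D_2 \hookrightarrow X$.

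\emph{Step 2 (torsor structure of local lifts).} Given two such lifts $\tilde s, \tilde s'$ over the same affine open $U$, the difference of the ring maps $\tilde s'^{\#} - \tilde s^{\#} : \mcl{O}_Y \to s_*(\mcl{I}_D/\mcl{I}_D^2)$ vanishes on $f^{-1}\mcl{O}_X$ and is an $\mcl{O}_X$-linear derivation, so it corresponds to an element of
$$\Hom_{\mcl{O}_D}(s^*\Omega^1_{Y/X},\, \mcl{I}_D/\mcl{I}_D^2) \;=\; \Hom_{\mcl{O}_D}(s^*\Omega^1_{Y/X},\, \mcl{N}_{D/X}^\vee) \;=\; \Hom(\mcl{N}_{D/X},\, s^*T_{Y/X}),$$
where the last equality uses the local freeness of $\mcl{N}_{D/X}$ and $s^*T_{Y/X}$ (the latter because $f$ is smooth). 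Conversely every such Hom produces a new lift from a given one, so the sheaf of local lifts is a torsor for $\mcl{T} := \mcl{H}om(\mcl{N}_{D/X}, s^*T_{Y/X})$.

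\emph{Step 3 (obstruction and conclusion).} On double overlaps the $\tilde s_i$ differ by sections $c_{ij} \in \Gamma(U_i\cap U_j, \mcl{T})$ satisfying the Čech cocycle condition $c_{ij}+c_{jk}=c_{ik}$; the cohomology class
$$o(s) \in \check H^1(\{U_i\}, \mcl{T}) \;\to\; H^1(D, \mcl{T}) \;=\; \Ext^1(\mcl{N}_{D/X}, s^*T_{Y/X})$$
is independent of the choice of local lifts (changing them by $b_i \in \Gamma(U_i,\mcl{T})$ changes the cocycle by a coboundary). If $o(s)=0$, write $c_{ij}=b_i-b_j$; then the modified lifts $\tilde s_i - b_i$ agree on overlaps and glue to a global extension, and conversely any global extension produces such $b_i$'s. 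Given two global extensions, Step 2 identifies their difference with a global section of $\mcl{T}$, so the set of extensions is a torsor for $H^0(D, \mcl{T}) = \Hom(\mcl{N}_{D/X}, s^*T_{Y/X})$.

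The main point requiring care is Step 2: one has to verify that the difference of two lifts really is an $\mcl{O}_X$-linear derivation taking values in the square-zero ideal, and to keep the identification $\mcl{I}_D/\mcl{I}_D^2 \cong \mcl{N}_{D/X}^\vee$ oriented correctly; everything else is formal Čech cohomology. Using the cotangent complex one can compress Steps 1--3 into the statement that, because $L_{Y/X} \simeq \Omega^1_{Y/X}$ is concentrated in degree $0$, lifts of $s$ are parametrized by $\Ext^0(s^*L_{Y/X}, \mcl{I}_D/\mcl{I}_D^2)$ with obstruction in $\Ext^1$, and these Ext groups coincide with the ones in the statement.
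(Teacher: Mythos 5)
Your argument is correct and is essentially the paper's own proof: the paper proves the more general Theorem \ref{maindefthm} (deformation from $D_n$ to $D_{n+1}$) by exactly this method — local lifts exist by smoothness, their differences are $\mcl{O}_X$-linear derivations valued in the square-zero ideal, and the \v{C}ech cocycle of these differences is the obstruction class — and the statement in question is the $n=1$ case, with $\on{Ext}^1(s^*\Omega^1_{Y/X}, \mcl{I}_D/\mcl{I}_D^2)$ rewritten as $\on{Ext}^1(\mcl{N}_{D/X}, s^*T_{Y/X})$ via the local freeness of $\mcl{N}_{D/X}$ and $s^*\Omega^1_{Y/X}$ that you invoke. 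Your Step 2 makes explicit the lci/smoothness translation that the paper leaves implicit, but otherwise the route is the same.
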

\begin{cor}
Let $X, D, D_2$ be schemes over a field $k$, and let $Y$ be an arbitrary smooth $k$-scheme.  Then if $s: D\to Y$ is a morphism, there is a natural obstruction class $o(s)\in \on{Ext}^1(\mcl{N}_{D/X}, s^*T_{Y/k})$ whose vanishing is equivalent to the existence of an extension of $s$ to $D_2$; such extensions are a torsor for $\on{Hom}(\mcl{N}_{D/X}, s^*T_{Y/k}).$
\end{cor}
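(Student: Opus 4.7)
The strategy is to deduce the absolute statement from the relative one (the preceding theorem) by the standard graph trick: view a morphism $s \colon D \to Y$ as a section of the projection $\pi \colon Y \times_k X \to X$.

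Concretely, since $Y$ is smooth over $k$, the projection $\pi$ is smooth. Given $s \colon D \to Y$, form the graph
\[
\tilde s = (s, i_D) \colon D \longrightarrow Y \times_k X,
\]
where $i_D \colon D \hookrightarrow X$ is the inclusion. Then $\tilde s$ is a section of the base change $\pi_D \colon Y \times_k D \to D$. Conversely, any section of $\pi_D$ recovers a morphism $D \to Y$ by post-composing with the projection $\on{pr}_Y$. The same equivalence holds with $D$ replaced by $D_2$, so extensions of $s$ to a morphism $s_2 \colon D_2 \to Y$ are in natural bijection with extensions of $\tilde s$ to a section of $\pi_{D_2}$.

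Apply the preceding theorem to $\pi$ and the section $\tilde s$ (the closed subscheme $D \subset X$ being lci, e.g. a Cartier divisor in the applications): this produces a natural obstruction class
\[
o(\tilde s) \in \on{Ext}^1\bigl(\mcl{N}_{D/X},\, \tilde s^* T_{(Y\times_k X)/X}\bigr)
\]
whose vanishing is equivalent to the existence of an extension, and when it vanishes, extensions form a torsor over $\on{Hom}(\mcl{N}_{D/X}, \tilde s^*T_{(Y\times_k X)/X})$. Because $\pi$ is the projection from a product, $T_{(Y\times_k X)/X} = \on{pr}_Y^* T_{Y/k}$, and therefore $\tilde s^* T_{(Y\times_k X)/X} = s^* T_{Y/k}$. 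Setting $o(s) := o(\tilde s)$ under this canonical identification gives the corollary.

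There is essentially no hard step here; the only thing to verify is that the bijection between extensions (of $s$ and of $\tilde s$) is compatible with the torsor actions and with the identification of coefficient sheaves. Both follow from the manifest naturality of the graph construction together with the naturality clause in the preceding theorem.
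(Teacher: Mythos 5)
Your proof is correct and is the standard graph-trick reduction that the paper implicitly relies on: the corollary is stated without proof immediately after the relative theorem, and passing to the section $\tilde s = (s, i_D)$ of $Y\times_k X \to X$ together with the canonical identification $T_{(Y\times_k X)/X} = \on{pr}_Y^* T_{Y/k}$ (hence $\tilde s^*T_{(Y\times_k X)/X} = s^*T_{Y/k}$) is exactly the intended deduction. You also correctly read the lci hypothesis on $D\subset X$ (and the definition of $D_2$ via $\mcl{I}_D^2$) as implicit carryovers from the theorem, which the corollary's terse statement omits.
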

Here $\mcl{N}_{D/X}$ is the normal bundle of $D$ in $X$, and $T_{Y/X}$ is the relative tangent bundle of $Y$ over $X$.  Loosely speaking, the idea is that a deformation of the map exhibits a rule for sending normal vectors to  $D$ to tangent vectors in $Y$.

More generally, we have:
\begin{thm}\label{maindefthm}
Let $f: Y\to X$ be a smooth morphism of schemes, and let $D\subset X$ be a closed lci subscheme with ideal sheaf $\mcl{I}_D$.  Consider the Cartesian diagram
$$\xymatrix{
Y_D \ar[d]^{f_D} \ar[r] & Y\ar[d]^f\\
D \ar[r] & X.
}$$
Let $s: D\to Y$ be a section to the map $f_D: Y_D\to D.$  Let $D_n\subset X$ be the subscheme defined by the ideal sheaf $\mcl{I}_D^n$; let $s_n: D_n\to Y$ be a section to $f_{D_n}: Y_{D_n}\to D_n$ extending $s$.  Then there is a natural class $o(s)\in \on{Ext}^1(s^*\Omega^1_{Y/X}, \mcl{I}_D^n/\mcl{I}_D^{n+1})$ whose vanishing is equivalent to the existence of an extension of $s$ to $D_{n+1}$; such extensions are a torsor for $\on{Hom}(s^*\Omega^1_{Y/X}, \mcl{I}_D^n/\mcl{I}_D^{n+1}).$
\end{thm}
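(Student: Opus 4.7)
The plan is to reduce to standard deformation theory along a square-zero closed immersion and then run the \v{C}ech construction of the obstruction and torsor classes. The first observation is that the immersion $D_n \hookrightarrow D_{n+1}$ has ideal sheaf $\mcl{J} := \mcl{I}_D^n/\mcl{I}_D^{n+1}$, and since $\mcl{I}_D^{2n} \subset \mcl{I}_D^{n+1}$ for $n \geq 1$, this ideal satisfies $\mcl{J}^2 = 0$; in particular it is annihilated by $\mcl{I}_D$ and is naturally an $\mcl{O}_D$-module. The extension problem thus becomes: lift the $X$-morphism $s_n \colon D_n \to Y$ along the square-zero $X$-thickening $D_n \hookrightarrow D_{n+1}$.

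Next, choose an affine open cover $\{U_\alpha\}$ of $D$. On each $U_\alpha$ the formal smoothness of $f$ (applied to the nilpotent thickening) produces a local extension $\widetilde{s}_\alpha \colon D_{n+1} \cap U_\alpha \to Y$ of $s_n|_{D_n \cap U_\alpha}$ as an $X$-morphism. On a double overlap $U_{\alpha\beta} := U_\alpha \cap U_\beta$, the two corresponding ring maps $\widetilde{s}_\alpha^{\sharp}, \widetilde{s}_\beta^{\sharp} \colon \mcl{O}_Y \to \mcl{O}_{D_{n+1}}|_{U_{\alpha\beta}}$ agree modulo $\mcl{J}$ (since they restrict to the same map on $D_n$) and coincide on $f^{-1}\mcl{O}_X$ (both being $X$-morphisms), so their difference is an $\mcl{O}_X$-linear map $\mcl{O}_Y \to \mcl{J}$. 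A short computation using $\mcl{J}^2 = 0$ verifies that this difference is a derivation, and hence factors through $s^*\Omega^1_{Y/X}$, producing an $\mcl{O}_D$-linear homomorphism $\xi_{\alpha\beta} \colon s^*\Omega^1_{Y/X} \to \mcl{J}$.

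The $\xi_{\alpha\beta}$ assemble into a \v{C}ech $1$-cocycle for $\{U_\alpha\}$ with values in $\underline{\Hom}_{\mcl{O}_D}(s^*\Omega^1_{Y/X}, \mcl{J})$. Since $f$ is smooth, $s^*\Omega^1_{Y/X}$ is locally free on $D$, so its cohomology class defines an element
$$o(s) \in H^1(D, \underline{\Hom}_{\mcl{O}_D}(s^*\Omega^1_{Y/X}, \mcl{J})) = \on{Ext}^1_{\mcl{O}_D}(s^*\Omega^1_{Y/X}, \mcl{J}),$$
independent of the choices of cover and of local lifts (a standard verification via the usual refinement-and-interpolation argument). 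The vanishing of $o(s)$ is equivalent to being able to adjust the $\widetilde{s}_\alpha$ by local derivations so that they glue to a global lift $s_{n+1}$, and the same derivation calculation shows that the difference of any two global lifts is a global derivation $s^*\Omega^1_{Y/X} \to \mcl{J}$, so the extensions form a torsor under $\Hom_{\mcl{O}_D}(s^*\Omega^1_{Y/X}, \mcl{J})$.

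The only nontrivial step is the derivation identity for the difference of two local $X$-lifts, which is a standard but careful calculation using $X$-linearity of both lifts together with the vanishing of $\mcl{J}^2$; once that is in hand everything else is formal cohomological bookkeeping. The lci hypothesis on $D$ plays no logical role in the construction above — it enters only later, when one wishes to identify $\mcl{J} \cong \on{Sym}^n(\mcl{N}_{D/X}^\vee)$ in order to make the $\on{Ext}^1$ and $\Hom$ groups computable in the applications of the theorem.
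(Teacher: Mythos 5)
Your proposal is correct and reproduces the paper's own argument: both proceed by the same \v{C}ech construction, choosing local lifts using smoothness of $f$ over the square-zero thickening $D_n \hookrightarrow D_{n+1}$, taking differences to obtain a derivation-valued $1$-cocycle, identifying its class in $H^1(D, \ul{\Hom}(s^*\Omega^1_{Y/X}, \mcl{I}_D^n/\mcl{I}_D^{n+1}))$, and deriving the torsor structure from the same calculation that the difference of two lifts is a derivation. Your side remark that the lci hypothesis plays no role in this particular statement (only in the reformulations involving $\mcl{N}_{D/X}$) is also accurate.
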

\begin{proof}
This is well-known, but we include a sketch proof for the sake of completeness.  Suppose we wish to extend $s_n$ to a map $s_{n+1}: D_{n+1}\to Y$.  Such an extension is the same as filling in the dotted arrow in the diagram
$$\xymatrix{
& & & s^{-1}\mcl{O}_Y \ar[d]^{s_n} \ar@{.>}[ld]&\\
0 \ar[r] &\mcl{I}_D^n/\mcl{I}_D^{n+1} \ar[r]  & \mcl{O}_{D_{n+1}} \ar[r]& \mcl{O}_{D_n} \ar[r] & 0.
}$$
Any two such lifts differ by an element of $\on{Der}_{\mcl{O}_X}(s^{-1}\mcl{O}_Y, \mcl{I}_D^n/\mcl{I}_D^{n+1})$; i.e. if a lift exists, the set of lifts are a torsor for $\on{Hom}(s^*\Omega^1_{Y/X}, \mcl{I}_D^n/\mcl{I}_D^{n+1})$ as desired.  Such lifts do exist locally on $X$, by the smoothness of $Y$.  So we may choose local lifts $s_{n+1}^i$ over some cover $\{U_i\}$ of $X$.  Let $U_{ij}=U_i\cap U_j$.  Now  $\{s_{n+1}^i|_{U_{ij}}-s_{n+1}^j|_{U_{ij}}\}$, viewed as a set of maps $$s^*\Omega^1_{Y/X}|_{U_{ij}}\to \mcl{I}_D^n/\mcl{I}_D^{n+1}|_{U_{ij}},$$  is a \v{C}ech cocycle representative for an element of $$\on{Ext}^1(s^*\Omega^1_{Y/X}, \mcl{I}_D^n/\mcl{I}_D^{n+1})=H^1(X, \ul{\Hom}(s^*\Omega^1_{Y/X}, \mcl{I}_D^n/\mcl{I}_D^{n+1}));$$ this is $o(s)$.  Indeed, if $o(s)=0$, we may refine the cover $\{U_i\}$ and find a cocycle exhibiting $\{s_{n+1}^i|_{U_{ij}}-s_{n+1}^j|_{U_{ij}}\}$ as a coboundary; modifying the $s_{n+1}^i$ by this cocycle, we find that a lift exists.  Likewise, one may easily construct such a cocycle from a lift.
\end{proof}

In particular, deformations of a map $D\to Y$ will exist if $$\on{Ext}^1(s^*\Omega^1_{Y/X}, \mcl{I}_D^n/\mcl{I}_D^{n+1})=0,$$ and they will be unique if $$\on{Hom}(s^*\Omega^1_{Y/X}, \mcl{I}_D^n/\mcl{I}_D^{n+1})=0.$$  Thus we search for conditions on $D, Y$ under which these two groups vanish.  Such conditions will come from positivity of both $D$ and $\Omega^1_{Y/X}$; for example, in characteristic zero it will suffice that $D$ be ample and $\Omega^1_{Y/X}$ be nef.  

In the case that $D$ is smooth and $\dim(Y)<\dim(D)$, we sketch an easy characteristic zero argument for the required vanishing.

Recall the Le Potier vanishing theorem \cite[Theorem 7.3.5]{lazarsfeld2004positivity2}:
\begin{thm}[Le Potier]\label{lepotier}
Let $E$ be an ample vector on a smooth projective variety $X$ over a field of characteristic zero, with $\dim(X)=n$.  Then $$H^i(X, \omega_X\otimes \bigwedge^aE)=0$$ for $a>0, i>e-a$, and $$H^i(X, \Omega^p_X\otimes E)=0$$ for $i+p\geq n-e.$
\end{thm}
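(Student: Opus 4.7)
The plan is to reduce both vanishings to Kodaira-Akizuki-Nakano (KAN) on the projective bundle $\pi : Y = \mathbb{P}(E) \to X$ of rank-one quotients of $E$, with tautological quotient $\mathcal{O}_Y(1)$. Writing $e = \mathrm{rk}(E)$: $\dim Y = n + e - 1$, $\mathcal{O}_Y(1)$ is ample iff $E$ is ample, $\pi_*\mathcal{O}_Y(m) = \mathrm{Sym}^m E$ with $R^{>0}\pi_*\mathcal{O}_Y(m) = 0$ for $m \geq 0$, and $\omega_{Y/X} = \pi^*\det E \otimes \mathcal{O}_Y(-e)$. The characteristic-zero hypothesis enters only through KAN itself: $H^q(Y, \Omega^p_Y \otimes \mathcal{O}_Y(1)) = 0$ for $p + q > n + e - 1$; every other step in the argument is characteristic-free.

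For the second vanishing $H^i(X, \Omega^p_X \otimes E) = 0$, the projection formula rewrites it as $H^i(Y, \pi^*\Omega^p_X \otimes \mathcal{O}_Y(1))$. The relative cotangent sequence
$$0 \to \pi^*\Omega^1_X \to \Omega^1_Y \to \Omega^1_{Y/X} \to 0$$
induces a filtration on $\Omega^{p+e-1}_Y$ with graded pieces $\pi^*\Omega^a_X \otimes \Omega^{p+e-1-a}_{Y/X}$, and the relative Euler sequence
$$0 \to \Omega^1_{Y/X} \to \pi^*E \otimes \mathcal{O}_Y(-1) \to \mathcal{O}_Y \to 0$$
controls $\Omega^b_{Y/X}$ in terms of $\pi^*\bigwedge^\bullet E$ up to $\mathcal{O}_Y(-1)$-twists. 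The KAN vanishing $H^q(Y, \Omega^{p+e-1}_Y \otimes \mathcal{O}_Y(1)) = 0$ for $q > n - p$ then descends via the associated filtration and Leray spectral sequences to the desired vanishing on $X$, the $(p,e-1)$-piece of the filtration contributing the $\pi^*\Omega^p_X$ term one wants.

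For the first vanishing $H^i(X, \omega_X \otimes \bigwedge^a E) = 0$ (for $a > 0$, $i > e - a$), the tautological surjection $\pi^*E \twoheadrightarrow \mathcal{O}_Y(1)$ yields a Koszul resolution
$$0 \to \pi^*{\textstyle\bigwedge}^e E \otimes \mathcal{O}_Y(-e) \to \cdots \to \pi^*E \otimes \mathcal{O}_Y(-1) \to \mathcal{O}_Y \to 0.$$
Tensoring with $\pi^*\omega_X \otimes \mathcal{O}_Y(1)$, breaking into short exact sequences, and pushing down to $X$ using the explicit formulas for $\pi_*\mathcal{O}_Y(j)$ (for $j \geq 0$) and for $R^{e-1}\pi_*\mathcal{O}_Y(j)$ (for $j \leq -e$, via relative Serre duality), one reduces the asserted vanishing to Kodaira-style vanishing on $Y$ for the $\pi^*\omega_X$-twisted pieces, which is the $p = \dim Y$ case of KAN. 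Alternatively one can work on the Grassmannian bundle $\rho : G = \mathrm{Gr}_a(E) \to X$, using $\rho_*(\det Q) = \bigwedge^a E$ (Borel-Weil-Bott along fibers) to lift the problem to $G$ and then applying KAN there.

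The main obstacle will be the spectral-sequence bookkeeping in both parts: one must manage simultaneously the filtration on differential forms on $Y$, the Leray spectral sequence for $\pi$, and the twists introduced by the Euler and Koszul complexes, verifying that the KAN vanishing range on $Y$ descends to precisely the asserted range on $X$. Once this bookkeeping is carried out, the theorem follows from KAN essentially formally, and the characteristic-zero hypothesis is needed nowhere else.
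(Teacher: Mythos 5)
The paper does not actually prove this statement: it is quoted from Lazarsfeld, \emph{Positivity in Algebraic Geometry II}, Theorem~7.3.5 (note the paper has a typo in the second range, which should read $i+p\geq n+e$, not $n-e$; otherwise $H^0(X,E)$ would have to vanish whenever $e\geq n$). Your overall strategy --- transfer to $Y=\mathbb{P}(E)$ and invoke Kodaira--Akizuki--Nakano there --- is exactly the standard one, so you have the right idea, but the indexing in both halves is wrong in a way that derails the computation.

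For the second vanishing you should use $\Omega^p_Y$, not $\Omega^{p+e-1}_Y$. The filtration on $\Omega^{p+e-1}_Y$ has graded pieces $\pi^*\Omega^a_X\otimes\Omega^{p+e-1-a}_{Y/X}$, and none of these equals $\pi^*\Omega^p_X$ on the nose; your ``$(p,e-1)$-piece'' is $\pi^*\Omega^p_X\otimes\Omega^{e-1}_{Y/X}$, which after twisting by $\mathcal{O}_Y(1)$ has zero direct images because $\Omega^{e-1}_{\mathbb{P}^{e-1}}(1)=\mathcal{O}_{\mathbb{P}^{e-1}}(1-e)$ has no cohomology for $e\geq 2$. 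The correct computation: the bottom graded piece of $\Omega^p_Y$ is $\pi^*\Omega^p_X$ itself, and every other piece $\pi^*\Omega^a_X\otimes\Omega^{p-a}_{Y/X}$ with $a<p$ has all direct images zero after tensoring with $\mathcal{O}_Y(1)$, by Bott's formula on the fibers ($H^j(\mathbb{P}^{e-1},\Omega^b(1))=0$ unless $b=0,\,j=0$). Hence $H^q(Y,\Omega^p_Y\otimes\mathcal{O}_Y(1))\cong H^q(X,\Omega^p_X\otimes E)$ for all $q$, and KAN on $Y$ (of dimension $n+e-1$) gives vanishing for $p+q\geq n+e$. With your choice $\Omega^{p+e-1}_Y$, KAN would only give $p+q>n$, and the Leray identification would in any case produce $0=0$ rather than information about $\Omega^p_X\otimes E$.

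For the first vanishing, the Koszul complex twisted by $\pi^*\omega_X\otimes\mathcal{O}_Y(1)$ cannot work: its $j$-th term is $\pi^*(\omega_X\otimes\bigwedge^j E)\otimes\mathcal{O}_Y(1-j)$, whose direct images vanish for all $2\leq j\leq e$ (since $1-e\leq 1-j\leq -1$), so the complex never ``sees'' $\bigwedge^a E$ for $a\geq 2$. The Grassmannian alternative also fails as stated: writing $\rho^*\omega_X\otimes\det Q=\omega_G\otimes\rho^*(\det E)^{-a}\otimes(\det Q)^{e+1}$, the extra factor $\rho^*(\det E)^{-a}\otimes(\det Q)^{e+1}$ need not be ample (try $E=\mathcal{O}(1)\oplus\mathcal{O}(N)$ on $\mathbb{P}^1$, $N\gg 0$), so Kodaira on $G$ does not apply. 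The correct path is again via $\mathbb{P}(E)$, but with the sheaf $\Omega^{n+a-1}_Y\otimes\mathcal{O}_Y(a)$: the bottom graded piece of the filtration is $\pi^*\omega_X\otimes\Omega^{a-1}_{Y/X}$, with $\pi_*(\Omega^{a-1}_{Y/X}\otimes\mathcal{O}_Y(a))=\bigwedge^a E$ and all other $R^j\pi_*$ (of this piece and of every higher piece) vanishing, again by Bott. So $H^q(Y,\Omega^{n+a-1}_Y\otimes\mathcal{O}_Y(a))\cong H^q(X,\omega_X\otimes\bigwedge^a E)$, and KAN gives vanishing for $q+(n+a-1)>n+e-1$, i.e.\ $q>e-a$, precisely as claimed. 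So the skeleton of your argument is right, but both the degree of the form on $Y$ and the twist must be chosen as above, and the Koszul/Grassmannian route to part (i) should be replaced by the filtration argument.
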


Observe that if $Y$ is smooth, $$\on{Ext}^i(s^*\Omega^1_{Y/X}, \mcl{I}_D^n/\mcl{I}_D^{n+1})=H^i(D, \mcl{O}_D(-nD)\otimes s^*T_{Y/X}).$$
If $D$ is ample and $\Omega^1_{Y/X}$ is a nef vector bundle, the vector bundle $\mcl{O}_D(-nD)\otimes s^*T_{Y/X}$ is anti-ample.  Thus if $D$ is smooth and $\dim(Y)<\dim(D)$, the Le Potier vanishing theorem combined with Serre duality immediately implies the required vanishing.  So we have shown
\begin{thm}\label{lamecharzeroresult}
Let $X$ be a projective variety over a field $k$ of characteristic zero, and let $D\subset X$ be a smooth ample Cartier divisor, with $\dim(X)\geq 3$.  Let $Y$ be a smooth variety over $k$ with $\Omega^1_Y$ nef and with $\dim(Y)<\dim(D)$, and let $f: D\to Y$ be a morphism.  Then $f$ extends uniquely to a morphism $\widehat D\to Y$, where $\widehat D$ is the formal scheme obtained by completing $X$ at $D$.
\end{thm}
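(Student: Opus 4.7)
The plan is to extend $f$ inductively along the infinitesimal thickenings $D_n \subset X$ defined by $\mcl{I}_D^{n+1}$, and then to assemble the resulting compatible system of maps into the desired morphism of formal schemes $\widehat D \to Y$. To put this in the framework of Theorem \ref{maindefthm}, I will regard $f$ as the section over $D$ of the smooth projection $p_2 \colon Y \times_k X \to X$, under the usual identification between $k$-morphisms $Z \to Y$ and sections of $p_2$ over subschemes $Z \subset X$. Given an extension $s_n \colon D_n \to Y$ of $f$, Theorem \ref{maindefthm} produces an obstruction class in
\[
\on{Ext}^1\!\bigl(f^*\Omega^1_Y,\, \mcl{I}_D^n/\mcl{I}_D^{n+1}\bigr) \;=\; H^1\!\bigl(D,\, f^*T_Y \otimes \mcl{O}_D(-nD)\bigr),
\]
whose vanishing yields an extension to $D_{n+1}$; moreover any two extensions differ by an element of the corresponding $H^0$. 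I have used smoothness of $Y$ to dualize $\Omega^1_Y$, and smoothness of the Cartier divisor $D$ to identify $\mcl{I}_D^n/\mcl{I}_D^{n+1}$ with $\mcl{O}_D(-nD)$.

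The core of the argument is to check that both $H^0$ and $H^1$ of $f^*T_Y \otimes \mcl{O}_D(-nD)$ vanish for every $n \geq 1$. I plan to do this via Serre duality on the smooth projective variety $D$: these groups are dual to $H^{\dim D - i}(D, \omega_D \otimes E_n)$ for $i = 0, 1$, where
\[
E_n \;:=\; f^*\Omega^1_Y \otimes \mcl{O}_D(nD)
\]
is a vector bundle of rank $\dim Y$. The key positivity claim is that $E_n$ is ample: $\mcl{O}_D(D)$ is ample because $D$ is an ample Cartier divisor in $X$, the bundle $f^*\Omega^1_Y$ is nef as a pullback of the nef bundle $\Omega^1_Y$, and twisting a nef vector bundle by an ample line bundle gives an ample vector bundle (so $E_1$ is ample, and for $n \geq 2$ we twist once more by a nef line bundle). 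Le Potier's theorem applied to $E_n$ with $p = \dim D$ then forces $H^j(D, \omega_D \otimes E_n) = 0$ for $j \geq \dim Y$; since the hypothesis $\dim Y < \dim D$ places both $j = \dim D$ and $j = \dim D - 1$ in this range, the two dualized degrees $i = 0, 1$ are covered.

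Once the vanishing is in place, the rest is formal: starting from $s_1 = f$, induction produces a unique compatible system $\{s_n \colon D_n \to Y\}_{n \geq 1}$, and by the universal property of the completion these glue to a unique morphism $\widehat f \colon \widehat D \to Y$. The step that I expect to require the most care is the positivity of $E_n$ matching the numerical range in Le Potier: this uses in an essential way both that $D$ is smooth (so Serre duality applies to the honest dualizing bundle $\omega_D$ and the conormal identification $\mcl{I}_D^n/\mcl{I}_D^{n+1} = \mcl{O}_D(-nD)$ holds) and that $\dim Y < \dim D$ (so the Le Potier cohomological window catches the two needed degrees). Loosening either hypothesis would break this clean vanishing argument, which is presumably the reason the paper eventually treats general $D$ through positive-characteristic methods rather than this direct route.
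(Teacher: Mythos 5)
Your proposal is correct and follows essentially the same route as the paper's proof sketch: reduce via Theorem~\ref{maindefthm} (viewing $f$ as a section of the trivial family $Y\times X\to X$) to the vanishing of $H^0$ and $H^1$ of $f^*T_Y\otimes\mcl{O}_D(-nD)$ on $D$, then apply Serre duality on the smooth projective $D$ together with Le Potier's vanishing theorem for the ample bundle $f^*\Omega^1_Y\otimes\mcl{O}_D(nD)$ of rank $\dim Y<\dim D$. The one tiny inaccuracy in your commentary is that the identification $\mcl{I}_D^n/\mcl{I}_D^{n+1}\cong\mcl{O}_D(-nD)$ requires only that $D$ be Cartier, not smooth; smoothness of $D$ is needed for Serre duality with a dualizing line bundle and for Le Potier.
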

So the main work of this section will be to deal with the case where $D$ is not smooth.  The obvious obstruction to imitating the proof of Theorem \ref{lamecharzeroresult} for singular $D$ is that the le Potier vanishing theorem does not hold for arbitrary varieties.  We will avoid this by proving a weaker, positive characteristic result for arbitrary $D$ (that is, Theorem \ref{frobeniusvanishing}), and then leveraging the smoothness of $X$ to deduce an improvement of Theorem \ref{lamecharzeroresult} where $D$ may have arbitrary singularities.

Indeed, there are certain results we can only obtain in characteristic zero for smooth $D$, which we state here.
\begin{thm}
Let $X$ be a projective variety, and let $D\subset X$ be a smooth lci subscheme, with ample normal bundle, and with $\dim(D)\geq 2$.  Let $\widehat D$ the formal scheme obtained by completing $X$ at $D$.  Let $Y$ be a smooth variety with Nakano semi-positive cotangent bundle.  Then given a morphism $f: D\to Y$,
\begin{itemize}
\item there is at most one extension of $f$ to a morphism $\widehat D\to Y$ if $\dim(D)\geq 1$, and
\item such an extension exists as long as $\dim(D)\geq 2$.
\end{itemize}
\end{thm}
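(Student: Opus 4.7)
The plan is to reduce, via Theorem \ref{maindefthm}, to a cohomological vanishing on $D$, and then to establish that vanishing through Nakano's vanishing theorem after passing to the projectivized normal bundle. A morphism $f: D\to Y$ is the same as a section of the projection $\on{pr}_X\colon X\times Y\to X$ over $D$, and an extension to $\widehat{D}$ amounts to a compatible family of sections over the thickenings $D_n = V(\mcl{I}_D^n)$. Theorem \ref{maindefthm} identifies the obstruction to extending from $D_n$ to $D_{n+1}$ with an element of $\on{Ext}^1_D(f^*\Omega^1_Y, \mcl{I}_D^n/\mcl{I}_D^{n+1})$, and, when the obstructions vanish, the space of extensions with a torsor under $\on{Hom}_D(f^*\Omega^1_Y, \mcl{I}_D^n/\mcl{I}_D^{n+1})$. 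Since $D$ is lci in $X$, the conormal filtration gives $\mcl{I}_D^n/\mcl{I}_D^{n+1}\cong \on{Sym}^n(\mcl{N}^\vee)$ with $\mcl{N}=\mcl{N}_{D/X}$. So it suffices to prove that, for every $n\geq 1$,
$$H^0(D, f^*T_Y\otimes \on{Sym}^n\mcl{N}^\vee) = 0\ \text{(uniqueness)} \quad \text{and} \quad H^1(D, f^*T_Y\otimes \on{Sym}^n\mcl{N}^\vee) = 0\ \text{(existence).}$$

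Serre duality on the smooth projective $d$-fold $D$ identifies these with the duals of $H^d(D, \omega_D\otimes f^*\Omega^1_Y\otimes \on{Sym}^n\mcl{N})$ and $H^{d-1}(D, \omega_D\otimes f^*\Omega^1_Y\otimes \on{Sym}^n\mcl{N})$. Let $\pi\colon\mathbb{P}(\mcl{N})\to D$ in Grothendieck's convention: since $\mcl{N}$ is ample, $\mcl{O}_{\mathbb{P}(\mcl{N})}(1)$ is ample, and $\pi_*\mcl{O}(n)=\on{Sym}^n\mcl{N}$ with vanishing higher direct images for $n\geq 0$. Combining the projection formula with the canonical bundle formula $K_{\mathbb{P}(\mcl{N})}=\pi^*(\omega_D\otimes \det\mcl{N})\otimes \mcl{O}(-c)$ (with $c=\on{rk}\mcl{N}$), the groups above equal
$$H^q\bigl(\mathbb{P}(\mcl{N}),\, K_{\mathbb{P}(\mcl{N})}\otimes \pi^*f^*\Omega^1_Y\otimes L_n\bigr),\qquad L_n:=\mcl{O}(n+c)\otimes \pi^*\det\mcl{N}^{-1},$$
for $q=d$ and $q=d-1$ respectively. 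Pullback of a Hermitian metric preserves Nakano semi-positivity, so $\pi^*f^*\Omega^1_Y$ is Nakano semi-positive on $\mathbb{P}(\mcl{N})$. For every $n\geq 1$, $L_n$ is nef and big (a direct intersection-theoretic check using ampleness of $\mcl{N}$ and positivity of the leading Segre class); tensoring a Nakano semi-positive bundle by a nef-and-big line bundle gives a bundle satisfying Nakano vanishing in positive degree, by a Kawamata--Viehweg-type argument. The displayed cohomology therefore vanishes for all $q\geq 1$ and $n\geq 1$, which translates back to the required vanishing of $H^i(D, f^*T_Y\otimes \on{Sym}^n\mcl{N}^\vee)$ for $i\leq d-1$: uniqueness needs $d\geq 1$, existence needs $d\geq 2$.

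The deformation-theoretic reduction, Serre duality, and projection formula are routine; the crux is the Nakano vanishing input. In the codimension-one case (when $D$ is an ample Cartier divisor, so $\mcl{N}$ is an ample line bundle), the passage to $\mathbb{P}(\mcl{N})$ is unnecessary: $f^*\Omega^1_Y\otimes \mcl{N}^{\otimes n}$ is directly Nakano positive for $n\geq 1$ by the curvature formula for tensor products, and classical Nakano vanishing applies immediately on $D$. For higher codimension the essential point is that $L_n$ remains nef and big on $\mathbb{P}(\mcl{N})$ for every $n\geq 1$ even when $\mcl{N}$ is only ample (possibly unstable), so that one can invoke the Kawamata--Viehweg-type vanishing for Nakano semi-positive twists of nef-and-big line bundles. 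This last statement---Nakano vanishing with coefficients in a nef-and-big line bundle rather than an ample one---is the main technical ingredient of the argument beyond the standard deformation calculus.
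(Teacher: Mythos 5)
Your reduction to the vanishing of $H^i(D, f^*T_Y\otimes\on{Sym}^n\mcl{N}^\vee)$ for $i=0,1$ and $n\geq 1$, the identification $\mcl{I}_D^n/\mcl{I}_D^{n+1}\cong\on{Sym}^n\mcl{N}^\vee$ for lci $D$, and the Serre duality step all match the paper. Where you diverge is in how you establish the vanishing: the paper's (admittedly terse) proof is a one-liner asserting that $f^*\Omega^1_Y\otimes\mcl{N}_{D/X}$ (and implicitly its higher symmetric analogues) is Nakano positive, and then cites Nakano vanishing; you instead pass to $\mathbb{P}(\mcl{N})$ and try to reduce to a Kawamata--Viehweg-type statement. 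That is a genuinely different decomposition, and if it worked it would be more robust, since it replaces the curvature computation on $D$ with a statement about line bundles on the projectivization.

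However, the crux of your argument fails: $L_n=\mcl{O}_{\mathbb{P}(\mcl{N})}(n+c)\otimes\pi^*\det\mcl{N}^{-1}$ is \emph{not} nef in general, even for $n\geq 1$ and $\mcl{N}$ ample. Observe that $L_n = \mcl{O}(n)\otimes K_{\mathbb{P}(\mcl{N})/D}^{-1}$, so you are implicitly asserting that the relative anticanonical of $\mathbb{P}(\mcl{N})\to D$ is nearly nef, which requires $\mcl{N}$ to be balanced. Concretely: let $C\subset D$ be a curve with $\mcl{N}|_C\cong\mcl{O}(1)\oplus\mcl{O}(N)$ for $N$ large (this can happen for an ample $\mcl{N}$). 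The quotient $\mcl{N}|_C\twoheadrightarrow\mcl{O}(1)$ gives a section $\sigma\subset\mathbb{P}(\mcl{N})$ of $\pi$ over $C$ on which $\mcl{O}_{\mathbb{P}(\mcl{N})}(1)$ has degree $1$ while $\pi^*\det\mcl{N}$ has degree $N+1$; hence
\[
L_n\cdot\sigma = (n+c)\cdot 1 - (N+1) = n+1-N < 0 \qquad\text{for } N>n+1.
\]
So the ``direct intersection-theoretic check'' does not go through: the positivity of the leading Segre class controls bigness, not nefness, and nefness genuinely fails for unstable ample $\mcl{N}$. Without nefness of $L_n$ the Kawamata--Viehweg-type vanishing you invoke is unavailable, and the argument collapses precisely in the higher-codimension case. (The secondary claim you make---that Nakano semi-positive tensor nef-and-big yields vanishing above degree zero---is itself nontrivial and would need a citation, e.g.\ to Demailly--Peternell--Schneider or Mourougane, but it is not the main problem.)

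In codimension one, where $\mcl{N}$ is an ample line bundle, your projectivization step is vacuous, $L_n=\mcl{N}^n$ is ample, and the argument is fine---indeed it coincides with the paper's observation that a Nakano semi-positive bundle twisted by a positive line bundle is Nakano positive. To handle genuinely higher codimension, the route the paper intends (Nakano positivity of $f^*\Omega^1_Y\otimes\on{Sym}^n\mcl{N}$ from a curvature computation, using a metric on $\mcl{N}$ with Nakano-positive curvature) and your projectivization route both need additional input on $\mcl{N}$ beyond ampleness; you should either strengthen the hypothesis on the normal bundle (e.g.\ Nakano-positive, or at least Griffiths-positive with a correcting determinant twist à la Demailly--Skoda) or find the right twist on $\mathbb{P}(\mcl{N})$, which is not $L_n$.
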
 
\begin{proof}
The main observation here is that the sheaf $f^*\Omega^1_Y\otimes \mcl{N}_{D/X}$ is Nakano-positive, hence satisfies the required vanishing by Nakano's vanishing theorem \cite[after example 7.3.17]{lazarsfeld2004positivity2}.
\end{proof}
\subsection{Frobenius amplitude}
We first consider the case of positive characteristic.  Let $k$ be a field with positive characteristic, and $X$ a $k$-variety.  If $f:X\to S$ is a morphism over a positive characteristic base, recall the definition of the relative Frobenius.  If $Y$ is any $k$-scheme, we let $\on{Frob}_p$ be the absolute Frobenius morphism of $Y$ (which is not a morphism of $k$-schemes).  There is a natural diagram
$$\xymatrix@R=3em@C=10em{
X \ar@{.>}[rd]|{F_{X/S}} \ar[rrd]^{\on{Frob}_p} \ar[ddr]^f& & \\
 & X^{(p)} \ar[r] \ar[d]^{f^{(p)}} & X\ar[d]^f\\
 & S \ar[r]^{\on{Frob}_p}& S
}$$
where the square on the right is Cartesian, with $X^{(p)}:=X\times_{S, \on{Frob}_p} S$ and the map $F_{X/S}: X\to X^{(p)}$ is defined via the universal property of the Carterisan product.  

The main idea of this section is that if $\mcl{E}$ is a vector bundle on $X$ with some positivity property, $E^{(p)}:=\on{Frob}_p^*E$ has increased positivity; similarly, if $E'$ is a vector bundle on $X^{(p)}$ with some positivity property, $F_{X/S}^*E'$ has increased positivity.

Following, \cite{arapura-f-amplitude}, we make the following definition to measure the asymptotic positivity of $E^{(p^k)}$, as $k\to \infty$:
\begin{defn}[f-amplitude]
Let $E$ be a vector bundle on a $k$-scheme $X$.  Then if $\on{char}(k)=p>0$, the f-amplitude of $E$, denoted $\phi(E)$ is the least integer $i_0$ such that $$H^i(X, \mcl{F}\otimes E^{(p^k)})=0 \text{ for } k\gg 0$$ for all coherent sheaves $\mcl{F}$ on $X$ and $i> i_0.$  If $\on{char}(k)=0$, $\phi(E)$ is defined to be the the infimum of $$\on{max}_{\mfk{q}\in A} \phi(E_\mfk{q}),$$ where $A$ is a finite-type $\mbb{Z}$-scheme, $(\mcl{X}, \mcl{E})$ is a model of $(X, E)$ over $A$, and $\mfk{q}$ ranges over all closed points of $A$.
\end{defn}
The main result we will need is the following bound on the f-amplitude of an ample vector bundle \cite[Theorem 6.1]{arapura-f-amplitude}:
\begin{thm}\label{arapurabound}
Let $k$ be a field of characteristic zero, $X$ be a projective $k$-scheme, and $\mcl{E}$ an ample vector bundle on $X$.  Then $$\phi(\mcl{E})<\on{rk}(\mcl{E}).$$
\end{thm}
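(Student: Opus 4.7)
The plan is to reduce to positive characteristic (automatic by the definition of f-amplitude in characteristic zero, which is built by spreading out over a finite-type $\mbb{Z}$-scheme) and then exploit the projective bundle $\pi: Y = \mbb{P}(\mcl{E}) \to X$. By Hartshorne's definition of ampleness for a vector bundle, the tautological quotient line bundle $\mcl{O}_Y(1)$ is ample on $Y$ precisely because $\mcl{E}$ is ample; and $\pi$ is smooth of relative dimension $r - 1$ with $\pi_*\mcl{O}_Y(n) = \on{Sym}^n \mcl{E}$ and $R^j\pi_* \mcl{O}_Y(n) = 0$ for $j > 0$ whenever $n \geq 0$.

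With this setup, I would first apply Serre vanishing on $Y$ to the ample line bundle $\mcl{O}_Y(1)$. For any coherent sheaf $\mcl{F}$ on $X$ and each $i \geq 1$, there exists $k_0$ such that $H^i(Y, \pi^*\mcl{F} \otimes \mcl{O}_Y(p^k)) = 0$ for $k \geq k_0$. The Leray spectral sequence, combined with the identifications above applied to the line bundle $\mcl{O}_Y(p^k)$, then translates this directly into
\[
H^i(X, \mcl{F}\otimes \on{Sym}^{p^k}\mcl{E}) = 0 \quad \text{for } i \geq 1,\; k \gg 0.
\]
This is stronger than what we need in low degrees, but the passage to $\mcl{E}^{(p^k)}$ is where the rank bound enters.

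The next step is to replace $\on{Sym}^{p^k}\mcl{E}$ with $\mcl{E}^{(p^k)}$. In characteristic $p$ there is a canonical Frobenius-semilinear map $\mcl{E} \to \on{Sym}^{p^k}\mcl{E}$ sending $e \mapsto e^{p^k}$; after Frobenius twisting on the source this gives an $\mcl{O}_X$-linear embedding $\mcl{E}^{(p^k)} \hookrightarrow \on{Sym}^{p^k}\mcl{E}$ which is injective (locally it sends a basis $e_i$ to the distinct monomials $e_i^{p^k}$). Letting $\mcl{C}_k$ denote the cokernel, the long exact sequence in cohomology combined with the $\on{Sym}^{p^k}$ vanishing above shows that $H^i(X, \mcl{F} \otimes \mcl{E}^{(p^k)})$ is a subquotient of $H^{i-1}(X, \mcl{F} \otimes \mcl{C}_k)$ for $i \geq 1$ and $k \gg 0$. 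Hence it suffices to prove that $H^j(X, \mcl{F} \otimes \mcl{C}_k) = 0$ for $j \geq r - 1$ and $k \gg 0$.

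The main obstacle is controlling this cohomology of the cokernel $\mcl{C}_k$, and here I would induct on the rank $r$ of $\mcl{E}$. The base case $r = 1$ is immediate from Serre vanishing (there $\mcl{E}^{(p^k)} = \mcl{E}^{\otimes p^k}$ and the cokernel is zero). For the inductive step, one filters $\on{Sym}^{p^k}\mcl{E}$ by subbundles pulled back from a flag $0 \subset \mcl{E}_1 \subset \cdots \subset \mcl{E}_r = \mcl{E}$ (obtained after passing to a suitable flag variety over $X$, whose structure morphism has fiber dimension $\binom{r}{2}$ and whose tautological line bundles remain ample), and analyzes the graded pieces as products of Frobenius powers of line-bundle quotients tensored with ample bundles of smaller rank. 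An alternative is to resolve $\mcl{C}_k$ by a Koszul-type complex whose terms involve $\on{Sym}^{j}\mcl{E}$ for $j < p^k$ tensored with Frobenius pullbacks of exterior powers of subbundles, and bootstrap on this. Either route requires the careful combinatorial bookkeeping that forms the technical core of Arapura's argument; this is the step where I expect the real work to lie, everything else being formal consequences of Serre vanishing and the projective-bundle formalism.
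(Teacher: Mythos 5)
The paper does not actually prove Theorem~\ref{arapurabound}; it is stated as a citation of \cite[Theorem 6.1]{arapura-f-amplitude}, with only the short remark immediately following it indicating that the key ingredient of Arapura's argument is the Carter--Lusztig resolution of the functor $\mcl{E}\mapsto\mcl{E}^{(p)}$ by Schur functors. So there is no internal proof to compare your write-up against; the relevant comparison is against Arapura's own approach as the paper describes it, and against the internal consistency of your sketch.

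Your reduction to positive characteristic, the use of $\pi\colon\mbb{P}(\mcl{E})\to X$ with $\pi_*\mcl{O}(n)=\on{Sym}^n\mcl{E}$, the Leray/Serre-vanishing step, and the $\mcl{O}_X$-linear embedding $\mcl{E}^{(p^k)}\hookrightarrow\on{Sym}^{p^k}\mcl{E}$ with locally free cokernel $\mcl{C}_k$ are all correct and are broadly in the same spirit as Arapura's argument. But there is a genuine gap, and you have already named it yourself: the step ``bound $H^j(X,\mcl{F}\otimes\mcl{C}_k)$ for $j\ge r-1$ and $k\gg 0$'' is deferred wholesale to ``the careful combinatorial bookkeeping that forms the technical core of Arapura's argument.'' That bookkeeping is not a routine afterthought; it is the content of the theorem. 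The paper's own remark makes this explicit: what is known is a resolution of $\mcl{E}\mapsto\mcl{E}^{(p)}$ by Schur functors (Carter--Lusztig), and the absence of an analogous resolution of $\mcl{E}\mapsto\mcl{E}^{(p^n)}$ for $n>1$ is exactly why the bound is open in fixed positive characteristic. Your direct attack on $\mcl{E}^{(p^k)}\hookrightarrow\on{Sym}^{p^k}\mcl{E}$ for arbitrary $k\gg0$ therefore runs squarely into the thing that is not known; to prove the characteristic-zero statement one must instead exploit the single-Frobenius resolution together with the fact that Schur functors commute with pullback along Frobenius, so that $\mbb{S}_\lambda(\mcl{E}^{(p^{k-1})})=(\mbb{S}_\lambda\mcl{E})^{(p^{k-1})}$, and then run a recursion whose termination needs to be argued. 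Neither of your two sketched routes (a flag filtration after pullback, or an unspecified Koszul-type resolution of $\mcl{C}_k$) is shown to converge, and the flag route in particular changes the cohomology groups by the fiber dimension $\binom{r}{2}$ of the flag bundle, which you would need to account for. As it stands the proposal is a correct reduction to a hard step plus an acknowledgment that the hard step is Arapura's, which is not a proof.
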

\begin{rem}
It is presently unknown whether the bound of Theorem \ref{arapurabound} holds in positive characteristic; we expect that it does.  The key ingredient in the proof is a natural resolution of the functor $\mcl{E}\mapsto \mcl{E}^{(p)}$ by Schur functors (see \cite[p. 235]{carter-lusztig}); an analogous resolution of the functor $\mcl{E}\mapsto \mcl{E}^{(p^n)}$ would suffice to give the result.
\end{rem}
\begin{rem}
One may deduce versions of Le Potier's vanishing theorem (Theorem \ref{lepotier}) and many other interesting vanishing theorems from Theorem \ref{arapurabound} and the methods of Deligne-Illusie \cite{deligne-illusie}.  This is the purpose of Donu Arapura's beautiful papers \cite{arapura-f-amplitude, arapura-partial-regularity, arapura-ultraproducts}.
\end{rem}
We will also make heavy usage of the following analogue of Le Potier's vanishing theorem \cite[Theorem 8.2]{arapura-f-amplitude}, proven using the methods of \cite{deligne-illusie}:
\begin{thm}\label{arapuravanishing}
Let $k$ be a perfect field of characteristic $p>n$, and let $X$ be a smooth $n$-dimensional projective variety over $k$.  Let $\mcl{E}$ be a vector bundle on $X$, and suppose that $X$ lifts to $W_2(k)$.  Then $$H^i(X, \Omega^j_X\otimes \mcl{E})=0$$ for $i+j>n+\phi(\mcl{E}).$
\end{thm}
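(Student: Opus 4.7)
The plan is to combine Deligne-Illusie's decomposition of the de Rham complex with the $f$-amplitude hypothesis, trading the given bundle $\mcl{E}$ for a large Frobenius twist $\mcl{E}^{(p^k)}$ where enough cohomology vanishes by definition. Let $F\colon X \to X^{(p)}$ denote the relative Frobenius. For any vector bundle $\mcl{G}$ on $X^{(p)}$ the pullback $F^*\mcl{G}$ carries the canonical flat connection of vanishing $p$-curvature, making $\Omega^\bullet_X \otimes F^*\mcl{G}$ a de Rham complex. Tensoring the Deligne-Illusie decomposition $F_*\Omega^\bullet_X \simeq \bigoplus_j \Omega^j_{X^{(p)}}[-j]$ (which requires the $W_2(k)$-lift and $p > n$) with $\mcl{G}$ via the projection formula yields
$$F_*\bigl(\Omega^\bullet_X \otimes F^*\mcl{G}\bigr) \;\simeq\; \bigoplus_j \Omega^j_{X^{(p)}} \otimes \mcl{G}\,[-j]$$
in $D(X^{(p)})$. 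This forces the Hodge-to-de Rham spectral sequence for the left-hand complex to degenerate at $E_1$, and comparing the two expressions for its hypercohomology gives the numerical identity
$$\sum_{i+j=m} \dim_k H^i\bigl(X, \Omega^j_X \otimes F^*\mcl{G}\bigr) \;=\; \sum_{i+j=m} \dim_k H^i\bigl(X^{(p)}, \Omega^j_{X^{(p)}} \otimes \mcl{G}\bigr).$$

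I would then iterate this identity along the Frobenius tower $X \to X^{(p)} \to X^{(p^2)} \to \cdots$. Because $k$ is perfect and $X$ lifts to $W_2(k)$, every twist $X^{(p^s)}$ is again a smooth $n$-dimensional $W_2(k)$-liftable $k$-scheme, so the identity applies at each level. Factor the $k$-fold absolute Frobenius as $\on{Frob}_{p^k} = \on{pr}\circ F^k$, where $F^k\colon X \to X^{(p^k)}$ is the iterated relative Frobenius and $\on{pr}\colon X^{(p^k)} \to X$ is the structural projection, and set $\mcl{G}_k = \on{pr}^*\mcl{E}$ so that $F^{k*}\mcl{G}_k = \mcl{E}^{(p^k)}$. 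The iterated identity then collapses to
$$\sum_{i+j=m} \dim_k H^i\bigl(X, \Omega^j_X \otimes \mcl{E}^{(p^k)}\bigr) \;=\; \sum_{i+j=m} \dim_k H^i\bigl(X^{(p^k)}, \Omega^j_{X^{(p^k)}} \otimes \mcl{G}_k\bigr).$$

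At this point the definition of $\phi := \phi(\mcl{E})$ finishes the job. For $k \gg 0$ every left-hand summand with $i > \phi$ vanishes, and $j$ is trivially at most $n$, so the entire left-hand side is zero whenever $m > n + \phi$. Each right-hand summand is nonnegative, so each vanishes individually: $H^i(X^{(p^k)}, \Omega^j_{X^{(p^k)}} \otimes \mcl{G}_k) = 0$ for $i + j > n + \phi$. Finally, because $k$ is perfect, $\on{Frob}$ is an automorphism of $\on{Spec} k$, so $\on{pr}\colon X^{(p^k)} \to X$ is an isomorphism of abstract schemes (merely not of $k$-schemes) carrying $\mcl{G}_k$ to $\mcl{E}$ and $\Omega^j_{X^{(p^k)}}$ to $\Omega^j_X$; the desired conclusion $H^i(X, \Omega^j_X \otimes \mcl{E}) = 0$ follows.

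The main obstacle is securing the Deligne-Illusie decomposition with coefficients $\mcl{G}$ and verifying its compatibility with iteration along the Frobenius tower; once one accepts this input (as an application of the techniques of \cite{deligne-illusie}), the remainder is bookkeeping. Minor care is needed in distinguishing absolute from relative Frobenius and in confirming that the $W_2(k)$-lift propagates to each twist, both of which are immediate from the perfection of $k$.
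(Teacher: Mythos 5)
The paper does not prove this statement; it is imported verbatim as \cite[Theorem 8.2]{arapura-f-amplitude}, so there is no in-house proof to compare against. Your proposal does, however, reconstruct the Deligne--Illusie argument that Arapura uses, and the overall structure (tensor the DI decomposition with a bundle pulled back from $X^{(p)}$ via the projection formula, iterate along the Frobenius tower, invoke the definition of $\phi$, then transport back through the abstract isomorphism $X^{(p^k)}\cong X$ given by perfection of $k$) is the right one. The $W_2$-lift propagating to every Frobenius twist, and the identity $F^{k*}\mathrm{pr}^*\mcl{E}=\mcl{E}^{(p^k)}$, are both handled correctly.

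There is one place where you over-claim. The quasi-isomorphism
$$F_*\bigl(\Omega^\bullet_X\otimes F^*\mcl{G}\bigr)\simeq\bigoplus_j\Omega^j_{X^{(p)}}\otimes\mcl{G}[-j]$$
is precisely the degeneration at $E_2$ of the \emph{conjugate} (Cartier) spectral sequence for $\Omega^\bullet_X\otimes F^*\mcl{G}$; it does not ``force'' the Hodge-to-de Rham spectral sequence to degenerate. For trivial coefficients one deduces Hodge-to-de Rham degeneration from conjugate degeneration by matching total dimensions, using $h^i(X,\Omega^j_X)=h^i(X^{(p)},\Omega^j_{X^{(p)}})$; but with coefficients the two $E_1$ pages live on different spaces with different bundles ($\Omega^j_X\otimes F^*\mcl{G}$ versus $\Omega^j_{X^{(p)}}\otimes\mcl{G}$), and there is no a priori equality of dimensions, so the stated ``numerical identity'' is unjustified and probably false in general. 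Fortunately this does not damage the argument: the Hodge-to-de Rham spectral sequence always gives the one-sided estimate
$$\sum_{i+j=m}\dim H^i\bigl(X^{(p)},\Omega^j_{X^{(p)}}\otimes\mcl{G}\bigr)=\dim\mathbb{H}^m\bigl(X,\Omega^\bullet_X\otimes F^*\mcl{G}\bigr)\le\sum_{i+j=m}\dim H^i\bigl(X,\Omega^j_X\otimes F^*\mcl{G}\bigr),$$
and this is exactly the direction you need: once the definition of $\phi(\mcl{E})$ kills the right-hand side for $k\gg 0$ and $m>n+\phi(\mcl{E})$, every nonnegative summand on the left vanishes. Replace ``degenerates at $E_1$, giving the identity'' with ``the Hodge-to-de Rham spectral sequence gives the inequality'' and the proof is clean.
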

\subsection{Extending after composition with Frobenius}\label{extendingafterfrobeniussection}
The main result of this section is the following extension theorem.  The use of these sorts of positive-characteristic vanishing results was suggested to the author by Bhargav Bhatt.
\begin{thm}\label{frobeniusvanishing}
Let $X$ be variety over a field $L$ of characteristic $p$, and let $D\subset X$ be a Cartier divisor whose dualizing complex $K_D$ is supported in degrees $[-\dim(D), -r]$, and whose normal bundle is ample.  Suppose $\dim(X)\geq 3$.  Let $\widehat D$ be the formal scheme obtained by completing $X$ at $D$.  Let $f: D\to Y$ be a morphism, with $Y$ a smooth $k$-variety.  Suppose that $\phi(f^*\Omega^1_Y\otimes \mcl{N}_{D/X})<r-1$.  Let $\widetilde f^{(p^k)}=F_{Y/L}^k\circ f: D\to Y^{(p^k)}$.  Then for $k\gg0, \widetilde f^{(p^k)}$ extends uniquely to a morphism $\widehat D\to Y^{(p^k)}$.
\end{thm}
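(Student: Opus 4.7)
The plan is to build the extension by deforming $\widetilde f^{(p^k)}$ through the infinitesimal neighborhoods $D_n := \on{Spec}_X(\mcl{O}_X/\mcl{I}_D^n)$. By Theorem~\ref{maindefthm} applied to the smooth projection $Y^{(p^k)} \times X \to X$, the obstruction to extending a lift from $D_n$ to $D_{n+1}$ lies in $\on{Ext}^1_D((\widetilde f^{(p^k)})^*\Omega^1_{Y^{(p^k)}}, \mcl{I}_D^n/\mcl{I}_D^{n+1})$, and uniqueness is governed by the corresponding $\on{Hom}$. Using the standard identification $(F_{Y/L}^k)^*\Omega^1_{Y^{(p^k)}} = (\Omega^1_Y)^{(p^k)}$ and the fact that $D$ is Cartier (so $\mcl{I}_D^n/\mcl{I}_D^{n+1} \cong \mcl{N}_{D/X}^{\otimes -n}$), the task reduces to proving, for a single large $k$ and every $n \geq 1$, that
\[
H^i\bigl(D,\, (f^*T_Y)^{(p^k)} \otimes \mcl{N}_{D/X}^{\otimes -n}\bigr) = 0 \qquad (i = 0, 1).
\]

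Next, Grothendieck--Serre duality on $D$ (whose dualizing complex $K_D$ lives in degrees $[-\dim D, -r]$) combined with the hypercohomology spectral sequence translates the target into
\[
H^p\bigl(D,\, (f^*\Omega^1_Y)^{(p^k)} \otimes \mcl{N}_{D/X}^{\otimes n} \otimes \mcl{H}^q(K_D)\bigr) = 0
\]
for all $p \geq r-1$, $q \in [-\dim D, -r]$, and $n \geq 1$. I would attack this using the Frobenius-pullback identity
\[
(f^*\Omega^1_Y)^{(p^k)} \otimes \mcl{N}_{D/X}^{\otimes n} \;=\; (f^*\Omega^1_Y \otimes \mcl{N}_{D/X})^{(p^k)} \otimes \mcl{N}_{D/X}^{\otimes (n-p^k)}.
\]
For $n > p^k$ the second factor is a positive power of the ample $\mcl{N}_{D/X}$, so Serre/Fujita vanishing handles the degrees $p \geq r-1 \geq 1$. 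For $n \leq p^k$, the inclusion $\mcl{I}_D^{p^k} \subseteq \mcl{I}_D^n$ means that the $k$-th absolute Frobenius on $\mcl{O}_{D_n}$ factors through $\mcl{O}_D$, producing a canonical map $\rho_n : D_n \to D$. Its $L$-linear incarnation $\widetilde \rho_n : D_n \to D^{(p^k)}$, composed with the base change $f^{(p^k)} : D^{(p^k)} \to Y^{(p^k)}$, gives an $L$-linear morphism $D_n \to Y^{(p^k)}$; the identity $f^{(p^k)} \circ F^k_{D/L} = F^k_{Y/L} \circ f$ ensures this restricts to $\widetilde f^{(p^k)}$ on $D$, so existence is automatic in the small-$n$ regime.

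\textbf{Main obstacle.} The principal challenge is to match the two regimes uniformly in $n$ for a single $k$. The $f$-amplitude hypothesis $\phi(f^*\Omega^1_Y \otimes \mcl{N}_{D/X}) < r-1$ guarantees vanishing of high-degree cohomology after enough Frobenius twists, but only for each \emph{fixed} coherent test sheaf; the test sheaves $\mcl{N}_{D/X}^{\otimes(n-p^k)} \otimes \mcl{H}^q(K_D)$ that appear above vary with $n$. The Frobenius-section construction removes the existence burden in the range $n \leq p^k$, but uniqueness at each stage still demands vanishing of the corresponding $\on{Hom}$ groups, and the Serre/Fujita thresholds for $n > p^k$ depend on $k$ through the Frobenius pullback $(f^*\Omega^1_Y \otimes \mcl{N}_{D/X})^{(p^k)}$. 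Making the two ranges meet cleanly --- most likely via a uniform Castelnuovo--Mumford regularity bound on the family $\{(f^*\Omega^1_Y \otimes \mcl{N}_{D/X})^{(p^k)}\}_k$, or by exploiting the Frobenius-section trick more systematically so that only a bounded family of test sheaves needs to be controlled --- is where the genuine technical work lies.
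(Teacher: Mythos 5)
You correctly identify the two key ingredients: (a) the factorization of $F_{D/L}^k$ through the thickening $D_{p^k}$ (your $\rho_n$ construction, which matches the paper's Lemma~\ref{frobeniusfactors}), producing a canonical extension of $\widetilde f^{(p^k)}$ to a morphism $D_{p^k}\to Y^{(p^k)}$; and (b) Grothendieck--Serre duality on $D$ to translate the deformation-theoretic $\on{Ext}$ groups into cohomology groups involving $K_D$. But you assemble these in an order that manufactures the two-regime difficulty ($n\leq p^k$ versus $n>p^k$) that you correctly flag as the gap. In particular, using the Frobenius section only to dispose of \emph{existence} in the small-$n$ range, while still trying to control \emph{uniqueness} there cohomologically, leaves you with $\on{Hom}$ groups twisted by $\mcl{O}_D(-nD)$ for $1\leq n<p^k$, which are exactly the ones the f-amplitude hypothesis does not reach.

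The paper's proof removes the small-$n$ regime altogether: instead of running the deformation chain upward from $D_1=D$, it takes the canonical morphism $D_{p^k}\to Y^{(p^k)}$ from (a) as the \emph{base} of the induction, so the only deformation steps that ever occur are from $D_{p^k+s}$ to $D_{p^k+s+1}$ with $s\geq 0$. After Grothendieck duality, the obstruction and uniqueness groups for that step are
\[
H^{\dim(D)-\epsilon}\bigl(D,\; K_D(sD)\otimes \on{Frob}_p^{k*}(f^*\Omega^1_Y\otimes\mcl{N}_{D/X})\bigr), \qquad \epsilon=0,1,\quad s\geq 0,
\]
and the twist by $sD$ is always a \emph{nonnegative} power of the ample normal bundle, so positivity only improves with $s$. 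The test sheaves $\mcl{H}^q(K_D)$ are a finite collection, and the ampleness of $\mcl{N}_{D/X}$ together with $\phi(f^*\Omega^1_Y\otimes\mcl{N}_{D/X})<r-1$ yields a single usable threshold for $k$. Your instinct that Castelnuovo--Mumford control is the right underlying tool is sound, but the substantially harder uniformity statement your framing demands --- vanishing over two regimes meeting at a $k$-dependent interface, with twists of either sign --- is never required once the chain is based at $D_{p^k}$.
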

We prove will prove this theorem soon.  Essentially identical arguments give a version of this theorem for sections:
\begin{thm}
Let $X$ be variety over a field $L$ of characteristic $p$, and let $D\subset X$ be a Cartier divisor whose dualizing complex $K_D$ is supported in degrees $[-\dim(D), -r]$, and whose normal bundle is ample.  Suppose  $\dim(X)\geq 3$.  Let $\widehat D$ be the formal scheme obtained by completing $X$ at $D$.  Let $g: Y\to X$ be a smooth morphism, and $f: D\to Y_D$ a section to $g_D: Y_D\to D$.  Suppose that $\phi(f^*\Omega^1_{Y/X}\otimes \mcl{N}_{D/X})<r-1$.  Let $g^{(p^k)}$ be the map defined via the fiber square
$$\xymatrix{
Y^{(p^k)} \ar[r] \ar[d]^-{g^{(p^k)}} & Y \ar[d]^g\\
X \ar[r]^{\on{Frob}_p^k} & X,}$$  
and let $\widetilde f^{(p^k)}$ be the section to $g^{(p^k)}_D$ induced from $f$ by the universal property of the fiber product $Y^{(p^k)}$. Then for $k\gg0, \widetilde f^{(p^k)}$ extends to a morphism $\widehat D\to Y^{(p^k)}$.
\end{thm}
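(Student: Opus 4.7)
The plan is to mimic the proof of Theorem~\ref{frobeniusvanishing} (the absolute version stated just above), adapted to the relative setting of extending a section.  The strategy is inductive: extend $\widetilde f^{(p^k)}$ step-by-step along the infinitesimal thickenings $D = D_1 \subset D_2 \subset \cdots$ of $D$ in $X$, where $D_n$ is defined by $\mcl{I}_D^n$, and check that the obstruction at each step vanishes for a single $k \gg 0$.  By Theorem~\ref{maindefthm} applied to the smooth morphism $g^{(p^k)} : Y^{(p^k)} \to X$, if $\widetilde f^{(p^k)}_n : D_n \to Y^{(p^k)}$ is a section extending $\widetilde f^{(p^k)}$, the obstruction to extending it further to $D_{n+1}$ lies in
\[
\on{Ext}^1_D\bigl(\widetilde f^{(p^k),*} \Omega^1_{Y^{(p^k)}/X},\; \mcl{I}_D^n/\mcl{I}_D^{n+1}\bigr).
\]
Since $Y^{(p^k)} = Y \times_{X,\on{Frob}_X^k} X$ and absolute Frobenius is natural, a short chase identifies $\widetilde f^{(p^k),*}\Omega^1_{Y^{(p^k)}/X} \cong (f^*\Omega^1_{Y/X})^{(p^k)}$ on $D$; and since $D$ is Cartier, $\mcl{I}_D^n/\mcl{I}_D^{n+1} \cong \mcl{N}_{D/X}^{\vee,\otimes n}$.

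The problem thus reduces to establishing, for a single $k \gg 0$, the vanishing
\[
H^1\bigl(D,\; (f^*T_{Y/X})^{(p^k)} \otimes \mcl{N}_{D/X}^{\otimes -n}\bigr) = 0 \qquad \text{for all } n \geq 1.
\]
I would then apply Grothendieck--Serre duality on $D$ with respect to the dualizing complex $K_D$ (concentrated in degrees $[-\dim D,-r]$ by hypothesis); the resulting hypercohomology spectral sequence reduces the task to showing
\[
H^p\bigl(D,\; (f^*\Omega^1_{Y/X})^{(p^k)} \otimes \mcl{N}_{D/X}^{\otimes n} \otimes \mcl{H}^q(K_D)\bigr) = 0
\]
for $p \geq r-1$, each $q \in [-\dim D,-r]$, and all $n \geq 1$.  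Using that $\mcl{N}_{D/X}$ is a line bundle and hence $\mcl{N}_{D/X}^{(p^k)} = \mcl{N}_{D/X}^{\otimes p^k}$, I would rewrite the relevant bundle as $E^{(p^k)} \otimes \mcl{N}_{D/X}^{\otimes(n - p^k)}$ with $E := f^*\Omega^1_{Y/X} \otimes \mcl{N}_{D/X}$.  The hypothesis $\phi(E) < r-1$ combined with Arapura's $f$-amplitude technology should then deliver the required vanishing for $k \gg 0$, and the successive infinitesimal extensions assemble into the desired section $\widehat D \to Y^{(p^k)}$ of $g^{(p^k)}_{\widehat D}$.

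The main obstacle I anticipate is achieving the vanishing in the previous display \emph{uniformly in $n$}: the $f$-amplitude hypothesis provides vanishing for each \emph{fixed} coherent auxiliary sheaf with $k$ allowed to depend on that sheaf, but here the auxiliary factor $\mcl{N}_{D/X}^{\otimes(n-p^k)} \otimes \mcl{H}^q(K_D)$ depends on both $n$ and $k$ simultaneously.  Following the template presumably used in the absolute case of Theorem~\ref{frobeniusvanishing}, I would split the range of $n$ into two regimes: for $n \geq p^k$ the extra twist $\mcl{N}_{D/X}^{\otimes(n-p^k)}$ is an effective power of the ample normal bundle, and one applies $f$-amplitude combined with Serre vanishing on $D$ to absorb it into a single uniform statement; for $1 \leq n < p^k$ the twist is anti-ample, and one invokes the Serre-dual form of Serre vanishing (using $\dim D = \dim X - 1 \geq 2$) applied to $E^{(p^k)}$ to kill $H^p$ in the relevant range $p < \dim D$.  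Threading these two regimes together so that a single $k$ works for every $n$ simultaneously is the delicate point.
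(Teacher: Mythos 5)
Your overall skeleton is right---inductive infinitesimal extension via Theorem~\ref{maindefthm}, the identification $\widetilde f^{(p^k),*}\Omega^1_{Y^{(p^k)}/X}\cong (f^*\Omega^1_{Y/X})^{(p^k)}$, and Grothendieck--Serre duality with $K_D$ to convert the obstruction group into a statement amenable to $f$-amplitude. But there is a real gap, and it is precisely the ``delicate point'' you flag at the end: the regime $1\leq n<p^k$ is not salvageable by the Serre-dual form of Serre vanishing, because that vanishing only kicks in for $m\gg 0$, not $m=1$. In fact the obstruction \emph{group} $H^1\bigl(D,(f^*T_{Y/X})^{(p^k)}\otimes\mcl{N}_{D/X}^{\vee,\otimes n}\bigr)$ can genuinely be nonzero for small $n$: take $f$ to be a constant section of a trivial family, so $f^*T_{Y/X}\cong\mcl{O}_D^{\oplus m}$; the group becomes $H^1(D,\mcl{N}_{D/X}^{-n})^{\oplus m}$, which is typically nonzero for $n=1$ on a surface. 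The extension certainly exists in that example, but your argument---which tries to kill the obstruction \emph{group} at every step from $D_1$ upward---cannot show it.

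The step you are missing is Lemma~\ref{frobeniusfactors}, which the paper uses to start the induction at $D_{p^k}$ rather than at $D_1$. Because $\on{Frob}_D^k$ factors through the thickening $D\hookrightarrow D_{p^k}$, the map $\widetilde f^{(p^k)}$ admits a \emph{free} extension to a section over $D_{p^k}$, with no cohomological input whatsoever; one then lifts from $D_{p^k+s}$ to $D_{p^k+s+1}$ for $s\geq 0$. In that range the obstruction group, after duality and the rewrite you correctly carry out, involves the twist $\mcl{N}_{D/X}^{\otimes s}$ with $s\geq 0$ only---i.e.\ your ``first regime.'' The anti-ample regime simply never arises. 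Once you incorporate this, your sketch matches the paper's proof (which is omitted there precisely because it is the notational translation of the proof of Theorem~\ref{frobeniusvanishing}): fix finitely many $q$ with $\mcl{H}^q(K_D)\neq 0$, use $\phi\bigl(f^*\Omega^1_{Y/X}\otimes\mcl{N}_{D/X}\bigr)<r-1$ together with ampleness of $\mcl{N}_{D/X}$ to get the required vanishing uniformly in $s\geq 0$ for $k\gg 0$, and conclude.
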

The proof of hthis theorem is only notationally more complicated than that of Theorem \ref{frobeniusvanishing}, so we omit it.

Before proving Theorem \ref{frobeniusvanishing}, we need an elementary lemma.
\begin{lem}\label{frobeniusfactors}
Let $X$ be a $k$-scheme, with $k$ of characteristic $p>0$, and let $D\subset X$ be an effective Cartier divisor, defined by an ideal sheaf $\mcl{O}(-D)$.  Let $D_n$ be the Cartier divisor defined by $\mcl{O}(-nD)$.  Then the relative Frobenius map $F_{D/k}: D\to D^{(p)}$ factors through the inclusion $\iota_p: D\hookrightarrow D_p$, i.e. there exists a natural map $\tau_p: D_p\to D^{(p)}$ so that $F_{D/k}=\tau_p\circ \iota_p$.
\end{lem}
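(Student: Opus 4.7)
The plan is to construct $\tau_p$ locally from the classical freshman's-dream identity $(a+b)^p \equiv a^p + b^p \pmod p$, and then observe that the construction is natural enough to glue. Working in an affine chart $\Spec(R)$ of $X$ on which $D$ is cut out by an ideal $I \subset R$, the divisor $D_p$ corresponds to $R/I^p$ and the inclusion $\iota_p$ corresponds to the quotient $R/I^p \twoheadrightarrow R/I$. On the other hand, $D^{(p)}$ corresponds to the ring $(R/I) \otimes_{k,\on{Frob}} k$, with its $k$-algebra structure via the second factor, and $F_{D/k}$ corresponds to the map $\bar a \otimes \lambda \mapsto \lambda \bar a^p$.

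First, I would define a candidate ring map
\[
\tau_p^\#: (R/I) \otimes_{k,\on{Frob}} k \longrightarrow R/I^p, \qquad \bar a \otimes \lambda \longmapsto \lambda \cdot a^p \pmod{I^p},
\]
where $a \in R$ is any lift of $\bar a$. The main thing to check is well-definedness: if $a' = a + b$ with $b \in I$, then in characteristic $p$ all intermediate binomial coefficients $\binom{p}{i}$ vanish, so $(a+b)^p = a^p + b^p$, and since $b \in I$ we have $b^p \in I^p$, giving $a'^p \equiv a^p \pmod{I^p}$. The same identity, together with $(aa')^p = a^p (a')^p$, shows that $\tau_p^\#$ is a ring homomorphism, and the definition of the $k$-structure on $(R/I) \otimes_{k,\on{Frob}} k$ via the second factor makes it $k$-linear.

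Next, I would verify the factorization identity directly: the composition of $\tau_p^\#$ with the quotient $R/I^p \to R/I$ sends $\bar a \otimes \lambda$ to $\lambda \bar a^p$, which is exactly the ring map corresponding to $F_{D/k}$. Hence $F_{D/k} = \tau_p \circ \iota_p$ on the affine chart.

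Finally, I would note that the construction of $\tau_p^\#$ depends only on the pair $(R,I)$ in a manifestly functorial way: a ring map $R \to R'$ carrying $I$ into $I'$ induces a commuting square of $\tau_p^\#$'s, because $a \mapsto a^p$ commutes with ring maps. Consequently the local maps glue to a global morphism $\tau_p: D_p \to D^{(p)}$ with $F_{D/k} = \tau_p \circ \iota_p$. I do not anticipate a genuine obstacle; the only nontrivial step is the freshman's-dream check, and the rest is bookkeeping about the tautological description of the relative Frobenius.
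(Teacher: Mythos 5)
Your proof is correct and takes essentially the same route as the paper's: both reduce to an affine chart and observe that in characteristic $p$ the $p$-th power map $R/I \to R/I^p$, $\bar a \mapsto \overline{a^p}$, is well-defined (freshman's dream), then package this into a $k$-morphism $D_p \to D^{(p)}$. The only cosmetic difference is that the paper defines $\tau_p$ via the universal property of the fiber product $D^{(p)} = D \times_{k,\,\on{Frob}} k$ after factoring the absolute Frobenius through $D_p$, whereas you write out $\Gamma(D^{(p)})$ as the tensor product $(R/I)\otimes_{k,\,\on{Frob}} k$ and construct $\tau_p^\#$ directly; these are the same argument.
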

\begin{proof}
We may assume that $X=\on{Spec}(A)$ is affine, and $D$ is principle, cut out by some element $f\in A$.  Then $\on{Frob}_p: D\to D$ is given by the $p$-th power map on $A/f$, which factors through the natural map $A/f^p\to A/f$.  That is, the diagram defining the relative Frobenius map factors as
$$\xymatrix@R=3em@C=10em{
D \ar@{.>}[rd]|{F_{X/S}} \ar[rrd]^{\on{Frob}_p} \ar[ddr]^f \ar@{^(->}[r]^{\iota_p}& D_p \ar[rd]& \\
 & D^{(p)} \ar[r] \ar[d]^{f^{(p)}} & D\ar[d]^f\\
 & \on{Spec}(k) \ar[r]^{\on{Frob}_p}& \on{Spec}(k).
}$$
The induced map $D_p\to D$ and the structure map $D_p\to \on{Spec}(k)$ give the desired map $\tau_p: D_p\to D^{(p)}$, via the universal property of the Cartesian product.
\end{proof}
\begin{proof}[Proof of Theorem \ref{frobeniusvanishing}]
Without loss of generality, $L$ is perfect.  Let $f: D\to Y$ be a morphism, as in the statement of the theorem.  Then by the ``universal commutativity of Frobenius," i.e. the commutativity of the diagram
$$\xymatrix@C=3em{
D \ar[r]^{F_{D/k}} \ar[d]^f \ar@/^2pc/[rr]^{\on{Frob}_p}& D^{(p)} \ar[d]^{f^{(p)}} \ar[r]& D\ar[d]^f\\
Y \ar[r]^{F_{Y/k}} \ar@/_2pc/[rr]_{\on{Frob}_p}& Y^{(p)} \ar[r] &  Y
}$$
we have that $\widetilde f^{(p^k)}:=\on{Frob}_p^k\circ f$ is also equal to $f\circ \on{Frob}_p^k$.  By Lemma \ref{frobeniusfactors}, the morphism $\widetilde f^{(p^k)}$ admits a natural extension to $D_{p^k}$.  We claim that for $k\gg 0$, this morphism extends naturally to a morphism $D_{p^k+s}\to Y^{(p^k)}$ for all $s$.  Indeed, it will suffice to take $k$ large enough so that $$H^{\dim(D)-\epsilon}(D, K_D(sD)\otimes \on{Frob}_p^{k*}(f^*\Omega^1_Y\otimes \mcl{N}_{D/X}))=0$$ for $\epsilon=0, 1$ and all $s\geq 0$.  Such a $k$ exists by the assumption on the $f$-amplitude of $f^*\Omega^1_Y\otimes \mcl{N}_{D/X}$, and the amplitude of $\mcl{N}_{D/X}=\mcl{O}_D(D)$.  

Indeed, extending the morphism $D_{p^k+s}\to Y^{(p^k)}$ to a morphism $D_{p^k+s+1}\to Y^{(p^k)}$ is the same as extending the composite morphism $s: D_{p^k+s}\to Y^{(p^k)}\to Y$ (which is not a morphism of $k$-schemes, but rather a morphism ``over" $\on{Frob}_p: k\to k$), by the definition of $Y^{(p^k)}$.  The obstruction to such an extension lies in $$\on{Ext}^1(\widetilde f^{(p^k)*}\Omega^1_{Y}, \mcl{I}_D^{p^k+s}/\mcl{I}_D^{p^k+s+1})=H^1(D,  \widetilde f^{(p^k)*}T_{Y}\otimes \mcl{O}_D((-p^k-s)D)),$$ and assuming the obstruction vanishes, such extensions are a torsor for $$\on{Hom}(\widetilde f^{(p^k)*}\Omega^1_{Y}, \mcl{I}_D^{p^k+s}/\mcl{I}_D^{p^k+s+1})=H^0(D,  \widetilde f^{(p^k)*}T_{Y}\otimes \mcl{O}_D((-p^k-s)D)),$$
by Theorem \ref{maindefthm}; we wish to show that both of these groups vanish for $k\gg0$.  But $$\widetilde f^{(p^k)*}T_{Y}\otimes \mcl{O}_D((-p^k-s)D)=\on{Frob}_p^{k*}(f^*T_Y\otimes\mcl{O}_D(-D))\otimes \mcl{O}_D(-sD).$$  Recall also that $\mcl{O}_D(-D)=\mcl{N}_{D/X}^\vee$.  But by Grothendieck duality, \begin{equation*}\begin{split} H^i(D, \on{Frob}_p^{k*}(f^*T_Y\otimes\mcl{O}_D(-D))\otimes \mcl{O}_D(-sD))=\\ \qquad H^{\dim(D)-i}(D, K_D(sD)\otimes \on{Frob}_p^{k*}(f^*\Omega^1_Y\otimes \mcl{N}_{D/X})),\end{split}\end{equation*} 
which is zero by our assumption on $k$.
\end{proof}
\begin{rem}\label{zeroonDpk}
Observe that, from the proof of Theorem \ref{frobeniusvanishing}, the induced map $g_{p^k}: D_{p^k}\to Y^{(p^k)}\to Y$ factors through the natural ``$p^k$-th power map" $D_{p^k}\to D$. Thus in particular the map $g_{p^k}^*\Omega^1_Y\to \Omega^1_{D_{p^k}}$ is zero.
\end{rem}
Recall that given a morphism $f: D\to Y$, the goal of this section was to extend $\widetilde f^{p^k}:=\on{Frob}_p^k\circ f: D\to Y^{(p^k)}$ to  a morphism $\widehat D\to Y^{(p^k)}$.  Theorem \ref{frobeniusvanishing} reduces this problem to estimating the f-amplitude of the sheaf $f^*\Omega^1_Y\otimes \mcl{N}_{D/X}$.  Our main result in positive characteristic will be when $\Omega^1_Y$ is f-semipositive (which we will recall from \cite{arapura-f-amplitude}) and $D$ has some positivity properties (e.g. $D$ is an ample divisor).   To introduce the notion of f-semipositivity, we will need to recall the classical notion of Casteluovo-Mumford regularity. 
\begin{defn}[Castelnuovo-Mumford Regularity]
Let $X$ be a variety, and $\mcl{O}(1)$ an ample line bundle on $X$.  The \emph{Castelnuovo-Mumford regularity} of a coherent sheaf $\mcl{F}$ on $X$ with respect to $\mcl{O}(1)$ is the least $n$ such that $$H^i(X, \mcl{F}(n-i))=0$$ for all $i>0$.
\end{defn}
\begin{defn}[f-semipositivity, {\cite[Definition 3.8]{arapura-f-amplitude}}]\label{f-semipositivity-def}
Let $k$ be a field of characteristic $p>0$, and let $S$ be a projective $k$-scheme.  We say that a coherent sheaf $\mcl{F}$ on $S$ is f-semipositive if the Castelnuovo-Mumford regularity of the Frobenius pullbacks $\{\mcl{F}^{(p^k)}\}$ with respect to a fixed ample line bundle on $S$ are bounded.  This notion is independent of the choice of ample line bundle.

If $k$ is a field of characteristic zero, $S$ is a projective $k$-scheme, and $\mcl{F}$ is a coherent sheaf on $S$, we say that $\mcl{F}$ is f-semipositive if there exists a model of $(S, \mcl{F})$ over a finite-type $\mbb{Z}$-scheme so that the positive-characteristic fibers of $\mcl{F}$ are f-semipositive.
\end{defn}
\begin{rem}
A priori, the definition of f-semipositivity appears to rely on a choice of ample line bundle; however, the class of f-semipositive sheaves is independent of this choice \cite[Corollary 3.11]{arapura-f-amplitude}.  
\end{rem}
The main result about f-semipositivity which we will use is \cite[Theorem 4.5]{arapura-f-amplitude}:
\begin{thm}\label{tensorproductbound}
Let $\mcl{E}$ and $\mcl{F}$ be vector bundles with $\mcl{F}$ f-semipositive.  Then $$\phi(\mcl{E}\otimes \mcl{F})\leq \phi(\mcl{E}).$$
\end{thm}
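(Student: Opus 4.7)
The goal is to show that for any coherent sheaf $\mcl{G}$ on $X$, there exists $k_0$ such that $H^i(X, \mcl{G} \otimes \mcl{E}^{(p^k)} \otimes \mcl{F}^{(p^k)}) = 0$ for every $i > \phi(\mcl{E})$ and $k \geq k_0$. The naive attempt, applying the definition of $\phi(\mcl{E})$ to the ``test sheaf'' $\mcl{G} \otimes \mcl{F}^{(p^k)}$, fails because this sheaf varies with $k$, while the definition of $\phi$ requires the test sheaf to be fixed before $k$ is sent to infinity. The plan is to exploit f-semipositivity in order to replace $\mcl{F}^{(p^k)}$ by a resolution whose terms are, uniformly in $k$, drawn from a \emph{finite} list of twists of a fixed ample line bundle. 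This will reduce the problem to finitely many applications of the definition of $\phi(\mcl{E})$.

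Fix an ample line bundle $\mcl{O}(1)$. By Definition \ref{f-semipositivity-def}, the Castelnuovo--Mumford regularities of the pullbacks $\mcl{F}^{(p^k)}$ are bounded by some integer $m$ independent of $k$. Then $\mcl{F}^{(p^k)}(m)$ is globally generated with vanishing higher cohomology, so evaluation gives a surjection $\mcl{O}(-m)^{\oplus N_{0,k}} \to \mcl{F}^{(p^k)}$; a standard regularity computation shows the kernel has regularity at most $m+1$. Iterating and truncating at length $n=\dim(X)$ yields, for each $k$, a finite locally free resolution
\begin{equation*}
0 \to \mcl{K}_k \to \mcl{O}(-b_n)^{\oplus N_{n,k}} \to \cdots \to \mcl{O}(-b_0)^{\oplus N_{0,k}} \to \mcl{F}^{(p^k)} \to 0,
\end{equation*}
in which the integers $b_0, \ldots, b_n$ are bounded (by some function of $m$ and $n$) independently of $k$. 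Tensoring this exact sequence with the locally free sheaf $\mcl{G} \otimes \mcl{E}^{(p^k)}$ preserves exactness and produces a hypercohomology spectral sequence whose $E_1$-page, in columns $-j$ for $0\le j \le n$, has entries
\begin{equation*}
E_1^{-j, q} = H^q\bigl(X, \mcl{G}(-b_j) \otimes \mcl{E}^{(p^k)}\bigr)^{\oplus N_{j,k}},
\end{equation*}
together with a single additional column contributed by $\mcl{K}_k$ that only lives in total degree $> n$ and hence vanishes automatically above dimension. For each fixed $j$, applying the definition of $\phi(\mcl{E})$ to the (now $k$-independent) coherent sheaf $\mcl{G}(-b_j)$ produces an integer $k_j$ such that these entries vanish for $q > \phi(\mcl{E})$ and $k \geq k_j$; because only finitely many $j$ occur, taking $k \geq \max_j k_j$ makes them vanish simultaneously, forcing $H^i(X, \mcl{G} \otimes \mcl{E}^{(p^k)} \otimes \mcl{F}^{(p^k)}) = 0$ for $i > \phi(\mcl{E})$.

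The main technical obstacle is the bounded-regularity-of-syzygies step: one must verify that the kernel of a surjection $\mcl{O}(-m)^{\oplus N} \twoheadrightarrow \mcl{H}$ from a direct sum of copies of $\mcl{O}(-m)$ onto a sheaf $\mcl{H}$ of regularity $\leq m$ has regularity bounded by $m+1$ (so that the bound on $b_j$ grows only additively in $j$). This is a classical manipulation with the long exact sequence in cohomology defining Castelnuovo--Mumford regularity; once it is in hand, the construction of uniform-length resolutions above goes through, and the rest of the argument is purely formal. A secondary subtlety is handling the terminal term $\mcl{K}_k$ gracefully: on a smooth projective $X$ of dimension $n$ it is automatically locally free once the resolution is long enough, while on singular $X$ one instead relies on the vanishing of coherent cohomology in degrees $> \dim(X)$ to render the $\mcl{K}_k$-column harmless.
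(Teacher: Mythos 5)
The paper does not supply its own proof of this statement; it is quoted as Theorem~4.5 of Arapura's paper on Frobenius amplitude. Your argument is a correct reconstruction of the proof, and it captures exactly the right idea: the naive attempt fails because the test sheaf $\mcl{G}\otimes\mcl{F}^{(p^k)}$ varies with $k$, and f-semipositivity (bounded Castelnuovo--Mumford regularity of the $\mcl{F}^{(p^k)}$) is precisely what lets one trade those varying twists for a resolution drawn, uniformly in $k$, from a finite stock of sheaves $\mcl{O}(-b_j)$, after which only finitely many applications of the definition of $\phi(\mcl{E})$ are needed. Two small imprecisions are worth flagging, neither fatal. First, the syzygy bound ``regularity at most $m+1$'' is the $\mathbb{P}^N$ statement; on a general polarized $X$ (embed in $\mathbb{P}^N$ by $\mcl{O}(1)$ and push forward) the kernel's regularity is bounded by $m$ plus a constant depending on $\operatorname{reg}(\mcl{O}_X)$, so $b_j \leq m + jC$ for some $C$ depending only on $X$ --- what matters is $k$-independence and finiteness, not the literal increment of $1$. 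Second, the extra $\mcl{K}_k$ column contributes to hypercohomology in total degree at most $-1$ (not ``$>n$'') since it sits in column $-n-1$ with $q\le n$; either way it cannot touch $\mathbb{H}^i$ for $i\ge 0$, so the conclusion stands. You should also say explicitly that since $\mcl{F}^{(p^k)}$ is locally free, the syzygies have vanishing $\operatorname{Tor}$ against any coherent sheaf, which is why tensoring the resolution by $\mcl{G}\otimes\mcl{E}^{(p^k)}$ stays exact; and, if you want to state the full theorem rather than its positive-characteristic form, you should add the routine spreading-out step that reduces characteristic zero to the case you treat.
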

\begin{cor}
Let $X$ be a smooth projective variety over a field $k$ and let $D\subset X$ be a Cartier divisor with ample normal bundle.  Let $Y$ be a smooth $k$-variety with $\Omega^1_Y$ f-semipositive.  Then for any morphism $f: D\to Y$, $$\phi(f^*\Omega^1_Y\otimes \mcl{N}_{D/X})=0.$$
\end{cor}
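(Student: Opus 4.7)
The plan is to reduce the statement to Theorem~\ref{tensorproductbound} by writing $f^*\Omega^1_Y \otimes \mcl{N}_{D/X}$ as the tensor product of the ample line bundle $\mcl{N}_{D/X}$ with the f-semipositive bundle $f^*\Omega^1_Y$, and then noting that the f-amplitude of an ample line bundle vanishes.

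First I would check that $\phi(\mcl{N}_{D/X}) = 0$. Since $\mcl{N}_{D/X}$ is a line bundle, its Frobenius twists are just its ordinary tensor powers: $\mcl{N}_{D/X}^{(p^k)} = \mcl{N}_{D/X}^{\otimes p^k}$. Thus, for any coherent sheaf $\mcl{F}$ on $D$, Serre vanishing for the ample bundle $\mcl{N}_{D/X}$ on the projective scheme $D$ gives $H^i(D, \mcl{F} \otimes \mcl{N}_{D/X}^{(p^k)}) = 0$ for all $i > 0$ and $k \gg 0$, showing $\phi(\mcl{N}_{D/X}) = 0$ in positive characteristic. In characteristic zero the same argument, applied fiberwise on a suitable spreading out over a finite-type $\mbb{Z}$-scheme, yields the same conclusion.

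Second, I would invoke the hypothesis that $\Omega^1_Y$ is f-semipositive in order to conclude that $f^*\Omega^1_Y$ is f-semipositive on $D$. Since $D$ is projective, this is the content of (the appropriate extension of) Definition~\ref{f-semipositivity-def}: f-semipositivity of a sheaf on $Y$ means, or at least implies, the boundedness of Castelnuovo--Mumford regularities of Frobenius pullbacks after restriction along any morphism from a projective scheme. One then applies Theorem~\ref{tensorproductbound} with $\mcl{E} = \mcl{N}_{D/X}$ and $\mcl{F} = f^*\Omega^1_Y$ to obtain
\[
\phi(\mcl{N}_{D/X} \otimes f^*\Omega^1_Y) \;\leq\; \phi(\mcl{N}_{D/X}) \;=\; 0,
\]
which gives the desired conclusion.

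The only potentially nontrivial point is the second step: verifying that f-semipositivity of $\Omega^1_Y$ on the (possibly non-proper) smooth variety $Y$ descends to f-semipositivity of $f^*\Omega^1_Y$ on the projective scheme $D$. Once the definition of f-semipositivity is set up to be stable under pullback along morphisms from projective schemes (which is the natural extension of Definition~\ref{f-semipositivity-def} to non-projective targets), this step is formal, and the whole argument is essentially a one-line application of Theorem~\ref{tensorproductbound} combined with Serre vanishing for an ample line bundle.
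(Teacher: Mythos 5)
Your proposal is correct and follows essentially the same route as the paper: apply Theorem~\ref{tensorproductbound} with the ample line bundle $\mcl{N}_{D/X}$ playing the role of $\mcl{E}$ and $f^*\Omega^1_Y$ the role of the f-semipositive factor, then use $\phi(\mcl{N}_{D/X})=0$. The step you rightly flag as the only nontrivial point---that f-semipositivity of $\Omega^1_Y$ pulls back to f-semipositivity of $f^*\Omega^1_Y$ on $D$---is exactly what the paper resolves by citing \cite[Proposition~3.10]{arapura-f-amplitude}; likewise, $\phi(\mcl{N}_{D/X})=0$ is cited there from \cite[Lemma~2.4]{arapura-f-amplitude} rather than derived from Serre vanishing as you do, but the content is the same.
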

\begin{proof}
By \cite[Proposition 3.10]{arapura-f-amplitude}, $f^*\Omega^1_Y$ is f-semipositive.  Thus by Theorem \ref{tensorproductbound}, $\phi(f^*\Omega^1_Y\otimes \mcl{N}_{D/X})=\phi(\mcl{N}_{D/X})$.  But for a line bundle $\mcl{L}$, amplitude is the same as the condition that $\phi(\mcl{L})=0$ (this is elementary, but for a proof, see \cite[Lemma 2.4]{arapura-f-amplitude}).  Hence $\phi(\mcl{N}_{D/X})=0$, completing the proof.
\end{proof}
For examples of varieties $Y$ with f-semipositive cotangent bundle, see Section \ref{f-semipositive-section}.
\subsection{Extension results in characteristic zero and for liftable varieties}\label{charzeroextension}
\subsubsection{Uniqueness of Extensions}
Let $k$ be a field, and let $X$ be a smooth projective $k$-variety.  Let $D\subset X$ be an ample divisor, and $Y$ a smooth projective $k$-variety, and $f: D\to Y$ a map.  We now study the extent to which extensions of $f$ to a morphism $X\to Y$ are unique.  Our main theorem is:
\begin{thm}\label{unique-extensions}
Let $k$ be a field, and let $X$ be a smooth projective $k$-variety with $\dim(X)\geq 2$.  Let $D\subset X$ be an ample divisor, and let $Y$ be a smooth $k$-variety.  Let $f: X\to Y$ be a morphism such that $$\phi(f^*\Omega^1_Y(D))<\dim(D).$$ Then if 
\begin{enumerate}
\item $\on{char}(k)=0$, or
\item $k$ is perfect of characteristic $p>\dim(X)$ and $X$ lifts to $W_2(k)$, 
\end{enumerate}
there is at most one extension of $f|_D: D\to Y$ to a map $X\to Y$, namely $f$ itself.
\end{thm}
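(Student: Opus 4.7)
The plan is to show that any extension $g\colon X \to Y$ of $f|_D$ agrees with $f$ on the formal completion $\widehat{D}$; Corollary~\ref{sectionsareequal} applied to the projection $X \times Y \to X$ (whose sections are precisely morphisms $X \to Y$) then forces $g = f$. Agreement on $\widehat{D}$ will be proved by induction on $n$, lifting the equality $f|_{D_n} = g|_{D_n}$ up to $D_{n+1}$. Theorem~\ref{maindefthm} will reduce each inductive step to the vanishing of $H^0(D, f|_D^*T_Y \otimes \mcl{O}_D(-nD))$, and this vanishing will be established by transferring the problem to the smooth ambient $X$, via Serre duality on the Gorenstein divisor $D$ combined with the adjunction formula, and then finishing with Arapura's vanishing theorem.

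\textbf{Inductive step.} The case $n=1$ is the hypothesis. For the inductive step, apply Theorem~\ref{maindefthm} to the smooth projection $\on{pr}_1\colon X \times Y \to X$: the two extensions $f|_{D_{n+1}}, g|_{D_{n+1}}$ of the common $f|_{D_n} = g|_{D_n}$ differ by an element of
\[
\on{Hom}\!\left(f|_D^*\Omega^1_Y,\; \mcl{I}_D^n/\mcl{I}_D^{n+1}\right) \;=\; H^0\!\left(D,\; f|_D^*T_Y \otimes \mcl{O}_D(-nD)\right).
\]
It therefore suffices to show this group vanishes for every $n \geq 1$.

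\textbf{Cohomological vanishing.} Since $D$ is a Cartier divisor in the smooth $X$, it is Gorenstein with $\omega_D = \omega_X|_D \otimes \mcl{O}_D(D)$. Serre duality on $D$ identifies the group above, up to duality, with
\[
H^{\dim D}\!\left(D,\; (\omega_X \otimes \mcl{E}_{n+1})|_D\right), \qquad \mcl{E}_m := f^*\Omega^1_Y \otimes \mcl{O}_X(mD).
\]
Tensoring $0 \to \mcl{O}_X(-D) \to \mcl{O}_X \to \mcl{O}_D \to 0$ with $\omega_X \otimes \mcl{E}_{n+1}$ and taking cohomology reduces this further to the simultaneous vanishings
\[
H^{\dim D}(X,\, \omega_X \otimes \mcl{E}_{n+1}) \;=\; 0 \;=\; H^{\dim D + 1}(X,\, \omega_X \otimes \mcl{E}_n).
\]
In case~(2), since $\mcl{O}_X(D)$ is ample, hence f-semipositive, Theorem~\ref{tensorproductbound} yields $\phi(\mcl{E}_m) \leq \phi(\mcl{E}_1) = \phi(f^*\Omega^1_Y(D)) < \dim D$ for every $m \geq 1$. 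Arapura's vanishing (Theorem~\ref{arapuravanishing}, whose hypotheses $p > \dim X$ and the $W_2(k)$-lift are in force) then delivers $H^i(X, \omega_X \otimes \mcl{E}_m) = 0$ for all $i > \phi(\mcl{E}_m)$, which covers both $i = \dim D$ and $i = \dim D + 1$.

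\textbf{Characteristic zero and the main obstacle.} Case~(1) will follow by spreading $(X,D,Y,f,g)$ out over a finite-type $\mbb{Z}$-subalgebra and specializing to closed fibres of sufficiently large residue characteristic, where case~(2) applies fibrewise; the bound $\phi(f^*\Omega^1_Y(D)) < \dim D$ survives by the very definition of f-amplitude in characteristic zero, and $f = g$ on a dense set of closed fibres implies $f = g$ globally. The main technical difficulty is securing the vanishing \emph{uniformly in $n$}: Arapura's theorem lives on the smooth $X$, not on the possibly singular $D$, so a direct application to $\mcl{E}_1$ alone would not suffice. This is precisely why the hypothesis is phrased with the twist $f^*\Omega^1_Y(D)$ rather than $f^*\Omega^1_Y$: combined with the f-semipositivity of every nonnegative power of the ample line bundle $\mcl{O}_X(D)$ and Theorem~\ref{tensorproductbound}, it ensures $\phi(\mcl{E}_m) < \dim D$ propagates to all $m \geq 1$ at once.
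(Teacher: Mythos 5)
Your proof is correct and follows the paper's route: reduce to uniqueness of the extension to $\widehat D$ via Corollary \ref{sectionsareequal}, use Theorem \ref{maindefthm} to reduce to the vanishing of $H^0\bigl(D, f|_D^*T_Y\otimes\mcl{O}_D(-nD)\bigr)$ for all $n\geq 1$, and conclude via Arapura's vanishing theorem (Theorem \ref{arapuravanishing}) on the smooth ambient $X$ together with the $f$-amplitude bound coming from Theorem \ref{tensorproductbound}. The only cosmetic difference is the order of operations in the cohomological step: you apply Gorenstein Serre duality on $D$ first and then pass to $X$ via the ideal-sheaf sequence, whereas the paper first twists the ideal-sheaf sequence on $X$ and then applies Serre duality there implicitly; the two bookkeepings produce the same pair of vanishings $H^{\dim D}(X,\omega_X\otimes\mcl{E}_{n+1})=0=H^{\dim D+1}(X,\omega_X\otimes\mcl{E}_n)$.
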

\begin{proof}
Let $\widehat D$ be the formal scheme obtained by completing $X$ at $D$.  By Corollary \ref{sectionsareequal}, it suffices to show that an extension of $f|_D$ to $\widehat D$ is unique.  By standard deformation theory (Theorem \ref{maindefthm}), it is enough to show that $$\on{Hom}(f^*\Omega^1_Y|_D, \mcl{I}_D^n/\mcl{I}_D^{n+1})=H^0(D, (f|_D^*\Omega^1_Y)^\vee\otimes \mcl{O}_X(-nD)|_D)=0$$ for all $n\geq 1$, where $\mcl{I}_D$ is the ideal sheaf of $D$.

Observe that \begin{equation}\label{phibound}\phi(f^*\Omega^1_Y(nD))\leq \phi(f^*\Omega^1_Y(D))<\dim(D),\end{equation} because tensoring with an ample line bundle does not increase f-amplitude, by Theorem \ref{tensorproductbound} (using that ample line bundles are f-semipositive \cite[Proposition A.2]{arapura-f-amplitude}).  

Now the short exact sequence $$0\to f^*(\Omega^1_Y)^\vee((-n-1)D)\to (f^*\Omega^1_Y)^\vee(-nD)\to (f|_D^*\Omega^1_Y)^\vee\otimes \mcl{O}_X(-nD)|_D)\to 0$$ induces a long exact sequence in cohomology, so to show that $$H^0(D, (f|_D^*\Omega^1_Y)^\vee\otimes \mcl{O}_X(-nD)|_D)=0$$ as desired, it is enough to show that $$H^0(X,  (f^*\Omega^1_Y)^\vee(-nD))=H^1(X,  f^*(\Omega^1_Y)^\vee((-n-1)D))=0.$$  But this is immediate from our estimate \ref{phibound} and the vanishing theorem \ref{arapuravanishing}.
\end{proof}
\subsubsection{Existence of Extensions}
We now deduce consequences of Section \ref{extendingafterfrobeniussection} in characteristic zero and for varieties over a field $L$ of characteristic $p>0$, which lift to $W_2(L)$.  Our main theorem is:
\begin{thm}\label{charzeroextensionthm}
Let $k$ be a field of characteristic zero.  Let $X$ be a smooth $k$-variety and $D\subset X$ an ample Cartier divisor, with $\dim(X)\geq 3$.  Let $Y$ be a smooth $k$-variety and let $f: D\to Y$ be a morphism such that $$\phi(\mcl{N}_{D/X}\otimes f^*\Omega^1_Y)<\dim(D)-1.$$   Suppose further that  
\begin{enumerate}
\item $\dim(Y)<\dim(D)$, or
\item $Y$ is quasi-projective and $\dim(\on{im}(f))<\dim(D)-1$, or
\item $Y$ is proper and admits a model which is finite type over $\mbb{Z}$ and whose geometric fibers contain no rational curves.
\end{enumerate}   
Then $f$ extends uniquely to a morphism $X\to Y$.
\end{thm}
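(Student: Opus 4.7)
The plan is to reduce the problem to positive characteristic and invoke the liftable positive-characteristic analogue foreshadowed in the introduction and established in Section~\ref{deformation theory}, applied on closed fibers of a $\BZ$-model. Concretely, spread $(X,D,Y,f)$ out to a finite-type $\BZ$-algebra $A$; by the characteristic-zero definition of $f$-amplitude, the hypothesis $\phi<\dim D-1$ persists on a dense set of closed fibers of $A$. On each such fiber I follow the three-part factorization
\[
D \hookrightarrow \widehat{D} \hookrightarrow U \hookrightarrow X
\]
of Section~1.3, applied to the Frobenius twist $\on{Frob}^k\circ f\colon D\to Y^{(p^k)}$ rather than to $f$ directly.

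For the formal-extension step, I observe that smoothness of $X$ makes the Cartier divisor $D$ Cohen--Macaulay, so $K_D$ is concentrated in the single degree $-\dim D$. The hypothesis thus matches Theorem~\ref{frobeniusvanishing} with $r=\dim D$, yielding for $k\gg 0$ a formal extension $\widehat D\to Y^{(p^k)}$ of $\on{Frob}^k\circ f$. Corollary~\ref{sectionalgebraization}, applied to the projection $X\times Y^{(p^k)}\to X$ (the hypothesis $m\geq 2$ holds since $X$ is smooth of dimension $\geq 3$), algebraizes this to a morphism $U\to Y^{(p^k)}$ on a Zariski-open $U\supseteq D$. In each of cases (1), (2), (3) the relevant hypothesis is inherited by $Y^{(p^k)}$, so one of Corollary~\ref{nonproperextension} (using local $\BQ$-factoriality of smooth $X$), Corollary~\ref{smallimageextension}, or Proposition~\ref{noratlcurvesextension} extends the morphism to $X\to Y^{(p^k)}$.

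To descend through the Frobenius twist and recover an extension of $f$ itself, I invoke the smoothness of $X$, its canonical $W_2$-lift, and Arapura's Deligne--Illusie-style vanishing Theorem~\ref{arapuravanishing} applied on $X$; this is the content of the positive-characteristic extension theorem alluded to in the introduction. A standard representability and limit argument for the $\on{Hom}$ scheme then transports the fiberwise extensions back to the characteristic-zero generic fiber, yielding $f\colon X\to Y$. Uniqueness in every case is immediate from Theorem~\ref{unique-extensions}.

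The main obstacle is the Frobenius-descent step: Theorem~\ref{frobeniusvanishing} only produces extensions of $\on{Frob}^k\circ f$, and stripping off the Frobenius twist uses smoothness and $W_2$-liftability of $X$ in an essential way via vanishing theorems on $X$ itself. The remaining steps are deployments of machinery already established in Sections~\ref{algebraization}--\ref{deformation theory}.
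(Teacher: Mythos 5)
Your proposal is correct and follows essentially the same strategy as the paper's proof: the paper reduces Theorem \ref{charzeroextensionthm} by spreading out over a finite-type $\mbb{Z}$-scheme and invoking Theorem \ref{charpliftextensionthm} (the positive-characteristic analogue) on closed fibers, then concludes via the representability and finite type of the $\on{Hom}$-scheme over the base. You essentially unpack the proof of Theorem \ref{charpliftextensionthm} rather than citing it directly, but the factorization $D\hookrightarrow\widehat D\hookrightarrow U\hookrightarrow X$, the Cohen--Macaulay observation to pin $r=\dim D$ in Theorem \ref{frobeniusvanishing}, the algebraization and open-extension steps, and the Frobenius-descent via $W_2$-liftability and Theorem \ref{arapuravanishing} are all exactly the paper's route.
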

In fact, one has the following interesting theorem in characteristic zero, which is an analogue of Theorem \ref{frobeniusvanishing}, without the global hypotheses on $Y$.  
\begin{thm}\label{formalextensioncharzero}
Let $k$ be a field of characteristic zero.  Let $X$ be a smooth $k$-variety and $D\subset X$ an ample divisor, with $\dim(X)\geq 3$.  Let $Y$ be a smooth $k$-variety and let $f: D\to Y$ be a morphism such that  $$\phi(\mcl{N}_{D/X}\otimes f^*\Omega^1_Y)<\dim(D)-1.$$   Then $f$ extends uniquely to a morphism $\widehat D\to Y$, where $D$ is the formal scheme obtained by completing $X$ at $D$.
\end{thm}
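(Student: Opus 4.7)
My approach follows the strategy sketched in the paper's introduction: spread out to positive characteristic, apply Theorem~\ref{frobeniusvanishing} to extend after composition with a power of Frobenius, and then use the smoothness of $X$ together with Arapura's vanishing theorem to remove the Frobenius twist.

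\emph{Step 1 (spreading out).} Using the characteristic-zero definition of f-amplitude, I would spread out $(X,D,Y,f)$ to a datum $(\mcl{X},\mcl{D},\mcl{Y},\mcl{F})$ over a finite-type $\mbb{Z}$-algebra $A$ with $\mcl{X}/A$ smooth projective, $\mcl{D}\subset\mcl{X}$ a relatively ample Cartier divisor, $\mcl{Y}/A$ smooth, and with the bound $\phi(\mcl{N}_{\mcl{D}_\mfk{q}/\mcl{X}_\mfk{q}}\otimes\mcl{F}_\mfk{q}^*\Omega^1_{\mcl{Y}_\mfk{q}})<\dim(D)-1$ holding at every closed point $\mfk{q}\in\on{Spec}(A)$. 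The smoothness of $\mcl{X}$ over $A$ ensures that each fiber $\mcl{X}_\mfk{q}$ automatically lifts to $W_2(k(\mfk{q}))$ via the first infinitesimal thickening of $\mfk{q}$ in $A$. By a standard density argument over the Jacobson scheme $\on{Spec}(A)$, it then suffices to prove the theorem in the positive-characteristic setting: $L$ a perfect field of characteristic $p>0$, $X/L$ smooth projective lifting to $W_2(L)$, with all other hypotheses as in the statement.

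\emph{Step 2 (extend after Frobenius).} Since $D$ is a Cartier divisor in the smooth $X$, $D$ is Gorenstein, so its dualizing complex is the line bundle $\omega_X(D)|_D$ placed in cohomological degree $-\dim(D)$. Hence $K_D$ is supported in degrees $[-\dim(D),-\dim(D)]$, and the hypotheses of Theorem~\ref{frobeniusvanishing} are satisfied with $r=\dim(D)$; the f-amplitude bound $<\dim(D)-1=r-1$ is exactly what that theorem requires. We conclude that for $k\gg 0$ the morphism $\widetilde{f}^{(p^k)}=F_{Y/L}^k\circ f$ extends uniquely to a formal morphism $\widetilde{g}\colon\widehat D\to Y^{(p^k)}$.

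\emph{Step 3 (remove the Frobenius---the main obstacle).} To promote $\widetilde{g}$ to an extension of $f$ itself, I would proceed inductively along the infinitesimal thickenings $D_n\subset\widehat D$, showing at each step that both the obstruction $\on{Ext}^1(f^*\Omega^1_Y,\mcl{I}_D^n/\mcl{I}_D^{n+1})$ and the torsor $\on{Hom}(f^*\Omega^1_Y,\mcl{I}_D^n/\mcl{I}_D^{n+1})$ vanish (Theorem~\ref{maindefthm}), so that $f$ lifts uniquely from $D_n$ to $D_{n+1}$ at every stage. By Grothendieck--Serre duality on the Gorenstein divisor $D$, these groups identify with $H^{\dim(D)-i}(D,f^*\Omega^1_Y\otimes\mcl{N}_{D/X}^{\otimes n+1}\otimes\omega_X|_D)^\vee$ for $i=1,0$. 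To access these cohomology groups without knowing a priori that $f$ extends to $X$, I would algebraize the vector bundle $\widetilde{g}^*\Omega^1_{Y^{(p^k)}}$ on $\widehat D$, via Corollary~\ref{existencetheorem}, to a coherent sheaf on a Zariski-open neighborhood of $D$ in $X$ extending the Frobenius pullback $(f^*\Omega^1_Y)^{(p^k)}$; this is how the Frobenius extension of Step~2 feeds into the descent. The long exact sequences arising from $0\to\mcl{O}_X(-D)\to\mcl{O}_X\to\mcl{O}_D\to 0$ tensored with suitable twists of this extended bundle, combined with Arapura's vanishing theorem (Theorem~\ref{arapuravanishing}) on the smooth $W_2$-liftable $X$, would then transfer vanishing from $X$ down to $D$. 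The gap of one in the hypothesis $\phi<\dim(D)-1$, as against the $\phi<\dim(D)$ of Theorem~\ref{unique-extensions}, is precisely the codimension-one slack needed to move the computation off $D$ via this short exact sequence, and correctly matching the Frobenius-twisted computation on $X$ to the untwisted obstruction on $D$ is the principal technical obstacle.
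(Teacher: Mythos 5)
The paper does not give a proof of this theorem: it explicitly states that the argument ``relies on certain unpublished vanishing results for $H^i(\widehat D, \widehat{\Omega^q_X}\otimes E)$\ldots As we will not need this result, we omit the proof.'' So you are on your own here, and your Step~3 does not actually close the gap.

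Your Steps~1 and~2 are reasonable (modulo the point that spreading out a conclusion about the formal scheme $\widehat D$ is not the routine finite-type reduction used for Theorem~\ref{charzeroextensionthm}; you must reduce to the vanishing of the individual $\on{Ext}^i(f^*\Omega^1_Y,\mcl{I}_D^n/\mcl{I}_D^{n+1})$ for each fixed $n$ before a density argument applies). The genuine gap is in Step~3. Corollary~\ref{existencetheorem} algebraizes $\widetilde g^*\Omega^1_{Y^{(p^k)}}$ to a coherent sheaf $\mcl{E}$ on some Zariski open $U\supset D$, and you intend to feed this into Arapura's vanishing theorem~\ref{arapuravanishing}. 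But two things go wrong. First, $\mcl{E}$ lives only on $U$, not on the projective $X$ where Theorem~\ref{arapuravanishing} applies; you have no way to propagate it to $X$ as a vector bundle because Theorem~\ref{formalextensioncharzero} deliberately omits the global hypotheses (1)--(3) of Theorem~\ref{charzeroextensionthm} that allow algebraization to a morphism from $X$. Second, and more fundamentally, what you have on $D$ is the Frobenius pullback $(f^*\Omega^1_Y)^{(p^k)}$, not $f^*\Omega^1_Y$ itself, and there is no $p^k$-th root operation on vector bundles that would let you descend. The deformation-obstruction groups you need to kill are $\on{Ext}^i(f^*\Omega^1_Y,\mcl{I}_D^n/\mcl{I}_D^{n+1})$ for the untwisted bundle, while the positive-characteristic vanishing you can actually prove (Theorem~\ref{frobeniusvanishing}) only concerns the twisted groups $H^\epsilon(D,\on{Frob}_p^{k*}(\cdot))$ for $k\gg 0$; with $D$ singular those two sets of groups have no a priori relation. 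In the proof of Theorem~\ref{charpliftextensionthm} this is surmounted by producing a genuine morphism $X\to Y^{(p^k)}$ and then arguing that it factors through Frobenius using $H^0(X,\Omega^1_X\otimes(\cdot)^\vee)=0$ \emph{on $X$}; you cannot mimic that here because there is no morphism from $X$, only a formal one from $\widehat D$. The ``transfer of vanishing from $X$ down to $D$'' you invoke is exactly the missing ingredient, and it is what the paper flags as requiring unpublished vanishing theorems on the formal completion $\widehat D$ itself. In short, you have correctly located the hard part of the argument, but the route you propose through Corollary~\ref{existencetheorem} plus Theorem~\ref{arapuravanishing} is not a proof of it.
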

The proof is analogous to that of Theorem \ref{charzeroextensionthm}, but relies on certain unpublished vanishing results for $H^i(\widehat D, \widehat{\Omega^q_X}\otimes E)$, where $E$ has bounded Frobenius-amplitude, along the lines of \cite[Corollary 8.6]{arapura-f-amplitude} (here $\widehat{\Omega^q_X}$ denotes the pullback of $\Omega^q_X$ to $\widehat D$).  As we will not need this result, we omit the proof.  We observe that one may rather cheaply prove a local version of Theorem \ref{formalextensioncharzero} in the case that $D$ is smooth:
\begin{thm}\label{smoothcharzeroresult}
Let $k$ be a field of characteristic zero.  Let $D\hookrightarrow \widehat D$ be an embedding of a smooth proper $k$-variety in a smooth formal scheme, so that the ideal sheaf $\mcl{I}_D$ is an ideal of definition for $\widehat D$, with $\dim(D)\geq 2$.   Let $Y$ be a smooth $k$-variety and $f: D\to Y$ a morphism such that $$\phi((\mcl{I}_D^n/\mcl{I}_D^{n+1})^\vee\otimes f^*\Omega^1_Y)<\dim(D)-1$$ for all $n$.  Then $f$ extends uniquely to a morphism $\widehat D\to Y$.
\end{thm}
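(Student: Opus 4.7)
The plan is to construct the desired extension $\widehat{D}\to Y$ as the colimit of compatible maps $s_n\colon D_n\to Y$, where $D_n\subset\widehat D$ is the closed subscheme defined by $\mcl{I}_D^{n+1}$, starting from $s_0=f$. By Theorem \ref{maindefthm}, given $s_n$, the obstruction to producing an extension $s_{n+1}$ lies in $\on{Ext}^1_{D_n}(s_n^*\Omega^1_Y,\ \mcl{I}_D^n/\mcl{I}_D^{n+1})$, and (when nonempty) the set of such extensions is a torsor under $\on{Hom}_{D_n}(s_n^*\Omega^1_Y,\ \mcl{I}_D^n/\mcl{I}_D^{n+1})$. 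Because $\mcl{I}_D^n/\mcl{I}_D^{n+1}$ is supported on $D$, these Ext groups are computed on $D$ with $s_n^*\Omega^1_Y$ replaced by $f^*\Omega^1_Y$. The whole argument therefore reduces to proving the vanishing of both groups for every $n\geq 1$; existence, uniqueness at each stage, and hence uniqueness of the limit $\widehat D\to Y$ follow by induction on $n$.

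Next I would rewrite these Ext groups as cohomology on $D$. Since $Y$ is smooth, $f^*\Omega^1_Y$ is a vector bundle on $D$; since both $D$ and $\widehat D$ are smooth, the inclusion $D\hookrightarrow \widehat D$ is a regular embedding, so $\mcl{I}_D/\mcl{I}_D^2$ is locally free on $D$ and $\mcl{I}_D^n/\mcl{I}_D^{n+1}\cong \Sym^n(\mcl{I}_D/\mcl{I}_D^2)$ is also a vector bundle. Hence
\[
\on{Ext}^i_D\bigl(f^*\Omega^1_Y,\ \mcl{I}_D^n/\mcl{I}_D^{n+1}\bigr)\;=\;H^i\bigl(D,\ f^*T_Y\otimes \mcl{I}_D^n/\mcl{I}_D^{n+1}\bigr),
\]
and by Serre duality on the smooth proper variety $D$ this group is dual to
\[
H^{\dim D-i}\bigl(D,\ \Omega^{\dim D}_D\otimes \mcl{E}_n\bigr),\qquad \mcl{E}_n:=(\mcl{I}_D^n/\mcl{I}_D^{n+1})^\vee\otimes f^*\Omega^1_Y.
\]
The desired vanishings for $i=0,1$ become vanishings in cohomological degrees $\dim D-1$ and $\dim D$ of $\Omega^{\dim D}_D\otimes \mcl{E}_n$.

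The key input is then Arapura's vanishing theorem (Theorem \ref{arapuravanishing}), which gives $H^j(D,\ \Omega^{\dim D}_D\otimes \mcl{E}_n)=0$ whenever $j+\dim D>\dim D+\phi(\mcl{E}_n)$, i.e.~whenever $j>\phi(\mcl{E}_n)$. By hypothesis $\phi(\mcl{E}_n)<\dim D-1$, so both $j=\dim D-1$ and $j=\dim D$ satisfy this strict inequality. Because the ambient characteristic is zero, applying Theorem \ref{arapuravanishing} requires spreading the data $(D,\mcl{I}_D/\mcl{I}_D^2,Y,f)$ to a smooth model over a finite-type $\mbb{Z}$-scheme $A$ on which the $f$-amplitude bound is witnessed fiberwise; for closed points $\mfk{q}\in A$ of residue characteristic $>\dim D$, one arranges that $D_\mfk{q}$ is smooth and lifts to $W_2(k(\mfk{q}))$, and then Theorem \ref{arapuravanishing} furnishes the vanishing on $D_\mfk{q}$. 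Upper semicontinuity of cohomology then propagates the vanishing to the characteristic zero fiber.

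The main obstacle is this last spreading-out step, where one must simultaneously secure $W_2$-liftability and $p>\dim D$ on positive characteristic fibers; this is handled as in Deligne-Illusie-style arguments by shrinking $A$ so that it is smooth over $\mbb{Z}$, at which point the length-two infinitesimal neighborhood of any closed point of $A$ supplies the required $W_2$-lift of the fiber. Uniqueness of the limit extension is then immediate: two morphisms $\widehat D\rightrightarrows Y$ both restricting to $f$ must agree on every $D_n$ by induction (the vanishing of $\on{Hom}$ giving uniqueness at each inductive step), hence are equal as morphisms of formal schemes.
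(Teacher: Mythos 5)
Your proposal is correct and follows essentially the same approach as the paper: reduce to the vanishing of $\on{Ext}^0$ and $\on{Ext}^1$ of $f^*\Omega^1_Y$ against $\mcl{I}_D^n/\mcl{I}_D^{n+1}$ via Theorem \ref{maindefthm}, and obtain that vanishing from the Frobenius-amplitude bound. The only cosmetic difference is that the paper cites the characteristic-zero vanishing theorem \cite[Corollary 8.6]{arapura-f-amplitude} as a black box, whereas you re-derive it by combining Serre duality with the positive-characteristic Theorem \ref{arapuravanishing} and a spread-out/reduction-mod-$p$ argument — this is exactly what the cited corollary packages.
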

\begin{proof}
We must show that $$\on{Ext}^i(f^*\Omega^1_Y, \mcl{I}_D^n/\mcl{I}_D^{n+1})=H^i(D, (f^*\Omega^1_Y)^\vee\otimes \mcl{I}_D^n/\mcl{I}_D^{n+1})=0$$ for $i=0,1$.  But this is immediate from the estimate on the f-amplitude of the relevant sheaves, combined with the vanishing theorem \cite[Corollary 8.6]{arapura-f-amplitude}.
\end{proof}
\begin{rem}
Observe that Theorem \ref{smoothcharzeroresult} subsumes Theorem \ref{lamecharzeroresult}, using the bound in Theorem \ref{arapurabound} to see that the hypothesis on $\phi((\mcl{I}_D^n/\mcl{I}_D^{n+1})^\vee\otimes f^*\Omega^1_Y)$ is satisfied if $D$ is ample and $\Omega^1_Y$ is nef of rank less than $\dim(D)$.
\end{rem}

The idea of the proof of Theorem \ref{charzeroextensionthm} will be to spread the triple $(X, D, Y)$ over a finite type $\mbb{Z}$-scheme, and use Theorem \ref{frobeniusvanishing} to obtain extensions of the positive-characteristic fibers of the morphism $f: D\to Y$ to a morphism from $X$, after suitable composition with a power of Frobenius.  We will then show that in fact it was unnecessary to compose with Frobenius at all; this will allow us to lift the extensions to characteristic zero.

Before proceeding with the proof, we will need a lemma which tells us when a morphism factors through Frobenius.
\begin{lem}\label{factorsthroughfrobenius}
Let $k$ be a perfect field of characteristic $p>0$, and let $X$ be a normal, reduced $k$-variety.  Let $Y$ be an arbitrary $k$-variety.  Then a morphism $f: X\to Y$ factors through $\on{Frob}_p: Y\to Y$ if and only if the induced map $f^*\Omega^1_Y\to \Omega^1_X$ is zero; furthermore, this factorization is unique.
\end{lem}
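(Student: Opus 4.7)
The plan is to handle the easy direction and uniqueness by a direct argument, then construct the factorization locally by extracting $p$-th roots. The whole proof will reduce to the standard lemma that for a normal reduced algebra $A$ over perfect $k$ of characteristic $p$, $\ker(d \colon A \to \Omega^1_{A/k}) = A^p$, where $A^p$ denotes the subring of $p$-th powers.

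\textbf{Forward direction and uniqueness.} For the forward implication, if $f = \on{Frob}_p \circ g$, then the induced map $f^*\Omega^1_Y \to \Omega^1_X$ factors through the differential of $\on{Frob}_p$, which vanishes because $d(b^p) = p\, b^{p-1}\, db = 0$ in characteristic $p$. For uniqueness, I would work on affine opens $\Spec(A) \subset X$ and $\Spec(B) \subset Y$: writing $\phi \colon B \to A$ for the ring map induced by $f$, two lifts $\psi_1, \psi_2 \colon B \to A$ of $\phi$ through the $p$-th power map on $B$ satisfy $\psi_1(b)^p = \psi_2(b)^p$ for all $b \in B$, and reducedness of $A$ forces $\psi_1 = \psi_2$ via $(\psi_1(b) - \psi_2(b))^p = 0$.

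\textbf{Construction assuming the main lemma.} Granting the lemma, I would send each $b \in B$ to the unique $p$-th root $\psi(b) \in A$ of $\phi(b)$. The assignment $\psi \colon B \to A$ will be a ring homomorphism: for additivity, in characteristic $p$ one computes $(\psi(b_1) + \psi(b_2))^p = \psi(b_1)^p + \psi(b_2)^p = \phi(b_1 + b_2) = \psi(b_1 + b_2)^p$, and uniqueness of $p$-th roots in $A$ forces $\psi(b_1 + b_2) = \psi(b_1) + \psi(b_2)$; multiplicativity and unitality are verified identically. The local morphisms so constructed will then glue (by the uniqueness assertion above) to a global $g \colon X \to Y$ with $\on{Frob}_p \circ g = f$.

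\textbf{The main lemma and the main obstacle.} For smooth $A/k$, I expect to prove the lemma \'etale-locally: \'etale-locally $A$ is a polynomial ring $k[x_1,\ldots,x_n]$, and expanding $da = 0$ monomial-by-monomial in the free basis $\{dx_j\}$ of $\Omega^1$ forces each nonzero monomial in $a$ to have every exponent divisible by $p$, after which perfectness of $k$ promotes the coefficients to $p$-th powers, giving $a \in A^p$. The main obstacle will be extending this to the normal (not necessarily smooth) case. Here I plan to take $U \subset \Spec(A)$ the smooth locus, which has complement of codimension at least $2$ since normal rings are regular in codimension one; the smooth case provides local $p$-th roots of $a|_U$, which glue by uniqueness to a global section $a^{1/p} \in \Gamma(U, \mcl{O})$, and the algebraic Hartogs theorem for the normal scheme $\Spec(A)$ identifies $\Gamma(U, \mcl{O})$ with $A$, yielding the desired global $p$-th root.
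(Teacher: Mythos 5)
Your proposal is correct, and it establishes the same key algebraic fact as the paper --- that for $A$ a normal reduced $k$-algebra over a perfect field of characteristic $p$, one has $\ker(d\colon A\to\Omega^1_{A/k})=A^p$ --- but it gets there by a genuinely different route. The paper reduces to $X$ integral, passes to the fraction field $K=\on{Frac}(A)$, invokes the Cartier isomorphism for $K/k$ to conclude $a\in K^p$ when $da=0$, and then uses normality (integral closedness of $A$ in $K$, since $a^{1/p}$ satisfies the monic relation $x^p-a=0$) to land $a^{1/p}$ back in $A$. You instead pass to the smooth locus $U\subset \on{Spec}(A)$, prove the result there directly, and use algebraic Hartogs (valid since normality forces $\on{codim}(\on{Spec}A\setminus U)\geq 2$) to glue the local roots back to $A$. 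Both work; the paper's is perhaps shorter, while yours is more self-contained in that it does not appeal to the Cartier isomorphism for general separable field extensions. Your way of then building the factorization --- define $\psi(b)$ elementwise as the unique $p$-th root of $\phi(b)$ and verify it is a ring homomorphism via reducedness of $A$ --- is arguably cleaner than the paper's, which picks a presentation $B=k[t_1,\dots,t_n]/I$, defines the lift on the generators $t_i$, and must separately check that it kills $I$; the elementwise construction also makes the $\on{Frob}_p^{-1}$-semilinearity of the lift (i.e.\ $\psi(c)=c^{1/p}$ for $c\in k$) transparent, a point the paper's presentation-based argument is slightly cavalier about.

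One imprecision to flag: ``\'etale-locally $A$ is a polynomial ring $k[x_1,\dots,x_n]$'' is not literally true. What is true is that a smooth $A$ admits, \'etale-locally, an \'etale map \emph{to} $\mathbb{A}^n_k$, equivalently a system of \'etale coordinates $x_1,\dots,x_n$, and then $A$ is a free $A^p$-module on the monomials $x^\alpha$ with $0\leq\alpha_i<p$. Your monomial-expansion argument goes through verbatim in that setting (writing $a=\sum_{0\leq\alpha_i<p} a_\alpha^p x^\alpha$ and reading off $da=0$ forces $a_\alpha=0$ for $\alpha\neq 0$); but as stated it leans on a reduction that does not exist. Alternatively one can simply cite that $\ker(d)=A^p$ on the smooth locus is exactly the $H^0$ part of the Cartier isomorphism.
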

\begin{proof}
Without loss of generality, $X$ is connected, and hence by normality, integral.  It suffices to consider the case of $X, Y$ affine, with $X=\on{Spec}(A)$ and $Y=\on{Spec}(B)$, $B=k[t_1, \cdots, t_n]/I$; let $K=\on{Frac}(A)$.  Then $f$ is determined by the images of the elements $t_i$ in $A$, $$f: t_i\mapsto a_i\in A.$$  The assumption on the map $f^*\Omega^1_Y\to \Omega^1_X$ implies that $d(a_i)\in \Omega^1_{A/k}$ is zero for all $i$.  In particular, $d(a_i)$ maps to zero in $\Omega^1_{K/k}$.  As $K/k$ is geometrically regular, the Cartier isomorphism implies that $a_i$ is a $p$-th power in $K$; as $X$ was normal, this implies that it is a $p$-th power in $A$.  Let $b_i$ be a $p$-th root of $a_i$ (which is unique by the reducedness of $A$).  Then the map $\widetilde f: k[t_1, \cdots t_n]\to A$ given by $$\widetilde f: t_i\mapsto b_i$$ kills the ideal $I$ (again by the reducedness of $X$), and hence gives a map $\bar f: B\to A$ so that $f=\on{Frob}_p\circ \bar f.$
\end{proof}
We will also need a lemma estimating the f-amplitude of a vector bundle in terms of the f-amplitude of its restriction to an ample divisor.  This is essentially \cite[Lemma 6.1]{keeler}, but we remove the extraneous hypothesis that the divisor be very ample.
\begin{lem}\label{ampledivisorbound}
Let $X$ be a projective variety and let $D\subset X$ be an ample Cartier divisor.  Then if $\mcl{E}$ is a vector bundle on $X$, $$\phi(\mcl{E})\leq \phi(\mcl{E}|_D)+1.$$
\end{lem}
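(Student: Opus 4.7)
The plan is to follow the structure of Keeler's iterative argument, removing the very-ampleness hypothesis by working directly with the short exact sequence
\[
0 \to \mcl{O}_X(-D) \to \mcl{O}_X \to \mcl{O}_D \to 0.
\]
Given a coherent sheaf $\mcl{F}$ on $X$ and an integer $k \geq 0$, derived-tensoring with $\mcl{F}$ and then with the locally free sheaf $\mcl{E}^{(p^k)}$ produces the distinguished triangle
\[
\mcl{F}(-D) \otimes \mcl{E}^{(p^k)} \to \mcl{F} \otimes \mcl{E}^{(p^k)} \to j_*\bigl(Lj^*\mcl{F} \otimes (\mcl{E}|_D)^{(p^k)}\bigr) \xrightarrow{+1},
\]
where $j\colon D \hookrightarrow X$ is the inclusion. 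The derived pullback $Lj^*\mcl{F}$ has coherent cohomology sheaves only in degrees $-1$ and $0$, namely $\on{Tor}_1^{\mcl{O}_X}(\mcl{F}, \mcl{O}_D)$ and $\mcl{F}|_D$, both supported on $D$; a two-column spectral sequence together with the defining vanishing for $\phi(\mcl{E}|_D)$ then shows that $H^i(D, Lj^*\mcl{F} \otimes (\mcl{E}|_D)^{(p^k)}) = 0$ for $i > \phi(\mcl{E}|_D)$ and $k \gg 0$.

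Taking the long exact sequence in cohomology, for $i > \phi(\mcl{E}|_D) + 1$ and $k \gg 0$ both boundary contributions from the third term vanish, giving an isomorphism
\[
H^i(X, \mcl{F}(-D) \otimes \mcl{E}^{(p^k)}) \xrightarrow{\ \sim\ } H^i(X, \mcl{F} \otimes \mcl{E}^{(p^k)}).
\]
Iterating, with $\mcl{F}$ replaced successively by $\mcl{F}(D), \mcl{F}(2D), \ldots$, this extends to the statement $H^i(X, \mcl{F} \otimes \mcl{E}^{(p^k)}) \cong H^i(X, \mcl{F}(nD) \otimes \mcl{E}^{(p^k)})$ for any $n \geq 0$. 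Since $\mcl{O}_X(D)$ is ample, Serre vanishing then yields $H^i(X, \mcl{F}(nD) \otimes \mcl{E}^{(p^k)}) = 0$ for $i > 0$ and $n$ sufficiently large, giving the desired bound $\phi(\mcl{E}) \leq \phi(\mcl{E}|_D) + 1$.

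The main technical obstacle is the interdependence of the two limits: the iteration at step $j$ requires $k$ large enough for the $\phi(\mcl{E}|_D)$-vanishing of the coherent sheaves $\mcl{F}|_D(jD)$ and $\on{Tor}_1^{\mcl{O}_X}(\mcl{F}, \mcl{O}_D)(jD)$ on $D$, while the Serre-vanishing threshold for $n$ itself depends on $k$. I would resolve this by observing that for each fixed $k$, the sheaves $\mcl{F}|_D(jD) \otimes (\mcl{E}|_D)^{(p^k)}$ become $0$-regular on $D$ with respect to the ample $\mcl{O}_D(D) = \mcl{N}_{D/X}$ once $j$ exceeds a $k$-dependent Castelnuovo--Mumford threshold, so only finitely many iteration steps place genuine constraints on $k$; these finitely many constraints can then be satisfied simultaneously by taking $k$ large, after which Serre vanishing provides the required $n$ and closes the argument.
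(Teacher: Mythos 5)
Your overall strategy---running Keeler's short-exact-sequence iteration directly for an ample, not necessarily very ample, divisor---is genuinely different from the paper's, which \emph{reduces} to the very ample case rather than reproving it: for $k \gg 0$ the divisor $D_{p^k}$ cut out by $\mcl{I}_D^{p^k}$ is very ample, and Lemma~\ref{frobeniusfactors} supplies a finite morphism $\tau\colon D_{p^k}\to D$ with $\tau^*(\mcl{E}|_D)\cong \mcl{E}^{(p^k)}|_{D_{p^k}}$, giving
\[
\phi(\mcl{E})=\phi(\mcl{E}^{(p^k)})\le \phi\bigl(\mcl{E}^{(p^k)}|_{D_{p^k}}\bigr)+1=\phi\bigl(\tau^*(\mcl{E}|_D)\bigr)+1\le\phi(\mcl{E}|_D)+1,
\]
where the middle inequality is \cite[Lemma 6.1]{keeler} for the very ample divisor $D_{p^k}$ and the last is $\phi(\tau^*(-))\le\phi(-)$ for $\tau$ finite \cite[Theorem 2.5(4)]{arapura-f-amplitude}.

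The derived triangle you write down and the two-column spectral sequence analysis of $Lj^*\mcl{F}$ are correct; the gap is precisely the interdependence of limits you flag, and the proposed fix does not close it. The Serre-vanishing threshold $J(k)$ on $D$---beyond which the twists $\mcl{F}|_D(jD)\otimes(\mcl{E}|_D)^{(p^k)}$ and $\on{Tor}_1^{\mcl{O}_X}(\mcl{F},\mcl{O}_D)(jD)\otimes(\mcl{E}|_D)^{(p^k)}$ have no higher cohomology---is controlled by the Castelnuovo--Mumford regularity of $(\mcl{E}|_D)^{(p^k)}$, which for a general $\mcl{E}|_D$ grows without bound (indeed on the order of $p^k$ when $\mcl{E}|_D$ is not $f$-semipositive). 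Meanwhile the thresholds $k_j$ needed for the $\phi(\mcl{E}|_D)$-vanishing of $\mcl{F}|_D(jD)$ may themselves grow with $j$. The condition you need is a $k$ with $k\ge \max_{j<J(k)}k_j$, and the sentence ``these finitely many constraints can then be satisfied simultaneously by taking $k$ large'' treats that finite set of constraints as if it were fixed, when in fact it enlarges as $k$ increases---this is exactly the circularity, and no argument is offered that the self-referential inequality has a solution. The paper's route sidesteps the problem by stepping by $p^k$ at once (passing from $D$ to $D_{p^k}$), where the Frobenius compatibility $\mcl{E}^{(p^k)}|_{D_{p^k}}=\tau^*(\mcl{E}|_D)$ moves both limits in lockstep.
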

\begin{proof}
This is proven in \cite[Lemma 6.1]{keeler} for $D$ very ample, so we reduce to that case.  It suffices to work in positive characteristic.  For $k\gg0$, the divisor $D_{p^k}$ (defined by the $p^k$-th power of the ideal sheaf of $D$) is very ample.  Let $\tau: D_{p^k}\to D$ be the morphism defined in Lemma \ref{frobeniusfactors}.  Then $$\tau^*(\mcl{E}|_D)=\mcl{E}^{(p^k)}|_{D_{p^k}}$$ by the construction of $\tau$.  Thus by \cite[Lemma 6.1]{keeler}, we have $$\phi(\mcl{E})=\phi(\mcl{E}^{(p^k)})\leq \phi(\mcl{E}^{(p^k)}|_{D_{p^k}})+1=\phi(\tau^*(\mcl{E}|_D))+1\leq\phi(\mcl{E}|_D)+1$$ where we use the finiteness of $\tau$ and \cite[Theorem 2.5(4)]{arapura-f-amplitude} to obtain the last inequality.
\end{proof}
Finally, we need a lemma on the uniqueness of factorizing a morphism through Frobenius.
\begin{lem}\label{uniquefactorizationfrobenius}
Let $k$ be a perfect field of characteristic $p>0$.  Let $X$ be a smooth projective $k$-variety with $2<\dim(X)<p$, and $D\subset X$ an ample divisor.  Suppose that $X$ lifts to $W_2(k)$.  Let $Y$ be a scheme and $f: D\to Y$ a morphism.  Then there is at most one morphism $g: D\to Y$ so that $f=g\circ \on{Frob}_p$.
\end{lem}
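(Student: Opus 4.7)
The plan is to show that two morphisms $g_1, g_2: D \to Y$ satisfying $g_1 \circ \on{Frob}_p = g_2 \circ \on{Frob}_p$ must be equal, by analyzing the difference $h := g_1^\# - g_2^\#$ of their sheaf maps $g_1^{-1}\mcl{O}_Y \to \mcl{O}_D$. Since the $p$-th power map is additive in characteristic $p$, the hypothesis forces $h(y)^p = 0$ for every local section $y$, so $h$ takes values in the ideal sheaf $\mcl{J} \subset \mcl{O}_D$ of $p$-nilpotent elements, and in particular in the nilradical $\mfk{n} \subset \mcl{O}_D$. Restricting to the reduced subscheme $D_{\on{red}}$ (on which the absolute Frobenius acts by the injective $p$-th power map, by reducedness) gives $g_1|_{D_{\on{red}}} = g_2|_{D_{\on{red}}} =: g_0$.

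I would then induct on the $\mfk{n}$-adic filtration of $\mcl{O}_D$: suppose $h$ lands in $\mfk{n}^n$ for some $n \geq 1$; show it lands in $\mfk{n}^{n+1}$. Applying the identity $h(ab) = g_1^\#(a) h(b) + h(a) g_2^\#(b)$ together with the congruences $g_1^\#, g_2^\# \equiv g_0^\# \pmod{\mfk{n}^n}$ shows that $\bar h := h \bmod \mfk{n}^{n+1}$ is a $k$-linear derivation from $g_0^{-1}\mcl{O}_Y$ into $\mfk{n}^n/\mfk{n}^{n+1}$, i.e., an element of $\on{Hom}_{\mcl{O}_{D_{\on{red}}}}(g_0^* \Omega^1_{Y/k}, \mfk{n}^n/\mfk{n}^{n+1})$. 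Vanishing of all such obstructions gives $h \in \bigcap_n \mfk{n}^n = 0$, hence $g_1 = g_2$.

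The main step is to show that each obstruction vanishes. Because $D$ is an ample Cartier divisor in the smooth $X$, the reduced subscheme $D_{\on{red}}$ is ample in $X$ and the graded piece $\mfk{n}^n/\mfk{n}^{n+1}$ is locally isomorphic to the anti-ample line bundle $\mcl{N}_{D_{\on{red}}/X}^{-n}$ on $D_{\on{red}}$. Grothendieck-Serre duality on $D_{\on{red}}$ converts the vanishing of the Hom group into the vanishing of a top cohomology group of a coherent sheaf twisted by the ample line bundle $\mcl{N}_{D_{\on{red}}/X}^n$. I would extract this from Arapura's vanishing theorem \ref{arapuravanishing}, which applies because $X$ lifts to $W_2(k)$ and $\dim(X) < p$, propagating vanishing from $X$ to $D_{\on{red}}$ via the ideal sheaf exact sequence $0 \to \mcl{O}_X(-D_{\on{red}}) \to \mcl{O}_X \to \mcl{O}_{D_{\on{red}}} \to 0$, using Lemma \ref{ampledivisorbound} to control the Frobenius amplitude of $\mcl{N}_{D_{\on{red}}/X}$.

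The main obstacle will be that $Y$ is only assumed to be a scheme, so $\Omega^1_{Y/k}$ need not be locally free. I would handle this by working locally on $Y$, choosing a closed embedding of a neighborhood of $g_0(D_{\on{red}})$ into a smooth ambient scheme, and replacing $g_0^* \Omega^1_{Y/k}$ by the locally free pieces of the resulting conormal sequence so that Arapura's bound applies termwise.
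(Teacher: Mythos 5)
Your starting observation---that $h := g_1^\# - g_2^\#$ lands in the sheaf $\sqrt[p]{0}$ of $p$-nilpotents, and that consequently $g_1$ and $g_2$ agree on $D_{\on{red}}$---matches the paper's. But the induction along the $\mfk{n}$-adic filtration breaks at a crucial point: the claim that $\mfk{n}^n/\mfk{n}^{n+1}$ is locally isomorphic to $\mcl{N}_{D_{\on{red}}/X}^{-n}$ is false for a general non-reduced Cartier divisor $D$. Nothing in the hypotheses makes $D$ a uniform thickening of $D_{\on{red}}$ along a line bundle. Concretely, take $D = V(x^2y)$ locally in a smooth $X$; then $D_{\on{red}} = V(xy)$ is Cartier (since $\mcl{O}_{X,x}$ is a UFD), but $\mfk{n} = (xy)/(x^2y)$ satisfies $\mfk{n}^2 = 0$ and $\mfk{n}/\mfk{n}^2 \cong \mcl{O}_{V(x)}$, the structure sheaf of a proper closed subscheme of $D_{\on{red}}$---not a line bundle on $D_{\on{red}}$ at all, let alone a power of the conormal bundle. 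Your whole vanishing step (Serre duality on $D_{\on{red}}$ against an ample twist by $\mcl{N}_{D_{\on{red}}/X}^n$) rests on this identification, and once it fails there is no evident replacement: the graded pieces $\mfk{n}^n/\mfk{n}^{n+1}$ can be supported on arbitrary proper closed subsets of $D_{\on{red}}$ and carry no useful positivity.

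The paper takes a much shorter route that avoids the filtration entirely. It observes that the difference gives a global section of $\sqrt[p]{0}$, so it is enough to prove that $\Gamma(D, \mcl{O}_D)$ is reduced. That follows from the surjectivity of $\Gamma(X, \mcl{O}_X) \to \Gamma(D, \mcl{O}_D)$ (the source being a product of fields since $X$ is smooth projective), which needs only $H^1(X, \mcl{O}_X(-D)) = 0$. And this single vanishing is obtained directly from Theorem \ref{arapuravanishing} applied on the smooth $X$ itself (not on $D$ or $D_{\on{red}}$), using the $W_2(k)$-lifting and $\dim X < p$. No deformation theory, no structure theory for $\mcl{O}_D$ as a filtered $\mcl{O}_{D_{\on{red}}}$-algebra, and no cohomology on the possibly very singular $D_{\on{red}}$. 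You should revise your argument to follow this direct route; the inductive scheme you outlined does not survive the non-uniform thickening of $D$.
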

\begin{proof}
Let $\sqrt[p]{0}\subset \mcl{O}_D$ be the ideal sheaf $$\sqrt[p]{0}=\{f\mid f^p=0\}.$$  Then for any $g_1, g_2$ such that $$g_1\circ \on{Frob}_p=g_2\circ \on{Frob}_p=f$$ we have that $$g_1-g_2\in \on{Hom}_{f^{-1}\mcl{O}_Y}(f^{-1}\mcl{O}_Y, \sqrt[p]{0})\subset \Gamma(D, \sqrt[p]{0}).$$  Thus it suffices to show that $\Gamma(D, \sqrt[p]{0})=0$.  As $\Gamma(D, \sqrt[p]{0})$ is a nilpotent ideal in $\Gamma(D, \mcl{O}_D)$, it suffices to show that $\Gamma(D, \mcl{O}_D)$ is reduced.  From the short exact sequence $$0\to \mcl{O}_X(-D)\to \mcl{O}_X\to \mcl{O}_D\to 0$$ and the fact that $\Gamma(X, \mcl{O}_X)$ is a product of fields, it is enough to show that $$H^1(X, \mcl{O}_X(-D))=0.$$  But this is immediate from Theorem \ref{arapuravanishing}.
\end{proof}
We are now ready to give a positive characteristic version of Theorem \ref{charzeroextensionthm} from which we will deduce the result in characterstic zero.
\begin{thm}\label{charpliftextensionthm}
Let $L$ be a perfect field of characteristic $p>0$.  Let $X$ be a smooth $L$-variety and $D\subset X$ an ample Cartier divisor, with $\dim(X)\geq 3$, such that $X$ lifts to $W_2(L)$, and such that $\dim(X)<p$.  Let $Y$ be a smooth $k$-variety, and let $f: D\to Y$ be a morphism. Suppose that  $$\phi(\mcl{N}_{D/X}\otimes f^*\Omega^1_Y)<\dim(D)-1.$$ Suppose further that  
\begin{enumerate}
\item $\dim(Y)<\dim(D)$, or
\item $Y$ is projective and $\dim(\on{im}(f))<\dim(D)-1$, or
\item $Y_{\bar L}$ contains no rational curves.
\end{enumerate}   
Then $f$ extends uniquely to a morphism $X\to Y$.  
\end{thm}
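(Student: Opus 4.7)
The plan is to use Theorem \ref{frobeniusvanishing} to extend the Frobenius twist $\widetilde f^{(p^k)}=F_{Y/L}^k\circ f$ formally, algebraize, extend over $X$, and then use the smoothness of $X$ to remove the Frobenius.  Since $D$ is a Cartier divisor in the smooth variety $X$, it is a local complete intersection and hence Cohen--Macaulay; its dualizing complex is concentrated in degree $-\dim(D)$, so the ``$r$'' of Theorem \ref{frobeniusvanishing} equals $\dim(D)$ and our hypothesis $\phi(\mcl{N}_{D/X}\otimes f^*\Omega^1_Y)<\dim(D)-1$ is exactly what that theorem requires.  For $k\gg 0$ this produces a formal morphism $\hat g\colon\widehat D\to Y^{(p^k)}$ extending $\widetilde f^{(p^k)}$.

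Next I would apply Corollary \ref{sectionalgebraization} to the graph of $\hat g$, viewed as a section of the completion of $X\times Y^{(p^k)}\to X$ along $D$; the hypothesis on $\omega_{X/L}$ is trivial since $X$ is smooth with $\dim(X)\geq 3$.  This yields an algebraic morphism $g\colon U\to Y^{(p^k)}$ on some Zariski open $U\supset D$, which I then extend to all of $X$ by a case analysis: in case~(1), $\dim(Y^{(p^k)})=\dim(Y)\leq\dim(X)-2$ and Corollary \ref{nonproperextension} applies (using that $X$ is locally $\mbb{Q}$-factorial); in case~(2), the relative Frobenius is a universal homeomorphism, so $\dim(\on{im}\,g|_D)=\dim(\on{im}\,f)<\dim(D)-1$ and Corollary \ref{smallimageextension} applies (with $Y^{(p^k)}$ projective); in case~(3), $Y^{(p^k)}_{\bar L}\cong Y_{\bar L}$ (as $L$ is perfect) contains no rational curves and Proposition \ref{noratlcurvesextension} applies to $X\times Y^{(p^k)}\to X$.

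The main obstacle is stripping the Frobenius: given $g\colon X\to Y^{(p^k)}$ extending $F_{Y/L}^k\circ f$, one must produce an $L$-morphism $h\colon X\to Y$ with $F_{Y/L}^k\circ h=g$.  Composing with the (non-$L$-linear) projection $\pi\colon Y^{(p^k)}\to Y$, I iteratively factor $\pi\circ g=\on{Frob}_p^k\circ h$ via $k$ applications of Lemma \ref{factorsthroughfrobenius}.  At each stage, the current factor $\eta_j\colon X\to Y$ restricts on the thickening $D_{p^{k-j}}$ through a ``power map'' $D_{p^{k-j}}\to D$ followed by $f$ (inductively, in the spirit of Remark \ref{zeroonDpk}); because $d(t^{p^{k-j}})=0$ in characteristic $p$, the conormal map $\Omega^1_X|_{D_{p^{k-j}}}\to\Omega^1_{D_{p^{k-j}}}$ is an isomorphism, so the pullback $\eta_j^*\Omega^1_Y\to\Omega^1_X$ vanishes on $D_{p^{k-j}}$ and thus defines a global section of $(\eta_j^*\Omega^1_Y)^\vee\otimes\Omega^1_X\otimes\mcl{O}_X(-p^{k-j}D)$.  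By Serre duality this section vanishes once $\phi(\eta_j^*\Omega^1_Y\otimes\mcl{O}_X(p^{k-j}D))<\dim(X)-1$; Lemma \ref{ampledivisorbound}, combined with the Frobenius-invariance of $\phi$ and the identification $(\eta_j^*\Omega^1_Y\otimes\mcl{O}_X(p^{k-j}D))|_D=(f^*\Omega^1_Y\otimes\mcl{N}_{D/X})^{(p^{k-j})}$, reduces this to the standing hypothesis $\phi(\mcl{N}_{D/X}\otimes f^*\Omega^1_Y)\leq\dim(D)-2$, and Theorem \ref{arapuravanishing} applies thanks to $\dim(X)<p$ and the lifting of $X$ to $W_2(L)$.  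After $k$ iterations one obtains $h\colon X\to Y$; tracking the action on constants $L\subset\mcl{O}_Y$ shows $h$ is automatically an $L$-morphism, and iterated application of Lemma \ref{uniquefactorizationfrobenius} to $f\circ\on{Frob}_{p,D}^k=h|_D\circ\on{Frob}_{p,D}^k$ yields $h|_D=f$.

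Uniqueness of $h$ is then immediate from Theorem \ref{unique-extensions}, whose f-amplitude hypothesis $\phi(h^*\Omega^1_Y(D))<\dim(D)$ is again verified via Lemma \ref{ampledivisorbound}.
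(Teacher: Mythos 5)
Your proposal is correct and follows essentially the same route as the paper's proof: extend the Frobenius twist formally via Theorem \ref{frobeniusvanishing}, algebraize via Corollary \ref{sectionalgebraization}, extend over all of $X$ by the same three-way case split, and then strip Frobenius by showing the differential vanishes on $D_{p^k}$, factoring through $\Omega^1_X(-p^kD)$, and killing the resulting Hom group with Lemma \ref{ampledivisorbound}, Serre duality, and Theorem \ref{arapuravanishing}, finishing with Lemma \ref{uniquefactorizationfrobenius} and Theorem \ref{unique-extensions}. Your added remarks that $D$ is Cohen--Macaulay (so $r=\dim D$) and that one must track $L$-linearity through the Frobenius factorization make explicit points the paper leaves implicit; the only nit is that the map you call ``the conormal map $\Omega^1_X|_{D_{p^{k-j}}}\to\Omega^1_{D_{p^{k-j}}}$'' is the quotient map (the conormal map $\mcl{N}^\vee\to\Omega^1_X|_{D_{p^{k-j}}}$ is what vanishes), but the intended meaning is clear.
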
 
\begin{proof}
We first prove existence of an extension.

By Theorem \ref{frobeniusvanishing}, there exists $k\geq 0$ so that $F_{Y/L}^k\circ f$ extends to a morphism $\widehat D\to Y^{(p^k)}$.  By Corollary \ref{sectionalgebraization}, there exists a Zariski open set $U\subset X$, with $D\subset U$, so that this map extends to a morphism $U\to Y^{(p^k)}$.  Finally, by Corollary \ref{nonproperextension} in Case (1), Corollary \ref{smallimageextension} in Case (2), or Proposition \ref{noratlcurvesextension} in Case (3) this map extends to a map $\widetilde f: X\to Y^{(p^k)}$.  We wish to show that if $k>0$, $\widetilde f$ factors through $F_{Y/L}$, or equivalently that the induced map $\bar f: X\to Y$ factors through $\on{Frob}_p$.  

Suppose that $k>0$.  By Lemma \ref{factorsthroughfrobenius}, it suffices to show that the map $\bar f^*\Omega^1_{Y}\to \Omega^1_X$ is identically zero.  Recall that we have two  exact sequences 
\begin{equation*}
0\to \Omega^1_X(-p^kD)\to \Omega^1_X\to \Omega^1_X|_{D_{p^k}}\to 0
\end{equation*}
and 
\begin{equation*}
 \mcl{N}_{D_{p^k}/X}^\vee\to \Omega^1_X|_{D_{p^k}}\to \Omega^1_{D_{p^k}}\to 0.
\end{equation*}
A local computation shows that the map $$\mcl{N}_{D_{p^k}/X}^\vee\to \Omega^1_X|_{D_{p^k}}$$ is identically zero, so $\Omega^1_X|_{D_{p^k}}= \Omega^1_{D_{p^k}}$.  Thus
\begin{equation}\label{funses}
 \Omega^1_X(-p^kD)= \ker(\Omega^1_X\to \Omega^1_{D_{p^k}})
\end{equation}
Because $k>0$, the composite map $$\bar f^*\Omega^1_{Y}\to \Omega^1_X\to \Omega^1_{D_{p^k}}$$ is zero (see Remark \ref{zeroonDpk}),  so we must show that the induced map $$\bar f^*\Omega^1_{Y}\to \ker(\Omega^1_X\to \Omega^1_{D_{p^k}})=\Omega^1_X(-p^kD)$$ is zero, where the equality follows from Equation \ref{funses}.  It suffices to show that $$\on{Hom}(\bar f^*\Omega^1_Y, \Omega^1_X(-p^kD))=0.$$
Now, $$\bar f^* \Omega^1_Y(p^kD)|_{D}=\on{Frob}_p^{k*}( f^*\Omega^1_Y\otimes \mcl{N}_{D/X}).$$ Thus $$\phi(\bar f^* \Omega^1_Y(p^kD)|_{D})< \dim(D)-1$$ by assumption, and Lemma \ref{ampledivisorbound} implies that $$\phi(\bar f^* \Omega^1_Y(p^kD))<\dim(X)-1.$$  Hence
\begin{align*}
\on{Hom}(\bar f^*\Omega^1_Y, \Omega^1_X(-p^kD)) &=H^0(X, \Omega^1_X\otimes (\bar f^*\Omega^1_Y(p^kD))^\vee)\\
&=H^{\dim(X)}(X, \Omega^{\dim(X)-1}_X\otimes  (\bar f^*\Omega^1_Y(p^kD)))^\vee\\
&=0
\end{align*} by Serre duality and Theorem \ref{arapuravanishing}, as desired.  So we have $\bar{f}=\on{Frob}_p\circ h$ for some $h$; we wish to show that $h|_D=\on{Frob}_p^{k-1}\circ f$.  This follows immediately from Lemma \ref{uniquefactorizationfrobenius}, however.  (If $D$ was assumed to be reduced, this lemma would have been unnecessary, as $\on{Frob}_p: D\to D$ would be an epimorphism.)  Thus we have shown existence of an extension, by induction on $k$.

We now prove uniqueness of the extension.  By Theorem \ref{unique-extensions}, it is enough to show that for our given extension $g$, we have $$\phi(f^*\Omega^1_Y(D))<\dim(D).$$ But $f^*\Omega^1(D)|_D=\mcl{N}_{D/X}\otimes f^*\Omega^1_Y$, so were are done by Lemma \ref{ampledivisorbound}.
\end{proof}
\begin{rem}
Observe that in Theorem \ref{charpliftextensionthm}, we require that the variety $X$ lift to $W_2(L)$, but there is no such requirement on $f, D,$ or $Y$.
\end{rem}
Finally, we may prove Theorem \ref{charzeroextensionthm}.
\begin{proof}[Proof of Theorem \ref{charzeroextensionthm}]
We may spread $(X, D, Y, f)$ out over a finite type $\mbb{Z}$-scheme $S$; after shrinking $S$, we may assume that for each closed point $s\in S$, the hypotheses of Theorem \ref{charpliftextensionthm} are satisfied.  Thus each $f_s$ extends uniquely to a map $X_s\to Y_s$.  This completes the proof, as the $S$-scheme parametrizing extensions of $f$ to $X$ is of finite type over $\mbb{Z}$.
\end{proof}
The main case in which we will be able to check the hypotheses of Theorem \ref{charzeroextensionthm} is when $\Omega^1_Y$ is nef.  For this application and examples of $Y$ with nef cotangent bundle, see Section \ref{nefcotangentbundle}.

While we omit the proof, essentially identical arguments give the following ``section" version of Theorem \ref{charzeroextensionthm}:
\begin{thm}\label{charzerosectionextensionthm}
Let $k$ be a field of characteristic zero.  Let $X$ be a smooth $k$-variety and $D\subset X$ an ample Cartier divisor, with $\dim(X)\geq 3$.  Let $f: Y\to X$ be a smooth proper morphism $s: D\to Y$ be a section to $f_D$, such that $$\phi(\mcl{N}_{D/X}\otimes s^*\Omega^1_{Y/X})<\dim(D)-1.$$   Suppose further that  
\begin{enumerate}
\item $\on{rel. dim.}(Y/X)<\dim(D)$, or
\item $f$ admits a model which is finite type over $\mbb{Z}$, so that the geometric fibers of this model contains no rational curves.
\end{enumerate}   
Then $s$ extends uniquely to a section to $f$.
\end{thm}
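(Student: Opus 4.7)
The plan is to mimic the proof of Theorem \ref{charzeroextensionthm}. First, spread $(X, D, f: Y \to X, s)$ out over a finite-type $\mbb{Z}$-scheme $S$; since the functor of sections to $f$ extending $s$ is represented by a scheme locally of finite type over $k$, it suffices to produce a unique extension on the positive-characteristic fibers over a Zariski-dense set of closed points of $S$. After shrinking $S$ so the relevant hypotheses hold at every such point, the problem reduces to proving the section analogue of Theorem \ref{charpliftextensionthm}: for a perfect base field $L$ of characteristic $p>\dim(X)$ with $X$ lifting to $W_2(L)$ and the other hypotheses in force, the section $s$ extends uniquely.

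For that positive-characteristic analogue, I would follow the four-step template used for Theorem \ref{charpliftextensionthm}. \emph{Step one:} apply the section version of Theorem \ref{frobeniusvanishing} (stated immediately after it) to obtain, for $k\gg 0$, a formal extension $\widehat{s}^{(p^k)}: \widehat D \to Y^{(p^k)}$ of the Frobenius-twisted section $\widetilde s^{(p^k)}: D\to Y^{(p^k)}_D$. \emph{Step two:} algebraize via Corollary \ref{sectionalgebraization} (using that $f^{(p^k)}: Y^{(p^k)} \to X$ is of finite type and $\omega_{X/L}$ is a shifted line bundle) to obtain a regular section over some Zariski-open $U\supseteq D$ in $X$. \emph{Step three:} extend the rational section $X\dashrightarrow Y^{(p^k)}$ to a regular one on all of $X$. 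In Case (2), apply Proposition \ref{noratlcurvesextension}, noting that the geometric fibers of $f^{(p^k)}$ are Frobenius twists of those of $f$ and hence still contain no rational curves. In Case (1), the properness of $f^{(p^k)}$ together with the DVR valuative criterion extends the section across every codimension-$1$ point of $X$; combined with $\on{rel.\ dim.}(f)\le \dim(X)-2$, the smoothness (and thus local $\mbb{Q}$-factoriality) of $X$, and the argument of Proposition \ref{properextension} (Debarre's result that the exceptional locus of a proper birational morphism from the graph closure is pure codimension $1$, which contradicts the available dimension estimate) one obtains a global section $\widetilde s: X\to Y^{(p^k)}$. \emph{Step four:} descend through Frobenius. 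Let $\bar s: X \to Y$ denote the composite with $Y^{(p^k)}\to Y$; then $f\circ \bar s = \on{Frob}_p^k$, and by Lemma \ref{factorsthroughfrobenius} it suffices to show $\bar s^*\Omega^1_{Y/X}\to \Omega^1_X$ vanishes (passage from $\Omega^1_Y$ to $\Omega^1_{Y/X}$ is immediate from the sequence $f^*\Omega^1_X\to \Omega^1_Y\to \Omega^1_{Y/X}\to 0$, since $\bar s^*f^*\Omega^1_X = (\on{Frob}_p^k)^*\Omega^1_X$ kills its own differentials). By Remark \ref{zeroonDpk} and the exact sequence $0\to \Omega^1_X(-p^k D)\to \Omega^1_X\to \Omega^1_{D_{p^k}}\to 0$ (valid since $\mcl{N}^\vee_{D_{p^k}/X}\to \Omega^1_X|_{D_{p^k}}$ is zero in characteristic $p$), this map factors through $\on{Hom}(\bar s^*\Omega^1_{Y/X}, \Omega^1_X(-p^k D))$, which vanishes by Serre duality and Theorem \ref{arapuravanishing}, using the bound $\phi(\bar s^*\Omega^1_{Y/X}(p^k D))<\dim(X)-1$ that follows from $\phi(s^*\Omega^1_{Y/X}\otimes \mcl{N}_{D/X})<\dim(D)-1$ together with Lemma \ref{ampledivisorbound}. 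Inducting on $k$ (using Lemma \ref{uniquefactorizationfrobenius} for uniqueness of each descent) yields an actual section $X\to Y$.

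Uniqueness is the easier half: by Corollary \ref{sectionsareequal} it suffices to check uniqueness of extensions to $\widehat D$, which by Theorem \ref{maindefthm} reduces to the vanishing of $\on{Hom}(s^*\Omega^1_{Y/X}, \mcl{I}_D^n/\mcl{I}_D^{n+1})$ for $n\geq 1$; this is the section analogue of Theorem \ref{unique-extensions} and follows from the same cohomological vanishing. The main obstacle is step three in Case (1): unlike Theorem \ref{charpliftextensionthm}, where Corollary \ref{nonproperextension} extends maps to a low-dimensional target, here one must extend a rational \emph{section} to a proper morphism across a potentially large base locus, and making the Debarre-style dimension estimate close requires both the smoothness of $X$ and the sharp quantitative gap $\on{rel.\ dim.}(f)<\dim(D)$.
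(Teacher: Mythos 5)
Your proposal correctly reconstructs the proof the paper omits: the paper explicitly says the argument is ``essentially identical'' to that of Theorem \ref{charpliftextensionthm} (spread out to positive characteristic, apply the section version of Theorem \ref{frobeniusvanishing}, algebraize via Corollary \ref{sectionalgebraization}, extend the rational section, then descend through Frobenius via Lemma \ref{factorsthroughfrobenius} and the vanishing from Theorem \ref{arapuravanishing}), and that is exactly what you have written out, including the correct identification of $\phi(\bar s^*\Omega^1_{Y/X}(p^kD))<\dim(X)-1$ via Lemma \ref{ampledivisorbound}.

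The one place where your account is imprecise is Step 3 in Case (1). You worry about extending a rational section across a ``potentially large base locus'' and invoke the DVR valuative criterion as a preliminary step before Proposition \ref{properextension}. In fact no such step is needed: since $D$ is ample and $D\subset U$, the complement $X\setminus U$ is $0$-dimensional, so Proposition \ref{properextension} applies directly with $r=0$ once one checks that $\on{rel.\ dim.}(f)\leq \dim(D)-1=\dim(X)-2$ (which is exactly the Case (1) hypothesis). The smoothness of $X$ is used only to guarantee local $\mathbb{Q}$-factoriality; there is no separate codimension-$1$ extension step and the dimension count is immediate rather than delicate.
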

There is also an analogue of Theorem \ref{charzerosectionextensionthm} for schemes over a perfect field $L$ of positive characteristic which lift to $W_2(L)$, which we will not use.  The proof is essentially identical to that of Theorem \ref{charpliftextensionthm}, though it is notationally more complicated.

\section{Lefschetz for maps to Deligne-Mumford stacks}
Let us briefly recall our main goal.  Let $X$ be a smooth projective variety an $D\subset X$ an ample Cartier divisor.  Then we wish to study the problem of extending a map $D\to Y$ to a map $X\to Y$.  As we will see in Chapter \ref{applications}, the results of Section \ref{charzeroextension} provide a reasonably satisfactory answer to this problem when $Y$ is a smooth scheme.  The purpose of this chapter is to extend some of the results of Section \ref{charzeroextension} to the case of maps whose target is a (reasonable) Deligne-Mumford stack.  

Many of the results of the previous few chapters hold with few changes when $Y$ is an Artin stack with finite diagonal; however, the proofs are somewhat complicated, and the results in this chapter will suffice for all of the applications we have in mind.  We will restrict our attention to the case where $Y$ is a smooth Deligne-Mumford stack.  This covers many cases of interest, e.g. $BG$ with $G$ a finite \'etale group scheme, the moduli space of curves $\mcl{M}_g$, the moduli space of principally polarized Abelian varieties $\mcl{A}_g$ as well as all other Shimura varieties, etc.

Our main result is the following improvement of Theorem \ref{charzeroextensionthm}:
\begin{thm}\label{charzeroextensionthmdmstacks}
Let $k$ be a field of characteristic zero.  Let $X$ be a smooth $k$-variety and $D\subset X$ an ample Cartier divisor, with $\dim(X)\geq 3$.  Let $\mcl{Y}$ be a smooth Deligne-Mumford stack over $k$.  Let $f: D\to \mcl{Y}$ be a morphism such that $$\phi(\mcl{N}_{D/X}\otimes f^*\Omega^1_Y)<\dim(D)-1.$$   Suppose further that  
\begin{enumerate}
\item $\dim(\mcl{Y})<\dim(D)$, or
\item $\dim(f(D))\leq \dim(D)-2$ and the coarse space of $\mcl{Y}$ is projective, or
\item $\mcl{Y}$ is proper and admits a model which is finite type over $\mbb{Z}$ and whose geometric fibers contain no rational curves (i.e. any map from a rational curve to this model is constant) .
\end{enumerate}   
Then $f$ extends uniquely to a morphism $X\to \mcl{Y}$.
\end{thm}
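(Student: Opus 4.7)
The plan is to imitate the proof of Theorem \ref{charzeroextensionthm} step by step, using that a smooth Deligne--Mumford stack $\mcl{Y}$ is \'etale-locally a smooth scheme. This observation lets the deformation theory of Theorem \ref{maindefthm} and the Frobenius-amplitude arguments of Theorem \ref{frobeniusvanishing} go through with $\mcl{Y}$ as target without modification: the obstruction and torsor groups $\on{Ext}^i(f^*\Omega^1_{\mcl{Y}}, \mcl{I}_D^n/\mcl{I}_D^{n+1})$ are well-defined, and all of the vanishing inputs (Theorem \ref{arapuravanishing}, Lemma \ref{ampledivisorbound}, Theorem \ref{tensorproductbound}) depend only on $D \subset X$ and the vector bundle $f^*\Omega^1_{\mcl{Y}} \otimes \mcl{N}_{D/X}$ on $D$, which is unchanged by the stacky target.

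Spreading $(X, D, \mcl{Y}, f)$ over a finite-type $\mbb{Z}$-scheme and working at positive-characteristic closed fibers, I would first apply Theorem \ref{frobeniusvanishing} to obtain, for $k \gg 0$, a formal extension of $F_{\mcl{Y}/L}^k \circ f$ to $\hat D \to \mcl{Y}^{(p^k)}$, and then algebraize: the graph of this formal map in $\hat D \times \mcl{Y}$ is a closed formal substack, representable over $\hat D$ by the DM hypothesis, so Corollary \ref{subschemealgebraization} (applied on an \'etale chart of $\mcl{Y}$) produces a regular map $U \to \mcl{Y}^{(p^k)}$ for some Zariski open $U \supset D$. I would then extend across $X \setminus U$ case by case: in case (1), pass to the coarse moduli space $Y_c$ of $\mcl{Y}$ (an algebraic space of dimension $<\dim D$), Nagata-compactify $Y_c$ to a projective scheme $\bar Y$, apply Corollary \ref{nonproperextension} to extend the induced map $U \to \bar Y$ to $X \to \bar Y$, use the ampleness of $D$ as in the proof of that Corollary to force the image into $Y_c$, and finally lift the coarse map $X \to Y_c$ through $\mcl{Y} \to Y_c$; case (2) is identical via Corollary \ref{smallimageextension} and is simpler because the coarse space is already projective; case (3) reduces to Proposition \ref{noratlcurvesextension} by pulling back along a representable \'etale atlas $Y' \to \mcl{Y}$, since the no-rational-curves condition is \'etale-local on $\mcl{Y}$. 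Finally, the Frobenius twist is removed as in Theorem \ref{charpliftextensionthm} using Lemmas \ref{factorsthroughfrobenius} and \ref{uniquefactorizationfrobenius} applied \'etale-locally on $\mcl{Y}$, and the extension lifts from positive characteristic to characteristic zero because the scheme parametrizing extensions is of finite type over $\mbb{Z}$. Uniqueness then follows from Theorem \ref{unique-extensions} applied \'etale-locally, combined with Corollary \ref{sectionsareequal} applied to the $\on{Isom}$-sheaf of two putative extensions, which is a finite unramified scheme over $X$ by the DM hypothesis.

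The main obstacle is the lift through the coarse moduli map $\mcl{Y} \to Y_c$ in cases (1) and (2): the scheme-theoretic extension corollaries deliver only a map $X \to \bar Y$, and upgrading its restriction $X \to Y_c$ to a morphism $X \to \mcl{Y}$ requires control over the inertia of $\mcl{Y}$ at points of $X \setminus U$ mapping to the stacky locus of $Y_c$. I would handle this by combining the formal extension $\hat D \to \mcl{Y}$ already constructed, which pins down the inertia in a neighborhood of $D$, with a Zariski--Nagata purity argument at the stacky points of the image: since $\mcl{Y} \to Y_c$ is representable away from a closed locus of codimension at least two in $Y_c$ and $X$ is smooth, the lift extends uniquely over codimension-two strata, reducing the lifting problem to finitely many local calculations that are governed by Theorem \ref{maindefthm} and the same vanishing hypothesis on $\phi(\mcl{N}_{D/X}\otimes f^*\Omega^1_{\mcl{Y}})$.
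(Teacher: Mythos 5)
Your outline follows the paper's overall decomposition (formal extension after Frobenius twist, algebraization, open-to-global extension, untwisting, then uniqueness), and you correctly identify the danger point. But the fix you propose for the "main obstacle" — lifting the coarse map $X \to Y_c$ through $\mcl{Y} \to Y_c$ — does not work. The premise that a coarse moduli map of a smooth Deligne--Mumford stack is representable away from a closed locus of codimension at least two in $Y_c$ is simply false: $\mcl{Y}$ may have generic stabilizers (e.g.\ $B\mu_n$, or $\mcl{M}_{1,1}$), in which case $\mcl{Y} \to Y_c$ is representable nowhere, and a map $X \to Y_c$ has no a priori lift through $\mcl{Y}$ at any point. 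Even with trivial generic inertia the stacky locus can be a divisor. So the Zariski--Nagata purity argument you propose is applied to a situation where its hypotheses fail, and the lift may genuinely not exist along the lines you suggest.

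The paper's proof sidesteps the coarse-space lifting problem entirely. Fix an \'etale atlas $V \to \mcl{Y}$. Rather than extending $X \to Y_c$ and then lifting, one directly manufactures descent data for a map $X \to \mcl{Y}$: resolve the rational map $U \to Y_c$ (where $U \supset D$ is the algebraized open) to a proper $X' \to X$, and form the scheme-theoretic images $X^{(2)} \subset X \times V$ and $X^{(3)} \subset X \times V \times_{\mcl{Y}} V$ of $X' \times_{Y_c} V$ and $X' \times_{Y_c} V \times_{Y_c} V$. In cases (1) and (2) one takes $X' = X$ by Corollaries \ref{nonproperextension} and \ref{smallimageextension}; then $X^{(2)}, X^{(3)}$ are quasi-finite over $X$ and hence \'etale by Zariski--Nagata purity, which is the genuine place purity enters (applied to the descent data, not to a coarse-space lift). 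In case (3), the ruled exceptional fibers of $X' \to X$ must be contracted by $X^{(2)} \to V$ and $X^{(3)} \to V \times_{\mcl{Y}} V$ because $V$ and $V\times_{\mcl{Y}} V$ contain no rational curves; this again is a statement about the atlas, not a reduction to Proposition \ref{noratlcurvesextension} by "pulling back along an \'etale atlas," which changes the source $X$. Similarly, the algebraization lemma cannot simply be applied "on an \'etale chart of $\mcl{Y}$"; the paper first uses Olsson's theorem to produce a quasi-projective scheme with a proper $1$-morphism to $\mcl{Y}$, algebraizes the graph there, and then builds the descent data. These explicit descent constructions with the atlas $V$ are the genuine new content in extending the scheme-theoretic proof to the stacky setting, and are what is missing from your proposal.
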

\begin{proof}
We sketch how to extend the results in previous chapters to the case where $\mcl{Y}$ is a smooth Deligne-Mumford stack.  We first give an analogue of Corollary \ref{sectionalgebraization}.
\begin{lem}
Let $X, D, \mcl{Y}$ be as in the theorem, and let $\widehat{D}$ be the formal scheme obtained by completing $X$ at $D$.  Then any map $f: \widehat D\to \mcl{Y}$ extends to a Zariski-open neighborhood of $D$.
\end{lem}
\begin{proof}
By the main theorem of \cite{olsson_proper}, there exists a quasi-projective scheme $Y'$ and a proper $1$-morphism $Y'\to \mcl{Y}$.  Let $Y''=Y'\times X$, and let $Y''_D$ be the base change of $Y''$ along the inclusion $D\hookrightarrow X$.  Let $\widehat{Y''_D}$ be the formal scheme obtained by completing $Y''$ at $Y''_D$, and let $Z\subset \widehat{Y''_D}$ be the closed formal subscheme given by taking the preimage of the graph of $f$ (the scheme-theoretic image of the map $(\on{id}, f): \widehat{D}\to \widehat{D}\times \mcl{Y}$).  By Corollary \ref{subschemealgebraization}, $Z$ extends to a subscheme $Z'$ of $Y''$.  

Now let $U\to \mcl{Y}$ be an \'etale cover of $\mcl{Y}$, and consider $$r_1: U\times_\mcl{Y} Y''\to U\times X,$$  $$\pi_1: U\times_\mcl{Y} Y''\to Y'',$$ $$r_2: U\times_\mcl{Y} U \times_\mcl{Y} Y''\to U\times_\mcl{Y} U \times X,$$ and $$\pi_2: U\times_\mcl{Y} U\times_\mcl{Y} Y''\to Y''.$$  Then $r_1(\pi_1^{-1}(Z'))\subset U\times X$ and $r_2(\pi_2^{-1}(Z'))$ are \'etale over a Zariski-open neighborhood of $D$ in $X$; furthermore, over a neighborhood of $D$, the natural map $$r_2(\pi_2^{-1}(Z'))\to r_1(\pi_1^{-1}(Z'))\times_X r_1(\pi_1^{-1}(Z'))$$ is an isomorphism.  Thus the maps $$r_1(\pi_1^{-1}(Z'))\to U$$ and $$r_2(\pi_2^{-1}(Z'))\to U\times_\mcl{Y} U$$ provide descent data for a map from a Zariski-open neighborhood of $D$ to $\mcl{Y}$.
\end{proof}
\begin{rem}
The argument above works for Artin stacks with finite diagonal.
\end{rem}
We now give an analogue of Corollary \ref{nonproperextension} and Proposition \ref{noratlcurvesextension}.
\begin{lem}
Let $X, D, \mcl{Y}$ be as in the theorem, and let $U\subset X$ be a Zariski-open containing $D$.  Then the restriction map $\mcl{Y}(X)\to \mcl{Y}(U)$ is an equivalence.
\end{lem}
\begin{proof}
Let $Y$ be the coarse space of $\mcl{Y}$, which exists by the Keel-Mori theorem \cite{keel-mori}.  Let $f: U\to \mcl{Y}$ be a map; we may resolve the induced map $U\to Y$ to obtain a scheme $X'$, proper over $X$ and a map $f': X'\to Y$.  Let $V\to \mcl{Y}$ be an \'etale cover and let $X^{(2)}\subset X\times V$ be the scheme-theoretic image of the natural map $$X'\times_Y V\to X\times V.$$  By properness of $X'$, $X^{(2)}$ is a cover of $X$.   Likewise, let $X^{(3)}\subset X\times V\times V$ be the scheme-theoretic image of the natural map $$X'\times_Y V\times_Y V\to X\times V\times_{\mcl{Y}} V.$$  We claim that the maps $X^{(2)}\to V, X^{(3)}\to V\times V$ give descent data for a map $X\to \mcl{Y}$.

Indeed, in cases (1) and (2) we may take $X'=X$, by Corollaries \ref{nonproperextension} and \ref{smallimageextension} respectively.  Then $X^{(2)}$ and $X^{(3)}$ are quasi-finite over $X$, and hence are \'etale over $X$ by Zariski-Nagata purity.  The natural map $X^{(2)}\times_X X^{(2)}\to X^{(3)}$ is an isomorphism over $U$ and hence an isomorphism over all of $X$, using the \'etaleness from the previous sentence.

In case (3), it again suffices to show that $X^{(2)}$ and $X^{(3)}$ are quasi-finite over $X$.  Indeed, it is enough to show that the maps $$X'\times_Y V\to X\times V,$$ $$X'\times_Y V \times_Y V\to X\times V\times_{\mcl{Y}} V$$ contract the preimages of the exceptional divisor of $X'\to X$.  But these exceptional divisors are ruled; by assumption $V$ and $V \times_{\mcl{Y}} V$ contain no rational curves, so the proof is complete.
\end{proof}
Finally, we need an analogue of Lemma \ref{factorsthroughfrobenius}.
\begin{lem}\label{factorsthroughfrobeniusdmstacks}
Let $k$ be a perfect field of characteristic $p>0$, and let $X$ be a normal, reduced $k$-variety.  Let $\mcl{Y}$ be an arbitrary $k$-Deligne-Mumford stack.  Then a morphism $f: X\to Y$ factors through $\on{Frob}_p: Y\to Y$ if and only if the induced map $f^*\Omega^1_Y\to \Omega^1_X$ is zero; furthermore, this factorization is unique up to canonical isomorphism.
\end{lem}
\begin{proof}
Let $U\to \mcl{Y}$ be an \'etale cover.  Let $X^{(2)}=X\times_\mcl{Y} U$ and $X^{(3)}=X\times_\mcl{Y} U\times_\mcl{Y} U$; let $$f^{(2)}: X^{(2)}\to U$$ and $$f^{(3)}: X^{(3)}\to U\times_\mcl{Y} U$$ be the descent data for $f$.  Then by Lemma \ref{factorsthroughfrobenius}, $f^{(2)}, f^{(3)}$ factor uniquely through Frobenius.  These factorizations (and their uniqueness) provide descent data for a (unique up to canonical isomorphism) factorization of $f$ through Frobenius.
\end{proof}
Now we may exactly follow the proof of Theorem \ref{charzeroextensionthm}, replacing the corresponding scheme-theoretic lemmas with the lemmas above.
\end{proof}
Essentially identical arguments give an analogue of Theorem \ref{charpliftextensionthm}, etc.

We now give an analogue of Theorem \ref{unique-extensions}.
\begin{thm}\label{charzerouniquenessthmdmstacks}
Let $k$ be a field, and let $X$ be a smooth projective $k$-variety with $\dim(X)\geq 2$.  Let $D\subset X$ be an ample divisor, and let $\mcl{Y}$ be a smooth $k$-Deligne-Mumford stack.  Let $f: X\to \mcl{Y}$ be a morphism such that $$\phi(f^*\Omega^1_Y(D))<\dim(D).$$ Then if 
\begin{enumerate}
\item $\on{char}(k)=0$, or
\item $k$ is perfect of characteristic $p>\dim(X)$ and $X$ lifts to $W_2(k)$, 
\end{enumerate}
any extension of $f|_D: D\to Y$ to a map $X\to Y$, is canonically isomorphic to $f$ itself.
\end{thm}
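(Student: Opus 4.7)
The plan is to adapt the proof of Theorem \ref{unique-extensions} to the stack setting by (a) reducing to uniqueness of the extension at the level of the formal completion $\widehat{D}$, (b) breaking up such an extension into successive extensions to the infinitesimal neighborhoods $D_n$ and using the deformation-theoretic obstruction/torsor description, and (c) proving the requisite Ext vanishing by exactly the argument already used in the scheme case.

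First I would reduce to a statement about the formal neighborhood. Given an extension $g\colon X\to\mcl{Y}$ together with a $2$-isomorphism $g|_D\cong f|_D$, I want to produce a canonical $2$-isomorphism $g\cong f$. The restriction map on morphisms from $X$ to $\mcl{Y}$ factors through $\mathrm{Hom}(\widehat{D},\mcl{Y})$, and by the stack-theoretic descent argument used in the proof of Theorem \ref{charzeroextensionthmdmstacks} (pulling back along an \'etale cover $U\to\mcl{Y}$ and applying Corollary \ref{sectionsareequal} to the resulting scheme-theoretic data $X^{(2)}\to U$ and $X^{(3)}\to U\times_{\mcl{Y}} U$), two maps $X\to\mcl{Y}$ together with a $2$-isomorphism on $\widehat{D}$ are canonically $2$-isomorphic on all of $X$ (using that $X$ is integral). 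So it suffices to show that any extension of $f|_D$ to $\widehat{D}\to\mcl{Y}$ is canonically $2$-isomorphic to $f|_{\widehat{D}}$.

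Next I would proceed by induction on the thickenings $D_n\hookrightarrow\widehat{D}$. Since $\mcl{Y}$ is smooth as a Deligne-Mumford stack, its deformation theory is \'etale-locally identical to that of a smooth scheme, and Theorem \ref{maindefthm} carries over verbatim (differences between two extensions to $D_{n+1}$ of a given map $D_n\to\mcl{Y}$ form a torsor for $\on{Hom}(f|_D^*\Omega^1_{\mcl{Y}},\mcl{I}_D^n/\mcl{I}_D^{n+1})$; this Hom group is the same whether one works with $\mcl{Y}$ or with a smooth scheme chart, since $\Omega^1_{\mcl{Y}}$ is computed \'etale-locally). Uniqueness at each stage thus reduces to the vanishing
\[
H^0\bigl(D,\,(f|_D^*\Omega^1_{\mcl{Y}})^\vee\otimes\mcl{O}_X(-nD)|_D\bigr)=0\quad\text{for all }n\geq 1.
\]

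Finally, this vanishing is obtained exactly as in Theorem \ref{unique-extensions}: from the short exact sequence
\[
0\to (f^*\Omega^1_{\mcl{Y}})^\vee((-n-1)D)\to (f^*\Omega^1_{\mcl{Y}})^\vee(-nD)\to (f|_D^*\Omega^1_{\mcl{Y}})^\vee\otimes\mcl{O}_X(-nD)|_D\to 0,
\]
it suffices that $H^0(X,(f^*\Omega^1_{\mcl{Y}})^\vee(-nD))=H^1(X,(f^*\Omega^1_{\mcl{Y}})^\vee(-(n+1)D))=0$. By Theorem \ref{tensorproductbound} applied to the ample line bundle $\mcl{O}_X(D)$, the f-amplitude hypothesis $\phi(f^*\Omega^1_{\mcl{Y}}(D))<\dim(D)$ gives $\phi(f^*\Omega^1_{\mcl{Y}}(mD))<\dim(D)$ for all $m\geq 1$. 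Then in case (1) Hodge-theoretic spreading out allows one to invoke Theorem \ref{arapuravanishing} on geometric special fibers, and in case (2) we apply Theorem \ref{arapuravanishing} directly using the lift of $X$ to $W_2(k)$ together with Serre duality, exactly as in the scheme-theoretic proof.

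The main obstacle is purely bookkeeping: one has to ensure that at each step the $2$-categorical data (isomorphisms of maps, not equalities) are being carried along canonically, which is what allows one to assemble the \'etale-local deformation-theoretic arguments into a global statement. This is handled by the descent argument in the first paragraph, which converts canonical $2$-isomorphisms of scheme-theoretic data into canonical $2$-isomorphisms of stack-theoretic maps; everything else is a direct transcription of the proof of Theorem \ref{unique-extensions}.
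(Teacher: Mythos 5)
Your proposal is correct and matches the paper's own proof in substance: the paper reduces to uniqueness of the extension over $\widehat{D}$ (by running the deformation-theoretic/Frobenius-amplitude argument of Theorem \ref{unique-extensions} verbatim, which is exactly your second and third paragraphs) and then passes from $\widehat{D}$ to $X$ by imitating Corollary \ref{sectionsareequal}, which is what your first paragraph supplies via the \'etale-cover descent used in Theorem \ref{charzeroextensionthmdmstacks}. You present the reduction from $X$ to $\widehat{D}$ first and then prove uniqueness on $\widehat{D}$, whereas the paper's (very terse) proof states them in the opposite order, but the decomposition, the key lemmas invoked, and the handling of the $2$-categorical bookkeeping are the same.
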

\begin{proof}
The proof of Theorem \ref{unique-extensions} shows that $f$ admits a unique (up to canonical isomorphism) extension to $\widehat D$.  Now we may imitate the argument of Corollary \ref{sectionsareequal} to conclude the result.
\end{proof}

\section{Applications}\label{applications}
We now come to the applications of the results in previous chapters.  The main work in this section is to identify positivity properties of the cotangent bundle of a smooth variety which allow one to verify the hypotheses of Theorems \ref{charzerosectionextensionthm} and \ref{charzeroextensionthmdmstacks}.  We also give several sporadic examples of varieties (and Deligne-Mumford stacks) such that maps into them from an ample divisor in $X$ automatically extend to maps from $X$.  Of particular interest is the case where $Y$ represents a natural moduli functor (e.g. $\mcl{M}_g$).
\subsection{Maps to varieties with nef cotangent bundle}\label{nefcotangentbundle}
The first case of interest is that of smooth proper varieties with nef cotangent bundle.  Our main result is:
\begin{thm}\label{nefcotangentbundleextension}
Let $k$ be a field of characteristic zero.  Let $X$ be a smooth projective $k$-variety with $\dim(X)\geq 3$, and let $D\subset X$ be an ample Cartier divisor.  Let $Y$ be a smooth Deligne-Mumford stack over $k$ and let $f: D\to Y$ be a morphism.  Suppose that $\Omega^1_Y$ is nef, that there exists a scheme $Y'$ and a finite surjective \'etale morphism $Y'\to Y$, and that  $\dim(Y)<\dim(D)$.  Then $f$ extends uniquely to a morphism $X\to Y$.  If $\dim(Y)=\dim(D)$, at most one such extension exists.
\end{thm}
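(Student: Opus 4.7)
The plan is to deduce this result from Theorem \ref{charzeroextensionthmdmstacks} (for existence) and Theorem \ref{charzerouniquenessthmdmstacks} (for uniqueness), by verifying the required f-amplitude bounds via Arapura's estimate (Theorem \ref{arapurabound}) applied to a suitable ample vector bundle.

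For the existence statement (when $\dim(Y)<\dim(D)$), I would first observe that $f^*\Omega^1_Y$ is a nef vector bundle on $D$: nefness is checked by pulling back along morphisms $C\to D$ from smooth projective curves and considering line-bundle quotients, and any such morphism factors through $C\to D\to Y$, where $\Omega^1_Y$ is already assumed nef. Since $D\subset X$ is an ample Cartier divisor, its conormal-dual $\mcl{N}_{D/X}=\mcl{O}_X(D)|_D$ is an ample line bundle on $D$, and tensoring a nef vector bundle with an ample line bundle yields an ample vector bundle. The rank of this tensor product is $\dim(Y)$, so Theorem \ref{arapurabound} applied on $D$ (possibly after passing to the finite étale scheme cover $Y'\to Y$ and the corresponding cover of $D$, which preserves both nefness and ampleness under finite pullback) gives
$$\phi(f^*\Omega^1_Y\otimes \mcl{N}_{D/X}) < \on{rk}(f^*\Omega^1_Y\otimes \mcl{N}_{D/X}) = \dim(Y) \leq \dim(D)-1.$$
This verifies the f-amplitude hypothesis of Theorem \ref{charzeroextensionthmdmstacks}, and condition (1) of that theorem holds by assumption, producing the desired extension $X\to Y$.

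For uniqueness (in both cases $\dim(Y)<\dim(D)$ and $\dim(Y)=\dim(D)$), suppose $g\colon X\to Y$ is any extension of $f$. By the same pullback argument, $g^*\Omega^1_Y$ is nef on $X$, so $g^*\Omega^1_Y\otimes \mcl{O}_X(D)$ is an ample vector bundle of rank $\dim(Y)$ on $X$. Theorem \ref{arapurabound} then yields
$$\phi(g^*\Omega^1_Y(D)) < \dim(Y) \leq \dim(D),$$
which is precisely the hypothesis needed to apply Theorem \ref{charzerouniquenessthmdmstacks} to $g$. The conclusion is that any extension of $f|_D$ is canonically isomorphic to $g$, giving uniqueness in both dimension regimes.

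The main obstacle I anticipate is a technical one: ensuring that nefness of $\Omega^1_Y$ on the DM stack $Y$ passes to nefness of $f^*\Omega^1_Y$ on the possibly singular scheme $D$, and that the tensor of a nef bundle with an ample line bundle on such a $D$ is ample in the sense required by Theorem \ref{arapurabound}. The hypothesis that $Y$ admits a finite surjective étale cover by a scheme $Y'$ is plausibly used here: pulling $f$ back along the induced étale cover $D\times_Y Y'\to D$ reduces nefness and ampleness questions to the scheme level, where both notions and Arapura's bound apply verbatim, and finiteness of the cover then lets one descend the f-amplitude estimate back to $D$.
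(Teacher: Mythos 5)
Your proposal is correct and takes essentially the same approach as the paper: verify the f-amplitude bound by observing that a nef bundle tensored with an ample line bundle is an ample bundle of rank $\dim(Y)$, apply Theorem \ref{arapurabound} to get $\phi<\dim(Y)$, and then feed this into Theorem \ref{charzeroextensionthmdmstacks} (condition (1)) for existence and Theorem \ref{charzerouniquenessthmdmstacks} for uniqueness. One small remark on your final paragraph: the worry you raise is unfounded, and the finite \'etale cover $Y'\to Y$ is not needed for the f-amplitude estimate. The bundle $f^*\Omega^1_Y\otimes\mcl{N}_{D/X}$ already lives on the projective $k$-scheme $D$, nefness of $f^*\Omega^1_Y$ on $D$ is immediate from the curve-testing definition of nefness (any test curve $C\to D$ composes to $C\to Y$), and Theorem \ref{arapurabound} is stated for arbitrary projective $k$-schemes with no smoothness or irreducibility assumption, so it applies to $D$ directly with no descent step required.
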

\begin{proof}
We first deal with the case $\dim(Y)<\dim(D)$.  We must verify the hypotheses of Theorems \ref{charzeroextensionthmdmstacks}.  The only non-obvious hypothesis is that $$\phi(f^*\Omega^1_Y\otimes \mcl{N}_{D/X})<\dim(D)-1.$$  But $f^*\Omega^1_Y\otimes \mcl{N}_{D/X}$ is ample, as $f^*\Omega^1_Y$ is nef and $\mcl{N}_{D/X}=\mcl{O}_X(D)|_D$ is ample.  So by Theorem \ref{arapurabound}, we have $$\phi(f^*\Omega^1_Y\otimes \mcl{N}_{D/X})<\on{rk}(f^*\Omega^1_Y\otimes \mcl{N}_{D/X})\leq\dim(D)-1,$$ as desired.

Now we deal with the case $\dim(Y)=\dim(D)$.  We must verify the hypotheses of Theorem \ref{charzerouniquenessthmdmstacks}.  The only non-obvious hypothesis is that if an extension $g$ exists, we have $$\phi(g^*\Omega^1_Y(D))<\dim(D).$$  But this follows exactly as before.
\end{proof}
\begin{thm}\label{nefsectionsextend}
Let $k$ be a field of characteristic zero.  Let $X$ be a smooth projective $k$-variety with $\dim(X)\geq 3$, and $D\subset X$ an ample Cartier divisor.  Let $f:Y\to X$ be a smooth proper morphism, and let $s: D\to Y$ be a section to $f_D$.  Suppose that $\Omega^1_{Y/X}$ is nef, and that $\on{rel. dim.}(f)<\dim(D)$.  Then $s$ extends uniquely to a section to $f$.
\end{thm}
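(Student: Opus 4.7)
The plan is to deduce Theorem~\ref{nefsectionsextend} directly from the section-version extension result, Theorem~\ref{charzerosectionextensionthm}, by checking its hypotheses. This parallels the proof of Theorem~\ref{nefcotangentbundleextension}, with the role of $f^{*}\Omega^{1}_{Y}$ replaced by $s^{*}\Omega^{1}_{Y/X}$ and the absolute cotangent bundle replaced by the relative one.

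First, I would observe that since $\Omega^{1}_{Y/X}$ is nef on $Y$, its pullback $s^{*}\Omega^{1}_{Y/X}$ along the section $s\colon D\to Y$ is nef on $D$ (nefness in the sense of this paper is preserved under pullback: any surjection onto a line bundle on a curve mapping to $D$ yields such a surjection for a curve mapping to $Y$ via composition with $s$). Tensoring the nef vector bundle $s^{*}\Omega^{1}_{Y/X}$ with the ample line bundle $\mathcal{N}_{D/X} = \mathcal{O}_{X}(D)|_{D}$ produces an ample vector bundle on $D$. Therefore Arapura's bound (Theorem~\ref{arapurabound}) applies and gives
\[
\phi\bigl(s^{*}\Omega^{1}_{Y/X}\otimes \mathcal{N}_{D/X}\bigr) < \operatorname{rk}\bigl(s^{*}\Omega^{1}_{Y/X}\otimes \mathcal{N}_{D/X}\bigr) = \operatorname{rel.dim.}(f).
\]
By hypothesis $\operatorname{rel.dim.}(f) < \dim(D)$, i.e.\ $\operatorname{rel.dim.}(f) \leq \dim(D) - 1$, so $\phi(s^{*}\Omega^{1}_{Y/X}\otimes \mathcal{N}_{D/X}) \leq \dim(D) - 2 < \dim(D) - 1$, which is exactly the f-amplitude hypothesis required by Theorem~\ref{charzerosectionextensionthm}.

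With this inequality in hand, the remaining hypotheses of Theorem~\ref{charzerosectionextensionthm} are immediate: $X$ is smooth projective with $\dim(X)\geq 3$, $D\subset X$ is an ample Cartier divisor, $f\colon Y\to X$ is smooth and proper, and $\operatorname{rel.dim.}(Y/X) < \dim(D)$ is case~(1) of that theorem. Applying Theorem~\ref{charzerosectionextensionthm} yields a unique extension of $s$ to a section of $f$ over all of $X$.

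There is no essential obstacle here; the work has been done in Chapter~\ref{deformation theory} and in Arapura's bound. The only point one should double-check is the preservation of nefness under pullback along $s$, and the elementary fact that nef tensor ample is ample, both of which follow directly from the curve-based definition of nef used in the paper.
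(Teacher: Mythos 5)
Your proposal is correct and follows essentially the same route as the paper: verify the hypotheses of Theorem~\ref{charzerosectionextensionthm}, observing that $s^{*}\Omega^{1}_{Y/X}\otimes\mathcal{N}_{D/X}$ is ample (nef tensor ample) and then invoking Theorem~\ref{arapurabound} together with $\operatorname{rel.dim.}(f)<\dim(D)$ to get the required f-amplitude bound. No discrepancies.
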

\begin{proof}
We must verify the hypotheses of Theorem \ref{charzerosectionextensionthm}.  As before, the only non-obvious hypothesis is that $$\phi(s^*\Omega^1_{Y/X}\otimes \mcl{N}_{D/X})<\dim(D)-1.$$ But $s^*\Omega^1_{Y/X}\otimes \mcl{N}_{D/X}$ is ample, as $s^*\Omega^1_{Y/X}$ is nef and $\mcl{N}_{D/X}=\mcl{O}_X(D)|_D$ is ample.  So by Theorem \ref{arapurabound}, we have $$\phi(f^*\Omega^1_{Y/X}\otimes \mcl{N}_{D/X})<\on{rk}(f^*\Omega^1_{Y/X}\otimes \mcl{N}_{D/X})\leq\dim(D)-1,$$ as desired.
\end{proof}
We now collect some examples of varieties with nef cotangent bundle, to which the two theorems above apply.
\begin{thm}\label{sourceofnefexamples}
Let $Y$ be a smooth variety such that one of the following holds:
\begin{enumerate}
\item $Y$ is a curve of genus at least $1$.
\item $Y$ admits an unramified map to an abelian variety.
\item $\on{Sym}^n\Omega^1_Y$ is globally generated for some $n> 0$.
\item $Y$ is a compact quotient of a bounded domain in $\mbb{C}^n$ or in a Stein manifold.
\item $Y$ is a closed subvariety of a smooth variety with nef cotangent bundle.
\item $Y$ admits a smooth map $f: Y\to X$, where $X$ is smooth and both $\Omega^1_X, \Omega^1_{Y/X}$ are nef.
\item There exists $Y'$ and a surjective \'etale morphism $Y'\to Y$ or $Y\to Y'$, and $\Omega^1_{Y'}$ is nef.
\end{enumerate}
Then $Y$ has nef cotangent bundle.
\end{thm}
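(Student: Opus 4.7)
The plan is to verify each of the seven conditions separately, exploiting three structural facts about the nef property: it is preserved under pullback, it passes to quotients, and (as I shall verify in situ on smooth projective curves) it is closed under extensions. Throughout, fix a smooth projective curve $C$ with a morphism $g\colon C\to Y$ and a surjection $g^*\Omega^1_Y\twoheadrightarrow \mcl{L}$, and aim to show $\deg_C(\mcl{L})\geq 0$.

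First I would dispatch the cases that reduce to standard exact sequences. For (5), if $Y\hookrightarrow X$ with $X$ smooth and $\Omega^1_X$ nef, the conormal sequence gives a surjection $\Omega^1_X|_Y\twoheadrightarrow \Omega^1_Y$, and since $\mcl{L}$ is then also a quotient of $g^*(\Omega^1_X|_Y)$, the hypothesis on $\Omega^1_X$ applies. For (7), note that $\pi^*\Omega^1_{Y'}=\Omega^1_Y$ in the case $\pi\colon Y\to Y'$ étale, while if instead $\pi\colon Y'\to Y$ is a surjective étale cover, pulling back $g\colon C\to Y$ along $\pi$ produces a finite étale map $\rho\colon C'\to C$ together with $g'\colon C'\to Y'$, and $\deg_{C'}(\rho^*\mcl{L})=\deg(\rho)\deg_C(\mcl{L})\geq 0$ forces $\deg_C(\mcl{L})\geq 0$. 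For (6), the relative cotangent sequence $0\to f^*\Omega^1_X\to \Omega^1_Y\to \Omega^1_{Y/X}\to 0$ requires showing that extensions of nef bundles by nef bundles are nef on $C$: let $\mcl{K}\subset\mcl{L}$ be the image of $g^*f^*\Omega^1_X\to\mcl{L}$; then either $\mcl{K}=0$, in which case $\mcl{L}$ is a quotient of the nef $g^*\Omega^1_{Y/X}$, or $\mcl{K}$ is a line bundle quotient of the nef bundle $g^*f^*\Omega^1_X$ and $\mcl{L}/\mcl{K}$ is a torsion quotient of $g^*\Omega^1_{Y/X}$, so $\deg_C(\mcl{L})=\deg_C(\mcl{K})+\operatorname{length}(\mcl{L}/\mcl{K})\geq 0$.

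Next I would handle (1), (2), and (3) via global generation. For (1), $\Omega^1_Y$ is a line bundle of degree $2g-2\geq 0$, and any non-constant pullback to another curve remains of non-negative degree. For (2), an unramified map $Y\to A$ yields a surjection $h^*\Omega^1_A\twoheadrightarrow \Omega^1_Y$ where $\Omega^1_A\cong\mcl{O}_A^{\oplus\dim A}$ is trivial, exhibiting $\Omega^1_Y$ as globally generated and hence nef. For (3), the surjection $g^*\Omega^1_Y\twoheadrightarrow\mcl{L}$ induces a surjection $g^*\on{Sym}^n\Omega^1_Y\twoheadrightarrow \mcl{L}^{\otimes n}$; since the source is globally generated, so is $\mcl{L}^{\otimes n}$, whence $n\deg_C(\mcl{L})\geq 0$.

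The main obstacle is (4), which is not accessible by these formal manipulations and demands complex-analytic input. My approach is to invoke the Bergman (or Kobayashi) metric on the universal cover: a bounded domain $\Omega$ in $\CC^n$ (or in a Stein manifold) carries a canonical $\Gamma$-invariant Kähler metric whose holomorphic sectional curvature is bounded above by a negative constant, so that the quotient $Y=\Omega/\Gamma$ inherits such a metric; by a theorem of Kobayashi (see \cite{lazarsfeld2004positivity2} and the references therein), this forces $\Omega^1_Y$ to be ample, in particular nef. The delicate point, which I would defer to the cited literature rather than reprove, is verifying that the curvature negativity really does descend to $Y$ and yields a bundle-theoretic (not merely metric) positivity statement — this is where the compactness of $Y$ is essential, ruling out the degeneracies that bounded domains themselves exhibit at the boundary.
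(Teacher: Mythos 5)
Your treatment of cases (1), (2), (3), (5), (6), (7) is correct and differs from the paper in an instructive way: where the paper appeals to the formal package of nef stability properties from Lazarsfeld (\cite[Theorem 6.2.12]{lazarsfeld2004positivity2} and \cite[Proposition 6.1.7(iv)]{lazarsfeld2004positivity2}), you unwind the definition on a test curve and verify the degree inequality directly. Your argument for (6), writing $\mcl{K}$ for the saturated image of $g^*f^*\Omega^1_X$ in $\mcl{L}$ and splitting into the two cases $\mcl{K}=0$ or $\mcl{K}$ a line subbundle, is a clean self-contained proof that extensions of nef bundles are nef; similarly for (3) the passage to $\mcl{L}^{\otimes n}$ replaces a citation, and for (7) the base-change to a connected component of $C\times_Y Y'$ replaces the cited descent statement. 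This is more elementary and, to my mind, preferable.

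There is, however, a genuine error in your sketch for case (4). You assert that a bounded domain $\Omega\subset\CC^n$ carries a $\Gamma$-invariant K\"ahler metric of \emph{negative} holomorphic sectional curvature, and conclude that $\Omega^1_Y$ is \emph{ample}. Both claims are false in general. Even for a bounded symmetric domain of rank $\geq 2$ (say the bidisc $\Delta\times\Delta$), the invariant metric has flat directions, and a compact quotient $Y$ contains totally geodesic curves (images of factors of embedded polydiscs) along which $\Omega^1_Y$ has a trivial quotient --- so $\Omega^1_Y$ is nef but not ample, and any curvature-based argument must only produce non-positivity of holomorphic sectional curvature, not strict negativity. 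For an arbitrary bounded domain, the Bergman metric is not known to have negative (or even non-positive) holomorphic sectional curvature. The correct input is precisely what the paper cites: Kratz \cite[Theorem 6]{kratz}, which establishes \emph{nefness} of the cotangent bundle of a compact quotient of a bounded domain (in $\CC^n$ or in a Stein manifold) without a pointwise curvature hypothesis. You flag (4) as the case to be outsourced to the literature, which is the right instinct, but the heuristic you offer points at the wrong theorem and overshoots the true conclusion; I would replace it with the Kratz citation and drop the ampleness claim.
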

\begin{proof}
\begin{enumerate}
\item Curves of genus at least $1$ have globally generated cotangent bundles and admit unramified maps to Abelian varieties (in fact, for smooth varieties, these two conditions are equivalent), so the result follows from either parts (2) or (3).
\item Let $f: Y\to A$ be an unramified map to an Abelian variety, and choose a trivialization of $\Omega^1_A$.  Then the natural map $\mcl{O}_Y^{\dim(A)}\simeq f^*\Omega^1_A\to \Omega^1_Y$ is surjective, so $\Omega^1_Y$ is globally generated.  Hence the result follows from part (3).
\item A vector bundle is nef if and only if some symmetric power is nef; this is \cite[Theorem 6.2.12(iii)]{lazarsfeld2004positivity2}.  Globally generated vector bundles are nef, so the result follows.
\item This is \cite[Theorem 6]{kratz} and the remarks following it.
\item Let $\iota: Y\hookrightarrow X$ be a closed embedding, where $\Omega^1_X$ is nef.  Then the map $\iota^*\Omega^1_X\to \Omega^1_Y$ is surjective.  But quotients of nef vector bundles are nef, by \cite[Theorem 6.2.12(i)]{lazarsfeld2004positivity2}.
\item  There is a short exact sequence $$0\to f^*\Omega^1_X\to \Omega^1_Y\to \Omega^1_{Y/X}\to 0.$$   But extensions of nef vector bundles are nef, by \cite[Theorem 6.2.12(ii)]{lazarsfeld2004positivity2}, giving the result.
\item There is an isomorphism $f^*\Omega^1_Y\simeq \Omega^1_{Y'}$, so if $\Omega^1_Y$ is nef, so is $\Omega^1_{Y'}$.  The converse follows from \cite[Proposition 6.1.7(iv)]{lazarsfeld2004positivity2}.
\end{enumerate}
\end{proof}
\begin{cor}
Let $X\to Y$ be a smooth proper relative curve of genus $\geq 1$, and suppose that $Y$ has nef cotangent bundle.  Then $X$ has nef cotangent bundle.
\end{cor}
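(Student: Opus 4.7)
The strategy is to apply Theorem \ref{sourceofnefexamples}(6) directly to the given morphism $f: X \to Y$. That criterion tells us that if $\Omega^1_Y$ and $\Omega^1_{X/Y}$ are both nef, then so is $\Omega^1_X$. The first nefness is given by hypothesis (pullbacks of nef bundles are immediately nef from the definition), so the whole proof reduces to showing $\Omega^1_{X/Y}$ is nef on $X$.

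Since the fibers of $f$ are smooth projective curves $F_y$ of genus $g \geq 1$, the line bundle $\Omega^1_{X/Y}$ restricts on each fiber to the canonical bundle $\omega_{F_y}$. For $g = 1$ this is trivial, while for $g \geq 2$ it is base-point-free by Riemann--Roch; either way $\Omega^1_{X/Y}$ is relatively globally generated over $Y$. One then invokes the result announced in the paragraph preceding Theorem \ref{sourceofnefexamples}, namely that in characteristic zero any smooth morphism whose relative cotangent bundle is relatively globally generated has nef relative cotangent bundle (as an absolute vector bundle on the total space). Combining this with Theorem \ref{sourceofnefexamples}(6) gives the corollary.

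The only non-formal ingredient is the passage from relative global generation to absolute nefness of $\Omega^1_{X/Y}$, and this is the main point to justify. Given a test map $\phi: C \to X$ from a smooth projective curve, one splits into two cases. If $f \circ \phi$ is constant then $\phi$ factors through a fiber $F_y$, and non-negativity of $\deg \phi^* \Omega^1_{X/Y} = \deg \phi^* \omega_{F_y}$ follows from fiberwise global generation. If $f\circ \phi$ is non-constant, one pulls back $f$ along $f \circ \phi$ to obtain a smooth family of genus-$g$ curves $f_C: X_C \to C$ together with the section $\sigma: C \to X_C$ induced by $\phi$, and $\deg \phi^* \Omega^1_{X/Y} = \deg \sigma^* \omega_{X_C/C}$. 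Nefness of $\omega_{X_C/C}$ in this setting reduces to classical positivity results for families of curves over curves: Arakelov's theorem for $g \geq 2$, and, for $g=1$, the identity $\omega_{X_C/C} \cong f_C^* (f_C)_* \omega_{X_C/C}$ together with non-negativity of the degree of the Hodge line bundle of a smooth elliptic fibration. Characteristic zero is used precisely at this point.
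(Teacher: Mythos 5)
Your reduction is exactly the paper's: invoke Theorem \ref{sourceofnefexamples}(6), which via the short exact sequence $0 \to f^*\Omega^1_Y \to \Omega^1_X \to \Omega^1_{X/Y} \to 0$ leaves only the nefness of $\Omega^1_{X/Y}$ to be verified. Where you diverge is in how that nefness is established. The paper disposes of it in one line by citing Keel's theorem \cite[Theorem 0.4]{keel} on nefness of the relative dualizing sheaf for families of curves, a result which holds over a field of \emph{arbitrary} characteristic. Your route instead verifies fiberwise global generation of $\omega_{X/Y}$ (correct for $g \geq 1$: trivial in genus one, base-point free by Riemann--Roch in genus $\geq 2$, and cohomology-and-base-change promotes this to relative global generation), then invokes the implication ``relatively globally generated $\Rightarrow$ nef'' from Theorem \ref{relativelygloballygeneratedsections} --- note this is a slight forward reference, and that result is not in ``the paragraph preceding Theorem \ref{sourceofnefexamples}'' as you say, but appears later, stated only in characteristic zero since it rests on Griffiths positivity of Hodge bundles. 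Your unwinding of that implication to curve tests, Arakelov, and non-negativity of the elliptic Hodge line bundle is a reasonable elaboration, again characteristic-zero in nature.

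The genuine issue is that the corollary as stated carries no characteristic hypothesis, and the paper's proof via Keel is characteristic-free, whereas every step of your nefness argument --- Theorem \ref{relativelygloballygeneratedsections}, Griffiths positivity, Arakelov's theorem, semi-positivity of the Hodge bundle --- is a characteristic-zero tool. So your proposal proves a strictly weaker statement. If you want a self-contained argument avoiding Keel, you should either state the characteristic-zero restriction explicitly, or replace the positivity input with an argument valid in positive characteristic (which is essentially what Keel's theorem provides and why the paper cites it). Otherwise the argument is correct and gives additional insight into \emph{why} $\omega_{X/Y}$ is nef in the classical case.
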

\begin{proof}
By Theorem \ref{sourceofnefexamples}(6), it suffices to show that $\Omega^1_{X/Y}$ is nef.  But this is well-known; see e.g. \cite[Theorem 0.4]{keel}.
\end{proof}
Good sources of varieties with nef cotangent bundle include \cite{brotbek}, \cite{debarre2}, \cite{horing}, \cite{jabbusch}, \cite{kratz}, \cite{spurr}.

We also give some examples of classes of morphisms with nef \emph{relative} cotangent bundle, where one may apply Theorem \ref{nefsectionsextend}.
\begin{thm}\label{relativelygloballygeneratedsections}
Let $k$ be a field of characteristic zero.  Let $f: Y\to X$ be a smooth morphism of smooth $k$-varieties with $\Omega^1_{X/Y}$ relatively globally generated (i.e. the map $$f^*f_*\Omega^1_{Y/X}\to \Omega^1_{Y/X}$$ is surjective.)  Then $\Omega^1_{Y/X}$ is nef.
\end{thm}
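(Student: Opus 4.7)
The plan is to verify the nefness of $\Omega^1_{Y/X}$ directly from the definition. Let $g: C \to Y$ be a morphism from a smooth projective curve and $q: g^*\Omega^1_{Y/X} \twoheadrightarrow \mcl{L}$ a surjection onto a line bundle; I must show $\deg_C \mcl{L} \geq 0$. The first step is a base change: set $h := f \circ g: C \to X$, let $Y_C := Y \times_X C$ with projections $f_C: Y_C \to C$ and $h': Y_C \to Y$, and factor $g$ uniquely as $g = h' \circ \sigma$ with $\sigma: C \to Y_C$ a section of $f_C$, so that $g^*\Omega^1_{Y/X} = \sigma^*\Omega^1_{Y_C/C}$ via $\Omega^1_{Y_C/C} = h'^*\Omega^1_{Y/X}$. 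Pulling the relative global generation surjection $f^*f_*\Omega^1_{Y/X} \twoheadrightarrow \Omega^1_{Y/X}$ back along $h'$, and then along $\sigma$ (using $f_C \circ \sigma = \on{id}_C$), produces a surjection $\mcl{G} \twoheadrightarrow g^*\Omega^1_{Y/X}$ on $C$, where $\mcl{G} := h^*f_*\Omega^1_{Y/X}$; composing with $q$ gives $\mcl{G} \twoheadrightarrow \mcl{L}$.

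Next, I split into cases according to whether $h$ is constant. If $h$ is constant at some closed point $x \in X$, then $g$ factors through the smooth fiber $Y_x := f^{-1}(x)$. Restricting the relative global generation surjection to $Y_x$ via $\iota_x: Y_x \hookrightarrow Y$ identifies $\iota_x^*(f^*f_*\Omega^1_{Y/X})$ with the trivial bundle $\bigl[(f_*\Omega^1_{Y/X}) \otimes_{\mcl{O}_X} k(x)\bigr] \otimes_k \mcl{O}_{Y_x}$, which surjects onto $\iota_x^*\Omega^1_{Y/X} = \Omega^1_{Y_x}$; hence $\Omega^1_{Y_x}$ is globally generated on $Y_x$. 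It follows that $g^*\Omega^1_{Y/X} = g^*\Omega^1_{Y_x}$ is globally generated on $C$, hence nef, and $\deg_C \mcl{L} \geq 0$ follows.

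The main obstacle is the remaining case, in which $h$ is non-constant and hence finite onto its image. Here, for each affine open $U \subset X$, the Noetherianity of $Y_U$ together with relative global generation produces finitely many sections $\omega_1,\ldots,\omega_N \in H^0(Y_U, \Omega^1_{Y_U/U})$ generating $\Omega^1_{Y_U/U}$ as an $\mcl{O}_{Y_U}$-module, and their $g$-pullbacks generate $g^*\Omega^1_{Y/X}$, and therefore $\mcl{L}$, on the Zariski open $h^{-1}(U) \subset C$. This yields only \emph{local} global generation of $\mcl{L}$ across an affine cover of $C$, which is not by itself enough to bound $\deg_C \mcl{L}$ from below. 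To complete the argument I would exploit the identification $\sigma^*\Omega^1_{Y_C/C} \cong \mcl{N}^\vee_{\sigma(C)/Y_C}$ (coming from the conormal sequence combined with the splitting of the relative cotangent sequence induced by $\sigma$ being a section) to reinterpret the desired nonnegativity as a statement about the normal bundle of $\sigma(C) \subset Y_C$, and then patch the locally-defined generating relative differentials across overlaps of the affine cover of $X$ to produce enough global rational sections of $\mcl{L}$ to force $\deg_C \mcl{L} \geq 0$. This patching step, and the associated choice of an appropriate coherent subsheaf $\mcl{F}' \subset f_*\Omega^1_{Y/X}$ with enough positivity that $h^*\mcl{F}'$ has only nonnegative line-bundle quotients, is what I expect to be the crux of the argument, and is where the characteristic-zero hypothesis is most likely to be essential.
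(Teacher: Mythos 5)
Your proposal has a genuine gap, and the speculative ideas offered to close it do not work. The paper's proof is a one-paragraph reduction: quotients of nef bundles are nef, so by relative global generation it suffices to show $f^*f_*\Omega^1_{Y/X}$ is nef; pullbacks of nef bundles are nef, so it suffices to show $f_*\Omega^1_{Y/X}$ is nef; and this last fact is Griffiths positivity (Fujita's theorem) for smooth projective families in characteristic zero, which is where the characteristic-zero hypothesis actually enters. You correctly produced the key surjection $h^*f_*\Omega^1_{Y/X} \twoheadrightarrow \mcl{L}$ on $C$, and you correctly identified that the crux is positivity of $f_*\Omega^1_{Y/X}$ (your hypothetical subsheaf $\mcl{F}'$), but you never invoke the Hodge-theoretic input that actually delivers it.

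The specific obstruction to completing your argument as sketched: local global generation over an affine cover of $X$ is vacuous as a positivity statement, since every coherent sheaf on an affine scheme is globally generated there; in particular, it places no constraint at all on $\deg_C \mcl{L}$. Likewise, ``patching locally defined generating differentials across overlaps'' cannot manufacture global regular sections, and passing through the identification $\sigma^*\Omega^1_{Y_C/C} \cong \mcl{N}^\vee_{\sigma(C)/Y_C}$ merely relabels the question without giving you any new handle on it. The entire case split (constant versus non-constant $h$) and the base change to $Y_C$ are also unnecessary once you know $f_*\Omega^1_{Y/X}$ is nef: a line-bundle quotient of the pullback of a nef bundle to a smooth projective curve automatically has nonnegative degree. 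What is missing from your write-up is exactly the citation of Griffiths positivity, without which the theorem cannot be proved along these lines.
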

\begin{proof}
As the quotient of a nef vector bundle is nef, it suffices to show that $f^*f_*\Omega^1_{Y/X}$ is nef; as nefness is preserved by pullbacks, it is enough to show that $f_*\Omega^1_{Y/X}$ is nef.  But this is a consequence of Griffiths positivity; see e.g. \cite[Theorem 5]{kratz}, or \cite[Corollary 7.8]{griffiths} for the dual result.
\end{proof}
\begin{cor}\label{sectionswithglobalgeneration}
Let $k$ be a field of characteristic zero.  Let $f: Y\to X$ be a smooth proper morphism, with $X$ a smooth projective $k$-variety, and $\dim(X)\geq 3$.  Let $D\subset X$ be an ample Cartier divisor, and suppose $\on{rel. dim.}(f)<\dim(D)$.  Then if each geometric fiber of $f$ has globally generated cotangent bundle, the natural map $$\on{Sections}(f)\to \on{Sections}(f_D)$$ is an isomorphism.
\end{cor}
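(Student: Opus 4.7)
The plan is to apply Theorem \ref{nefsectionsextend}: every hypothesis of that theorem is given in the corollary except that the relative cotangent bundle $\Omega^1_{Y/X}$ is nef. To verify nefness, I will invoke Theorem \ref{relativelygloballygeneratedsections}, which reduces the question to showing that $\Omega^1_{Y/X}$ is relatively globally generated; that is, that the natural evaluation map $f^*f_*\Omega^1_{Y/X}\to \Omega^1_{Y/X}$ is surjective.

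To establish surjectivity of this evaluation map, I will first show by cohomology and base change that the formation of $f_*\Omega^1_{Y/X}$ commutes with arbitrary base change. Since $f$ is smooth and proper and $\Omega^1_{Y/X}$ is a locally free sheaf on $Y$, by Grauert's theorem it is enough to check that the function $x\mapsto h^0(Y_x,\Omega^1_{Y_x/k(x)})$ is locally constant on $X$. In characteristic zero this is a consequence of the classical deformation invariance of Hodge numbers in smooth proper families, i.e.\ of the $E_1$-degeneration of the Hodge-to-de Rham spectral sequence; since $X$ is a connected variety, $h^{0,1}(Y_x)=h^0(Y_x,\Omega^1_{Y_x/k(x)})$ is constant on $X$. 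Hence $f_*\Omega^1_{Y/X}$ is locally free and its formation commutes with base change.

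With base change compatibility in hand, the surjectivity of $f^*f_*\Omega^1_{Y/X}\to\Omega^1_{Y/X}$ can be checked pointwise by Nakayama's lemma: for each closed point $y\in Y$ with $x=f(y)$, the required statement becomes the surjectivity of the composite
$$H^0(Y_x,\Omega^1_{Y_x/k(x)})\longrightarrow \Omega^1_{Y_x/k(x)}\otimes k(y),$$
and the latter is precisely the assumption that each geometric fiber of $f$ has globally generated cotangent bundle. The main (essentially only) subtle point is the appeal to Hodge-theoretic deformation invariance to obtain constancy of $h^0$ and thus base change compatibility; this is where the characteristic-zero hypothesis enters. Once relative global generation is established, Theorem \ref{relativelygloballygeneratedsections} yields nefness of $\Omega^1_{Y/X}$ and Theorem \ref{nefsectionsextend} immediately concludes the proof.
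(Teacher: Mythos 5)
Your proposal matches the paper's proof in essence: both deduce nefness of $\Omega^1_{Y/X}$ from Theorem \ref{relativelygloballygeneratedsections} after checking relative global generation, and both obtain base-change compatibility of $f_*\Omega^1_{Y/X}$ from the Hodge-theoretic input (the paper cites \cite[Section 4]{deligne-illusie}, which is precisely the $E_1$-degeneration you invoke), then conclude via Theorem \ref{nefsectionsextend}. One small notational slip: you write $h^{0,1}(Y_x)=h^0(Y_x,\Omega^1_{Y_x})$, but in standard conventions $h^0(\Omega^1)=h^{1,0}$; this does not affect the argument since local constancy of $h^{1,0}$ is what you actually use.
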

\begin{proof}
By theorem \ref{relativelygloballygeneratedsections}, it suffices to check that $\Omega^1_{Y/X}$ is relatively globally generated.  But the formation of $f_*\Omega^1_{X/Y}$ commutes with base change, by \cite[Section 4]{deligne-illusie}, so this is true on geometric fibers, and the result follows.
\end{proof}
\begin{rem}
In particular, relative curves of genus at least one and (torsors for) Abelian schemes satisfy the hypotheses of Theorem \ref{relativelygloballygeneratedsections}.
\end{rem}
\begin{rem}
There are non-isotrivial families of curves and Abelian varieties over proper bases.  Indeed, both $\mcl{M}_g$ and $\mcl{A}_g$ admit compactifications whose boundaries have codimension $\geq 2$, so a general complete intersection curve will miss the boundary.
\end{rem}
\subsection{Moduli spaces with nef cotangent bundle}
We now give several examples of moduli spaces $\mcl{M}$ with nef cotangent bundle.  Applying Theorem \ref{nefcotangentbundle}, we will see that a family of objects parametrized by such a moduli space $\mcl{M}$ over an ample divisor $D\subset X$ will extend uniquely to $X$, as long as $\dim(\mcl{M})<\dim(D)$.
\begin{lem}\label{trivialcotangentnefness}
Let $f: Y\to X$ be a smooth projective morphism over a field of characteristic zero, of relative dimension $n$.  Suppose that each geometric fiber of $f$ has trivial cotangent bundle.  Then $(\mbf{R}^1f_*T_{Y/X})^\vee$ is a nef vector bundle.
\end{lem}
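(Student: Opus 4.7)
The plan is to exploit the fiberwise triviality of $\Omega^1_{Y/X}$ to rewrite $(\mbf{R}^1 f_* T_{Y/X})^\vee$ as a tensor product of two copies of the Hodge bundle $f_*\Omega^1_{Y/X}$, and then invoke Griffiths semipositivity. First, cohomology and base change shows that $F:=f_*\Omega^1_{Y/X}$ is a vector bundle of rank $n$, that the evaluation map $f^*F\to \Omega^1_{Y/X}$ is an isomorphism, and dually that $T_{Y/X}\cong f^*E$ with $E:=F^\vee$. The projection formula then yields $\mbf{R}^1 f_* T_{Y/X}\cong E\otimes \mbf{R}^1 f_*\mcl{O}_Y$, so
$$(\mbf{R}^1 f_* T_{Y/X})^\vee \;\cong\; F\otimes (\mbf{R}^1 f_*\mcl{O}_Y)^\vee.$$

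Next, I would introduce a relative polarization. Smooth projectivity of $f$ supplies an $f$-ample line bundle $\mcl{L}$ on $Y$; cup product with $c_1(\mcl{L})^{n-1}$ polarizes the weight-one variation of Hodge structure on $\mbf{R}^1 f_*\mbb{Q}_Y$, yielding a non-degenerate skew-symmetric form $Q\colon \mcl{H}\otimes\mcl{H}\to\mcl{O}_X$ on $\mcl{H}:=\mbf{R}^1 f_*\mbb{Q}_Y\otimes\mcl{O}_X$. The first Riemann bilinear relation gives $Q(F^1,F^1)=0$, and since $h^{1,0}=h^{0,1}$ on any smooth projective fiber, the subbundle $F^1=f_*\Omega^1_{Y/X}$ is maximal $Q$-isotropic. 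Hence $Q$ induces an isomorphism of $\mcl{O}_X$-modules $F\cong (\mbf{R}^1 f_*\mcl{O}_Y)^\vee$, and therefore $(\mbf{R}^1 f_* T_{Y/X})^\vee\cong F\otimes F$.

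Finally, $F=f_*\Omega^1_{Y/X}$ is nef by Griffiths semipositivity of the Hodge bundle of a polarized variation of Hodge structure (or by the Fujita--Kawamata--Zucker theorem), and in characteristic zero a tensor product of nef vector bundles is again nef, so $F\otimes F$ is nef, completing the proof. The main obstacle is little more than a clean invocation of Griffiths/Fujita--Kawamata semipositivity together with checking that no line bundle twist enters the identification $F\cong (\mbf{R}^1 f_*\mcl{O}_Y)^\vee$; the latter is automatic because the weight is $1$, so the Tate twist $\mbb{Q}(-1)$ contributes a trivial $\mcl{O}_X$-module.
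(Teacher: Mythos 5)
Your proof is correct, and the core strategy matches the paper's: use fiberwise triviality of $\Omega^1_{Y/X}$ to write $\Omega^1_{Y/X}\cong f^*F$ with $F:=f_*\Omega^1_{Y/X}$, apply the projection formula, and then invoke Griffiths/Fujita--Kawamata semipositivity of Hodge bundles.

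Where you differ is in how the second tensor factor is handled. After the projection formula, both you and the paper arrive at $F\otimes(\mbf{R}^1f_*\mcl{O}_Y)^\vee$. The paper then identifies $(\mbf{R}^1f_*\mcl{O}_Y)^\vee$ with $\mbf{R}^{n-1}f_*\omega_{Y/X}$ via relative Serre duality and cites positivity of higher direct images of $\omega_{Y/X}$; you instead use the weight-one polarization of the VHS on $\mbf{R}^1f_*\mbb{Q}_Y$ to identify $(\mbf{R}^1f_*\mcl{O}_Y)^\vee\cong F$ directly, yielding the cleaner conclusion $(\mbf{R}^1 f_*T_{Y/X})^\vee\cong F^{\otimes 2}$. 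Your route is slightly more economical and, notably, sidesteps some apparent $1\leftrightarrow n-1$ index slips in the paper's displayed chain of identifications (the paper writes $\mbf{R}^1$ where relative Serre duality would produce $\mbf{R}^{n-1}$, and then states nefness of $\mbf{R}^{n-1}f_*\mcl{O}_Y$ when what is actually needed is nefness of its dual, i.e.\ of $\mbf{R}^{n-1}f_*\omega_{Y/X}$). Your polarization step is also in the spirit of the paper's neighboring Lemma~\ref{torellinef}, so it fits the toolkit being developed there. The only point I would flag is that where you say ``nef'' for $f_*\Omega^1_{Y/X}$, the cleanest citation is Griffiths curvature positivity of the Hodge metric (as used throughout the paper) or Fujita--Kawamata for $f_*\omega_{Y/X}$-type bundles; but this is exactly what the paper invokes, so no gap.
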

\begin{proof}
We have $$(\mbf{R}^1f_*T^1_{Y/X})^\vee=\mbf{R}^1f_*(\Omega^1_{Y/X}\otimes \omega_{Y/X}).$$  As each geometric fiber of $f$ has globally generated cotangent bundle, $\Omega^1_{Y/X}$ is isomorphic to $f^*f_*\Omega^1_{X/Y}$, arguing as in Corollary \ref{sectionswithglobalgeneration}.  Now, $$\mbf{R}^1f_*(f^*f_*\Omega^1_{Y/X}\otimes \omega_{Y/X})=f_*\Omega^1_{Y/X}\otimes \mbf{R}^1f_*\omega_{Y/X}=f_*\Omega^1_{Y/X}\otimes(\mbf{R}^{n-1}f_*\mcl{O}_{Y/X})^\vee.$$
But both of these bundles $f_*\Omega^1_{Y/X}$ and $\mbf{R}^{n-1}f_*\mcl{O}_{Y/X}$ are nef vector bundles.  That they are vector bundles follows from \cite[Section 4]{deligne-illusie} (or if $X$ is reduced, simply from the local constancy of Hodge numbers); that they are nef follows from Griffiths positivity (see e.g. \cite[Theorem 5]{kratz}, or \cite[Corollary 7.8]{griffiths}), so their tensor product is as well.
\end{proof}
\begin{cor}
The cotangent bundle of $\mcl{A}_g$ is nef over a field of characterisitc zero.
\end{cor}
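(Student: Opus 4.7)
The strategy is to apply Lemma~\ref{trivialcotangentnefness} to the universal principally polarized abelian scheme $\pi\colon \mcl{X}_g \to \mcl{A}_g$. Since abelian varieties have trivial cotangent bundle, the hypothesis of the lemma is immediate, and the real task is to translate the conclusion ``$(\mbf{R}^1\pi_* T_{\mcl{X}_g/\mcl{A}_g})^\vee$ is nef'' into a statement about $\Omega^1_{\mcl{A}_g}$ itself.

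First, because nefness descends through (and is preserved under) finite \'etale covers by Theorem~\ref{sourceofnefexamples}(7), I would replace $\mcl{A}_g$ with a level-$N$ cover $\mcl{A}_g^{[N]}$ for $N\geq 3$, which is a smooth quasi-projective scheme carrying a genuine universal family. Writing $\mbb{E} := \pi_*\Omega^1_{\mcl{X}/\mcl{A}_g^{[N]}}$ for the Hodge bundle, the triviality of $\Omega^1_{\mcl{X}/\mcl{A}}$ along fibers (pulled back from the unit section) gives $T_{\mcl{X}/\mcl{A}} \cong \pi^*\mbb{E}^\vee$, so by the projection formula
\[\mbf{R}^1\pi_* T_{\mcl{X}/\mcl{A}} \;\cong\; \mbb{E}^\vee \otimes \mbf{R}^1\pi_*\mcl{O}_{\mcl{X}}.\]
The factor $\mbf{R}^1\pi_*\mcl{O}_{\mcl{X}}$ is canonically $\on{Lie}(\mcl{X}^\vee/\mcl{A})$, and the principal polarization identifies $\mcl{X}^\vee \cong \mcl{X}$, so $\mbf{R}^1\pi_*\mcl{O}_{\mcl{X}} \cong \mbb{E}^\vee$. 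Thus $(\mbf{R}^1\pi_* T_{\mcl{X}/\mcl{A}})^\vee \cong \mbb{E}\otimes \mbb{E}$, which is nef by Lemma~\ref{trivialcotangentnefness}.

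Second, the Kodaira-Spencer map identifies $T_{\mcl{A}_g^{[N]}}$ with the symmetric part $\on{Sym}^2(\mbb{E}^\vee) \subset \mbb{E}^\vee\otimes \mbb{E}^\vee$, so dually $\Omega^1_{\mcl{A}_g^{[N]}} \cong \on{Sym}^2(\mbb{E})$. In characteristic zero the symmetrization idempotent exhibits $\on{Sym}^2(\mbb{E})$ as a direct summand, hence a quotient, of $\mbb{E}\otimes \mbb{E}$; since quotients of nef bundles are nef by \cite[Theorem 6.2.12(i)]{lazarsfeld2004positivity2}, the claim follows.

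The only mildly delicate point is bookkeeping the two appearances of $\mbb{E}^\vee$ in the second paragraph: one comes from the Lie algebra of $\mcl{X}/\mcl{A}$, the other from the combination of Serre duality on fibers and the principal polarization. Both identifications are classical, so no real obstacle arises; everything else is a formal consequence of Lemma~\ref{trivialcotangentnefness}, the projection formula, and Griffiths-style nefness of the Hodge bundle already baked into that lemma's proof.
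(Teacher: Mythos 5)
Your proof is correct and follows essentially the same route as the paper: apply Lemma~\ref{trivialcotangentnefness} to the universal abelian scheme and observe that $\Omega^1_{\mcl{A}_g}$ is a quotient of $(\mbf{R}^1\pi_*T_{\mcl{X}/\mcl{A}_g})^\vee$. The paper's proof is a one-liner and leaves the Kodaira--Spencer identification $\Omega^1_{\mcl{A}_g}\cong \Sym^2\mbb{E}$, the passage to a level cover to obtain an honest universal family, and the characteristic-zero observation that $\Sym^2\mbb{E}$ splits off $\mbb{E}\otimes\mbb{E}$ (so that ``subbundle'' dualizes to ``quotient'') all implicit; your writeup makes exactly these points explicit, which is a genuine improvement in exposition but not a different argument.
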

\begin{proof}
Let $A\to \mcl{A}_g$ be the universal family.  Then the cotangent bundle of $\mcl{A}_g$ is a quotient of $(\mbf{R}^1f_*T_{Y/X})^\vee$, and is hence nef by Lemma \ref{trivialcotangentnefness}.
\end{proof}
\begin{rem}
This result is false in every positive characteristic, for $g\geq 2$.  This follows immediately from Moret-Bailly's construction of complete rational curves in $\mcl{A}_g$, \cite{Moret-Bailly}.
\end{rem}
\begin{lem}\label{torellinef}
Let $f: Y\to X$ be a smooth projective morphism over a field of characteristic zero, such that the natural map $$\mbf{R}^1f_*T_{Y/X}\to (f_*\Omega^1_{Y/X})^\vee\otimes (\mbf{R}^1f_*\mcl{O}_{Y/X})$$ is injective (that is, the family $f$ satisfies an infinitesimal Torelli theorem in weight one).  Then if $\mcl{M}$ is a smooth moduli space of polarized varieties such that $f$ is induced by a map $g: X\to \mcl{M}$, the vector bundle $f^*\Omega^1_{\mcl{M}}$ is nef.
\end{lem}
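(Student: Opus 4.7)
The plan is to realize $g^*\Omega^1_{\mcl{M}}$ (I believe the $f^*$ in the statement is a typographical slip for $g^*$, where $g: X\to \mcl{M}$ is the moduli map classifying $f$) as a quotient of the tensor product $f_*\Omega^1_{Y/X}\otimes (\mbf{R}^1f_*\mcl{O}_{Y/X})^\vee$, and then verify that both factors are nef; since tensor products and quotients of nef bundles are nef by \cite[Theorem 6.2.12]{lazarsfeld2004positivity2}, this yields the lemma.

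To build the desired surjection, I would begin with the Kodaira--Spencer map $T_X\to \mbf{R}^1f_*T_{Y/X}$.  Because $f$ is the pullback of the universal family on $\mcl{M}$ along $g$, this map factors as $T_X \to g^*T_{\mcl{M}}\to \mbf{R}^1f_*T_{Y/X}$, and the second arrow may be identified fiberwise with the inclusion of polarized infinitesimal deformations into all infinitesimal deformations; in particular it is injective.  Since both sides are locally free, this is an injection of vector bundles $g^*T_{\mcl{M}}\hookrightarrow \mbf{R}^1f_*T_{Y/X}$, whose dual is a surjection $(\mbf{R}^1f_*T_{Y/X})^\vee\twoheadrightarrow g^*\Omega^1_{\mcl{M}}$.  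Dualizing the Torelli injection hypothesized in the statement produces a second surjection $f_*\Omega^1_{Y/X}\otimes (\mbf{R}^1f_*\mcl{O}_{Y/X})^\vee\twoheadrightarrow (\mbf{R}^1f_*T_{Y/X})^\vee$.  Composing these two surjections presents $g^*\Omega^1_{\mcl{M}}$ as a quotient of $f_*\Omega^1_{Y/X}\otimes (\mbf{R}^1f_*\mcl{O}_{Y/X})^\vee$, as promised.

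For the nefness of the two factors: $f_*\Omega^1_{Y/X}$ is nef by Griffiths/Fujita positivity, as already cited in the proofs of Theorem \ref{relativelygloballygeneratedsections} and Lemma \ref{trivialcotangentnefness}.  For $(\mbf{R}^1f_*\mcl{O}_{Y/X})^\vee$ I would exploit the hypothesis that $\mcl{M}$ parametrizes \emph{polarized} varieties:  the relative Lefschetz operator $L^{n-1}$ (with $n$ the fiber dimension of $f$), together with relative Poincar\'e duality, equips the weight-one variation $\mbf{R}^1f_*\mbb{Q}$ with a polarization under which, by the first Hodge--Riemann bilinear relation, the Hodge pieces $f_*\Omega^1_{Y/X}$ and $\mbf{R}^1f_*\mcl{O}_{Y/X}$ are mutually dual isotropic subbundles.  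This gives an algebraic isomorphism $(\mbf{R}^1f_*\mcl{O}_{Y/X})^\vee\simeq f_*\Omega^1_{Y/X}$, which is therefore again nef.

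The main obstacle, to my mind, is the polarization argument of the previous paragraph: it requires care to check that the polarization pairing, which a priori lives on the local system, really descends to an algebraic pairing on the relative de Rham cohomology realizing the claimed duality of Hodge bundles at the level of $\mcl{O}_X$-modules, rather than merely a $C^\infty$-identification via complex conjugation.  The key point is that the Lefschetz operator is algebraic, being cup product with the relative polarization class, so this should go through. Secondarily, one should verify that the hypothesis ``$f$ is induced by $g$'' really yields the factorization of Kodaira--Spencer through $g^*T_{\mcl{M}}$ (standard for a universal family), and the argument should be carried out with due care when $\mcl{M}$ is a smooth Deligne--Mumford stack rather than a scheme, using the extensions of the previous section.
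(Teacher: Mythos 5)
Your argument is correct, and it reaches the same conclusion as the paper's proof via a genuinely different (if ultimately parallel) route. You are also right that ``$f^*\Omega^1_{\mcl{M}}$'' in the statement is a slip for $g^*\Omega^1_{\mcl{M}}$ (or, equivalently but unnecessarily, $(g\circ f)^*\Omega^1_{\mcl{M}}$). Both proofs begin identically: dualize the Torelli injection to get a surjection $f_*\Omega^1_{Y/X}\otimes(\mbf{R}^1f_*\mcl{O}_{Y/X})^\vee\twoheadrightarrow(\mbf{R}^1f_*T_{Y/X})^\vee$, and dualize the Kodaira--Spencer injection $g^*T_{\mcl{M}}\hookrightarrow\mbf{R}^1f_*T_{Y/X}$ to present $g^*\Omega^1_{\mcl{M}}$ as a further quotient; then quotients and tensor products of nef bundles are nef. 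The difference is in how the two proofs establish nefness of the source bundle.

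The paper introduces the Albanese $X$-scheme $a\colon A\to X$ of $f$, identifies $(f_*\Omega^1_{Y/X})^\vee\otimes(\mbf{R}^1f_*\mcl{O}_{Y/X})$ with $\mbf{R}^1a_*T_{A/X}$, and appeals to Lemma~\ref{trivialcotangentnefness} (which in turn uses relative Serre duality on the abelian scheme and Griffiths positivity of the resulting Hodge bundles). You instead observe that the polarization on $\mbf{R}^1f_*\mbb{Q}$, combined with the isotropy of $F^1$ (the first Hodge--Riemann relation), gives an algebraic perfect pairing between $f_*\Omega^1_{Y/X}$ and $\mbf{R}^1f_*\mcl{O}_{Y/X}$, so that the source is isomorphic to $(f_*\Omega^1_{Y/X})^{\otimes 2}$ and hence nef by Griffiths positivity alone. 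Your concern about the algebraicity of this pairing is well-founded but resolved exactly as you say: the pairing is cup-product with powers of the relatively ample class followed by the trace map (relative hard Lefschetz plus Serre duality), all of which are algebraic on $\mcl{H}^1_{\mathrm{dR}}(Y/X)$. Both routes ultimately rest on Griffiths positivity, and the paper even acknowledges your shortcut parenthetically (``one may also see this directly using Griffiths positivity''). A small remark: the explicit polarization isomorphism $(\mbf{R}^1f_*\mcl{O}_{Y/X})^\vee\simeq f_*\Omega^1_{Y/X}$ is slightly more than you need; it is enough to note that $(\mbf{R}^1f_*\mcl{O}_{Y/X})^\vee$ is the $F^1$-piece of the dual variation of Hodge structure and therefore nef by the same Griffiths-positivity cited for $f_*\Omega^1_{Y/X}$, without invoking the Hodge--Riemann relations. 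That said, your version also correctly explains what the hypothesis ``polarized'' is buying, which the paper leaves implicit. What the paper's Albanese route buys in exchange is uniformity with the surrounding Lemma~\ref{trivialcotangentnefness} and a small economy of citation; what your route buys is that it works with the weight-one Hodge structure directly, without detouring through $A$.
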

\begin{proof}
Let $a: A\to X$ be the Albanese $X$-scheme associated to $f$.  Then $(f_*\Omega^1_{Y/X})^\vee\otimes (\mbf{R}^1f_*\mcl{O}_{Y/X})$ is isomorphic to $\mbf{R}^1a_*T_{A/X}$, and hence is nef by Lemma \ref{trivialcotangentnefness} (one may also see this directly using Griffiths positivity).  But $f^*\Omega^1_{\mcl{M}}$ is a quotient of $(\mbf{R}^1f_*T_{Y/X})^\vee$ and hence a quotient of $f_*\Omega^1_{Y/X})^\vee\otimes (\mbf{R}^1f_*\mcl{O}_{Y/X})$ by assumption, and hence is nef as well.
\end{proof}
\begin{cor}\label{mgnefcotangentbundle}
Let $g\geq 2, n\geq 0$.  Then over a field of characteristic zero, $\Omega^1_{\mcl{M}_{g, n}}$ is nef.  The cotangent bundle of $\mcl{M}_{1, 1+n}$ is nef as well.
\end{cor}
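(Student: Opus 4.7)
The plan is to identify $\Omega^1_{\mcl{M}_{g,n}}$ as a direct image on the universal curve and combine Lemma \ref{torellinef} with an induction on $n$ using the forgetful maps. Let $\pi\colon\mcl{C}_{g,n}\to\mcl{M}_{g,n}$ be the universal curve with marked sections $\sigma_1,\ldots,\sigma_n$. By Serre duality on fibers, the tangent space at $[C,p_1,\ldots,p_n]$ is $H^1(C,T_C(-\sum p_i))$, so
\[
\Omega^1_{\mcl{M}_{g,n}}\;\cong\;\pi_*\bigl(\omega_\pi^{\otimes 2}(\sigma_1+\cdots+\sigma_n)\bigr),
\]
and similarly for $\mcl{M}_{1,1+n}$ (where $\omega_\pi$ is trivial on fibers). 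Applying Lemma \ref{torellinef} to $\pi$, one checks that the infinitesimal Torelli hypothesis is equivalent under Serre duality to the surjectivity at each fiber of the multiplication map $\pi_*\omega_\pi\otimes\pi_*\omega_\pi\to\pi_*(\omega_\pi^{\otimes 2}(\sum\sigma_i))$. By Noether's theorem this holds everywhere on $\mcl{M}_{2,n}$ (where both sides have the same dimension) and on the non-hyperelliptic locus of $\mcl{M}_{g,n}$ for $g\geq 3$, so Lemma \ref{torellinef} immediately yields nefness over this open locus.

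To globalize and handle the remaining cases, I would induct on $n$ using the forgetful map $\phi\colon\mcl{M}_{g,n+1}\to\mcl{M}_{g,n}$, which realizes $\mcl{M}_{g,n+1}$ as an open substack of the universal curve over $\mcl{M}_{g,n}$. The relative cotangent bundle $\Omega^1_\phi$ is the $\psi$-class line bundle whose fiber at $[C,p_1,\ldots,p_{n+1}]$ is $\omega_C|_{p_{n+1}}$. For $g\geq 1$, the sheaf $\omega_\pi$ is a quotient of $\pi^*\pi_*\omega_\pi$ (using fiberwise global generation of $\omega_C$), and $\pi_*\omega_\pi$ is nef by Griffiths semi-positivity, so $\Omega^1_\phi$ is a quotient of the pullback of a nef bundle and hence nef; in the $g=1$ case it is even trivial. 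Theorem \ref{sourceofnefexamples}(6) then propagates nefness from $\mcl{M}_{g,n}$ to $\mcl{M}_{g,n+1}$, reducing everything to the base cases $\mcl{M}_g$ for $g\geq 2$ and $\mcl{M}_{1,1}\cong\mcl{A}_1$, the latter already established in this section.

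The main obstacle is the base case $\mcl{M}_g$ for $g\geq 3$: Lemma \ref{torellinef} produces nefness only on the non-hyperelliptic locus, and the hyperelliptic locus $\mcl{H}_g$ has codimension $g-2$, which equals $1$ when $g=3$, so one cannot extend nefness across $\mcl{H}_g$ by a codimension-$2$ Hartogs argument. To cross $\mcl{H}_g$ I would invoke the Fujita--Kawamata--Viehweg positivity theorem, which gives nefness of the pluricanonical pushforward $\pi_*\omega_\pi^{\otimes 2}$ on the base of any semistable family of curves of genus $\geq 2$; applied after semistable reduction of an arbitrary test morphism from a smooth projective curve into $\mcl{M}_g$, this recovers nefness globally. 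This positivity theorem for pluricanonical direct images, which goes beyond weight-one Hodge theory, is the essential input not already developed within the paper.
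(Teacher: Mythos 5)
Your overall architecture tracks the paper's: establish a base case via the infinitesimal Torelli map (Lemma \ref{torellinef}) and then climb the tower of forgetful maps using Theorem \ref{sourceofnefexamples}(6). Where you part ways is at the base case for $g\geq 3$, and your objection there is substantive. The proof of Lemma \ref{torellinef} dualizes the map $\phi\colon R^1\pi_*T_\pi\to(\pi_*\Omega^1_\pi)^\vee\otimes R^1\pi_*\mcl{O}$ to exhibit $\Omega^1_{\mcl{M}}$ as a quotient of a Griffiths-nef bundle; for the dual of $\phi$ to be a surjection of bundles one needs $\phi$ to be fiberwise injective (a subbundle inclusion), not merely injective as a sheaf map. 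On $\mcl{M}_g$ with $g\geq 3$ the map $\phi$ drops rank by $g-2$ along the hyperelliptic locus, by Max Noether's theorem, so this stronger hypothesis fails there, and a test curve lying entirely inside $\mcl{H}_g$ is not covered by the lemma's argument. Since $\mcl{H}_3$ has codimension one, one also cannot recover nefness by extending across codimension two, as you observe. Your proposed remedy---Fujita--Kawamata--Viehweg positivity of $\pi_*\omega_\pi^{\otimes 2}$ applied to a semistable reduction of each test curve---is correct and is essentially the input behind Koll\'ar's result cited in the remark following the corollary; it genuinely goes beyond the weight-one Hodge-theoretic machinery the paper develops, and it is the missing ingredient needed to make the base case rigorous for $g\geq 3$.

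Two intermediate claims in your writeup are off, though neither is fatal. The multiplication $\pi_*\omega_\pi\otimes\pi_*\omega_\pi\to\pi_*(\omega_\pi^{\otimes 2}(\sigma_1+\cdots+\sigma_n))$ lands in the proper subsheaf $\pi_*(\omega_\pi^{\otimes 2})$ whenever $n>0$, so it is never surjective at a fiber, and the two sides have the same rank on $\mcl{M}_{2,n}$ only when $n=0$. Moreover Lemma \ref{torellinef} is formulated for moduli of \emph{polarized varieties}; a pointed curve with $n>0$ is not one, so the lemma should not be invoked directly on $\mcl{M}_{g,n}$. Both issues evaporate once one adopts the organization that both the paper and the tail of your argument use: apply the lemma only to $\mcl{M}_g$ ($g\geq 2$) and to $\mcl{M}_{1,1}=\mcl{A}_1$, and let Theorem \ref{sourceofnefexamples}(6) absorb the marked points by induction on $n$.
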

\begin{proof}
The universal families $\mcl{M}_{g}$ and $\mcl{M}_{1, 1}$ both satisfy the hypotheses of Lemma \ref{torellinef}, so we proceed by induction on $n$.  But the map $\mcl{M}_{g,n}\to \mcl{M}_{g, n-1}$ exhibiting $\mcl{M}_{g,n}$ as the universal curve over $\mcl{M}_g$ has relatively globally generated relative cotangent bundle, hence by Theorem \ref{sourceofnefexamples}(6), the induction step is complete.
\end{proof}
\begin{rem}
This corollary holds true in positive characteristic as well; see \cite[Theorem 4.3]{kollar}.
\end{rem}
\begin{rem}
The cotangent bundle of $\overline{\mathcal{M}_{g, n}}$ is \emph{not} nef; however, the logarithmic cotangent bundle $\Omega^1_{\overline{\mcl{M}_{g, n}}}(\log \delta)$, for $\delta:=\overline{\mathcal{M}_{g, n}}\setminus \mathcal{M}_{g, n}$ is nef. One may prove logarithmic versions of Theorems \ref{maintheorem} and \ref{maintheorem2}, and this observation about the nefness allows one to prove a Lefschetz theorem for stable curves.  
\end{rem}
\begin{thm}\label{hyperkahlernef}
Let $\mcl{M}$ be a moduli space parametrizing polarized hyperk\"ahler manifolds.  Then $\Omega^1_{\mcl{M}}$ is nef.
\end{thm}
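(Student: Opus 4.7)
The plan is to identify $\mcl{M}$, up to \'etale pullback, with an open substack of a Hermitian locally symmetric variety of Type IV, and then to deduce nefness of $\Omega^1_\mcl{M}$ from standard positivity of the Bergman metric on the target. This parallels the role played by $\mcl{A}_g$ and the weight-one period map in Corollary \ref{mgnefcotangentbundle}, but uses the weight-two variation of Hodge structure on $H^2$ in place of the weight-one variation on $H^1$.

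First, I would fix a deformation type and discrete polarization class, so that $\mcl{M}$ parametrizes pairs $(Y,h)$ with $Y$ a hyperk\"ahler manifold of fixed topological type and $h$ an ample class of fixed Beauville--Bogomolov norm. Let $V = h^\perp \subset H^2(Y,\ZZ)$, equipped with the Beauville--Bogomolov--Fujiki form $q$, which has signature $(2,\on{rk}V - 2)$. The primitive weight-two Hodge structures are parametrized by the Type IV Hermitian symmetric domain
\[
\Omega_V \;=\; \{[\sigma] \in \PP(V_\CC) : q(\sigma,\sigma)=0,\; q(\sigma,\bar\sigma)>0\}.
\]
Monodromy lies in an arithmetic subgroup $\Gamma \subset O(V)$, and by Baily--Borel together with Borel's extension theorem the period map descends to an algebraic morphism of Deligne--Mumford stacks $p : \mcl{M} \to \mcl{D} := [\Gamma \backslash \Omega_V]$.

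The key input is the local Torelli theorem for hyperk\"ahler manifolds due to Beauville: contraction with the holomorphic symplectic form gives a canonical isomorphism $T_Y \cong \Omega^1_Y$, so the differential of the unpolarized period map at $[Y]$ is the isomorphism $H^1(Y, T_Y) \xrightarrow{\sim} \Hom(H^{2,0}(Y), H^{1,1}(Y))$; imposing the codimension-one constraint preserving $h$ on both sides shows that $dp$ is still an isomorphism. Since $\dim \mcl{M} = \on{rk}V - 2 = \dim \Omega_V$, the map $p$ is \'etale, and hence $\Omega^1_\mcl{M} \cong p^*\Omega^1_\mcl{D}$. It therefore suffices to prove that $\Omega^1_\mcl{D}$ is nef.

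For the latter, I would use that $\Omega_V$ is a bounded symmetric domain carrying a $\Gamma$-invariant Bergman (K\"ahler--Einstein) metric of nonpositive holomorphic bisectional curvature. Consequently $T_{\Omega_V}$ is Griffiths-seminegative, so dually $\Omega^1_{\Omega_V}$ is Griffiths-semipositive; this metric descends to a smooth Hermitian metric on $\Omega^1_\mcl{D}$ (in the orbifold sense) of Griffiths-semipositive curvature. For any smooth projective curve $g : C \to \mcl{M}$, the pullback $g^*\Omega^1_\mcl{M} = (p \circ g)^*\Omega^1_\mcl{D}$ inherits this metric, and any quotient line bundle $\mcl{L}$ acquires a quotient metric with semipositive Chern form, giving $\deg_C \mcl{L} \geq 0$; this is precisely the paper's notion of nefness. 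The hard part will be executing the period-map reduction rigorously at the level of DM stacks (accounting for the automorphisms of $(Y,h)$ and the compatibility of the Bergman metric with the orbifold structure), since the differential-geometric ingredients (positivity of the Bergman metric, Baily--Borel, Borel extension, and Beauville's local Torelli) are classical.
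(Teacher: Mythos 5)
Your proposal is correct in outline, but it takes a genuinely different route from the paper's. The paper avoids the period domain entirely: given a family $f\colon Y\to X$ with classifying map $g\colon X\to \mcl{M}$, it passes to the Kuga--Satake Abelian scheme $A\to X$ attached to the weight-two variation on $H^2$, uses the infinitesimal Torelli theorem to inject $g^*T_{\mcl{M}}$ into the Hodge-theoretic bundle $(f_*\Omega^1_{A/X})^\vee\otimes \mbf{R}^1f_*\mcl{O}_{A/X}$, and then deduces nefness of $g^*\Omega^1_{\mcl{M}}$ from Griffiths positivity of the Hodge bundles of the Abelian scheme (exactly as in Lemma \ref{torellinef} and Lemma \ref{trivialcotangentnefness}). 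In other words, the paper converts the weight-two problem into a weight-one problem and never mentions the Type IV domain or the Bergman metric. Your argument instead factors through the \'etale period map to $[\Gamma\backslash\Omega_V]$ and appeals to Griffiths-semipositive curvature of the K\"ahler--Einstein metric on a bounded symmetric domain; this is a more analytic route, and it is essentially the non-compact generalization of the mechanism the paper uses in Theorem \ref{sourceofnefexamples}(4) for compact quotients. Both arguments hinge on the same local Torelli input and the same dimension count. The tradeoff: the paper's approach is purely algebraic, stays uniform with the treatments of $\mcl{M}_g$, $\mcl{A}_g$, and Calabi--Yau moduli, and only needs a pointwise injectivity of a bundle map; your approach requires Baily--Borel, Borel's extension theorem, and care with the orbifold/metric compatibility that you flag, but it is more transparent about \emph{why} the cotangent bundle is nef (negative holomorphic bisectional curvature upstairs). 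One small thing to make explicit in your write-up: you need $\dim\mcl{M}=\dim\Omega_V$, which holds because for a hyperk\"ahler of fixed deformation type $b_2 = h^{1,1}+2$, so both sides equal $h^{1,1}-1 = b_2 - 3$; the proposal asserts this but it is worth recording the identity.
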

\begin{proof}
Let $f: Y\to X$ be a family of polarized hyperk\"ahler manifolds, with induced classifying morphism $g: X\to \mcl{M}$.  We wish to show that $f^*\Omega^1_{\mcl{M}}$ is nef.  Let $A\to X$ be the Kuga-Satake Abelian scheme associated to $f$.  We may consider the natural map $$f^*T^1_{\mcl{M}}\to \mbf{R}^1f_*T_{Y/X}\to \mbf{R}^1f_*T_{A/Y}\to  (f_*\Omega^1_{A/X})^\vee\otimes (\mbf{R}^1f_*\mcl{O}_{A/X})\to  (f_*\Omega^2_{Y/X})^\vee\otimes (\mbf{R}^1f_*\Omega^1_{Y/X});$$ this map is injective by the local torelli Theorem for hyperk\"ahlers.  Thus the map $$f^*T^1_{\mcl{M}}\to \mbf{R}^1f_*T_{Y/X}\to  (f_*\Omega^1_{A/X})^\vee\otimes (\mbf{R}^1f_*\mcl{O}_{A/X})$$ is injective.  But this last vector bundle has nef dual; hence $f^*\Omega^1_{\mcl{M}}$ is nef as desired.
\end{proof}
\begin{thm}\label{calabiyaunef}
Let $\mcl{M}$ be a moduli space parametrizing smooth polarized varieties $X$ with trivial canonical bundle and $h^{2,0}=0$.  Then $\Omega^1_{\mcl{M}}$ is nef.  
\end{thm}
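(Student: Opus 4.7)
I would follow the template of Theorem \ref{hyperkahlernef} and Lemma \ref{torellinef}, replacing the weight-$2$ Torelli plus Kuga--Satake ingredients used in the hyperk\"ahler case by the Calabi--Yau infinitesimal Torelli theorem in weight $n$ and the vanishing input $h^{2,0}=0$.

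Let $f\colon Y\to X$ be an arbitrary classifying family with moduli map $g\colon X\to \mcl{M}$, and write $n$ for the relative dimension. Since each fiber has trivial canonical bundle and $h^{n,0}=1$, the sheaf $L := f_*\omega_{Y/X}$ is a line bundle on $X$ and the natural map $f^*L\to \omega_{Y/X}$ is an isomorphism. Composing the Kodaira--Spencer isomorphism $g^*T_\mcl{M}\cong \mbf{R}^1 f_* T_{Y/X}$ with the canonical identification $T_{Y/X}\cong \Omega^{n-1}_{Y/X}\otimes \omega_{Y/X}^{-1}$ and the projection formula gives the Calabi--Yau local Torelli isomorphism
\[ g^*T_\mcl{M} \;\cong\; \mbf{R}^1 f_*\Omega^{n-1}_{Y/X}\otimes L^{-1}. \]
Dualizing and applying relative Serre duality on the fibers yields
\[ g^*\Omega^1_\mcl{M} \;\cong\; \mbf{R}^{n-1}f_*\Omega^1_{Y/X}\otimes L. \]

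The second step is to prove that $\mbf{R}^{n-1}f_*\Omega^1_{Y/X}\otimes L$ is nef on $X$. The hypothesis $h^{2,0}=0$ enters precisely here: via Hodge symmetry on the fibers it forces the vanishings $f_*\Omega^2_{Y/X}=0$ and $\mbf{R}^2f_*\mcl{O}_{Y/X}=0$, so the weight-two graded piece of the weight-$n$ VHS $\mbf{R}^n f_*\mbb{C}\otimes\mcl{O}_X$ is trivial. Mirroring the chain in Theorem \ref{hyperkahlernef}, one assembles the cup-product maps coming from Griffiths transversality together with the polarization on $\mbf{R}^nf_*\mbb{C}$ to realize $\mbf{R}^{n-1}f_*\Omega^1_{Y/X}\otimes L$ as a quotient of a tensor expression built from the weight-one Hodge bundle $f_*\Omega^1_{Y/X}$ (nef by Griffiths positivity, exactly as in Lemma \ref{torellinef} and Lemma \ref{trivialcotangentnefness}) and the top Hodge line bundle $L$ (nef by Griffiths positivity, as already used in the proofs of Lemma \ref{torellinef} and Theorem \ref{nefsectionsextend}). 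The triviality of the weight-two graded piece plays the role analogous to Kuga--Satake in the hyperk\"ahler argument, collapsing the obstruction to factoring the target bundle through a weight-one Hodge bundle. Since nefness is preserved under tensor products and quotients, this yields the conclusion.

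The main obstacle is the second step: converting the triviality of the weight-two VHS into a concrete presentation of $\mbf{R}^{n-1}f_*\Omega^1_{Y/X}\otimes L$ as a quotient of a manifestly Griffiths-semipositive bundle on $X$. A clean way to carry this out would be a Brunebarbe-type refinement of Griffiths' curvature formula showing directly that $(F^{n-1}/F^n)^\vee\otimes F^n$ is nef for any polarized VHS over a compact base whose weight-two graded piece vanishes; the hypothesis $h^{2,0}=0$ is exactly such a vanishing. A fallback, useful when $Y$ admits auxiliary structure, is to produce an intermediate Abelian scheme capturing the relevant Hodge numbers and then run the argument of Theorem \ref{hyperkahlernef} verbatim.
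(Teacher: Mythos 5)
Your first step is carried out correctly and in more detail than the paper gives: after using Kodaira--Spencer, the identification $T_{Y/X}\cong \Omega^{n-1}_{Y/X}\otimes\omega_{Y/X}^{-1}$, the projection formula, and relative Serre duality, you land on $g^*\Omega^1_{\mcl{M}}\cong \mbf{R}^{n-1}f_*\Omega^1_{Y/X}\otimes L$. The paper's printed proof reduces instead to the nefness of $\mbf{R}^{1}f_*(\Omega^1_{Y/X}\otimes\omega_{Y/X})$ and then of $\mbf{R}^{1}f_*\Omega^1_{Y/X}\otimes L$; since Serre duality gives $(\mbf{R}^1f_*T_{Y/X})^\vee \cong \mbf{R}^{n-1}f_*(\Omega^1_{Y/X}\otimes\omega_{Y/X})$, the paper's $\mbf{R}^1$ appears to be a slip for $\mbf{R}^{n-1}$ (they agree only when $n=2$, a case excluded by $h^{2,0}=0$). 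On this point your version is the cleaner one.

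The real gap is in your second step, and you have essentially acknowledged it. Two specific issues. First, the fallback chain through $f_*\Omega^1_{Y/X}$ (modeled on Theorem \ref{hyperkahlernef} and Lemma \ref{torellinef}) breaks down because for the Calabi--Yau targets here one typically has $h^{1,0}=0$, so $f_*\Omega^1_{Y/X}=0$ and there is nothing to factor through. Second, the assertion that $h^{2,0}=0$ makes ``the weight-two graded piece of the weight-$n$ VHS $\mbf{R}^nf_*\mbb{C}\otimes\mcl{O}_X$'' trivial is a category error: $h^{2,0}=0$ kills $f_*\Omega^2_{Y/X}$ and $\mbf{R}^2f_*\mcl{O}_{Y/X}$, which are Hodge pieces of the \emph{weight-two} VHS $\mbf{R}^2f_*\mbb{C}$, not the graded piece $\mbf{R}^2f_*\Omega^{n-2}_{Y/X}$ of the weight-$n$ VHS. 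This matters because the paper's actual nefness input is exactly the weight-two observation you are missing: $h^{2,0}=0$ means $\mbf{R}^2f_*\mbb{C}$ is pure of type $(1,1)$, hence its Hodge bundle $\mbf{R}^1f_*\Omega^1_{Y/X}$ is self-dual and carries a flat unitary metric, which gives nefness directly. As written, your proposal neither proves nor even correctly formulates the needed positivity of $\mbf{R}^{n-1}f_*\Omega^1_{Y/X}\otimes L$; and since that bundle lives in the weight-$n$ VHS (where $\mathrm{gr}^1$ has Griffiths curvature with contributions of both signs), the paper's ``flat because $h^{2,0}=0$'' shortcut does not transport to it either. So there is a genuine gap here, one which in fact also seems present in the paper's own proof once the $\mbf{R}^1$/$\mbf{R}^{n-1}$ discrepancy is corrected.
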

\begin{proof}
It suffices to show that if $f: Y\to X$ is a family of smooth polarized varieties with trivial canonical bundle and $h^{2,0}=0$, then $$\mbf{R}^1f_*(\Omega^1_{Y/X}\otimes \omega_{Y/X})$$ is nef.  As $\omega_{Y/X}$ is trivial on geometric fibers of $f$, we have that $\omega_{Y/X}=f^*f_*\omega_{Y/X}$, so $$\mbf{R}^1f_*(\Omega^1_{Y/X}\otimes \omega_{Y/X})=\mbf{R}^1f_*(\Omega^1_{Y/X}\otimes f^*f_*\omega_{Y/X})=\mbf{R}^1f_*\Omega^1_{Y/X}\otimes f_*\omega_{Y/X}$$  But both $\omega_{Y/X}$ and $\mbf{R}^1f_*\Omega^1_{Y/X}$ are nef by Griffiths positivity \cite[Corollary 7.8]{griffiths}, where the latter fact follows from the fact that $h^{2,0}=0$ and the fact that $\mbf{R}^1f_*\Omega^1_{Y/X}$ is self-dual.
\end{proof}
\begin{rem}
Moduli spaces of odd dimensional Calabi-Yau varieties are quasi-affine \cite{todorov}, so in that case the theorem above is vacuous.  On the other hand, there are moduli spaces of polarized $2n$-dimensional Calabi-Yau varieties containing non-trivial proper subvarieties for ever $n\geq 1$.  Indeed, \cite{k3-families} constructs non-isotrivial families $X\to S$ of polarized $K3$ surfaces over a proper base $S$.  Then $\on{Hilb}^n(X/S)$ gives a family of $2n$-dimensional (weak) Calabi-Yau varieties over $S$.  We do not know of non-isotrivial examples of Calabi-Yau varieties over a proper base where the fibers have $h^{2,0}=0$; such an example would be very interesting.
\end{rem}
\begin{cor}\label{modulispaceapplications}
Let $k$ be a field of characteristic zero, and let $X$ be a smooth projective $k$-variety with $\dim(X)\geq 3$.  Let $D\subset X$ be an ample divisor, and let $f: Y\to D$ be a smooth projective morphism of relative dimension $n$ such that:
\begin{enumerate}
\item The fibers of $f$ are geometrically connected curves of genus $g\geq 1$, and $$\dim(D)>3g-3,$$ or
\item The map $f$ exhibits $Y$ as a family of polarized hyperk\"ahler varieties over $D$, and the moduli space parametrizing these varieties has dimension less than $\dim(D)$, or
\item The map $f$ satisfies a local Torelli theorem for $H^1$ and the relevant moduli space of polarized manifolds is smooth of dimension less than $\dim(D)$ (e.g. if $f$ exhibits $Y$ as a polarized Abelian $X$-scheme), or
\item The map $f$ exhibits $Y$ as a family of polarized (weak) Calabi-Yau varieties over $Y$, with $h^{2,0}=h^{1,0}=0$,  and the relevant moduli space of polarized manifolds is smooth of dimension less than $\dim(D)$.
\end{enumerate}
Then $f$ extends to a smooth family of polarized varieties over $X$.
\end{cor}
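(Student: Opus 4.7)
The overall strategy is uniform across the four cases: the family $f: Y \to D$ induces a classifying morphism $g_D: D \to \mcl{M}$ to an appropriate smooth Deligne-Mumford moduli stack $\mcl{M}$, and I would extend $g_D$ to a morphism $g_X : X \to \mcl{M}$ by invoking Theorem~\ref{charzeroextensionthmdmstacks}(1) (equivalently Theorem~\ref{nefcotangentbundleextension} when the cotangent bundle of $\mcl{M}$ itself is nef).  The extension of $f$ is then obtained by pulling back the universal polarized family from $\mcl{M}$ along $g_X$; this pullback is smooth by base change and restricts to $f$ on $D$ by construction of $g_X$.

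To apply Theorem~\ref{charzeroextensionthmdmstacks}, I must verify $\dim(\mcl{M}) < \dim(D)$ and $\phi(g_D^*\Omega^1_\mcl{M} \otimes \mcl{N}_{D/X}) < \dim(D) - 1$.  The dimension bound holds by hypothesis in cases (2)--(4), and by $\dim(\mcl{M}_g) = 3g - 3$ together with the hypothesis $\dim(D) > 3g-3$ in case (1) for $g \geq 2$.  For the $\phi$-bound, I would use nefness of $g_D^*\Omega^1_\mcl{M}$: since $\mcl{N}_{D/X}$ is ample, the tensor product is an ample vector bundle of rank $\dim(\mcl{M})$, so Theorem~\ref{arapurabound} gives $\phi(g_D^*\Omega^1_\mcl{M} \otimes \mcl{N}_{D/X}) < \dim(\mcl{M}) \leq \dim(D) - 1$.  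The required nefness is provided by Corollary~\ref{mgnefcotangentbundle} in case (1), Theorem~\ref{hyperkahlernef} in case (2), Lemma~\ref{torellinef} in case (3), and Theorem~\ref{calabiyaunef} in case (4).  Note that in case (3), only the pullback $g_D^*\Omega^1_\mcl{M}$ is asserted to be nef rather than $\Omega^1_\mcl{M}$ itself, which is why I appeal to Theorem~\ref{charzeroextensionthmdmstacks} directly rather than the cleaner Theorem~\ref{nefcotangentbundleextension}.

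The genus one subcase of (1) is the main technical subtlety, since a smooth family of genus one curves need not admit a section and thus does not directly produce a map to $\mcl{M}_{1,1}$.  I would handle this by first passing to the relative Jacobian $J_D := \underline{\mathrm{Pic}}^0(Y/D)$, which is a polarized elliptic $D$-scheme covered by case (3), and extending it to a polarized elliptic scheme $J_X \to X$.  One then needs to extend the $J_D$-torsor structure on $Y$ to a $J_X$-torsor on $X$; this is a separate Lefschetz-type question for sheaf cohomology of $J$ which I expect to follow from an application of Corollary~\ref{subschemealgebraization} (to algebraize the torsor formally) combined with Corollary~\ref{sectionsareequal} for uniqueness and the fact that two such torsors which agree on a formal neighborhood of $D$ must agree globally.

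I expect this genus one torsor-extension issue to be the principal obstacle in the proof, together with the mild bookkeeping required to verify that each of the moduli stacks in cases (2)--(4) carries a universal polarized family which can be pulled back (this is automatic from the respective moduli problems but is used implicitly).  For cases (2)--(4) and the main subcase of (1) with $g \geq 2$, the argument is essentially formal once the appropriate nefness of $g_D^*\Omega^1_\mcl{M}$ has been verified; the whole proof then amounts to assembling the pieces established in the preceding chapters.
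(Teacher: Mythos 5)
Your proof takes essentially the same route as the paper: factor through the classifying morphism $g_D: D \to \mcl{M}$, extend using Theorem~\ref{charzeroextensionthmdmstacks} (or Theorem~\ref{nefcotangentbundleextension}), and pull back the universal family, with the nefness inputs drawn from Corollary~\ref{mgnefcotangentbundle}, Theorem~\ref{hyperkahlernef}, Lemma~\ref{torellinef}, and Theorem~\ref{calabiyaunef}.

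A few points of comparison are worth making. First, you are \emph{more} careful than the paper about the genus one subcase of (1). The paper cites Corollary~\ref{mgnefcotangentbundle}, which covers $\mcl{M}_{g,n}$ for $g\geq 2$ and $\mcl{M}_{1,1+n}$, and says nothing about genus one families without a section; but a smooth proper family of genus one curves over $D$ need not have a section (and the moduli stack of unpointed genus-one curves is not Deligne-Mumford, since translations give positive-dimensional automorphism groups). Your workaround via the relative Jacobian $J_D := \underline{\mathrm{Pic}}^0(Y/D)$ is the right starting point, but you only sketch the torsor-extension step; as it stands this is a genuine gap in both your argument and the paper's. Concretely, after extending $J_D$ to $J_X \to X$ via case (3), you still need to show the class $[Y]\in H^1_{\mathrm{fppf}}(D, J_D)$ lifts uniquely to $H^1_{\mathrm{fppf}}(X, J_X)$; this is a separate Lefschetz-type statement for torsors which does not obviously reduce to the sheaf/section machinery of Corollaries~\ref{subschemealgebraization} and~\ref{sectionsareequal} in the way you indicate. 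Second, your observation about case (3) — that Lemma~\ref{torellinef} only yields nefness of the \emph{pullback} $g_D^*\Omega^1_{\mcl{M}}$, so one should invoke Theorem~\ref{charzeroextensionthmdmstacks} directly rather than Theorem~\ref{nefcotangentbundleextension} — is a correct and useful refinement; the paper's proof asserts nefness of $\Omega^1_{\mcl{M}}$ itself in all cases, which is not literally what Lemma~\ref{torellinef} supplies under the stated hypothesis on $f$. Third, the paper also explicitly verifies that each moduli stack $\mcl{M}$ admits a finite \'etale cover by a scheme (via level structures and Viehweg's quasi-projectivity results), which is a stated hypothesis of Theorem~\ref{nefcotangentbundleextension}; you omit this check, and while Theorem~\ref{charzeroextensionthmdmstacks}(1) does not explicitly require it, you should confirm that none of the ingredients you use implicitly need it.
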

\begin{proof}
We must verify the hypotheses of Theorem \ref{nefcotangentbundleextension}, namely that the families in question come from a classifying map to a smooth DM stack with nef cotangent bundle, and admitting a finite \'etale cover by a scheme.  In case (1), smoothness is well-known; in cases (2) and (4), smoothness follows from the Bogomolov-Tian-Todorov theorem.  In case (3) smoothness follows from the smoothness of the moduli space of Abelian varieties with a polarization of a fixed degree.  We may deduce that the moduli spaces in question have nef cotangent bundle from Corollary \ref{mgnefcotangentbundle} in case (1), from Theorem \ref{hyperkahlernef} in case (2), from Lemma \ref{torellinef} in case (3), and from Theorem \ref{calabiyaunef} in case (4).

It only remains to show that the moduli schemes in question admit a finite \'etale cover by a scheme.  In all cases, local Torelli theorems give a finite \'etale cover by an algebraic space, by adding level structure.  But the quasi-projectivity results of Viewheg \cite{viehweg} show that these algebraic spaces are schemes.
\end{proof}
\begin{rem}
The examples in Corollary \ref{modulispaceapplications} include most examples of smooth moduli spaces known to the author.  We expect, however, that these results are true in significantly more generality (e.g. even in the case of singular moduli spaces).  Perhaps one reason to believe this is the general philosophy that moduli spaces of polarized varieties should be ``hyperbolic," as exemplified by the work of Viehweg-Zuo \cite{viehweg-zuo, viehweg-zuo2}, Moller-Viehweg-Zuo \cite{moller-viehweg-zuo}, and Kovacs \cite{kovacs}, among others.  

To ask a more precise question:  let $\pi: X\to Y$ be a family of smooth polarized variety, with tangent complex $\mbf{T}_{X/Y}$.  What is the Frobenius amplitude of $(\mbf{R}\pi_*\mbf{T}_{X/Y})^\vee$?  Information on this sort of question would be useful for generalizing the results above to singular moduli spaces.
\end{rem}
We also observe that one may recover the Lefschetz hyperplane theorem for $\pi_1^{\text{\'et}}$, via these techniques.  Recall the statement:
\begin{thm}[Found in {\cite[Th\'eor\`eme 3.10]{SGA2}}]\label{etalepi1-lefschetz}
Let $k$ be a field and $X$ a smooth projective $k$-variety.  Let $D\subset X$ be an ample divisor.  Then the natural map (obtained after choosing a base-point) $\pi_1(D)\to\pi_1(X)$ is
\begin{itemize}
\item surjective if $\dim(X)\geq 2$, and
\item an isomorphism if $\dim(X)\geq 3$.
\end{itemize}
\end{thm}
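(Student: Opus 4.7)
The plan is to deduce the theorem from the main extension results of the paper applied to $\mcl{Y} = BG$ as $G$ ranges over finite \'etale $k$-group schemes. I would first invoke the standard Tannakian dictionary: the map $\pi_1^{\text{\'et}}(D) \to \pi_1^{\text{\'et}}(X)$ is surjective if and only if, for every such $G$, the restriction functor of groupoids
$$\Hom(X, BG) \longrightarrow \Hom(D, BG)$$
is fully faithful, and it is an isomorphism if and only if this functor is additionally essentially surjective for every $G$. This reduces the theorem to establishing the appropriate property of the restriction functor in each case.

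The crucial observation is that $\mcl{Y} = BG$ is a smooth proper Deligne-Mumford stack of dimension zero whose cotangent bundle $\Omega^1_{BG}$ vanishes identically, and which admits a finite surjective \'etale cover $\Spec(k) \to BG$ by a scheme. Consequently $\Omega^1_{BG}$ is vacuously nef; every $f$-amplitude hypothesis in the paper's main theorems is trivially satisfied because $f^*\Omega^1_{BG} \otimes \mcl{N}_{D/X} = 0$; and $\dim(BG) = 0 < \dim(D)$ whenever $\dim(X) \geq 2$. In characteristic zero I would then quote Theorem \ref{nefcotangentbundleextension} for $\dim(X) \geq 3$ to obtain essential surjectivity (every $f\colon D \to BG$ extends uniquely to $X$), and Theorem \ref{charzerouniquenessthmdmstacks} for $\dim(X) \geq 2$ to obtain fully faithfulness.

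In positive characteristic, one must instead mimic the proof of Theorem \ref{charzeroextensionthmdmstacks} using Theorem \ref{frobeniusvanishing} to extend $f\colon D \to BG$ to $\widehat{D}$. The key simplification here is that because $BG$ is \'etale over $\Spec(k)$, the relative Frobenius $F_{BG/k}\colon BG \to BG^{(p)}$ is an isomorphism; consequently the extension $\widehat{D} \to BG^{(p^k)}$ produced by Theorem \ref{frobeniusvanishing} corresponds to a genuine extension of $f$ itself, and no liftability of $X$ to $W_2(k)$ is needed. Algebraization to a Zariski-open $U \supset D$ uses the stack analogue of Corollary \ref{sectionalgebraization} established in the proof of Theorem \ref{charzeroextensionthmdmstacks}, and the resulting section of the finite \'etale cover of $X$ corresponding to the $G$-torsor extends from $U$ to $X$ by taking its closure and invoking Zariski's main theorem, using normality of $X$ and finiteness of the cover. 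The main obstacle is essentially formal --- organizing the Tannakian reduction and handling Frobenius descent cleanly in positive characteristic --- since all deformation-theoretic and positivity content becomes vacuous once one observes that $\Omega^1_{BG} = 0$.
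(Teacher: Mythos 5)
Your proposal follows the same route as the paper: take $Y=BG$ for $G$ finite \'etale, observe $\Omega^1_{BG}=0$ makes every positivity hypothesis vacuous, and invoke the Deligne--Mumford stack extension theorems (Theorem \ref{charzeroextensionthmdmstacks} in characteristic zero and Theorem \ref{frobeniusvanishing} in characteristic $p$, which the paper cites without detail). You fill in the omitted positive-characteristic argument correctly -- in particular the observation that $F_{BG/k}$ is an isomorphism so no Frobenius twist or $W_2$-liftability is actually needed -- though the final extension of the torsor from the Zariski neighborhood $U$ to all of $X$ is more precisely a Zariski--Nagata purity statement (codimension $\geq 2$ complement in a smooth variety) than an application of Zariski's main theorem alone.
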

\begin{proof}
This is immediate from Theorem \ref{charzeroextensionthmdmstacks} if $k$ is of characteristic zero, by applying the theorem in the case that $Y=BG$, for $G$ a finite \'etale group scheme.  In characteristic $p>0$ we may deduce the result from Theorem \ref{frobeniusvanishing}, but we omit the proof.
\end{proof}
\subsection{Variations of Hodge structure and period domains}
Observe that by Theorem \ref{sourceofnefexamples}(4), compact quotients of Hermitian symmetric domains by arithmetic groups (viewed as Deligne-Mumford stacks) have nef cotangent bundle; by results of Baily-Borel \cite{baily-borel}, these (a priori analytic) Deligne-Mumford stacks are in fact algebraic and admit finite \'etale covers by schemes (by increasing level structure).  Likewise, non-compact Shimura varieties of PEL type have nef cotangent bundle, by Lemma \ref{torellinef}, and again admit finite \'etale covers by schemes.  Thus, we have proven
\begin{thm}\label{shimuralefschetz}
Let $Y$ be a compact (stack) quotient of a Hermitian symmetric domain by an arithmetic group, or a Shimura variety of PEL type.  Let $X$ be a smooth projective variety, and $D\subset X$ an ample divisor, with $\dim(Y)<\dim(D)$.  Then any map $D\to Y$ extends uniquely to a map $X\to Y$.
\end{thm}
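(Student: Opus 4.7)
The plan is to verify the three hypotheses of Theorem \ref{nefcotangentbundleextension} for $Y$: that $Y$ is a smooth Deligne-Mumford stack, that $\Omega^1_Y$ is nef, and that $Y$ admits a finite surjective \'etale cover by a scheme. The dimension condition $\dim(Y)<\dim(D)$ is given in the hypothesis, and $\dim(X)\geq 3$ is forced whenever $\dim(Y)\geq 1$ (the $\dim(Y)=0$ case being trivial) by the ampleness of $D$.

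For a compact arithmetic quotient $Y=[\Gamma\backslash \Omega]$ of a Hermitian symmetric domain $\Omega$, algebraicity is Baily-Borel \cite{baily-borel}. Selberg's lemma produces a torsion-free normal subgroup $\Gamma'\subset \Gamma$ of finite index; the quotient $Y'=\Gamma'\backslash \Omega$ is then a smooth projective scheme with $Y'\to Y$ finite \'etale. The Harish-Chandra embedding realizes $\Omega$ as a bounded domain in $\mbb{C}^n$, so Theorem \ref{sourceofnefexamples}(4) gives nefness of $\Omega^1_{Y'}$, which promotes to nefness of $\Omega^1_Y$ via Theorem \ref{sourceofnefexamples}(7).

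For $Y$ a Shimura variety of PEL type, choosing a sufficiently deep level structure produces $Y'$ which is a quasi-projective scheme, finite \'etale over $Y$, representing the fine moduli problem for polarized abelian varieties with the prescribed endomorphism and level data. Let $\pi\colon A\to Y'$ denote the universal abelian scheme. Since the Hodge filtration on $H^1$ of an abelian variety determines the variety up to isogeny, $\pi$ satisfies infinitesimal Torelli in weight one: the Kodaira-Spencer map
$$T_{Y'}\hookrightarrow \mbf{R}^1\pi_*T_{A/Y'}=(\pi_*\Omega^1_{A/Y'})^\vee\otimes \mbf{R}^1\pi_*\mcl{O}_A$$
is injective (indeed it is an isomorphism onto the subbundle of the target cut out by the PEL constraints). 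Lemma \ref{torellinef} applied to $\pi$ now yields nefness of $\Omega^1_{Y'}$, and Theorem \ref{sourceofnefexamples}(7) promotes this to nefness of $\Omega^1_Y$.

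With the hypotheses of Theorem \ref{nefcotangentbundleextension} verified in both cases, the theorem supplies the desired unique extension $X\to Y$. The main obstacle is checking the infinitesimal Torelli hypothesis of Lemma \ref{torellinef} in the PEL setting after the endomorphism and polarization constraints have cut down $T_{Y'}$; this is transparent once one writes out the Kodaira-Spencer map for an abelian scheme, so no serious difficulty arises beyond assembling the pieces already developed in the paper.
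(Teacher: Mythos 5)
Your proof follows essentially the same route as the paper's: verify the hypotheses of Theorem \ref{nefcotangentbundleextension}, using Baily--Borel for algebraicity, level structure (or Selberg's lemma) to produce a finite \'etale cover by a scheme, Theorem \ref{sourceofnefexamples}(4) for nefness in the compact Hermitian case, and Lemma \ref{torellinef} in the PEL case. The only cosmetic difference is that you establish nefness on the scheme cover $Y'$ and descend via Theorem \ref{sourceofnefexamples}(7), whereas the paper invokes \ref{sourceofnefexamples}(4) directly on the stack; this is a reasonable and slightly more careful rendering of the same argument.
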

As a corollary, we see that if $\mcl{H}\to D$ is a polarized variation of Hodge structure whose induced period map is given by a map $D\to Y$, where $Y$ is a compact quotient of a Hermitian symmetric domain  or a Shimura variety of PEL type with $\dim(Y)<\dim(D)$, then $\mcl{H}$ extends uniquely to a polarized variation of Hodge structure on $X$.  We conjecture that this is true in significantly greater generality than we prove it here.  Namely, 
\begin{conj}\label{vhsconjecture}
Let $D$ be an ample divisor in a smooth projective variety $X$.  Let $\mcl{H}\to D$ be a polarized variation of Hodge structure.  Suppose that $\dim(D)$ is large in terms of the numerical invariants of $\mcl{H}$.  Then $\mcl{H}$ extends uniquely to a polarized variation of Hodge structure on $X$.
\end{conj}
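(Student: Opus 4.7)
The strategy would be to decouple $\mcl{H}$ into its underlying rational local system $\mbb{V}$ (with polarization form) and its Hodge filtration $F^\bullet$, and extend each piece separately. By the Lefschetz theorem for $\pi_1$ (Theorem \ref{etalepi1-lefschetz} and its topological analogue), the monodromy representation of $\pi_1(D)$ extends uniquely to $\pi_1(X)$ once $\dim(X) \geq 3$, giving a local system $\mbb{V}_X$ on $X$ that restricts to $\mbb{V}$. Using $\mbb{V}_X$, form the twisted flag bundle $\check{\mcl{D}}(\mbb{V}_X) \to X$ whose fibers are the compact dual $G_{\mbb{C}}/P$ of the period domain, together with its open suborbit $\mcl{D}(\mbb{V}_X) \subset \check{\mcl{D}}(\mbb{V}_X)$. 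The Hodge filtration on $D$ is then a section $f \colon D \to \mcl{D}(\mbb{V}_X)|_D$, and what remains is to extend this to a section over all of $X$.

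The cotangent bundle of the compact dual is very far from nef, since its fibers are Fano flag varieties, so Theorem \ref{charzeroextensionthmdmstacks} does not apply directly. The key observation is that $f$ is \emph{horizontal}: Griffiths transversality forces $df$ to land in the horizontal subbundle $T^h \subset T_{\check{\mcl{D}}(\mbb{V}_X)/X}$. By the Griffiths curvature computation, the graded pieces of $f^*(T^h)^\vee$ with respect to Hodge type are built out of $(F^p/F^{p+1})^\vee \otimes (F^{p-1}/F^p)$ and are nef. Hence for every $n \geq 1$ the bundle $f^*(T^h)^\vee \otimes \mcl{N}_{D/X}^{\otimes n}$ is ample (as $\mcl{N}_{D/X} = \mcl{O}_X(D)|_D$ is ample), and Theorem \ref{arapurabound} gives $\phi(f^*(T^h)^\vee \otimes \mcl{N}_{D/X}) < \on{rk}(T^h)$. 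Taking $\dim(D)$ to exceed $\on{rk}(T^h) + 1$, which is a numerical invariant of $\mcl{H}$ computable from the Hodge numbers, the deformation-theoretic obstructions of Theorem \ref{maindefthm} restricted to \emph{horizontal} infinitesimal deformations vanish, and $f$ extends formally to $\widehat D$.

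The main technical obstacle is carrying out the full deformation theory of Chapter \ref{deformation theory} while restricting throughout to horizontal deformations, so that $f^*(T^h)^\vee$ replaces $f^*\Omega^1_{\check{\mcl{D}}}$ in the obstruction and uniqueness groups; horizontality is a closed algebraic condition, so an extension that is horizontal at each infinitesimal stage remains horizontal in the formal limit. Secondary issues include confining the extended section to the open locus $\mcl{D}(\mbb{V}_X) \subset \check{\mcl{D}}(\mbb{V}_X)$ where the Hodge--Riemann bilinear relations hold (this locus is Zariski-open and contains $D$, so a codimension argument in the spirit of Corollary \ref{smallimageextension} should suffice), and algebraizing the resulting formal section, for which the target is not of finite type and Corollary \ref{sectionalgebraization} does not literally apply. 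Here one would need a refined algebraization statement, or else to first pass to a quasi-projective quotient (such as a Shimura variety of PEL type, in which case one recovers Theorem \ref{shimuralefschetz}). The conjectural nature of the statement reflects precisely the fact that general period domain quotients are not quasi-projective, and controlling the extension in full generality almost certainly requires hyperbolicity input along the lines of Viehweg--Zuo.
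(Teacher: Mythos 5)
This statement is a \emph{conjecture} in the paper, not a theorem: the text immediately following it says that Theorem \ref{shimuralefschetz} establishes special cases, and defers the smooth-$D$ case to Simpson \cite[Corollary 4.3]{simpson2} and the mildly singular case to forthcoming work \cite{litt}. There is therefore no proof in the paper to compare against. Your sketch is a sensible attack and you have identified the right structural decomposition (local system plus filtration, period map into the relative flag bundle), but it contains one unjustified claim that does the bulk of the work.

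The assertion that ``by the Griffiths curvature computation, the graded pieces of $f^*(T^h)^\vee$ \dots are nef'' is the crux, and it does not follow from the Griffiths curvature computation. What Griffiths and Schmid prove is that the period domain has \emph{negative holomorphic sectional curvature in horizontal directions}; this is a statement about curvature along complex lines, which is strictly weaker than Griffiths- or Nakano-seminegativity of the curvature tensor and does not formally imply nefness of the dual bundle. For weight one the graded pieces of $(T^h)^\vee$ reduce to products of Hodge bundles of extreme type, which are nef by Fujita-type positivity (and this is essentially what makes the paper's treatment of $\mcl{A}_g$, $\mcl{M}_g$, and PEL Shimura varieties via Lemma \ref{torellinef} work), but for higher-weight VHS the interior graded pieces $F^{p}/F^{p+1}$ carry curvature contributions of both signs, and the nefness of $f^*(T^h)^\vee$ (or its logarithmic variant) is a genuinely hard theorem --- in the smooth base case it is due to Zuo and was substantially extended by Brunebarbe, Popa--Schnell, and others, using $L^2$ and Higgs-bundle methods rather than pointwise curvature. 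If you invoke those results the numerical bound and the formal extension step become plausible, but you should cite the actual theorem rather than attribute it to a curvature computation that does not deliver it. Beyond that, you correctly flag the horizontal deformation theory as the main technical gap: the obstruction theory of Theorem \ref{maindefthm} is stated for arbitrary sections, and restricting throughout to infinitesimal deformations along $T^h$ requires redoing Chapter \ref{deformation theory} in a constrained setting that the paper does not develop. One smaller correction: the period domain $\mcl{D}(\mbb{V}_X)$ is only \emph{analytically} open inside $\check{\mcl{D}}(\mbb{V}_X)$ (the second bilinear relation is not an algebraic condition), so the ``codimension argument in the spirit of Corollary \ref{smallimageextension}'' you propose for confining the extension cannot be phrased in terms of Zariski-openness; one would instead argue, as in the proof of Corollary \ref{nonproperextension}, that any fiber of the extended section $X \to \check{\mcl{D}}(\mbb{V}_X)$ lying over the boundary would have to meet $D$ by ampleness, which is a contradiction. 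On the other hand the algebraization concern you raise is less serious than you suggest, since the compact dual bundle $\check{\mcl{D}}(\mbb{V}_X) \to X$ is projective and Corollary \ref{sectionalgebraization} applies directly.
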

Theorem \ref{shimuralefschetz} proves many special cases of this conjecture; we expect a complete proof in the case that $D$ has mild singularities to appear in upcoming work \cite{litt}.  The case where $D$ is smooth follows from \cite[Corollary 4.3]{simpson2}.

Observe that work of Simpson \cite{simpson} gives a purely algebro-geometric interpretation of this conjecture.  Namely, Simpson identifies an algebraic category (polystable Higgs bundles) corresponding to the category of polarized variations of Hodge structure; thus one may give Conjecture \ref{vhsconjecture} a purely algebraic statement.

We believe Conjecture \ref{vhsconjecture} to be of some importance, is it would give a natural way of extending vector bundles from an ample divisor to an ambient variety---namely, vector bundles arising from certain polarized variations of Hodge structure would extend.
\subsection{Maps to varieties with $f$-semipositive cotangent bundle}\label{f-semipositive-section}
We now give some results in arbitrary characteristic.  While in the previous section, we required throughout that $\dim(Y)<\dim(D)$, this section will have no such requirement; we will only require that $\dim(X)\geq 3$.  Our fundamental results will be in the case that $Y$ has f-semipositive cotangent bundle (recall that f-semipositivity was defined in Definition \ref{f-semipositivity-def}).

Our main result will be:
\begin{thm}\label{f-semipositive-lefschetz-thm}
Let $X$ be a smooth projective variety over a field $k$, $D\subset X$ an ample divisor, and set $n:=\dim(X)\geq 2$.  Suppose that either $\on{char}(k)=0$ or that $k$ is perfect of characteristic $p>n$ and $X$ lifts to $W_2(k)$.  Let $Y$ be a smooth proper $k$-variety with $f$-semipositive cotangent bundle.  Then $${\Hom}(X, Y)\to {\Hom}(D, Y)$$ is
\begin{itemize}
\item Injective if $\dim(X)=2$, and
\item Bijective if $\dim(X)\geq 3$.
\end{itemize}
\end{thm}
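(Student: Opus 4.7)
The plan is to reduce Theorem~\ref{f-semipositive-lefschetz-thm} to the existence theorem (Theorem~\ref{charzeroextensionthm} in characteristic zero, Theorem~\ref{charpliftextensionthm} in positive characteristic) and to the uniqueness theorem (Theorem~\ref{unique-extensions}). Each of these requires an f-amplitude bound on a sheaf of the form $g^{*}\Omega^1_Y\otimes \mcl{L}$ with $\mcl{L}$ an ample line bundle. Since $\Omega^1_Y$ is f-semipositive, so is every pullback $g^{*}\Omega^1_Y$ by \cite[Proposition~3.10]{arapura-f-amplitude}; since ample line bundles have f-amplitude $0$ (\cite[Lemma~2.4]{arapura-f-amplitude}) and tensoring with an f-semipositive sheaf does not increase f-amplitude (Theorem~\ref{tensorproductbound}), all such bundles have f-amplitude at most $0$. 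This is strictly smaller than $\dim(D)\geq 1$ (the bound needed for uniqueness) and, when $\dim(X)\geq 3$, strictly smaller than $\dim(D)-1\geq 1$ (the bound needed for existence).

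Injectivity, which covers both the $\dim(X)=2$ case and the uniqueness half of the $\dim(X)\geq 3$ case, then follows by applying Theorem~\ref{unique-extensions} to any two putative extensions $g_1,g_2\colon X\to Y$ of a common map $D\to Y$: the hypothesis $\phi(g_1^{*}\Omega^1_Y(D))<\dim(D)$ is exactly what was checked above, so $g_1$ is the unique extension, forcing $g_1=g_2$. For existence when $\dim(X)\geq 3$, one applies Theorem~\ref{charzeroextensionthm} in characteristic zero and Theorem~\ref{charpliftextensionthm} in positive characteristic; the f-amplitude hypothesis $\phi(f^{*}\Omega^1_Y\otimes \mcl{N}_{D/X})<\dim(D)-1$ has again just been verified, so what remains is to check one of the three supplementary hypotheses of these extension theorems.

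The main obstacle is precisely this last check. Hypotheses~(1) and~(2) require a dimension bound on $Y$ or on $f(D)$, neither of which is assumed, so the plan is to use hypothesis~(3): $Y$ (and, in char.~zero, a suitable $\mbb{Z}$-model of $Y$) has geometric fibers containing no rational curves. The key claim to establish is that any smooth proper variety $Y$ with f-semipositive cotangent bundle contains no rational curves. Indeed, given a non-constant $g\colon \mbb{P}^1\to Y$, write $g^{*}\Omega^1_Y\cong \bigoplus_i \mcl{O}(a_i)$. f-semipositivity forces $\min_i a_i\geq 0$, because the Frobenius pullbacks $\bigoplus_i \mcl{O}(p^k a_i)$ have Castelnuovo--Mumford regularity $-\min_i(p^k a_i)$, which is bounded in $k$ only when all $a_i\geq 0$. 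On the other hand, non-constancy of $g$ makes the differential $g^{*}\Omega^1_Y\to \Omega^1_{\mbb{P}^1}=\mcl{O}(-2)$ generically nonzero, which requires some $a_i\leq -2$, a contradiction.

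Applying this claim on each positive-characteristic closed fiber of a spreading out of $(X,D,Y,f)$ (which exists and has f-semipositive cotangent bundle on the $Y$-fibers by Definition~\ref{f-semipositivity-def}) supplies hypothesis~(3) of Theorem~\ref{charpliftextensionthm} fiber-by-fiber. Since the proof of the characteristic-zero extension result Theorem~\ref{charzeroextensionthm} already proceeds by verifying the hypotheses of Theorem~\ref{charpliftextensionthm} on such closed fibers, this is all that is needed in either characteristic, and the theorem follows.
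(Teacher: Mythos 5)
Your proposal is correct and follows the same skeleton as the paper's own proof: reduce to Theorems~\ref{unique-extensions}, \ref{charzeroextensionthm}, \ref{charpliftextensionthm}; establish the f-amplitude bounds by combining preservation of f-semipositivity under pullback with Theorem~\ref{tensorproductbound} and the fact that ample line bundles have f-amplitude zero; and then secure hypothesis~(3) of the extension theorems by showing that f-semipositivity of $\Omega^1_Y$ rules out rational curves. Where you diverge is in the last step. The paper proves the ``no rational curves'' claim indirectly: it quotes \cite[Lemma~3.13]{arapura-f-amplitude} to pass from f-semipositivity to arithmetic nefness and then invokes Proposition~\ref{noratlcurves} (nef cotangent bundle excludes rational curves). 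You instead give a self-contained computation on $\mbb{P}^1$: if $g^*\Omega^1_Y\cong\bigoplus\mcl{O}(a_i)$ had some $a_i<0$, the Castelnuovo--Mumford regularity of the Frobenius pullbacks would be unbounded, contradicting f-semipositivity; nonconstancy of $g$ forces some $a_i\le -2$. This is essentially the same content as Arapura's lemma specialized to $\mbb{P}^1$, so it buys elementariness at the cost of redoing a cited result, but it also makes explicit the model-theoretic bookkeeping (spreading out, checking fibers) that the paper elides when it says Proposition~\ref{f-semipositive-noratlcurves} supplies the model whose fibers have no rational curves.

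One small technical point worth patching in your $\mbb{P}^1$ argument: in characteristic $p>0$ a non-constant $g\colon\mbb{P}^1\to Y$ may have identically zero differential if it factors through an absolute Frobenius of $\mbb{P}^1$. You can repair this either by replacing $g$ with the normalization of its image (which is birational onto the image, hence has generically nonzero differential onto $\Omega^1$ of the image curve), or by factoring out powers of Frobenius until the differential is nonzero (each factorization strictly drops the degree, so the process terminates). With that noted, the argument goes through.
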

Before proceeding with the proof, we will need a lemma on the geometry of varieties with nef and f-semipositive cotangent bundle.
\begin{prop}\label{noratlcurves}
Let $X$ be a smooth variety with nef cotangent bundle.  Then $X$ contains no rational curves.
\end{prop}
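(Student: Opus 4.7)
The plan is a direct argument by contradiction. Suppose $X$ contains a rational curve, which (passing to the normalization of the image if necessary) produces a non-constant morphism $f:\mathbb{P}^1\to X$. Pulling back $\Omega^1_X$ along $f$ gives a vector bundle $f^*\Omega^1_X$ on $\mathbb{P}^1$, and nefness of $\Omega^1_X$ on $X$ persists under this pullback in the sense of the paper's definition: any surjection $f^*\Omega^1_X\twoheadrightarrow\mathcal{L}$ onto a line bundle on $\mathbb{P}^1$ is automatically a witness for nefness applied to the test curve $f:\mathbb{P}^1\to X$, so such $\mathcal{L}$ must have nonnegative degree.

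Next I would use the differential of $f$ to exhibit a quotient of $f^*\Omega^1_X$ of \emph{negative} degree, contradicting the previous paragraph. Concretely, the codifferential is a map
\[
df^\vee: f^*\Omega^1_X \longrightarrow \Omega^1_{\mathbb{P}^1} = \mathcal{O}_{\mathbb{P}^1}(-2),
\]
which is nonzero because $f$ is non-constant (so $f$ is generically an immersion onto its image, and the induced map on cotangent spaces at a generic point is nonzero). Let $\mathcal{I}\subset \mathcal{O}_{\mathbb{P}^1}(-2)$ be the image. Since $\mathcal{I}$ is a nonzero coherent subsheaf of a line bundle on the smooth curve $\mathbb{P}^1$, it is torsion-free of rank $1$, hence itself a line bundle $\mathcal{O}_{\mathbb{P}^1}(-a)$; because it embeds into $\mathcal{O}_{\mathbb{P}^1}(-2)$ we have $-a\le -2$, so $\deg\mathcal{I}\le -2<0$.

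Thus $f^*\Omega^1_X$ surjects onto a line bundle of strictly negative degree, directly contradicting the nefness hypothesis on $\Omega^1_X$ applied to $f:\mathbb{P}^1\to X$. I do not anticipate any real obstacle here: once one recognizes that the codifferential gives the required quotient, the only small check is that the image of $df^\vee$ is a line bundle (rather than a more general subsheaf), which is automatic on a smooth curve. The argument works uniformly in any characteristic and for any $X$ smooth (no properness needed on $X$, since the test curve is already $\mathbb{P}^1$).
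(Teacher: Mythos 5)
Your proposal is correct and uses essentially the same approach as the paper: pull back $\Omega^1_X$ along a non-constant $f:\mathbb{P}^1\to X$ and exhibit a negative-degree line bundle quotient via the codifferential. In fact your version is slightly more careful than the paper's, which asserts that normalizing the image yields an \emph{unramified} map $\mathbb{P}^1\to X$ (false for, e.g., cuspidal rational curves) so that the codifferential is surjective onto $\Omega^1_{\mathbb{P}^1}$; your observation that the image of $df^\vee$ is in any case a line bundle of degree $\le -2$ closes this small gap while keeping the argument elementary and characteristic-free.
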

\begin{proof}
Suppose to the contrary that there is a non-constant morphism $f: \mbb{P}^1\to X$.  Then the image of $f$ is a rational curve, $C$; taking its normalization gives an unramified map $\iota: \mbb{P}^1\to X$.  Thus there is a surjection $$\iota^*\Omega^1_X\to \Omega^1_{\mathbb{P}^1}\to 0.$$ But $\Omega^1_{\mbb{P}^1}$ has negative degree, contradicting the nefness of $\Omega^1_X$.
\end{proof}
\begin{prop}\label{f-semipositive-noratlcurves}
Let $X$ be a smooth projective variety with $f$-semipositive cotangent bundle.  Then $X$ contains no rational curves. 
\end{prop}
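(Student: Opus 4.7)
The plan is to mimic the argument for Proposition \ref{noratlcurves}, replacing ``nef'' with ``$f$-semipositive,'' and to reduce the characteristic zero case to positive characteristic by spreading out. Suppose for contradiction that there is a non-constant morphism $f:\mbb{P}^1\to X$. By \cite[Proposition 3.10]{arapura-f-amplitude} (the same citation used elsewhere in the paper), $f$-semipositivity is preserved under pullback, so $f^*\Omega^1_X$ is $f$-semipositive on $\mbb{P}^1$. The key intermediate claim I would prove is that any $f$-semipositive vector bundle on $\mbb{P}^1$ is nef; granting this, I would finish exactly as in Proposition \ref{noratlcurves} by exhibiting a quotient of $f^*\Omega^1_X$ of negative degree.

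For the intermediate claim, I would use Grothendieck's splitting theorem to write $f^*\Omega^1_X\cong\bigoplus_{i=1}^{\dim X}\mcl{O}_{\mbb{P}^1}(a_i)$ and compute directly: the $k$-th Frobenius pullback is $\bigoplus_i \mcl{O}_{\mbb{P}^1}(p^ka_i)$, whose Castelnuovo--Mumford regularity with respect to $\mcl{O}_{\mbb{P}^1}(1)$ equals $\max_i(-p^ka_i)+1$. For this to remain bounded as $k\to\infty$ one must have $a_i\geq 0$ for every $i$, which is precisely nefness. This is the only computation in the proof, and I do not expect it to be the difficult step.

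To produce the promised negative-degree quotient, I would observe that because $f$ is non-constant its derivative $df:T_{\mbb{P}^1}\to f^*T_X$ is injective at the generic point of $\mbb{P}^1$, so dually $f^*\Omega^1_X\to \Omega^1_{\mbb{P}^1}$ has non-zero image $L$. As a non-zero subsheaf of $\Omega^1_{\mbb{P}^1}=\mcl{O}_{\mbb{P}^1}(-2)$, the sheaf $L$ is a line bundle of degree at most $-2$, and it is by construction a quotient of $f^*\Omega^1_X$. Since quotients of nef bundles are nef (e.g.\ \cite[Theorem 6.2.12(i)]{lazarsfeld2004positivity2}), this contradicts nefness of $f^*\Omega^1_X$ established in the previous paragraph.

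Finally, for the characteristic zero case I would spread the pair $(X,f)$ out over a finite-type $\mbb{Z}$-scheme $A$, arranging (using the freedom to choose the model in Definition \ref{f-semipositivity-def}) that the cotangent bundle is fiberwise $f$-semipositive over a dense open subset of $A$ and that $f$ extends over that open. The closed points $\mathfrak{q}\in A$ then reduce the problem to positive characteristic fibers, where the argument above applies. The only potential subtlety is ensuring that the chosen model of $(X,\Omega^1_X)$ witnessing $f$-semipositivity is compatible with a spreading out of the rational curve; this is automatic after shrinking $A$ since only finitely many coherent conditions need to hold simultaneously.
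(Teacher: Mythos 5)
Your proof is correct but takes a genuinely different route from the paper's. The paper's proof is two lines: it cites \cite[Lemma 3.13]{arapura-f-amplitude}, which says that $f$-semipositive vector bundles are arithmetically nef, and then invokes Proposition \ref{noratlcurves}. You instead re-derive the relevant special case of Arapura's lemma by hand: you restrict to the rational curve, apply Grothendieck's splitting theorem to write $f^*\Omega^1_X \cong \bigoplus_i \mathcal{O}_{\mathbb{P}^1}(a_i)$, and observe that the Castelnuovo--Mumford regularity of $\bigoplus_i \mathcal{O}_{\mathbb{P}^1}(p^k a_i)$ is unbounded as $k\to\infty$ unless every $a_i \geq 0$. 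This gives a self-contained, elementary argument at the cost of redoing a piece of Arapura's machinery; the paper's citation-based approach is shorter but leans on the reference. Two minor remarks. First, your regularity computation is off by one: with the paper's convention, the regularity of $\bigoplus_i \mathcal{O}_{\mathbb{P}^1}(a_i)$ is $\max_i(-a_i)$ rather than $\max_i(-a_i) + 1$; this does not affect the conclusion. Second, in positive characteristic a non-constant $f:\mathbb{P}^1\to X$ need not have generically injective derivative (it may factor through Frobenius, in which case the image of $f^*\Omega^1_X \to \Omega^1_{\mathbb{P}^1}$ vanishes and you have no negative-degree quotient to exhibit). As in Proposition \ref{noratlcurves}, you should first replace $f$ by the normalization of its image, which is generically unramified, or equivalently factor out the maximal power of Frobenius through which $f$ factors. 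This matters because your characteristic-zero case reduces to the positive-characteristic one.
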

\begin{proof}
We use that $f$-semipositive vector bundles are arithmetically nef \cite[Lemma 3.13]{arapura-f-amplitude}.  Then we may conclude using Proposition \ref{noratlcurves}. 
\end{proof}
\begin{proof}[Proof of Theorem \ref{f-semipositive-lefschetz-thm}]
We must verify the hypotheses of Theorems \ref{unique-extensions}, \ref{charzeroextensionthm} and \ref{charpliftextensionthm}.  First, given a map $f: D\to Y$ we will show $$\phi(f^*\Omega^1_Y\otimes \mcl{N}_{D/X})=0,$$ and given a map $g: X\to Y$ we will show $$\phi(g^*\Omega^1_Y(D))=0.$$  But f-semipositivity of vector bundles is preserved by pullback \cite[Proposition 3.10]{arapura-f-amplitude}, so $f^*\Omega^1_Y, g^*\Omega^1_Y$ are both f-semipositive.  Now we may conclude the result by Theorem \ref{tensorproductbound}, using that $\mcl{O}_X(D), \mcl{N}_{D/X}$ are ample line bundles and thus have f-amplitude zero.

We also observe that by Proposition \ref{f-semipositive-noratlcurves}, $Y$ admits a model over a finite type $\mbb{Z}$-scheme whose geometric fibers contain no rational curves, so all the hypotheses are satisfied, as desired.
\end{proof}
As usual, there is an analogue of this theorem for sections to morphisms with f-semipositive relative cotangent bundle.

We now provide some examples of varieties with f-semipositive cotangent bundle.  Before stating our main theorem, we need to recall the notion of an arithmetically nef line bundle.
\begin{defn}
Let $k$ be a field, $X$ a $k$-variety, and $\mcl{L}$ a line bundle on $X$.  If $k$ is of positive characteristic, we say that $\mcl{L}$ is \emph{arithmetically} nef if it is nef.  If $k$ has characteristic zero, we say that $\mcl{L}$ is \emph{arithmetically nef} if there exists a model $(\overline{X}, \overline{\mcl{L}})$ of $(X, \mcl{L})$ over a finite-type $\mbb{Z}$-scheme $S$, so that for all closed $s\in S$, the line bundle $\overline{\mcl{L}}_s$ on $\overline{X}_s$ is nef.
\end{defn}
The main result we will need about arithmetically nef line bundles is:
\begin{prop}[{\cite[Proposition A.2]{arapura-f-amplitude}}]\label{arithmeticallynefiff-f-semipositive}
A line bundle on a projective variety is f-semipositive if and only if it is arithmetically nef.
\end{prop}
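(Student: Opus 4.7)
The plan is to unwind both notions on the positive-characteristic side and then compare them via Fujita's vanishing theorem and the standard consequence of Castelnuovo-Mumford regularity that $m$-regular sheaves become globally generated after twisting by $\mcl{O}(m)$.

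First I would reduce to the case where the base field has positive characteristic. Both ``arithmetically nef'' and ``f-semipositive'' are defined in characteristic zero by spreading $(X,\mcl{L})$ out to a finite-type $\mbb{Z}$-scheme $S$ and imposing the corresponding condition on closed-point fibers; using that nefness and boundedness of regularity in flat projective families are well-behaved (e.g.\ preserved under specialization and generalization after shrinking $S$), both conditions are equivalent to the condition that for some model, \emph{every} positive-characteristic fiber is nef (resp.\ f-semipositive). So I reduce to the case where $k$ has characteristic $p>0$, fix an ample $\mcl{O}(1)$ on $X$, and observe that, since $\mcl{L}$ is a line bundle, $\on{Frob}_p^*\mcl{L}=\mcl{L}^{\otimes p}$, so $\mcl{L}^{(p^k)}=\mcl{L}^{\otimes p^k}$.

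For the implication nef $\Rightarrow$ f-semipositive, I would invoke Fujita's vanishing theorem: there exists $m_0=m_0(\mcl{O}(1))$ such that for every nef line bundle $\mcl{N}$ on $X$, $H^i(X,\mcl{O}(n)\otimes\mcl{N})=0$ for all $i>0$ and $n\geq m_0$. Applied to $\mcl{N}=\mcl{L}^{\otimes p^k}$, this yields $H^i(X,\mcl{L}^{\otimes p^k}(n-i))=0$ whenever $i>0$ and $n-i\geq m_0$, so the Castelnuovo-Mumford regularity of $\mcl{L}^{\otimes p^k}$ is bounded above by $m_0+\dim X$ uniformly in $k$, which is exactly f-semipositivity.

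For the converse, suppose $\on{reg}(\mcl{L}^{\otimes p^k})\leq r$ for every $k\geq 0$, after possibly replacing $\mcl{O}(1)$ by a very ample power (which only changes the regularity bound by a multiplicative constant). Then the standard Mumford-regularity statement implies $\mcl{L}^{\otimes p^k}\otimes \mcl{O}(r)$ is globally generated for every $k$. For any integral curve $C\subset X$, the restriction is then globally generated and hence has nonnegative degree, giving
\[
p^k\deg_C(\mcl{L})+r\deg_C(\mcl{O}(1))\geq 0
\]
for all $k$. Dividing by $p^k$ and letting $k\to\infty$ yields $\deg_C(\mcl{L})\geq 0$, so $\mcl{L}$ is nef.

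The routine cohomological core (Fujita plus the globally-generated consequence of $m$-regularity) is standard; the only real subtlety is the spreading-out step, where one needs to know that nefness and boundedness of regularity are model-independent conditions. Once that is verified, the characteristic-$p$ biconditional given above yields the claim over an arbitrary field.
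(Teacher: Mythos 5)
The paper itself does not prove this statement; it cites it as Proposition A.2 of Arapura's paper \cite{arapura-f-amplitude}, so there is no in-text proof to compare against. Your argument is correct and is essentially the standard one. A few small observations. The reduction to positive characteristic is actually even cleaner than you suggest: since both ``arithmetically nef'' and ``f-semipositive'' are \emph{defined} in characteristic zero by the existence of a model whose closed (hence positive-characteristic) fibers have the corresponding property, a single fixed model witnesses either condition once the positive-characteristic biconditional is known, so no statements about preservation of nefness or regularity under specialization/generalization in families are actually needed. For the forward direction, the relevant tool is indeed Fujita-type vanishing; one should cite the version valid over an arbitrary Noetherian base (Keeler's theorem), since the classical statement is usually given over $\mathbb{C}$. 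Your bound $\on{reg}(\mcl{L}^{\otimes p^k}) \le m_0 + \dim X$ is right. For the converse, the passage from bounded regularity (with respect to a fixed ample $\mcl{O}(1)$) to global generation after twist requires a very ample polarization, and you correctly invoke the independence-of-polarization statement (recorded in the paper's remark after Definition \ref{f-semipositivity-def}, from \cite[Corollary 3.11]{arapura-f-amplitude}) to justify the switch. The degree computation $p^k\deg_C(\mcl{L}) + r\deg_C(\mcl{O}(1)) \ge 0$, followed by division by $p^k$, is exactly the right way to extract nefness, since a line bundle is nef precisely when it has nonnegative degree on every curve. In short: the proof is correct, and to the best of my knowledge it is the same Fujita-vanishing argument used in the cited source.
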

\begin{thm}\label{source-of-f-semipositive-examples}
Let $Y$ be a smooth projective variety over a field $k$ such that one of the following holds:
\begin{enumerate}
\item $Y$ is a curve of genus at least one.
\item $Y$ has trivial cotangent bundle.
\item There exists a smooth map $f: Y\to X$ with $\Omega^1_X, \Omega^1_{Y/X}$ f-semipositive.
\item There exists an \'etale morphism $g: Y\to Y'$ with $\Omega^1_{Y'}$ f-semipositive.
\item There exists a finite \'etale morphism $g: Y'\to Y$ with $\Omega^1_{Y'}$ f-semipositive and such that $\on{char}(k)$ does not divide $\on{deg}(g)$.
\item $Y$ is a divisor in smooth variety $Y'$ so that $\Omega^1_{Y'}$ is f-semipositive and $\mcl{N}_{Y/Y'}^\vee$ is arithmetically nef. 
\end{enumerate}
Then $\Omega^1_Y$ is f-semipositive.
\end{thm}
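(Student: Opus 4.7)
The plan is to treat each case by reducing to one of a few closure properties of the class of f-semipositive coherent sheaves on a smooth projective variety: stability under pullback (Arapura, Proposition 3.10), under finite étale pushforward, under passage to direct summands, and under formation of either middle term or quotient in a short exact sequence of vector bundles whose other two terms are f-semipositive. Each of these follows from the characterization of f-semipositivity as boundedness of Castelnuovo--Mumford regularities of Frobenius pullbacks, combined with exactness of absolute Frobenius pullback on short exact sequences of vector bundles, compatibility of absolute Frobenius with finite étale pushforward (the square of absolute Frobeniuses being Cartesian for étale morphisms), and the standard regularity estimate from a long exact sequence.

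Cases (1) and (2) are essentially immediate. In (1), the canonical bundle of a smooth projective curve of genus $g \geq 1$ has degree $2g - 2 \geq 0$, hence is nef; nefness of a line bundle on a curve is detected by degree, which is preserved in any spreading-out, so $\omega_Y$ is arithmetically nef and Proposition \ref{arithmeticallynefiff-f-semipositive} applies. In (2), $\Omega^1_Y \simeq \mathcal{O}_Y^n$ has constant Frobenius pullbacks and therefore trivially bounded Castelnuovo--Mumford regularities.

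Case (3) uses the relative cotangent sequence $0 \to f^*\Omega^1_X \to \Omega^1_Y \to \Omega^1_{Y/X} \to 0$: the outer terms are f-semipositive (the first by pullback stability, the third by hypothesis), so $\Omega^1_Y$ is f-semipositive by closure under extensions. Case (4) follows at once from the identification $g^*\Omega^1_{Y'} = \Omega^1_Y$ together with pullback stability. For (5), since $\deg(g)$ is prime to the characteristic, the composition $\mathcal{O}_Y \hookrightarrow g_*\mathcal{O}_{Y'} \xrightarrow{\on{tr}/\deg(g)} \mathcal{O}_Y$ is the identity, so the projection formula $g_*\Omega^1_{Y'} = g_*g^*\Omega^1_Y = \Omega^1_Y \otimes g_*\mathcal{O}_{Y'}$ exhibits $\Omega^1_Y$ as a direct summand of $g_*\Omega^1_{Y'}$; the latter is f-semipositive because $g_*$ commutes with Frobenius pullback (by flat base change in the Cartesian square of absolute Frobeniuses for étale $g$) and preserves Castelnuovo--Mumford regularity (identifying $H^i(Y, g_*\mathcal{F}(n))$ with $H^i(Y', \mathcal{F} \otimes g^*\mathcal{O}(n))$ via finiteness of $g$), and then so is its direct summand $\Omega^1_Y$. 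For (6), the conormal sequence $0 \to \mcl{N}_{Y/Y'}^\vee \to \Omega^1_{Y'}|_Y \to \Omega^1_Y \to 0$ has its first term f-semipositive by Proposition \ref{arithmeticallynefiff-f-semipositive} and its middle term f-semipositive by pullback along the closed immersion $Y \hookrightarrow Y'$, so $\Omega^1_Y$ is f-semipositive by the quotient version of the regularity estimate.

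The main technical obstacle is verifying the regularity estimate for short exact sequences, from which the extension and quotient closure properties used in (3), (5), and (6) follow. Given an SES $0 \to A \to B \to C \to 0$ of vector bundles on a smooth projective variety $(Y, \mathcal{O}(1))$, Frobenius pullback preserves exactness (since all three terms are locally free), and the standard long exact sequence in cohomology yields $\on{reg}(B^{(p^k)}) \leq \max(\on{reg}(A^{(p^k)}), \on{reg}(C^{(p^k)}))$ and $\on{reg}(C^{(p^k)}) \leq \max(\on{reg}(B^{(p^k)}), \on{reg}(A^{(p^k)}) + 1)$, so uniform boundedness of any two of the three terms propagates to the third. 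Everything else is formal.
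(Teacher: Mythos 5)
Your argument is correct and follows the same route as the paper's proof in every case: arithmetic nefness of $\omega_Y$ plus Proposition \ref{arithmeticallynefiff-f-semipositive} for (1), Frobenius-invariance of the trivial bundle for (2), the cotangent exact sequence plus closure under extensions for (3), pullback stability for (4), the trace splitting plus the projection-formula cohomology identification for (5), and the conormal sequence plus closure under quotients for (6). You are a bit more explicit than the paper where it leaves steps implicit — e.g.\ the Cartesianness of the absolute Frobenius square for \'etale $g$ in (5), and the regularity estimates behind the extension and quotient closure — but the underlying ideas are identical (note only that the estimate $\on{reg}(C) \leq \max(\on{reg}(B), \on{reg}(A)+1)$ can in fact be sharpened to $\on{reg}(A)-1$, though this is immaterial for boundedness).
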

\begin{proof}
In every case it suffices to work in positive characteristic.  
\begin{enumerate}
\item A curve of genus one has arithmetically nef cotangent bundle, so the result is immediate from Proposition \ref{arithmeticallynefiff-f-semipositive}.
\item Recall that we must show that the Castelnuovo-Mumford regularity of $(\Omega^1_Y)^{(p^n)}$ remains bounded as $n$ goes to infinity.  But if $\Omega^1_Y$ is trivial, we have $(\Omega^1_Y)^{(p^n)}=\Omega^1_Y$, so this is clear.
\item This is immediate from the existence of the short exact sequence $$0\to f^*\Omega^1_X\to \Omega^1_Y\to \Omega^1_{Y/X}\to 0,$$ because $f^*\Omega^1_X, \Omega^1_{Y/X}$ are both f-semipositive by assumption (recall that the pullback of an f-semipositive vector bundle is f-semipositive, by \cite[Proposition 3.10]{arapura-f-amplitude}).
\item We have $\Omega^1_Y=g^*\Omega^1_{Y'}$, so $\Omega^1_Y$ is nef by \cite[Proposition 3.10]{arapura-f-amplitude}.
\item There is a natural injective map $\Omega^1_Y\to g_*g^*\Omega^1_Y=g_*\Omega^1_{Y'}$, which is split by $\frac{1}{\on{deg}(g)}\on{tr}_{Y'/Y}$.  Thus it suffices to show that $g_*\Omega^1_{Y'}$ is f-semipositive.  Let $\mcl{O}_Y(1)$ be an ample line bundle on $Y$ and $\mcl{O}_{Y'}(1)$ be its pullback to $Y'$ (which is also ample).   By the projection formula, $$(g_*\Omega^1_{Y'})(n)=g_*(\Omega^1_{Y'}(n)),$$ so $$H^i(Y, (g_*\Omega^1_{Y'})(n))=H^i(Y', \Omega^1_{Y'}(n)).$$  This gives the result.
\item Consider the short exact sequence $$0\to \mcl{N}_{Y/Y'}^\vee\to \Omega^1_{Y'}|_Y\to \Omega^1_Y\to 0.$$ The middle term is f-semipositive by assumption, and the first is f-semipositive as it is an arithmetically nef line bundle (Proposition \ref{arithmeticallynefiff-f-semipositive}).  So the last term is f-semipositive, as desired. 
\end{enumerate}
\end{proof}
\begin{rem}
This theorem allows us to construct many examples of varieties with f-semipositive cotangent bundle.  For example, bi-elliptic surfaces and total spaces of Kodaira fibrations both have f-semipositive cotangent bundle.
\end{rem}
\subsection{Maps to small targets}
We now consider the case of maps $f$ to a smooth target $Y$ with $\dim(\on{im}(f))<\dim(D)-1$.  Our main result is that in this case, a map $D\to Y$ \emph{always} extends to a map $X\to Y$.
\begin{lem}\label{smalltargetbound}
Let $X$ be a finite-type $k$-scheme, and $\mcl{E}$ a vector bundle on $X$.  Let $Y$ be another finite-type $k$-scheme, and $\mcl{F}$ a vector bundle on $Y$.  Suppose $f: X\to Y$ is a morphism.  Then $$\phi(\mcl{E}\otimes f^*\mcl{F})\leq \phi(\mcl{E})+\dim(\on{im}(f)).$$
\end{lem}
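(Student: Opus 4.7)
The plan is to run the Leray spectral sequence for $f$ and exploit the fact that the base $Y$ has low cohomological dimension. By the characteristic-zero definition of $\phi$, it suffices to treat the case $\on{char}(k) = p > 0$, and by replacing $Y$ with the reduced closure $\overline{\on{im}(f)}$ (and $\mcl{F}$ with its restriction) I may assume $\dim Y = d := \dim(\on{im}(f))$.

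Given an arbitrary coherent sheaf $\mcl{G}$ on $X$, the projection formula (applicable because $\mcl{F}^{(p^k)}$ is locally free) rewrites the Leray spectral sequence as
\begin{equation*}
E_2^{p,q} = H^p\!\bigl(Y,\ \mbf{R}^q f_*(\mcl{G} \otimes \mcl{E}^{(p^k)}) \otimes \mcl{F}^{(p^k)}\bigr) \Longrightarrow H^{p+q}\!\bigl(X,\ \mcl{G} \otimes \mcl{E}^{(p^k)} \otimes f^*\mcl{F}^{(p^k)}\bigr).
\end{equation*}
It would suffice to verify that $E_2^{p,q} = 0$ whenever $p + q > \phi(\mcl{E}) + d$ and $k \gg 0$. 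This splits into two regimes. If $p > d$, then the sheaf $\mbf{R}^q f_*(\cdots) \otimes \mcl{F}^{(p^k)}$ is supported on the $d$-dimensional scheme $Y$, so Grothendieck's cohomological dimension theorem gives $E_2^{p,q} = 0$. If $p \leq d$, then $q > \phi(\mcl{E})$, and one must show $\mbf{R}^q f_*(\mcl{G} \otimes \mcl{E}^{(p^k)}) = 0$ for $k$ large. I would check this locally on $Y$: on an affine $Y_i = \on{Spec}(R_i)$, the sheaf $\mbf{R}^q f_*(\cdot)|_{Y_i}$ corresponds to the $R_i$-module $H^q(X_i, \cdot|_{X_i})$, where $X_i = f^{-1}(Y_i)$; so the required statement reduces to the monotonicity inequality $\phi(\mcl{E}|_{X_i}) \leq \phi(\mcl{E})$.

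This monotonicity step is the main obstacle, since $f$-amplitude is not manifestly well-behaved under restriction to Zariski opens when $f$ is not proper. To handle it, I would extend an arbitrary coherent sheaf on $X_i$ to a coherent sheaf $\widetilde{\mcl{G}}$ on $X$ (possible by the Noetherian hypothesis) and compare $H^q(X_i,\widetilde{\mcl{G}}|_{X_i} \otimes \mcl{E}|_{X_i}^{(p^k)})$ with $H^q(X,\widetilde{\mcl{G}} \otimes \mcl{E}^{(p^k)})$ via the long exact sequence of local cohomology along $Z = X \setminus X_i$; the error terms are controlled by the definition of $\phi(\mcl{E})$ applied to the cofinal family of coherent extensions $\mcl{I}_Z^n \widetilde{\mcl{G}}$. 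Once this monotonicity is in place, combining the two regimes yields $E_2^{p,q} = 0$ for $p + q > \phi(\mcl{E}) + d$, and hence $\phi(\mcl{E} \otimes f^*\mcl{F}) \leq \phi(\mcl{E}) + d$, as desired.
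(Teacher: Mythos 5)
Your argument has the same shape as the paper's: reduce to characteristic $p > 0$, use the projection formula to rewrite the cohomology in question as $\mbf{R}\Gamma\bigl(Y, \mbf{R}f_*(\mcl{G}\otimes\mcl{E}^{(p^k)})\otimes\mcl{F}^{(p^k)}\bigr)$, then combine dimensional vanishing on $Y$ with the claim that $\mbf{R}^q f_*(\mcl{G}\otimes\mcl{E}^{(p^k)}) = 0$ for $q > \phi(\mcl{E})$ and $k \gg 0$. The paper asserts this higher direct image vanishing without further comment; you correctly flag it as the delicate step.

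Unfortunately your proposed justification does not close that gap. The inequality $\phi(\mcl{E}|_{X_i}) \leq \phi(\mcl{E})$ is false for general open subschemes: with $X = \mbb{A}^n$ for $n \geq 2$, $X_i = \mbb{A}^n \setminus \{0\}$, and $\mcl{E} = \mcl{O}_X$, one has $\phi(\mcl{E}) = 0$ since $X$ is affine, but $\phi(\mcl{E}|_{X_i}) = n-1$, because $H^{n-1}(\mbb{A}^n\setminus\{0\}, \mcl{O})\neq 0$ and $\mcl{O}^{(p^k)}=\mcl{O}$. Your local cohomology repair then runs into a uniformity problem: using the Deligne-type formula $H^q(X_i, \mcl{H}|_{X_i}) \cong \varinjlim_n \on{Ext}^q_{\mcl{O}_X}(\mcl{I}_Z^n, \mcl{H})$ with $Z = X\setminus X_i$ and $\mcl{H} = \widetilde{\mcl{G}}\otimes\mcl{E}^{(p^k)}$, the definition of $f$-amplitude gives, for each fixed $n$, a threshold $k_0(n)$ past which the $n$-th term vanishes in degrees $>\phi(\mcl{E})$; but you then pass to the colimit over $n$, so you need a single $k_0$ working for all $n$ simultaneously, and nothing in your sketch bounds the sequence $k_0(n)$. (The full monotonicity statement is also stronger than necessary, since only restrictions $\mcl{G}|_{X_i}$ of sheaves coherent on $X$ enter the direct image, and $X_i = f^{-1}(Y_i)$ has complement a pullback of a divisor; but even this weaker version runs into the same uniformity issue, which the definition of $\phi$ does not resolve by itself.)
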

\begin{proof}
It suffices to work in positive characteristic.  Let $\mcl{G}$ be a coherent sheaf on $X$.  Then 
\begin{align*}
\mbf{R}f_*(\mcl{G}\otimes \on{Frob}_{p}^{k*}(\mcl{E}\otimes f^*\mcl{F}))& =\mbf{R}f_*(\mcl{G}\otimes \on{Frob}_{p}^{k*}(\mcl{E})\otimes f^*\on{Frob}_{p}^{k*}\mcl{F})\\
&=\mbf{R}f_*(\mcl{G}\otimes \on{Frob}_{p}^{k*}(\mcl{E}))\otimes \on{Frob}_{p}^{k*}\mcl{F}
\end{align*} 
by the projection formula.  For $k\gg 0$, $\mbf{R}f_*(\mcl{G}\otimes \on{Frob}_{p}^{k*}(\mcl{E}))$ is concentrated in degrees $[0, \phi(\mcl{E})]$; thus by dimensional vanishing, $$\mbf{R}\Gamma(\mcl{G}\otimes \on{Frob}_{p}^{k*}(\mcl{E}\otimes f^*\mcl{F}))= \mbf{R}\Gamma(\mbf{R}f_*(\mcl{G}\otimes \on{Frob}_{p}^{k*}(\mcl{E}))\otimes \on{Frob}_{p}^{k*}\mcl{F})$$ is concentrated in degrees $[0, \phi(\mcl{E})+\dim(\on{im}(f))]$ for $k\gg0$ as desired.
\end{proof}
As a corollary, we deduce a general Lefschetz theorem for maps to small targets.
\begin{thm}\label{smalltargetsextension}
Let $k$ be a field of characteristic zero.  Let $X$ be a smooth projective $k$-variety with $\dim(X)\geq 3$, and let $D\subset X$ be an ample Cartier divisor.  Let $Y$ be a smooth Deligne-Mumford stack over $k$ with quasi-projective coarse moduli space, and let $f: D\to Y$ be a morphism.  Suppose that  $$\dim(\on{im}(f))<\dim(D)-1.$$  Then $f$ extends uniquely to a morphism $X\to Y$. 
\end{thm}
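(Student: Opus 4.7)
The plan is to reduce this theorem to Theorem \ref{charzeroextensionthmdmstacks} under its hypothesis (2). The image-dimension bound $\dim(\on{im}(f)) \leq \dim(D) - 2$ and the quasi-projectivity of the coarse moduli space of $Y$ are given directly by hypothesis, and $\dim(X) \geq 3$ is assumed, so the only non-obvious input to verify is the $f$-amplitude bound
\[
\phi(f^*\Omega^1_Y \otimes \mcl{N}_{D/X}) < \dim(D) - 1.
\]

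I would establish this bound by applying Lemma \ref{smalltargetbound} with $\mcl{E} = \mcl{N}_{D/X}$ (a line bundle on $D$) and $\mcl{F} = \Omega^1_Y$, giving
\[
\phi(\mcl{N}_{D/X} \otimes f^*\Omega^1_Y) \leq \phi(\mcl{N}_{D/X}) + \dim(\on{im}(f)).
\]
Since $\mcl{N}_{D/X} = \mcl{O}_X(D)|_D$ is an ample line bundle, the remaining ingredient is that ample line bundles have vanishing $f$-amplitude (cf.\ \cite[Lemma 2.4]{arapura-f-amplitude}, as used in the discussion following Theorem \ref{tensorproductbound}). Combining, $\phi(f^*\Omega^1_Y \otimes \mcl{N}_{D/X}) \leq \dim(\on{im}(f)) \leq \dim(D) - 2$, and Theorem \ref{charzeroextensionthmdmstacks}(2) then delivers the unique extension.

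The main (mild) obstacle is that Lemma \ref{smalltargetbound} is stated for morphisms of finite-type $k$-schemes, while here the target $Y$ is a Deligne-Mumford stack. I expect to handle this in one of several essentially equivalent ways: by passing to an étale atlas $U \to Y$ and using that étale pullback preserves $f$-amplitude and the image dimension; by observing that the projection-formula/dimensional-vanishing argument in the proof of Lemma \ref{smalltargetbound} extends verbatim to DM stack targets (tame in characteristic zero), once one pushes forward along $f \colon D \to Y$; or by replacing $Y$ with the closure of the scheme-theoretic image of $f$ inside the quasi-projective coarse moduli space, along which $f^*\Omega^1_Y$ is still the pullback of a coherent sheaf from a finite-type scheme. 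Any of these routes reduces the argument to a direct reprise of the proof of Lemma \ref{smalltargetbound}, and none presents a substantive difficulty.
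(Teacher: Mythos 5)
Your proposal is correct and follows the paper's own proof essentially verbatim: reduce to Theorem \ref{charzeroextensionthmdmstacks}(2) and verify the $f$-amplitude hypothesis via Lemma \ref{smalltargetbound} together with $\phi(\mcl{N}_{D/X})=0$ for the ample line bundle $\mcl{N}_{D/X}$. You even flag a small point the paper silently elides, namely that Lemma \ref{smalltargetbound} is stated for a morphism of finite-type $k$-schemes while $Y$ here is a Deligne-Mumford stack, and any of the reductions you sketch (pass to an \'etale atlas, rerun the projection-formula argument, or pull back from the scheme-theoretic image in the quasi-projective coarse space) closes that gap.
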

\begin{proof}
We must verify the hypotheses of Theorems \ref{charzeroextensionthmdmstacks}.  The only non-obvious hypothesis is that $$\phi(f^*\Omega^1_Y\otimes \mcl{N}_{D/X})<\dim(D)-1.$$  But $\phi(\mcl{N}_{D/X})=0$ as $\mcl{N}_{D/X}$ is an ample line bundle, so this is immediate from Lemma \ref{smalltargetbound}.
\end{proof}
\begin{rem}
We observe that the dimension estimates in Theorems \ref{nefcotangentbundleextension} and \ref{smalltargetsextension} are sharp.  Let $X=\mbb{P}^3$ and $D\subset X$ a smooth quadric surface.  Then any non-constant $D\to \mbb{P}^1$ (of which there are many) fails to extend to a map $X\to \mbb{P}^1$, showing that the nefness condition on $\Omega^1_Y$ in Theorem \ref{nefcotangentbundleextension} and the dimension condition in Theorem \ref{smalltargetsextension} cannot be weakened.
\end{rem}
\begin{rem}
As usual, we may also prove a similar theorem for varieties over a perfect field $L$ of positive charactersitic which lift to $W_2(L)$. 
\end{rem}
%\subsection{Counterexamples}
%\subsubsection{Ample and Globally-Generated by not $f$-Semipositive Cotangent Bundle}
%\subsubsection{Maps to $\mathbb{P}^1$}
\subsection{A non-Abelian Noether-Lefschetz theorem for Abelian schemes}
We now come to an application of these ideas to \emph{generic, sufficiently ample} divisors.  The model theorem to consider in this case is the Noether-Lefschetz theorem, which states that the Picard group of a smooth threefold $X$ is the same as that of a very general, sufficiently ample divisor $D\subset X$.  We prove a version of this result for sections to Abelian schemes.

First, we observe that there is an existing result along these lines.  Namely, Fakhruddin \cite[Proposition 4.1]{fakhruddin} proves an analogue of the Noether-Lefschetz theorem for sections to Abelian schemes over general, sufficiently ample divisors.  We give a similar but slightly different result; namely we require the divisor to be sufficiently ample, but it need not be general.
\begin{thm}\label{noether-lefschetz-abelian-schemes}
Let $k$ be a field, and $X$ a smooth projective $k$-scheme of dimension $m\geq 3$.  Let $f: A\to X$ be an Abelian scheme, and let $\mcl{O}_X(1)$ be an ample line bundle on $X$.  Then for $n\gg 0$ (depending  on $A, X, \mcl{L}$), and any element $D\in |\mcl{L}^{\otimes n}|$, the restriction map $$\on{Sections}(f)\to \on{Sections}(f_D)$$ is an isomorphism.
\end{thm}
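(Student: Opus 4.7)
The plan is to follow the three-step strategy of the paper: extend $s$ across the formal completion $\widehat D$, then algebraize, then extend globally. The key simplification afforded by the Abelian-scheme structure is that $\Omega^1_{A/X} = f^*\omega_e$ with $\omega_e := e^*\Omega^1_{A/X}$ a fixed vector bundle on $X$ (the cotangent space at the identity section $e: X \to A$), so $s^*\Omega^1_{A/X} = \omega_e$ for every section $s$. Set $T_e := \omega_e^\vee$ and write $D \in |\mcl{L}^{\otimes n}|$.

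The heart of the proof is the formal extension step. By Theorem \ref{maindefthm}, the obstruction to lifting a section from $D_k$ to $D_{k+1}$ lies in $H^1(D, T_e|_D \otimes \mcl{L}^{-kn}|_D)$, and lifts form a torsor under $H^0(D, T_e|_D \otimes \mcl{L}^{-kn}|_D)$. Via the short exact sequence
\[
0 \to T_e \otimes \mcl{L}^{-(k+1)n} \to T_e \otimes \mcl{L}^{-kn} \to T_e|_D \otimes \mcl{L}^{-kn}|_D \to 0,
\]
these vanishings reduce to the vanishing of $H^i(X, T_e \otimes \mcl{L}^{-jn})$ for $i \in \{0, 1, 2\}$ and $j \in \{k, k+1\}$. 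Serre duality rewrites these as $H^{m-i}(X, \omega_X \otimes T_e^\vee \otimes \mcl{L}^{jn})^\vee$, and Serre vanishing applied to the fixed coherent sheaf $\omega_X \otimes T_e^\vee$ kills all such groups for $n \gg 0$, provided $m - i \geq 1$, i.e., $m \geq 3$. Crucially, the same threshold $n$ works for every $k \geq 1$ (since $jn \geq n$) and for every $D \in |\mcl{L}^{\otimes n}|$ (the vanishing is computed on the fixed variety $X$), yielding a unique formal lift $\widehat s: \widehat D \to A$.

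Next I will algebraize and globalize. Since $X$ is smooth of dimension $m \geq 3$, Corollary \ref{sectionalgebraization} extends $\widehat s$ to a section $\sigma: U \to A$ over some Zariski-open $U \supset D$. Because $f$ is proper and its geometric fibers are Abelian varieties---which contain no rational curves (Proposition \ref{noratlcurves} applied to their trivial, hence nef, cotangent bundles)---Proposition \ref{noratlcurvesextension} uniquely extends the rational section $\sigma$ to a regular section $X \to A$. Uniqueness of the full extension of $s$ now follows: any two sections of $f$ agreeing on $D$ both restrict to formal lifts of $s$ on $\widehat D$, which coincide by the torsor-vanishing in Step 1, and Corollary \ref{sectionsareequal} then forces equality on $X$.

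The main difficulty is the bookkeeping for the uniform vanishing in Step 1. The substantive point is that the cohomological input depends only on the fixed triple $(T_e, X, \mcl{L})$---independent of the particular $D \in |\mcl{L}^{\otimes n}|$ and of the filtration index $k \geq 1$---so a single threshold $n_0(A, X, \mcl{L})$ works uniformly. The hypothesis $m \geq 3$ enters precisely to kill the dangerous $H^2$ group via Serre duality. It is worth noting that no hypothesis on the characteristic of $k$ or on liftability of $X$ is needed, consistent with the Noether--Lefschetz character of the statement.
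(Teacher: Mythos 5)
Your proof is correct and follows essentially the same route as the paper: vanishing of $H^0$ and $H^1$ over $D$ via the ideal-sheaf exact sequence, Serre duality, and Serre vanishing on $X$ (using $m\geq 3$ to keep all dual indices positive), followed by algebraization via Corollary \ref{sectionalgebraization}, global extension via Proposition \ref{noratlcurvesextension} (Abelian varieties contain no rational curves), and uniqueness via Corollary \ref{sectionsareequal}. The only cosmetic difference is that you make explicit from the start that $\Omega^1_{A/X}=f^*\omega_e$ so that $s^*\Omega^1_{A/X}$ is independent of the section $s$---a point the paper records via the identity $s^*\Omega^1_{A/X}=f_*\Omega^1_{A/X}=r^*\Omega^1_{A/X}$---which makes the uniformity of the threshold $n_0(A,X,\mcl{L})$ over all $D$ and all infinitesimal steps slightly more transparent.
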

\begin{proof}
Let $s: X\to A$ be the identity section to $f$.  We choose $n\gg0$ so that $$H^{i}(X, s^*\Omega^1_{A/X}(n')\otimes \omega_X)=0$$ for $i=m, m-1, m-2$ and all $n'\geq n$; such an $n$ exists by Serre vanishing.

Let $D\in |\mcl{L}^{\otimes n}|$ and let $r: D\to A$ be a section to $f_D$; we wish to show that $r$ extends uniquely to $X$.  Now by Serre duality and the short exact sequence $$0\to (s^*\Omega^1_{A/X})^\vee(-n'-1)\to (s^*\Omega^1_{A/X})^\vee(-n')\to (s^*\Omega^1_{A/X})^\vee(-n')|_D\to 0$$ we have that \begin{equation}\label{abelianschemevanishing}H^0(D, (s^*\Omega^1_{A/X})^\vee(-n')|_D)=H^1(D, (s^*\Omega^1_{A/X})^\vee(-n')|_D)=0\end{equation} for all $n'\geq n$.  But observe that $$s^*\Omega^1_{A/X}=f_*\Omega^1_{A/X}=r^*\Omega^1_{A/X}.$$  Thus Equation \ref{abelianschemevanishing} shows that the obstructions and deformations to extending $r$ to a section on the infinitesimal neighborhoods of $D$ vanish.  So $r$ extends uniquely to a section $\widehat D\to A$ to $f_{\widehat D}$, where $\widehat D$ is the formal scheme obtained by completing $X$ at $D$.

But by Corollary \ref{sectionalgebraization}, this map automatically extends to a section on some open neighborhood $U$ of $D$; as Abelian varieties contain no rational curves, such a section extends to all of $X$ by Proposition \ref{noratlcurvesextension}.  Such an extension is unique, by Corollary \ref{sectionsareequal}.
\end{proof}
\begin{rem}
By Corollary \ref{sectionswithglobalgeneration}, we may take $n=1$ above if $$\on{rel. dim.}(f)<\dim(X)-1.$$
\end{rem}

\hypersetup{linkcolor=blue}
\bibliographystyle{alpha}
\bibliography{thesis-bibtex}

\begin{thebibliography}{BKPSB98}

\bibitem[Ara04]{arapura-f-amplitude}
Donu Arapura.
\newblock Frobenius amplitude and strong vanishing theorems for vector bundles.
\newblock {\em Duke Math. J.}, 121(2):231--267, 2004.
\newblock With an appendix by Dennis S. Keeler.

\bibitem[Ara06]{arapura-partial-regularity}
Donu Arapura.
\newblock Partial regularity and amplitude.
\newblock {\em Amer. J. Math.}, 128(4):1025--1056, 2006.

\bibitem[Ara11]{arapura-ultraproducts}
Donu Arapura.
\newblock Frobenius amplitude, ultraproducts, and vanishing on singular spaces.
\newblock {\em Illinois J. Math.}, 55(4):1367--1384 (2013), 2011.

\bibitem[BB66]{baily-borel}
W.~L. Baily, Jr. and A.~Borel.
\newblock Compactification of arithmetic quotients of bounded symmetric
  domains.
\newblock {\em Ann. of Math. (2)}, 84:442--528, 1966.

\bibitem[BdJ14]{bhatt-dejong}
Bhargav Bhatt and Aise~Johan de~Jong.
\newblock Lefschetz for local {P}icard groups.
\newblock {\em Ann. Sci. \'Ec. Norm. Sup\'er. (4)}, 47(4):833--849, 2014.

\bibitem[BI09]{extending-morphisms}
Mauro~C. Beltrametti and Paltin Ionescu.
\newblock A view on extending morphisms from ample divisors.
\newblock In {\em Interactions of classical and numerical algebraic geometry},
  volume 496 of {\em Contemp. Math.}, pages 71--110. Amer. Math. Soc.,
  Providence, RI, 2009.

\bibitem[BKPSB98]{k3-families}
Richard~E. Borcherds, Ludmil Katzarkov, Tony Pantev, and N.~I. Shepherd-Barron.
\newblock Families of {$K3$} surfaces.
\newblock {\em J. Algebraic Geom.}, 7(1):183--193, 1998.

\bibitem[Bro01]{brotbek}
Damian Brotbek.
\newblock {\em Projective varieties with ample cotangent bundle}.
\newblock Universite\'e Rennes 1, 2001.

\bibitem[BS95]{C.:2011aa}
Mauro~C. Beltrametti and Andrew~J. Sommese.
\newblock {\em The adjunction theory of complex projective varieties},
  volume~16 of {\em de Gruyter Expositions in Mathematics}.
\newblock Walter de Gruyter \& Co., Berlin, 1995.

\bibitem[CL74]{carter-lusztig}
Roger~W. Carter and George Lusztig.
\newblock On the modular representations of the general linear and symmetric
  groups.
\newblock {\em Math. Z.}, 136:193--242, 1974.

\bibitem[Con07]{conrad-nagata}
Brian Conrad.
\newblock Deligne's notes on {N}agata compactifications.
\newblock {\em J. Ramanujan Math. Soc.}, 22(3):205--257, 2007.

\bibitem[Deb01]{debarre}
Olivier Debarre.
\newblock {\em Higher-dimensional algebraic geometry}.
\newblock Universitext. Springer-Verlag, New York, 2001.

\bibitem[Deb05]{debarre2}
Olivier Debarre.
\newblock Varieties with ample cotangent bundle.
\newblock {\em Compos. Math.}, 141(6):1445--1459, 2005.

\bibitem[DG67]{EGA}
Jean Dieudonn{\'e} and Alexander Grothendieck.
\newblock {\em \'{E}l\'ements de g\'eom\'etrie alg\'ebrique}, volume 4, 8, 11,
  17, 20, 24, 28, 32.
\newblock Inst. Hautes \'Etudes Sci. Publ. Math., 1961--1967.

\bibitem[DI87]{deligne-illusie}
Pierre Deligne and Luc Illusie.
\newblock Rel\`evements modulo {$p^2$} et d\'ecomposition du complexe de de
  {R}ham.
\newblock {\em Invent. Math.}, 89(2):247--270, 1987.

\bibitem[{Fak}03]{fakhruddin}
N.~{Fakhruddin}.
\newblock {Restriction of sections of abelian schemes}.
\newblock {\em ArXiv Mathematics e-prints}, October 2003.

\bibitem[Gri70]{griffiths}
Phillip~A. Griffiths.
\newblock Periods of integrals on algebraic manifolds. {III}. {S}ome global
  differential-geometric properties of the period mapping.
\newblock {\em Inst. Hautes \'Etudes Sci. Publ. Math.}, (38):125--180, 1970.

\bibitem[Gro68]{SGA2}
Alexander Grothendieck.
\newblock {\em Cohomologie locale des faisceaux coh{\'e}rents... (SGA 2)},
  volume~2 of {\em Advanced Studies in Pure Mathematics}.
\newblock North-{H}olland {P}ublishing {C}o., 1968.

\bibitem[Har73]{hartshorne-chow}
Robin Hartshorne.
\newblock Equivalence relations on algebraic cycles and subvarieties of small
  codimension.
\newblock In {\em Algebraic geometry ({P}roc. {S}ympos. {P}ure {M}ath., {V}ol.
  29, {H}umboldt {S}tate {U}niv., {A}rcata, {C}alif., 1974)}, pages 129--164.
  Amer. Math. Soc., Providence, R.I., 1973.

\bibitem[H{\"o}r13]{horing}
Andreas H{\"o}ring.
\newblock Manifolds with nef cotangent bundle.
\newblock {\em Asian J. Math.}, 17(3):561--568, 2013.

\bibitem[Jab09]{jabbusch}
Kelly Jabbusch.
\newblock Positivity of cotangent bundles.
\newblock {\em Michigan Math. J.}, 58(3):723--744, 2009.

\bibitem[Kee99]{keel}
Se{\'a}n Keel.
\newblock Basepoint freeness for nef and big line bundles in positive
  characteristic.
\newblock {\em Ann. of Math. (2)}, 149(1):253--286, 1999.

\bibitem[Kee10]{keeler}
Dennis~S. Keeler.
\newblock Ample filters and {F}robenius amplitude.
\newblock {\em J. Algebra}, 323(10):3039--3053, 2010.

\bibitem[KM97]{keel-mori}
Se{\'a}n Keel and Shigefumi Mori.
\newblock Quotients by groupoids.
\newblock {\em Ann. of Math. (2)}, 145(1):193--213, 1997.

\bibitem[Kol90]{kollar}
J{\'a}nos Koll{\'a}r.
\newblock Projectivity of complete moduli.
\newblock {\em J. Differential Geom.}, 32(1):235--268, 1990.

\bibitem[Kov97]{kovacs}
S{\'a}ndor~J. Kov{\'a}cs.
\newblock Families over a base with a birationally nef tangent bundle.
\newblock {\em Math. Ann.}, 308(2):347--359, 1997.

\bibitem[Kra97]{kratz}
Henrik Kratz.
\newblock Compact complex manifolds with numerically effective cotangent
  bundles.
\newblock {\em Doc. Math.}, 2:183--193 (electronic), 1997.

\bibitem[Laz04]{lazarsfeld2004positivity2}
R.K. Lazarsfeld.
\newblock {\em Positivity in algebraic geometry 2}.
\newblock Ergebnisse der Mathematik und ihrer Grenzgebiete : a series of modern
  surveys in mathematics. Folge 3. Springer Berlin Heidelberg, 2004.

\bibitem[Lit]{litt}
Daniel Litt.
\newblock Non-abelian {L}efschetz hyperplane thoerems {II}.
\newblock In preparation.

\bibitem[MB81]{Moret-Bailly}
Laurent Moret-Bailly.
\newblock {Familles de courbes et de vari\'et\'es abeliennes sur
  $\mathbb{P}^1.$ I. Descente des polarisations.}
\newblock 1981.

\bibitem[MVZ06]{moller-viehweg-zuo}
Martin M{\"o}ller, Eckart Viehweg, and Kang Zuo.
\newblock Special families of curves, of abelian varieties, and of certain
  minimal manifolds over curves.
\newblock In {\em Global aspects of complex geometry}, pages 417--450.
  Springer, Berlin, 2006.

\bibitem[Ols05]{olsson_proper}
Martin~Christian Olsson.
\newblock On proper coverings of {A}rtin stacks.
\newblock {\em Adv. Math.}, 198(1):93--106, 2005.

\bibitem[Ray74]{raynaud}
Mich{\`e}le Raynaud.
\newblock Th\'eoremes de {L}efschetz en cohomologie des faisceaux coh\'erents
  et en cohomologie \'etale. {A}pplication au groupe fondamental.
\newblock {\em Ann. Sci. \'Ecole Norm. Sup. (4)}, 7:29--52, 1974.
\newblock Collection of articles dedicated to Henri Cartan on the occasion of
  his 70th birthday, I.

\bibitem[Sil77]{Silva:1977aa}
Alessandro Silva.
\newblock Relative vanishing theorems {I}: applications to ample divisors.
\newblock {\em Commentarii Mathematici Helvetici}, 52(1):483--489, 1977.

\bibitem[Sim88]{simpson}
Carlos~T. Simpson.
\newblock Constructing variations of {H}odge structure using {Y}ang-{M}ills
  theory and applications to uniformization.
\newblock {\em J. Amer. Math. Soc.}, 1(4):867--918, 1988.

\bibitem[Sim92]{simpson2}
Carlos~T. Simpson.
\newblock Higgs bundles and local systems.
\newblock {\em Inst. Hautes \'Etudes Sci. Publ. Math.}, (75):5--95, 1992.

\bibitem[Som76]{Sommese:1976aa}
Andrew~John Sommese.
\newblock On manifolds that cannot be ample divisors.
\newblock {\em Mathematische Annalen}, 221(1):55--72, 1976.

\bibitem[Spu93]{spurr}
Michael~J. Spurr.
\newblock Nef cotangent bundles of branched coverings.
\newblock {\em Proc. Amer. Math. Soc.}, 118(1):57--66, 1993.

\bibitem[{Sta}]{stacks-project}
The {Stacks Project Authors}.
\newblock {\itshape Stacks Project}.

\bibitem[Tod03]{todorov}
Andrey Todorov.
\newblock Local and global theory of the moduli of polarized {C}alabi-{Y}au
  manifolds.
\newblock In {\em Proceedings of the {I}nternational {C}onference on
  {A}lgebraic {G}eometry and {S}ingularities ({S}panish) ({S}evilla, 2001)},
  volume~19, pages 687--730, 2003.

\bibitem[Vie95]{viehweg}
Eckart Viehweg.
\newblock {\em Quasi-projective moduli for polarized manifolds}, volume~30 of
  {\em Ergebnisse der Mathematik und ihrer Grenzgebiete (3) [Results in
  Mathematics and Related Areas (3)]}.
\newblock Springer-Verlag, Berlin, 1995.

\bibitem[VZ01]{viehweg-zuo}
Eckart Viehweg and Kang Zuo.
\newblock On the isotriviality of families of projective manifolds over curves.
\newblock {\em J. Algebraic Geom.}, 10(4):781--799, 2001.

\bibitem[VZ03]{viehweg-zuo2}
Eckart Viehweg and Kang Zuo.
\newblock On the {B}rody hyperbolicity of moduli spaces for canonically
  polarized manifolds.
\newblock {\em Duke Math. J.}, 118(1):103--150, 2003.

\end{thebibliography}

%\pagestyle{fancy}
%\renewcommand{\headrulewidth}{0pt}
%  \fancyhf{}
%  \fancyhead[R]{Daniel Litt}
%\onlinesignature

\end{document}